\newcommand{\blue}{\textcolor{black}}
\newcommand{\red}{\textcolor{black}}
\newcommand{\green}{\textcolor{black}}
\numberwithin{equation}{section}
\theoremstyle{plain}
\newtheorem{lemma}{Lemma}[section]
\newtheorem{proposition}[lemma]{Proposition}
\newtheorem{theorem}[lemma]{Theorem}
\newtheorem{corollary}[lemma]{Corollary}
\theoremstyle{definition}
\newtheorem{definition}[lemma]{Definition}
\newtheorem{remark}[lemma]{Remark}
\newtheorem{example}[lemma]{Example}
\newtheorem{assumption}[lemma]{Assumption}
\def\@tocline#1#2#3#4#5#6#7{\relax
  \ifnum #1>\c@tocdepth 
  \else
    \par \addpenalty\@secpenalty\addvspace{#2}%
    \begingroup \hyphenpenalty\@M
    \@ifempty{#4}{%
      \@tempdima\csname r@tocindent\number#1\endcsname\relax
    }{%
      \@tempdima#4\relax
    }%
    \parindent\z@ \leftskip#3\relax \advance\leftskip\@tempdima\relax
    \rightskip\@pnumwidth plus4em \parfillskip-\@pnumwidth
    #5\leavevmode\hskip-\@tempdima
      \ifcase #1
       \or\or \hskip 1.5 em \or \hskip 2em \else \hskip 3em \fi%
      #6\nobreak\relax
    \hfill\hbox to\@pnumwidth{\@tocpagenum{#7}}\par
    \nobreak
    \endgroup
  \fi}
\begin{document}
\newcommand{\R}{{\mathbb R}}
\newcommand{\C}{{\mathbb C}}
\newcommand{\F}{{\mathbb F}}
\renewcommand{\O}{{\mathbb O}}
\newcommand{\Z}{{\mathbb Z}}
\newcommand{\N}{{\mathbb N}}
\newcommand{\Q}{{\mathbb Q}}
\renewcommand{\L}{{\mathbb L}}
\renewcommand{\H}{{\mathbb H}}
\newcommand{\D}{{\mathbb D}}
\newcommand{\G}{{\mathbb G}}
\newcommand{\W}{{\mathbb W}}

\newcommand{\dass}{da\ss~}

\newcommand{\Aa}{{\mathcal A}}
\newcommand{\Bb}{{\mathcal B}}
\newcommand{\Cc}{{\mathcal C}} 
\newcommand{\Dd}{{\mathcal D}}
\newcommand{\Ee}{{\mathcal E}}
\newcommand{\Ff}{{\mathcal F}}
\newcommand{\Gg}{{\mathcal G}} 
\newcommand{\Hh}{{\mathcal H}}
\newcommand{\Kk}{{\mathcal K}}
\newcommand{\Ii}{{\mathcal I}}
\newcommand{\Jj}{{\mathcal J}}
\newcommand{\Ll}{{\mathcal L}} 
\newcommand{\Mm}{{\mathcal M}} 
\newcommand{\Nn}{{\mathcal N}}
\newcommand{\Oo}{{\mathcal O}}
\newcommand{\Pp}{{\mathcal P}}
\newcommand{\Qq}{{\mathcal Q}}
\newcommand{\Rr}{{\mathcal R}}
\newcommand{\Ss}{{\mathcal S}}
\newcommand{\Tt}{{\mathcal T}}
\newcommand{\Uu}{{\mathcal U}}
\newcommand{\Vv}{{\mathcal V}}
\newcommand{\Ww}{{\mathcal W}}
\newcommand{\Xx}{{\mathcal X}}
\newcommand{\Yy}{{\mathcal Y}}
\newcommand{\Zz}{{\mathcal Z}}

\newcommand{\zt}{{\tilde z}}
\newcommand{\xt}{{\tilde x}}
\newcommand{\Ht}{\widetilde{H}}
\newcommand{\ut}{{\tilde u}}
\newcommand{\Mt}{{\widetilde M}}
\newcommand{\Llt}{{\widetilde{\Ll }}}
\newcommand{\yt}{{\tilde y}}
\newcommand{\vt}{{\tilde v}}
\newcommand{\Ppt}{{\widetilde{\mathcal P}}}
\newcommand{\bp }{{\bar \partial}}

\newcommand{\Remark}{{\it Remark}}
\newcommand{\Proof}{{\it Proof}}
\newcommand{\ad}{{\rm ad}}
\newcommand{\Om}{{\Omega}}
\newcommand{\om}{{\omega}}
\newcommand{\eps}{{\varepsilon}}
\newcommand{\Di}{{\rm Diff}}
\newcommand{\vol}{{\rm vol}}
\newcommand{\im}{{\rm Im }}

\renewcommand{\a}{{\mathfrak a}}
\renewcommand{\b}{{\mathfrak b}}
\newcommand{\e}{{\mathfrak e}}
\renewcommand{\k}{{\mathfrak k}}
\newcommand{\pg}{{\mathfrak p}}
\newcommand{\g}{{\mathfrak g}}
\newcommand{\gl}{{\mathfrak gl}}
\newcommand{\h}{{\mathfrak h}}
\renewcommand{\l}{{\mathfrak l}}
\newcommand{\sm}{{\mathfrak m}}
\newcommand{\n}{{\mathfrak n}}
\newcommand{\s}{{\mathfrak s}}
\renewcommand{\o}{{\mathfrak o}}
\newcommand{\so}{{\mathfrak so}}
\renewcommand{\u}{{\mathfrak u}}
\newcommand{\su}{{\mathfrak su}}
\newcommand{\ssl}{{\mathfrak sl}}
\newcommand{\ssp}{{\mathfrak sp}}
\renewcommand{\t}{{\mathfrak t }}
\newcommand{\Cinf}{C^{\infty}}
\newcommand{\la}{\langle}
\newcommand{\ra}{\rangle}
\newcommand{\half}{\scriptstyle\frac{1}{2}}
\newcommand{\p}{{\partial}}
\newcommand{\notsub}{\not\subset}
\newcommand{\iI}{{I}} 
\newcommand{\bI}{{\partial I}} 
\newcommand{\LRA}{\Longrightarrow}
\newcommand{\LLA}{\Longleftarrow}
\newcommand{\lra}{\longrightarrow}
\newcommand{\LLR}{\Longleftrightarrow}
\newcommand{\lla}{\longleftarrow}
\newcommand{\INTO}{\hookrightarrow}

\newcommand{\QED}{\hfill$\Box$\medskip}
\newcommand{\UuU}{\Upsilon _{\square}(H_0) \times \Uu _{\square} (J_0)}
\newcommand{\bm}{\boldmath}

\newcommand{\commentgreen}[1]{\green{(*)}\marginpar{\fbox{\parbox[l]{3cm}{\green{#1}}}}}
\newcommand{\commentred}[1]{\red{(*)}\marginpar{\fbox{\parbox[l]{3cm}{\red{#1}}}}}
\newcommand{\commentblue}[1]{\blue{(*)}\marginpar{\fbox{\parbox[l]{3cm}{\blue{#1}}}}}

\newcommand{\Der}{\operatorname{Der}}
\newcommand{\id}{\operatorname{id}}
\newcommand{\bla}{\langle \! \langle}
\newcommand{\bra}{\rangle \! \rangle}
\newcommand{\blq}{[ \! [}
\newcommand{\brq}{] \! ]}
\newcommand{\pr}{\mathrm{pr}}
\newcommand{\rk}{\operatorname{rank}}

\definecolor{orange}{rgb}{0,0,0}

\date{\today}

\title[Dirac-Jacobi bundles]
{Dirac-Jacobi bundles}

\author{Luca Vitagliano}
\address{DipMat, Universit\`a degli Studi di Salerno \& Istituto Nazionale di Fisica Nucleare, GC Salerno, via Giovanni Paolo II n${}^\circ$ 123, 84084 Fisciano (SA) Italy.}
\email{lvitagliano@unisa.it}

\begin{abstract}
We show that a suitable notion of \emph{Dirac-Jacobi structure on a generic line bundle $L$}, is provided by \emph{Dirac structures in the omni-Lie algebroid of $L$}. Dirac-Jacobi structures on line bundles generalize Wade's $\mathcal E^1 (M)$-Dirac structures and unify generic (i.e.~non-necessarily coorientable) precontact distributions, Dirac structures and local Lie algebras with one dimensional fibers in the sense of Kirillov (in particular, Jacobi structures in the sense of Lichnerowicz). We study the main properties of Dirac-Jacobi structures and prove that integrable Dirac-Jacobi structures on line-bundles integrate to (non-necessarily coorientable) precontact groupoids. This puts in a conceptual framework several results already available in literature for $\mathcal E^1 (M)$-Dirac structures.
\end{abstract}

\keywords{precontact manifolds, Jacobi manifolds, line bundles, Dirac structures, \textcolor{black}{omni-Lie algebroids,} Dirac-Jacobi structures, Lie groupoids, Lie algebroids, multiplicative forms with coefficients}

\subjclass[2010]{Primary 53D35}

\maketitle

\tableofcontents

\section{Introduction}

A Dirac structure on a manifold $M$ is a maximal isotropic subbundle of the \emph{generalized tangent bundle} $TM \oplus T^\ast M$, whose sections are preserved by the Courant (equivalently, the Dorfman) bracket \cite{C1990, B2013}. Dirac structures unify presymplectic and Poisson structures. While every submanifold of a presymplectic manifold is equipped with an induced presymplectic structure, not every submanifold of a Poisson manifold is equipped with an induced Poisson structure. However, every submanifold of a Poisson manifold is (almost everywhere) equipped with a Dirac structure. From the point of view of Hamiltonian mechanics, submanifolds in a Poisson manifold are constraints on the phase space. Hence Dirac geometry is the right conceptual framework for constrained Hamiltonian mechanics on Poisson manifolds.

In \cite{W2000} Wade defines $\mathcal E^1 (M)$-Dirac structures. They play a similar role in precontact/Jacobi geometry as Dirac structures do in presymplectic/Poisson geometry. However, Wade's definition does not capture non-coorientable precontact distributions, nor local Lie algebras with one dimensional fibers in the sense of Kirillov \cite{Kiri1976}. Recall that a \emph{precontact manifold} is a manifold $M$ equipped with a precontact distribution, i.e.~a hyperplane distribution $C$. The quotient bundle $L := TM / C$ is a non-necessarily trivial line bundle. When $L$ is trivial, $C$ is called \emph{coorientable}. A coorientable precontact distribution can be presented as the kernel of a globally defined $1$-form. Hence coorientable precontact distributions are simpler than generic ones and most authors prefer to work with the former. However, there are important (pre)contact distributions which are not coorientable, e.g.~the canonical contact distribution on the manifold of contact elements is not. Actually, a large part of the theory of coorientable precontact manifolds can be extended to the generic case, after developing a suitable language to deal with generic line bundles.

A similar situation is encountered in Jacobi geometry. A \emph{Jacobi manifold} in the sense of Lichnerowicz \cite{Lich1978} can be regarded as a manifold $M$ equipped with a Lie bracket on smooth functions which is a first order differential operator in each entry. Now, functions on $M$ are sections of the trivial line bundle $\R_M := M \times \R \to M$. If we take sections of a generic line bundle, instead of functions, we get the notion of a \emph{local Lie algebra with one dimensional fibers} in the sense of Kirillov \cite{Kiri1976}. Lichnerowicz's Jacobi manifolds are simpler than Kirillov's local Lie algebras and most authors prefer to work with the former. However, there are important local Lie algebras which do not come from Jacobi manifolds. For instance, a non coorientable contact manifold $(M,C)$ defines a local Lie algebra (containing a full information on $C$) which doesn't come from a Jacobi structure. Actually, a large part of the theory of Jacobi manifolds can be extended to local Lie algebras with one dimensional fibers. 

In this paper we show that a suitable notion of \emph{Dirac-Jacobi structure on a generic line bundle} $L$, is provided by \emph{Dirac structures in the omni-Lie algebroid of $L$}. The omni-Lie algebroid of a vector bundle $E$, here denoted by $\D E$, and Dirac structures therein, have been introduced by Chen and Liu in \cite{CL2010}, and further studied by Chen, Liu and Sheng in \cite{CLS2011}. In the case when $E$ is a line bundle, Dirac structures in $\D E$ encompass local Lie algebras with one dimensional fibers (see \cite{CL2010}), as well as non necessarily coorientable precontact distributions, and Wade's $\mathcal E^1 (M)$-Dirac structures. We study systematically \emph{Dirac-Jacobi bundles}, i.e.~line bundles $L$ equipped with a Dirac structure in $\D L$, extending to this general context a number of results already available for $\mathcal E^1 (M)$-Dirac manifolds. We also find completely new results, including a useful theorem on the local structure of Dirac-Jacobi bundles (Theorem~\ref{theor:local}), a general discussion on backward/forward images of Dirac-Jacobi structures (Section \ref{sec:morphisms}), and a coisotropic embedding theorem (Section \ref{sec:coisotrop}), paralleling similar results holding for Dirac manifolds (see \cite{DW2008}, \cite{B2013}, and \cite{CZ2009} respectively). In our opinion, the omni-Lie algebroid approach to Dirac-Jacobi structures clarifies Wade's theory, putting it in a simple and efficient conceptual framework.

The paper is organized as follows. In Section \ref{sec:line} we recall those aspects of differential geometry of line bundles that we will need in the sequel. Specifically, we discuss, in some details, the \emph{Atiyah algebroid} (also called \emph{gauge algebroid}) of a vector bundle and its functorial properties. In Section \ref{sec:cont_lcs_geom} we present an alternative point of view on (pre)contact geometry: it turns out that (pre)contact geometry can be obtained from (pre)symplectic geometry ``replacing smooth functions with sections of an arbitrary line bundle $L$, and vector fields with sections of the Atiyah algebroid of $L$''. This principle is a guide-line throughout the paper. Actually, Dirac-Jacobi geometry can be obtained from Dirac geometry ``replacing the tangent bundle with the Atiyah algebroid of a line bundle''. This vague statement will be much clearer after Section \ref{sec:DJ_line} where we define Dirac-Jacobi structures on generically non-trivial line bundles. The examples show that Dirac-Jacobi bundles encompass (non-necessarily coorientable) precontact manifolds, local Lie algebras with one dimensional fibers (in particular, Jacobi manifolds and flat line bundles), and Dirac manifolds. In Section~\ref{sec:char_fol} we begin a systematic analysis of Dirac-Jacobi bundles. In particular, we show that a Dirac-Jacobi bundle is essentially the same as a (generically singular) foliation equipped with precontact or locally conformal presymplectic (lcps) structures on its leaves (see \cite{IM2002} for the trivial line bundle version of this result). In Section \ref{sec:loc_struct} we describe, to some extent, the local structure of a Dirac-Jacobi bundle around a point in either a precontact or a lcps leaf of the characteristic foliation (Theorem~\ref{theor:local}). This description is not a full local \emph{normal form theorem}. Nonetheless it is useful for several purposes (see, e.g., Corollary \ref{cor:parity}, and Proposition \ref{prop:KD}). In addition, \textcolor{black}{it allows to prove the existence of certain structures \emph{transverse to characteristic leaves} (Propositions \ref{prop:transv} and \ref{prop:transv_uniq}}). In Section \ref{sec:null_distr} we define the \emph{null distribution} of a Dirac-Jacobi bundle. It plays a similar role as the null distribution of a Dirac manifold, in particular, that of a presymplectic manifold. Namely, under suitable regularity conditions, the null distribution can be quotiented out. The quotient manifold is naturally equipped with a local Lie algebra with one dimensional fibers, which, together with the null foliation, completely determines the original Dirac-Jacobi structure. This allows to describe the structure of Dirac-Jacobi bundles, with sufficiently regular null distribution, in a similar way as for Dirac manifolds. In Section~\ref{sec:morphisms} we discuss how to pull-back and push-forward a Dirac-Jacobi structure along a smooth map. This allows us to define two different kind of morphisms (backward, and forward morphisms) between Dirac-Jacobi bundles. The analysis in this section closely parallels a similar analysis for Dirac manifolds in \cite[Section 5]{B2013}. In Section~\ref{sec:coisotrop} we discuss coisotropic submanifolds in manifolds equipped with a local Lie algebra structure. Under suitable regularity conditions, a coisotropic submanifold inherits a Dirac-Jacobi structure. Conversely, under suitable regularity conditions, a Dirac-Jacobi bundle can be coisotropically embedded in a manifold equipped with a local Lie algebra with one dimensional fibers (Theorem \ref{theor:coisotrop}) (see 
\cite[Theorem 8.1]{CZ2009} for a Poisson geometric version of this result). In Section~\ref{sec:integr} we prove that Dirac-Jacobi bundles are infinitesimal counterparts of (non-necessarily coorientable) precontact groupoids. Every Dirac structure comes equipped with a Lie algebroid structure integrating (if at all integrable) to a presymplectic groupoid \cite{BCZ2004}. Similarly every $\mathcal E^1 (M)$-Dirac structure comes equipped with a Lie algebroid structure integrating (if at all integrable) to a cooriented precontact groupoid \cite{IW2006}. Theorem \ref{theor:int} generalizes these results to Dirac-Jacobi bundles. Finally, recall that a local Lie algebra with one dimensional fibers can be regarded as a homogenous Poisson manifold via the so called \emph{Poissonization trick}. Similarly, a $\mathcal E^1 (M)$-Dirac structure can be regarded as a homogeneous Dirac structure via a \emph{Dirac-ization trick} \cite{IM2002}, and this is sometimes useful in proving theorems in Dirac-Jacobi geometry from available theorems in Dirac geometry (see, e.g., \cite{IW2006}, see also \cite{WZ2004}). In Appendix \ref{app:Dirac_trick} we show how to adapt the Dirac-ization trick to Dirac-Jacobi structures on non-necessarily trivial line bundles.

\subsection*{Notation, conventions and terminology}
Let $M$ be a smooth manifold, and let $E \to M$ be a vector bundle. A \emph{distribution} $V$ in $E$ is the assignment $x \mapsto V_x$ of a subspace $V_x$ of the fiber $E_x$ of $E$ over $x$ for all points $x \in M$. Let $V$ be a distribution in $E$. The \emph{rank} of $V$ is the integer-valued map $\rk V$ on $M$ defined by $\rk V : x \mapsto \rk_x V := \dim V_x$. A (smooth) section of $V$ is a section $e$ of $E$ such that $e(x) \in V_x$ for all $x \in M$. Distribution $V$ is \emph{smooth} if $V_x$ is spanned by values at $x$ of sections of $V$, for all $x \in M$. The rank of a smooth distribution is a lower semi-continuous function. A smooth distribution is \emph{regular} if its rank is locally constant. Hence a regular distribution in a vector bundle over a connected manifold is a vector subbundle. If $E$ is a Lie algebroid, a distribution $V$ in $E$ is called \emph{involutive} if sections of $V$ are preserved by the Lie bracket on $\Gamma (E)$.

Typical examples of distributions in vector bundles are kernels and images of vector bundle morphisms over the identity. The kernel of a vector bundle morphism (in particular, the intersection of two vector subbundles) is a distribution with upper semi-continuous rank. Hence it is smooth if and only if its rank is locally constant. The image of a vector bundle morphism is a smooth distribution. In particular, its rank is a lower semi-continuous function.

A distribution $K$ in $TM$ is \emph{integrable} if every point of $M$ is contained in a \emph{plaque}, i.e.~a connected, immersed submanifold $\Oo$ such that $T \Oo = K|_\Oo$. An integrable distribution $K$ in $TM$ determines a partition $\Ff$ of $M$ into \emph{leaves}, i.e.~maximal plaques. Partition $\Ff$ will be referred to as a \emph{foliation} (of $M$), specifically the \emph{integral foliation of $K$}, and $K$ will be also called the \emph{tangent distribution to $\Ff$} and denoted by $T \Ff$. (A version of) Stefan-Sussman Theorem asserts that a smooth distribution $K$ in $TM$ is integrable if and only if it is involutive and, additionally, $\rk K$ is constant along the flow lines of sections of $K$. In particular, a regular distribution in $TM$ is integrable if and only if it is involutive (Frobenius theorem). The integral foliation of a regular distribution in $TM$ will be called \emph{regular}. A regular foliation of $M$ is \emph{simple} if the space $M_{\mathrm{red}}$ of leaves carries a smooth manifold structure such that the canonical projection $M \to M_{\mathrm{red}}$ is a (surjective) submersion.

We assume that the reader is familiar with (fundamentals of) the theory of Lie groupoids, Lie algebroids and their representations (see, e.g., \cite{CF2010} and references therein). The unfamiliar reader will find a quick introduction to those aspects of the theory relevant for this paper in \cite{S2013} (see Section 1.2 therein, see also Chapter 4). We only recall here that, given a Lie algebroid $A \to M$, with Lie bracket $[-,-]_A$ and anchor $\rho_A$, and a vector bundle $E \to M$ equipped with a representation of $A$, i.e.~a flat $A$-connection, there is an $E$-valued \emph{Cartan calculus} on $A$ consisting of the following operators:
\begin{itemize}
\item the $E$-valued \emph{Lie algebroid differential}
\[
d_{A, E} : \Gamma (\wedge^\bullet A^\ast \otimes E) \to \Gamma (\wedge^\bullet A^\ast \otimes E),
\]
\end{itemize}
and, for every section $\alpha$ of $A$,
\begin{itemize}
\item the \emph{contraction} $i_\alpha : \Gamma (\wedge^\bullet A^\ast \otimes E) \to \Gamma (\wedge^\bullet A^\ast \otimes E)$ taking an $E$-valued, skew-symmetric multilinear form $\omega$ on $A$ to $i_\alpha \omega = \omega (\alpha, -, \ldots, -)$,
\item the \emph{Lie derivative} $\Ll_\alpha : \Gamma (\wedge^\bullet A^\ast \otimes E) \to \Gamma (\wedge^\bullet A^\ast \otimes E)$, defined as $\Ll_\alpha := [i_\alpha, d_{A,E}] = i_\alpha d_{A,E} + d_{A,E} i_\alpha$.
\end{itemize}
The above operators fulfill the following (additional) \emph{Cartan identities}
\begin{gather*}
[\Ll_\alpha, i_\beta]  = i_{[\alpha, \beta]_A}, \quad [\Ll_\alpha, \Ll_\beta] = \Ll_{[\alpha, \beta]_A} , \\
[d_{A,E}, d_{A,E}]  =  [d_{A, E}, \Ll_\alpha] = [i_\alpha , i_\beta] = 0.
\end{gather*}
for all $\alpha, \beta \in \Gamma (A)$, where $[-,-]$ denotes the \emph{graded commutator}.

\section{Reminder on the Atiyah algebroid of a vector bundle}\label{sec:line}
In order to fix the notation, we collect, in this section, some already known facts about the Atiyah algebroid of a vector bundle, including its definition and its interaction with vector bundle morphisms. For more information about the Atiyah algebroid the reader may refer, e.g., to Mackenzie's book \cite{M2005} and references therein (especially reference \cite{KM2002} of the present paper). Beware that our terminology and notation are slightly different from Mackenzie's ones. 

Let $M$ be a smooth manifold, and let $E \to M$ be a vector bundle over it. A \emph{derivation of $E$} is an $\R$-linear map $\Delta: \Gamma (E) \to \Gamma (E)$ such that there exists a, necessarily unique, vector field $X \in \mathfrak X (M)$, called the \emph{symbol of $\Delta$} and also denoted by $\sigma (\Delta)$, satisfying the following Leibniz rule
\[
\Delta (f e) = X(f) e + f \Delta (e),
\]
for all $f \in C^\infty (M)$ and $e \in \Gamma (E)$.

\begin{remark}
Derivations are first order differential operators. If $E$ is a line bundle, then every first order differential operator $\Gamma (E) \to \Gamma (E)$ is a derivation.
\end{remark}

Derivations of $E$ can be regarded as sections of a Lie algebroid $D E \to M$, the \emph{Atiyah algebroid of $E$}, defined as follows. The fiber $D_x E$ of $D E$ through $x \in M$ consists of $\R$-linear maps $\Delta : \Gamma (E) \to E_x$ such that there exists a, necessarily unique, tangent vector $X \in T_x M$, called the \emph{symbol of $\Delta$} and also denoted by $\sigma (\Delta)$, satisfying the following Leibniz rule
\[
\Delta (f e) = X (f) e_x + f (x) \Delta (e),
\]
$f \in C^\infty (M)$ and $e \in \Gamma (E)$. It is easy to see that $D_x E$ is a vector space. Choose coordinates $(x^i)$ on $M$, and a local basis $(\varepsilon^a)$ of $\Gamma (E)$. Then a basis of  $D_x E$ is $(\delta_i , \varepsilon_a^b )$ defined as follows. For $e = f_a \varepsilon^a \in \Gamma (E)$ put
\[
\delta_i (e) := \frac{\partial f_a}{\partial x^i} (x) \varepsilon^a_x \quad \text{and} \quad \varepsilon_a^b (e) := f_a (x) \varepsilon^b_x.
\]
This shows that the $D_x E$'s are fibers of a vector bundle $D E \to M$ whose rank is $\dim M + (\rk E)^2$. Sections of $D E \to M$ are denoted by $\Der E$. They identify with derivations of $E$. Hence $D E \to M$ is a Lie algebroid, whose Lie bracket is the commutator of derivations and whose anchor  $\sigma : D E \to TM$ maps a derivation to its symbol. A representation of a Lie algebroid $A \to M$ in a vector bundle $E \to M$ can then be regarded as a morphism of Lie algebroids $A \to D E$. In particular, there is a \emph{tautological representation} of the Lie algebroid $D E$ in $E$ itself, given by the identity $\id : D E \to D E$. The de Rham complex $\Gamma (\wedge^\bullet (D E)^\ast \otimes E)$ of $D E$ with values in $E$ is sometimes called the \emph{der-complex} \cite{R1980}. We denote be $d_{D}$ the differential in the der-complex. The der-complex is actually acyclic. Even more, it possesses a canonical contracting homotopy given by contraction with the identity derivation $\mathbb 1 : E \to E$, $\mathbb 1 e = e$. The Atiyah algebroid $D E$ is often called the \emph{gauge algebroid of $E$}.

Correspondence $E \to D E$ is functorial in the following sense. Let $\mathbf{VB}^{\mathrm{reg}}$ be the category whose objects are vector bundles (over possibly different base manifolds) and whose morphisms are \emph{regular morphisms} of vector bundles, i.e.~vector bundle maps that are isomorphisms on fibers. Then correspondence $E \mapsto DE$ can be promoted to a functor from $\mathbf{VB}^{\mathrm{reg}}$ to the category of Lie algebroids, with morphisms of Lie algebroids over possibly different base manifolds. Namely, let $E \to M$ and $E' \to M'$ be vector bundles, and let $F : E \to E'$ be a regular vector bundle morphism over a smooth map $\underline{F} : M \to M'$. In particular, a section $e'$ of $E'$ can be pulled-back to a section $F^\ast e'$ of $E$, defined by $(F^\ast e')(x) := (F|_{E_x}^{-1} \circ e \circ \underline{F}) (x)$, for all $x \in M$. Then $F$ induces a morphism of Lie algebroids $d_{D} F : D E \to D E'$ defined by
\[
((d_{D} F) \Delta) e' := \Delta (F^\ast e'), \quad \Delta \in D E, \quad e' \in \Gamma (E').
\]
It is easy to see that $\rk_x d_{D} F = \rk_x d\underline F + (\rk E)^2$ for all $x \in M$. If there is no risk of confusion, we also denote by $F_\ast$ the vector bundle morphism $d_{D} F$. Of a special interest is the case when $\underline F : M \to M'$ is the inclusion of a submanifold and $F : E = E'|_M \to E'$ is the inclusion of the restricted vector bundle. In this case $d_D F : D (E'|_M) \to D E'$ is an embedding whose image consists of derivations of $E'$ whose symbol is tangent to $M$. We will often regard $D (E'|_M)$ as a subbundle of $D E'$ (over the submanifold $M$).

\begin{remark}\label{rem:inf_aut}
There is a more geometric interpretation of derivations of a vector bundle which is often useful. Namely, derivations of a vector bundle $E \to M$ can be understood as infinitesimal vector bundle automorphisms of $E$, as follows. let $\{ \Delta_t \}$ be a smooth one-parameter family of derivations of $E$, and let $X_t = \sigma (\Delta_t)$. Denote by $\{ \underline \Phi {}_t \}$ the one-parameter family of diffeomorphisms generated by $\{ X_t \}$. Then there exists a unique one-parameter family $\{ \Phi_t \}$ of vector bundle automorphisms $\Phi_t : E \to E$, over $\{ \underline \Phi {}_t \}$ such that
\[
\frac{d}{dt} \Phi_t^\ast e = \Phi_t^\ast \Delta_t e,
\]
for all $e \in \Gamma (E)$. If $\Delta_t = \Delta$ is constant, then family $\{ \Phi_t \}$ is a flow, over the flow of $X = \sigma(\Delta)$, and 
\[
\Delta e = \left. \frac{d}{dt} \right|_{t=0} \Phi_t^\ast e.
\]
\end{remark}

In the case when $E = L$ is a line bundle, then $\rk D L = \dim M +1$ and a basis of $D_x L$ is $(\delta_i, \mathbb 1_x)$, where $\mathbb 1_x$ maps section $e$ to $e_x$. Moreover, the dual bundle of $D L$ is $J^1 L \otimes L^\ast$, where $J^1 L$ is the first jet bundle of $L$. In this case, der-complex looks like
\[
0 \longrightarrow \Gamma (L) \longrightarrow \Gamma (J^1 L) \longrightarrow \Gamma (\wedge^2 (DL)^\ast \otimes L) \longrightarrow \cdots,
\]
and the first differential $\Gamma (L) \to \Gamma (J^1 L)$ agrees with the first jet prolongation $j^1 : \Gamma (L) \to \Gamma (J^1 L)$. 
Let $L \to M$ and $L' \to M'$ be line bundles and let $F : L \to L'$ be a regular morphism of vector bundles over a smooth map $\underline{F} : M \to M'$. A sections $\psi '$ of $J^1 L'$ can be pulled-back to a section $F^\ast \psi '$ of $J^1 L$ as follows. First define a vector bundle morphism $j^1 F : \underline{F}^\ast J^1 L' \to J^1 L$ by putting $(j^1 F) j_{\underline F (x)}^1 \lambda ' := j^1_x (F^\ast \lambda ')$, for all $x \in M$. Next put $F^\ast \psi = j^1 F \circ \psi \circ \underline F$. By definition $j^1 F^\ast \lambda' = F^\ast j^1 \lambda' $ for all $\lambda' \in \Gamma (L')$. If there is no risk of confusion we also denote by $F^\ast$ the vector bundle morphism $j^1 F$.

The pull-backs $F^\ast : \Gamma (L') \to \Gamma (L)$ and $F^\ast : \Gamma (J^1 L') \to \Gamma (J^1 L)$ can be extended to a degree zero map, also denoted by $F^\ast : \Gamma (\wedge^\bullet (DL')^\ast \otimes L') \to \Gamma (\wedge^\bullet (DL)^\ast \otimes L)$, in the obvious way. Moreover,
\[
F^\ast d_{D} \omega' = d_{D} F^\ast \omega'
\]
for all $\omega' \in \Gamma (\wedge^\bullet (DL')^\ast \otimes L')$.

Vector bundle morphism $j^1 F$ is adjoint to $d_{D} F$ in the sense that\linebreak $\langle F_\ast \Delta , \psi ' \rangle = \langle \Delta , F^\ast \psi\rangle$ for all $\Delta \in D_x L$ and $\psi \in J^1_{\underline{F}(x)} L'$, where $\langle -,-\rangle : D L \otimes J^1 L \to L$, is the duality pairing twisted by $L$. Hence, if $\Delta \in \Der L$ and $\Delta ' \in \Der L'$ are \emph{$F$-related}, i.e.~$\Delta (F^\ast \lambda') = F^\ast (\Delta ' \lambda ')$ for all $\lambda ' \in \Gamma (L')$ (in other words, $F_\ast (\Delta_x) = \Delta'_{\underline F (x)}$ for all $x \in M$), then
\begin{equation}\label{eq:F-related}
\begin{aligned}
i_\Delta F^\ast \omega '  & = F^\ast i_{\Delta '} \omega '  \\
\Ll_{\Delta} F^\ast \omega ' & =  F^\ast \Ll_{\Delta '} \omega ' 
\end{aligned}
\end{equation}
for all $\omega ' \in \Gamma (\wedge^\bullet (DL')^\ast \otimes L')$.

\section{A new look at contact and locally conformal symplectic geometries}\label{sec:cont_lcs_geom}

A \emph{precontact manifold} is a manifold $M$ equipped with a \emph{precontact distribution} $C$, i.e.~a hyperplane distribution on $M$. Let $(M,C)$ be a precontact manifold. Denote by $L$ the quotient line bundle $TM/C$, and by $\theta : TM \to L$ the projection. We also interpret $\theta$ as an $L$-valued $1$-form on $M$. It contains a full information on $C$. Actually, a precontact distribution on $M$ can be equivalently defined as a line bundle $L$ equipped with a \emph{precontact form} $\theta : TM \to L$, i.e.~a nowhere zero $L$-valued $1$-form $\theta : TM \to L$. The curvature form $\omega_C : \wedge^2 C \to L$, $(X,Y) \mapsto \theta ([X,Y])$, defines a morphism $(\omega_C)_\flat : C \to C^\ast \otimes L$ of vector bundles whose kernel $K_C$ is a, generically singular, involutive, subdistribution of $C$ called the \emph{null distribution} of $C$. Since the rank of $K_C$ is an upper semi-continuous function on $M$, then $K_C$ is smooth if and only if it is regular. In this case, the rank of $K_C$ is locally constant and $K_C$ is integrable by involutivity.

A precontact distribution $C$ is \emph{contact} if $\omega_C$ is non-degenerate, i.e.~$(\omega_C)_\flat$ is an isomorphism. A contact distribution $C$ defines a \emph{local Lie algebra $(L = TM/C, \{-,-\})$} in the sense of Kirillov (see Remark \ref{rem:cont_Jac} below). {Following Marle's terminology \cite{Marle1991} we will use the name \emph{Jacobi bundle} for a local Lie algebra with one dimensional fiber. A Jacobi bundle} is a line bundle $L$ equipped with a Lie bracket $\{-,-\}: \Gamma (L) \times \Gamma (L) \to \Gamma (L)$ on its sections, which is, additionally,  a first order differential operator (hence a derivation) in each entry. The Lie bracket of a Jacobi bundle will be called a \emph{Jacobi bracket}. Jacobi manifolds in the sense of Lichnerowicz \cite{Lich1978} are natural sources of (trivial) Jacobi bundles and Jacobi brackets, but there are interesting examples of non-trivial Jacobi bundles.

\begin{remark}\label{rem:cont_Jac}
The Jacobi bracket $\{-,-\}$ defined by $C$ can be described as follows (see, e.g., \cite{CS2013}). Let $X \in \mathfrak X (M)$. The map $\phi_X : C \to C^\ast \otimes L$, $Y \mapsto \theta ([X, Y])$ is a well-defined morphism of vector bundles. Put
\[
X' = (\omega_C)_\flat^{-1} (\phi_X) \in \Gamma (C).
\]
It can be showed that $X-X'$ does only depend on $\lambda := \theta (X)$. We denote it by $X_\lambda$. Now, let $\lambda = \theta (X), \mu = \theta (Y) \in \Gamma (L)$, with $X,Y \in \mathfrak X (M)$. Then $\{\lambda, \mu \} = \theta ([X_\lambda , X_\mu])$. Notice, additionally, that the vector field $X_\lambda$ is the symbol of the derivation $\{\lambda,-\}$ and, moreover, its flow preserves $C$.
\end{remark}

In the following, it will be convenient to take a slightly more general point of view on precontact geometry. Namely, we relax the requirement that a precontact form is nowhere zero, and call a \emph{precontact form} any $1$-form $\theta$ on $M$ with values in a line bundle $L \to M$. The kernel of $\theta$ is a non-necessarily regular distribution on $M$ whose rank is $\dim M$ at points where $\theta$ vanishes, and $\dim M - 1$ at points where $\theta$ is not zero. When $\ker \theta$ is not regular, the null distribution cannot be defined as above. However, there is still a way out as discussed below (Remark \ref{rem:K_omega}).

Now, we propose an alternative approach to precontact geometry, inspired by the \emph{(pre)symplectization trick}. The latter consists in regarding a precontact manifold as a homogeneous symplectic manifold, which is always possible. In our approach, precontact geometry is, in a sense, \emph{symplectic geometry on the Atiyah algebroid of a line bundle}. Let $L \to M$ be any line bundle. Consider the \emph{Atiyah algebroid} $D L$ of $L$ and let $\Der  L$ be sections of $D L$ (see Section \ref{sec:line}). As already remarked, there is a tautological representation of $D  L$ in $L$, and an associated (acyclic) \emph{de Rham complex} $(\Gamma (\wedge^\bullet (D  L)^\ast \otimes L), d_{D})$ (remember that $(D  L)^\ast = J^1 L \otimes L^\ast$). In the following, we denote by $\Omega_L^\bullet$ the graded space $\Gamma (\wedge^\bullet (D  L)^\ast \otimes L )$. Since the contraction $i_ \mathbb{1}$ with the identity operator $ \mathbb{1} := \id_{\Gamma (L)} \in \Der  L$ is a contracting homotopy for $(\Omega^\bullet_L, d_{D})$, i.e.~$[i_ \mathbb{1}, d_{D}] = \id$, it immediately follows that $\Omega^\bullet_L = \ker d_{D} \oplus \ker i_ \mathbb{1}$ with projections $\Omega_L^\bullet \to \ker d_{D}$, $\omega \mapsto \omega - i_ \mathbb{1} d_{D} \omega$, and $\Omega_L^\bullet \to \ker i_ \mathbb{1}$, $\omega \mapsto i_{\mathbb 1} d_{D} \omega$. In particular, $d_{D} : \ker i_ \mathbb{1} \to \ker d_{D}$ is a (degree one) isomorphism of graded vector spaces, and $i_ \mathbb{1} : \ker d_{D} \to \ker i_ \mathbb{1}$ is its inverse isomorphism.

\begin{remark}\label{rem:i_Delta_point}
The $C^\infty (M)$-linear operator $i_ \mathbb{1}$ does actually come from a constant rank morphism of vector bundles $\wedge^\bullet (D  L)^\ast \otimes L \to \wedge^\bullet (D  L)^\ast \otimes L$ also denoted by $i_ \mathbb{1}$. In particular, it induces acyclic differentials on fibers (also denoted $i_\mathbb{1}$). 
\end{remark}

\begin{proposition}\label{prop:prec}
Precontact forms on $M$ with values in $L$ are in one-to-one correspondence with $2$-cocycles in $(\Omega_L^\bullet, d_{D})$. Nowhere zero precontact forms correspond to $2$-cocycles $\omega$ such that $\omega_x \notin \ker i_\mathbb{1}$ for any $x \in M$.
\end{proposition}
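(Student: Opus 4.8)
The statement should follow almost formally from the structure recalled just above it: the splitting $\Omega_L^\bullet = \ker d_{D} \oplus \ker i_{\mathbb 1}$, together with the fact that $d_{D}$ and $i_{\mathbb 1}$ restrict to mutually inverse (degree $\pm 1$) isomorphisms between $\ker i_{\mathbb 1}$ and $\ker d_{D}$. The only genuinely new input is the identification of the degree-one part $(\ker i_{\mathbb 1})^1$ with the space of precontact forms; everything else is transporting these isomorphisms. So I would argue in three steps: (i) show $(\ker i_{\mathbb 1})^1 = \Gamma(T^\ast M \otimes L)$; (ii) deduce the bijection; (iii) compare zero loci for the ``nowhere zero'' refinement.

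For (i), recall that $(DL)^\ast = J^1 L \otimes L^\ast$, so that $\Omega^1_L = \Gamma\big((DL)^\ast \otimes L\big)$ is identified with $\Gamma(J^1 L)$. Under this identification the zeroth-order contraction $i_{\mathbb 1}\colon \Omega^1_L \to \Omega^0_L = \Gamma(L)$ becomes the map on sections induced by the canonical jet projection $J^1 L \to L$, $j^1_x\lambda \mapsto \lambda(x)$ --- equivalently, $i_{\mathbb 1}$ is the $L$-twisted transpose of the inclusion $\mathbb 1\colon \R_M \hookrightarrow DL$ of the kernel of the symbol $\sigma$. Hence $(\ker i_{\mathbb 1})^1 = \Gamma\big(\ker(J^1 L \to L)\big) = \Gamma(T^\ast M \otimes L)$, which is precisely the set of precontact forms: concretely, a precontact form $\theta\colon TM \to L$ is matched with its pullback $\sigma^\ast\theta \in \Omega^1_L$ along the anchor, $(\sigma^\ast\theta)(\Delta) := \theta(\sigma \Delta)$.

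For (ii), the composite
\[
\{\text{precontact forms}\} \xrightarrow{\ \theta \mapsto \sigma^\ast\theta\ } (\ker i_{\mathbb 1})^1 \xrightarrow{\ d_{D}\ } (\ker d_{D})^2 = \{2\text{-cocycles}\}
\]
is a bijection, as a composite of bijections, with inverse sending a $2$-cocycle $\varpi$ to the precontact form underlying $i_{\mathbb 1}\varpi$; here one uses $[i_{\mathbb 1}, d_{D}] = \id$ to get $i_{\mathbb 1} d_{D}\sigma^\ast\theta = \sigma^\ast\theta$ (since $i_{\mathbb 1}\sigma^\ast\theta = 0$) and $d_{D} i_{\mathbb 1}\varpi = \varpi$ (since $d_{D}\varpi = 0$). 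This settles the first assertion.

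For (iii), note first that $\theta$ is nowhere zero if and only if $\sigma^\ast\theta$ is, since $\sigma^\ast\colon T^\ast M \otimes L \to (DL)^\ast \otimes L$ is a fibrewise injective vector bundle morphism (the $L$-twist of the transpose of the fibrewise surjective symbol $\sigma$). Moreover, by Remark \ref{rem:i_Delta_point} the operator $i_{\mathbb 1}$ is induced by a vector bundle morphism, so the relation $\sigma^\ast\theta = i_{\mathbb 1}(d_{D}\sigma^\ast\theta)$ holds pointwise, i.e.\ $(\sigma^\ast\theta)(x) = i_{\mathbb 1}\big((d_{D}\sigma^\ast\theta)(x)\big)$ for all $x$. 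Hence a zero of the cocycle $d_{D}\sigma^\ast\theta$ forces a zero of $\sigma^\ast\theta$, and thus of $\theta$; so nowhere zero precontact forms are carried to nowhere zero $2$-cocycles, as claimed. I do not expect a genuine obstacle here: apart from the identification in (i) --- which is just the first jet exact sequence rewritten via $(DL)^\ast \otimes L = J^1 L$ --- everything is formal manipulation of the contracting homotopy, the one point to keep in mind being that $i_{\mathbb 1}$, in contrast to the first-order operator $d_{D}$, is a genuine zeroth-order bundle map, which is what lets the zero locus of $\sigma^\ast\theta$ be read off that of its cocycle.
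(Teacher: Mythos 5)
Your treatment of the bijection itself is correct and is essentially the paper's own argument: the paper likewise sets $\Theta = \theta \circ \sigma$, takes $\omega = -d_{D}\Theta$, and inverts via $\Theta = -i_{\mathbb 1}\omega$, using $[i_{\mathbb 1}, d_{D}] = \id$ together with the fact that $\ker \sigma = \langle \mathbb 1 \rangle$ — which is your identification $(\ker i_{\mathbb 1})^1 = \Gamma (T^\ast M \otimes L)$ in different clothing. The overall sign convention is immaterial.

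The discrepancy is in the ``nowhere zero'' clause. You prove only one inclusion: since $i_{\mathbb 1}$ is a bundle map, $\omega_x = 0$ forces $(\sigma^\ast \theta)_x = (i_{\mathbb 1}\omega)_x = 0$, so nowhere zero precontact forms are sent to nowhere zero cocycles. The paper reads the sentence as a two-way restriction and devotes a separate paragraph to the converse, ``$\omega$ nowhere zero $\Rightarrow$ $\theta$ nowhere zero'', arguing via the fiberwise acyclicity of $i_{\mathbb 1}$ (Remark \ref{rem:i_Delta_point}) that $\omega (\mathbb 1_x, -) = 0$ would force $\omega_x = 0$. Your pointwise reasoning cannot reach that direction, because $d_{D}$ is a first order operator: $\theta_x = 0$ does not imply $(d_{D}\sigma^\ast\theta)_x = 0$. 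In fact the converse inclusion fails: on $M = \R^2$ with $L = \R_M$ and $\theta = x^1\, dx^2 \otimes \mu$, one computes $\omega (\delta_1, \delta_2) = -\delta_1 (x^1 \mu) = -\mu$, so $\omega$ is nowhere zero although $\theta$ vanishes along $\{ x^1 = 0 \}$. (Correspondingly, the paper's step deducing $0 = (d_{D}\omega)_x$ equals the value at $x$ of the auxiliary closed cochain is not justified: the equality $\omega_x = (i_{\mathbb 1}\sigma)_x$ holds only at the single point $x$, and a first order operator cannot be evaluated on a pointwise identity.) The pointwise statement that is actually true — and the one the paper uses later, in Remark \ref{rem:theta=0} — is $\theta_x = 0$ iff $\omega (\mathbb 1_x, -) = 0$, which is exactly what your argument delivers. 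So your proof establishes everything in the statement that is correct, but be aware that it does not (and cannot) reproduce the reverse implication that the paper's proof also claims.
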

\begin{proof}
Let $\theta : TM \to L$ be a precontact form on $M$. Denote by $\Theta : D  L \to L$ the composition
\[
D  L \overset{\sigma}{\longrightarrow} TM \overset{\theta}{\longrightarrow} L,
\]
where $\sigma$ is the anchor of $D  L$, i.e.~the \emph{symbol map}. Regard $\Theta$ as a $1$-cochain in $(\Omega^\bullet_L, d_{D})$ and put $\omega := - d_{D} \Theta$. Then $\omega$ is a $2$-cocycle. From $\sigma \mathbb 1 = 0$, we get $i_{\mathbb 1} \Theta = 0$, hence $\Theta = - i_ \mathbb{1} \omega$. Conversely, let $\omega$ be a $2$-cocycle in $(\Omega^\bullet_L, d_{D})$. Put $\Theta := - i_ \mathbb{1} \omega$, so that $\Theta \in \ker i_ \mathbb{1}$. Since the kernel of $\sigma$ is actually generated by $ \mathbb{1}$, it follows that $\Theta$ descends to an $L$-valued $1$-form $\theta : TM \to L$, i.e.~$\Theta = \theta \circ \sigma$. 

For the second part of the statement, let $\Delta \in \Der  L$ and $x \in M$. We have $\omega ( \mathbb{1}_x, \Delta_x) = \omega ( \mathbb{1}, \Delta)_x = - \Theta (\Delta)_x = - \theta_x (\sigma (\Delta_x))$, and the claim follows from surjectivity of $\sigma$.
\end{proof}

\begin{remark}\label{rem:theta=0}
Let $x \in M$. The proof of Proposition \ref{prop:prec} shows that $\theta_x = 0$ if and only if $\omega (\mathbb 1_x , -) = 0$.
\end{remark}

Clearly, every $2$-cochain $\omega \in \Omega^2_L$ defines a morphism $\omega_\flat : D  L \to (D  L)^\ast \otimes L = J^1 L$. Put $K_\omega := \ker \omega_\flat$. It is a generically non-smooth distribution in $D  L$.

\begin{remark}\label{rem:brack_omega}
By definition, the $2$-cochain $\omega$ is non-degenerate if $\omega_\flat$ is an isomorphism, i.e.~$K_\omega = 0$. In this case the inverse isomorphism $\omega^\sharp : J^1 L \to D  L$ defines a skew-symmetric bracket $\{-,-\}_\omega : \Gamma (L) \times \Gamma (L) \to \Gamma (L)$ via,
\[
\{ \lambda, \mu \}_\omega := \langle \omega^\sharp (j^1 \lambda), j^1 \mu \rangle ,
\]
where $\langle-, - \rangle : D  L \otimes J^1 L \to L$ is the duality pairing twisted by $L$. By definition, $\{-,-\}_\omega$ is a first order differential operator in each entry. Moreover, it is easy to see that $\{-,-\}_\omega$ is a Lie bracket, i.e.~it satisfies the Jacobi identity, if and only if $d_{D} \omega = 0$. In this case $(L, \{-,-\}_\omega)$ is a Jacobi bundle.
\end{remark}

\begin{proposition}\label{prop:null_prec}
Let $\theta : TM \to L$ be a nowhere zero precontact form with values in the line bundle $L \to M$, let $C := \ker \theta$ be the corresponding precontact distribution, and let $\omega \in \Omega^2_L$ be the associated $2$-cocycle. The symbol map $\sigma : D  L \to TM$ establishes a linear bijection between $K_\omega$ and $K_C$. In particular, $\theta$ is a contact form, i.e.~$C := \ker \theta$ is a contact distribution, if and only if $\omega$ is non-degenerate. In this case the bracket $\{-,-\}_\omega$ of Remark \ref{rem:brack_omega} is the Jacobi bracket defined by $C$.
\end{proposition}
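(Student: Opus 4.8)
The plan is to begin by recording, once and for all, an explicit formula for $\omega$ in terms of $\theta$. Recall from the proof of Proposition~\ref{prop:prec} that $\omega = -d_{D}\Theta$ with $\Theta = \theta\circ\sigma$. Feeding this into the formula for the der-differential in the tautological representation and using that the symbol map is a morphism of Lie algebroids with $\sigma(\mathbb{1}) = 0$, I obtain, for $\Delta_0,\Delta_1\in\Der L$ with symbols $X_i = \sigma(\Delta_i)$,
\begin{equation*}
\omega(\Delta_0,\Delta_1) = -\Delta_0\big(\theta(X_1)\big) + \Delta_1\big(\theta(X_0)\big) + \theta\big([X_0,X_1]\big), \qquad \omega(\mathbb{1},\Delta_1) = -\theta(X_1). \tag{$\star$}
\end{equation*}
Two consequences of $(\star)$ will be used repeatedly. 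First, since $\theta$ is nowhere zero and $\sigma$ is fiberwise onto, $\omega(\mathbb{1}_x,-)\ne 0$, so $\mathbb{1}_x\notin K_{\omega,x}$; as $\ker\sigma$ is spanned by $\mathbb{1}$, this already makes $\sigma|_{K_\omega}$ fiberwise injective. Second, if $\Delta,\Delta'\in D_xL$ have $\sigma(\Delta),\sigma(\Delta')\in C_x$, then — extending them to derivations whose symbols are sections of the subbundle $C=\ker\theta$ (smooth, being of constant corank one because $\theta$ is nowhere zero) — the first two terms of $(\star)$ vanish identically, and hence $\omega_x(\Delta,\Delta') = \theta_x\big([\sigma\Delta,\sigma\Delta']\big) = \omega_{C,x}(\sigma\Delta,\sigma\Delta')$.

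Next I would prove that $\sigma$ maps $K_\omega$ bijectively onto $K_C$, working in a fixed fiber $D_xL$. For the inclusion $\sigma(K_{\omega,x})\subseteq K_{C,x}$: if $\Delta\in K_{\omega,x}$, pairing with $\mathbb{1}_x$ and using $(\star)$ gives $\theta_x(\sigma\Delta)=0$, i.e. $\sigma\Delta\in C_x$; pairing then with an arbitrary $\Delta'$ with $\sigma\Delta'\in C_x$ and using the second consequence above gives $\omega_{C,x}(\sigma\Delta,\sigma\Delta')=0$, so $\sigma\Delta\in K_{C,x}$. For surjectivity, fix $v\in K_{C,x}$ and a lift $\Delta^0\in D_xL$ of $v$, so $\sigma^{-1}(v) = \Delta^0 + \R\,\mathbb{1}_x$; pick $X_0\in T_xM$ with $\theta_x(X_0)\ne 0$ and a lift $\Delta_0$ of it, so $D_xL = \sigma^{-1}(C_x)\oplus\R\Delta_0$. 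By the second consequence every $\Delta\in\sigma^{-1}(v)$ satisfies $\omega_x(\Delta,-)=\omega_{C,x}(v,\sigma(-))=0$ on $\sigma^{-1}(C_x)$, independently of the choice on the line, while $\omega_x(\Delta^0 + a\,\mathbb{1}_x,\Delta_0) = \omega_x(\Delta^0,\Delta_0) - a\,\theta_x(X_0)$ vanishes for exactly one $a\in\R$. Hence $\sigma^{-1}(v)\cap K_{\omega,x}$ is a single point, which gives surjectivity (and re-proves injectivity). In particular $K_\omega = 0$ iff $K_C = 0$, i.e. $\omega$ is non-degenerate iff $\omega_C$ is, which is precisely the statement that $\theta$ is a contact form iff $\omega$ is non-degenerate.

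For the final assertion, assume $C$ is contact and put $\Delta_\lambda := \omega^\sharp(j^1\lambda)\in\Der L$ for $\lambda\in\Gamma(L)$, so that by construction $\omega(\Delta_\lambda,\Delta) = \langle\Delta,j^1\lambda\rangle = \Delta\lambda$ for every $\Delta\in\Der L$; in particular $\{\lambda,\mu\}_\omega = \langle\Delta_\lambda,j^1\mu\rangle = \Delta_\lambda\mu$, and, pairing with $\mathbb{1}$ and using $(\star)$, $\theta(\sigma\Delta_\lambda) = \omega(\Delta_\lambda,\mathbb{1}) = \mathbb{1}\lambda = \lambda$. I then claim $\sigma\Delta_\lambda$ is the contact Hamiltonian vector field $X_\lambda$ of Remark~\ref{rem:cont_Jac}. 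Indeed, from the definitions in that remark, $X_\lambda$ is the unique vector field with $\theta(X_\lambda)=\lambda$ and $\theta([X_\lambda,Y])=0$ for all $Y\in\Gamma(C)$ (uniqueness uses $K_C = 0$); and $\sigma\Delta_\lambda$ satisfies $\theta(\sigma\Delta_\lambda)=\lambda$ as just seen, while $(\star)$ applied to $\Delta_\lambda$ and a derivation $\Delta_Y$ with symbol $Y\in\Gamma(C)$ gives $\Delta_Y\lambda = \omega(\Delta_\lambda,\Delta_Y) = \Delta_Y\lambda + \theta([\sigma\Delta_\lambda,Y])$, forcing $\theta([\sigma\Delta_\lambda,Y])=0$. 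So $\sigma\Delta_\lambda = X_\lambda$, and likewise $\sigma\Delta_\mu = X_\mu$ with $\theta(X_\mu)=\mu$. Applying $(\star)$ to $\Delta_\lambda,\Delta_\mu$ and comparing with $\omega(\Delta_\lambda,\Delta_\mu)=\Delta_\mu\lambda$ then yields $\{\lambda,\mu\}_\omega = \Delta_\lambda\mu = \theta([X_\lambda,X_\mu])$, which is exactly the Jacobi bracket defined by $C$.

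The routine part is the fiberwise linear algebra of the second paragraph. The part requiring care is everything depending on sign and identification conventions: the sign in $(\star)$ (which rests on $\omega = -d_{D}\Theta$ together with the de Rham sign rule), and the use of the identification $(DL)^\ast\otimes L = J^1L$ with its $L$-twisted pairing, so that ``$\omega^\flat(\Delta_\lambda) = j^1\lambda$'' correctly unwinds to $\omega(\Delta_\lambda,\Delta) = \Delta\lambda$. I expect the one genuinely conceptual step to be the observation that in the contact case $X_\lambda$ is characterized by the two conditions $\theta(X_\lambda)=\lambda$ and $\theta([X_\lambda,Y])=0$ for all $Y\in\Gamma(C)$, after which the comparison of the two brackets is immediate.
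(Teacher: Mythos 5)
Your proposal is correct, and its core --- the pointwise linear algebra giving the bijection $\sigma : K_\omega \to K_C$ --- is essentially the paper's own argument: your formula $(\star)$ is just the expanded form of $\omega = -d_{D}\Theta$ that the paper also computes with, the observation that $\mathbb{1}_x \notin K_\omega$ yields injectivity in both proofs, and your surjectivity step (every element of $\sigma^{-1}(v)$ annihilates $\sigma^{-1}(C_x)$ under $\omega_x$, and the residual condition against a transversal $\Delta_0$ is killed by a unique shift along $\mathbb{1}_x$) is the paper's ``$\ker \Theta_x \subset \ker \omega(\Delta_x,-)$, hence $\omega(\Delta_x,-) = r\,\Theta_x$'' made explicit. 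The one place you genuinely diverge is the final assertion. The paper starts from the Kirillov-side derivation $\Delta_\lambda = \{\lambda,-\}$, proves $\Ll_{\Delta_\lambda}\Theta = 0$ (using that $X_\lambda$ preserves $C$), and then reads off $\omega^\flat(\Delta_\lambda) = d_{D} i_{\Delta_\lambda}\Theta = j^1\lambda$ from Cartan's formula; you instead define $\Delta_\lambda := \omega^\sharp(j^1\lambda)$ and identify its symbol with $X_\lambda$ by checking the two conditions $\theta(\sigma\Delta_\lambda)=\lambda$ and $\theta([\sigma\Delta_\lambda,Y])=0$ for $Y \in \Gamma(C)$, which characterize $X_\lambda$ uniquely precisely because $K_C=0$ (a hypothesis you correctly flag). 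The two routes rest on the same identity; yours trades the Cartan-calculus step for the uniqueness characterization of $X_\lambda$, which is a fair exchange. The only detail worth spelling out is that in your ``second consequence'' the extensions of $\Delta,\Delta'$ to derivations with symbols in $\Gamma(C)$ and prescribed values at $x$ do exist (extend the symbol inside the corank-one subbundle $C$, lift along the surjection $\sigma$, and correct by a multiple of $\mathbb{1}$); this is routine and does not affect correctness.
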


\begin{proof}
We use the same notations as in the proof of Proposition \ref{prop:prec}. Work point-wise taking a generic $x \in M$. First of all we remark that, from Proposition \ref{prop:prec}, $ \mathbb{1}_x \notin K_\omega$. Since $ \mathbb{1}$ generates $\ker \sigma$, it follows that the restriction $\sigma : K_\omega \to TM$ is injective. Now, let $\Delta \in \Der  L$ be such that $\Delta_x \in K_\omega$. Compute $\theta (\sigma (\Delta_x)) = \Theta (\Delta_x) = - \omega ( \mathbb{1}_x, \Delta_x) = 0$. This shows that $\sigma (K_\omega) \subset C$. Since $\ker \Theta$ is a regular distribution in $D L$, we can choose $\Delta \in \ker \Theta$. We have to show that, for all such $\Delta$, $\sigma (\Delta_x) \in K_C$, i.e.~that, for all $X \in \Gamma (C)$, $\omega_C (\sigma (\Delta_x), X_x) = 0$, where $\omega_C : \wedge^2 C \to L$ is the curvature form of $C$. Thus, let $\square \in \Der L$ be such that $\sigma (\square) = X$. In particular, $\square \in \Gamma (\ker \Theta)$. Compute
\[
\begin{aligned}
\omega_C (\sigma (\Delta_x) , \sigma (\square_x)) & = \theta ([\sigma (\Delta), \sigma (\square)])_x \\
                                                                                 &= \Theta ([\Delta, \square])_x \\
                                                                                 & = \omega (\Delta_x, \square_x) - \Delta (\Theta (\square))_x + \square (\Theta (\Delta))_x \\  
                                                                                & = 0.
\end{aligned}
\]
Conversely, let $Y \in \Gamma (C)$ be such that $Y_x \in K_C$, and let $\Delta \in \Der  L$ be such that $Y = \sigma (\Delta)$. Then the same computation as above shows that $\ker \Theta_x \subset \ker \omega (\Delta_x, -)$. Since $\Theta_x \neq 0$, it follows that $\omega (\Delta_x, -) = r \Theta_x = \omega (r  \mathbb{1}_x, -)$ for some real number $r$. Hence $\Delta_x - r  \mathbb{1}_x \in K_\omega $. But $\sigma (\Delta_x - r \mathbb{1}_x ) = \sigma (\Delta_x) = X_x$. This concludes the proof of the first part of the proposition.

It remains to show that, when $\omega$ is non degenerate, $\{ -, - \}_\omega$ agrees with the Jacobi bracket $\{-,-\}$ determined by $C$. To see this, first notice that derivations of the form $\{\lambda, -\}$ preserve $\Theta$. Namely, let $\lambda \in \Gamma (L)$ and let $\Delta_\lambda = \{ \lambda , -\}$. The Lie derivative $\Ll_{\Delta_\lambda} \Theta$ of $\Theta$ along $\Delta_\lambda$ vanishes identically. Indeed, for all $\Delta \in \Der  L$, put $\mu = \Theta ( \Delta) = \theta (\sigma (\Delta)) \in \Gamma (L)$, and compute
\[
\begin{aligned}
(\Ll_{\Delta_\lambda} \Theta) (\Delta) & = \Delta_\lambda (\Theta (\Delta)) - \Theta ([\Delta_\lambda, \Delta])\\
                                                            & = \{ \lambda , \mu \} - \theta ([\sigma (\Delta_\lambda), \sigma (\Delta)]) \\
                                                            & = \theta ([\sigma (\Delta_\lambda), \sigma (\Delta - \Delta_{\mu})] \\
                                                           & = 0,
\end{aligned}
\] 
where the last equality follows from the fact that $\sigma (\Delta - \Delta_\mu) \in \Gamma (C)$, and $\sigma (\Delta_\lambda) = X_\lambda$ preserves $C$ (see Remark \ref{rem:cont_Jac}). Now
\[
\omega_\flat (\Delta_\lambda) = -i_{\Delta_\lambda} d_{D}\Theta = d_{D} i_{\Delta_\lambda} \Theta = d_{D}\lambda = j^1 \lambda,
\] 
i.e.~$\omega^\sharp (j^1 \lambda) = \Delta_\lambda$, which concludes the proof.
\end{proof}

\begin{remark}\label{rem:K_omega}
Propositions \ref{prop:prec} and \ref{prop:null_prec} show that the information contained in a precontact distribution $C$ can be actually encoded in a $2$-cocycle $\omega$ in $(\Omega^\bullet_L, d_{D})$. Moreover, from $\omega$, we immediately recover the notion of null subdistribution of a precontact distribution and Jacobi bracket of a contact distribution. Finally, Proposition \ref{prop:null_prec} suggests how to define the null distribution of a (non-necessarily nowhere zero) precontact $1$-form $\theta : TM \to L$: it is $\sigma (K_\omega)$, where $\omega \in \Omega^2_L$ is the $2$-cocycle corresponding to $\theta$. Notice that $\sigma : K_\omega \to \sigma (K_\omega)$ is not injective precisely at points where $\theta$ vanishes. Specifically, from Remark \ref{rem:theta=0}, we see that $\mathbb 1_x \in \ker \sigma \cap K_\omega$ whenever $\theta_x = 0$. 
\end{remark}

Now we turn to locally conformal presymplectic (lcps) geometry. Recall that a lcps manifold is a manifold $M$ equipped with a lcps structure, i.e.~a pair $(\omega, b)$ where $b$ is a closed $1$-form and $\omega$ is a $2$-form such that $d \omega + b \wedge \omega = 0$. A lcps structure $(\omega, b)$ is locally conformal symplectic (lcs) if $\omega$ is non degenerate. There is a more conceptual approach to lcps geometry (see, e.g., \cite[Appendix A]{V2014})). Namely, a closed $1$-form $b$ can be understood as a flat connection in the trivial line bundle $\R_M := \R \times M$, and $d  + b \wedge (-)$ is the associated flat connection differential. This suggests to revise the definition of a lcps structure as follows. A lcps structure on $M$ is a triple $(L, \nabla, \underline\omega)$ where $L \to M$ is a line bundle, $\nabla$ is a flat connection in $L$ and $\underline\omega : \wedge^2 TM \to L$ is an $L$-valued $2$-form such that $d_\nabla \underline\omega = 0$, where $d_\nabla$ is the flat connection differential associated to $\nabla$ (notation $\underline \omega$ instead of $\omega$ will be clear in few lines). A lcps structure $(L, \nabla, \underline\omega)$ is lcs if $\underline\omega$ is non-degenerate, i.e.~the induced vector bundle morphism $\underline\omega{}_\flat : TM \to T^\ast M \otimes L$ is an isomorphism. For instance, even-dimensional characteristic leaves of a Jacobi bundle are equipped with lcs structures in this more general (line bundle theoretic) sense.

\begin{remark}\label{rem:lcs_Jac}
A lcs structure $(L, \nabla, \underline \omega)$ defines a Jacobi bracket $\{-,-\}$ on $\Gamma (L)$ as follows. Let $\lambda \in \Gamma (L)$, and let $d_\nabla \lambda : TM \to L$ be its flat connection differential. Put $X_\lambda := (\underline \omega{}_\flat)^{-1}(d_\nabla \lambda)$. Then $\{\lambda, \mu\} = \underline \omega (X_\lambda, X_\mu)$. The Jacobi identity for $\{-,-\}$ is equivalent to $d_\nabla \omega = 0$.
\end{remark}

Now we show that lcps geometry can be put in the same framework as that proposed for precontact geometry. Specifically, a flat connection in $L$ is an injective Lie algebroid morphism $\nabla : TM \to D L$. Hence the image $\Ii_\nabla := \operatorname{im} \nabla$ is a subalgebroid in $D L$ and the symbol $\sigma : \Ii_\nabla \to TM$ is an isomorphism of Lie algebroids. Conversely, it follows by dimension counting that a transitive Lie subalgebroid of $D L$ is either $D L$ itself or is of the form $\Ii_\nabla$ for a unique flat connection $\nabla$. Hence there is a one-to-one correspondence between flat connections in $L$ and transitive, proper Lie subalgebroids in $D L$. Differential $d_\nabla$ is the $L$-valued Lie algebroid differential of $\Ii_\nabla$ up to the identification $\Ii_\nabla \simeq TM$. This shows that \emph{precontact/lcps geometry is the geometry of an $L$-valued, closed $2$-form on a transitive Lie subalgebroid of the Atiyah algebroid of a line bundle $L$}.

\begin{remark}\label{rem:lcps}
Let $\underline \omega$ be an $L$-valued $2$-form on the manifold $M$. It can be also regarded as a $2$-form on $D L$ itself as follows. Let $ \omega = \sigma^\ast \underline \omega \in \Omega^2_L$ be given by $(\sigma^\ast \underline \omega) (\Delta, \square) := \underline \omega (\sigma (\Delta), \sigma (\square))$, for all $\Delta, \square \in \Der L$. In particular, $\omega \in \ker i_{\mathbb 1}$. Conversely, let $\omega \in \ker i_{\mathbb 1} \cap \Omega^2_L$. Then it descends to an $L$-valued $2$-form on $M$, i.e.~$\omega = \sigma^\ast \underline \omega$ for some $\underline \omega : \wedge^2 TM \to L$. In other words, an $L$-valued $2$-form $\underline \omega$ on $M$ is equivalent to a $2$-cochain $\omega$ in $(\Omega^\bullet_L, d_{D})$ such that $i_{\mathbb 1} \omega = 0$. Now, let $(L, \nabla, \underline \omega)$ be a lcps structure and $\omega = \sigma^\ast \underline \omega$. If the rank of $\underline \omega$ is at least four, then $\omega$ determines $\nabla$. However, in general, it doesn't (consider the case $\underline \omega = 0$), and $\nabla$ is an extra piece of information. Finally, a direct computation shows that
\begin{equation}\label{eq:d_nabla}
(d_{D} \omega ) (\nabla_X, \nabla_Y, \nabla_Z) = (d_\nabla \underline \omega) (X, Y, Z)
\end{equation}
for all $X,Y,Z \in \mathfrak X (M)$. Hence  condition $d_\nabla \underline \omega = 0$ is equivalent to condition $(d_{D} \omega )|_{\Ii_\nabla} = 0$.
\end{remark}

\section{Dirac-Jacobi bundles}\label{sec:DJ_line}
Wade defined a Jacobi version of a Dirac structure, and originally called it a \emph{Dirac structure on $\Ee^1 (M)$} \cite{W2000}. Later the terms \emph{$\Ee^1 (M)$-Dirac structure} \cite{IM2002} and \emph{Dirac-Jacobi structure} \cite{IW2006} were also used to indicate the same object. Wade's definition is based on the following remark. Let $M$ be a smooth manifold and let $\R_M := M \times \R$ be the trivial line bundle over $M$. Sections of the bundle $\Ee^1 (M) := (TM \oplus \R_M) \oplus (T^\ast M \oplus \R_M)$ are equipped with a canonical bracket extending the \emph{Courant bracket} on sections of $TM \oplus T^\ast M$ \cite[Section 3]{W2000}. An $\Ee^1 (M)$-Dirac structure is then a maximal isotropic subbundle of $\Ee^1 (M)$ whose sections are preserved by the canonical bracket. $\Ee^1 (M)$-Dirac structures encompass cooriented precontact distributions and Jacobi structures in the sense of Lichnerowicz \cite{Lich1978} as special cases. Wade also defined conformal Dirac structures in the same spirit as conformal Jacobi structures \cite[Section 1.4]{DLM}, which are essentially equivalent to Jacobi bundles, but she didn't develop the theory beyond the definition (except for the special case of locally conformal Dirac structures \cite{W2004}). In this section we propose an alternative, and easy to use, definition of a conformal Dirac structure, which we call a \emph{Dirac-Jacobi bundle}. Dirac-Jacobi bundles are slightly more general than $\Ee^1 (M)$-Diract structures, and encompass non-coorientable precontact distributions and Jacobi bundles.

With the new understanding of precontact (and lcps) structures presented in the previous section, it is actually easy to find a common framework for precontact distributions and Jacobi bundles. Namely, let $L \to M$ be a line bundle. Similarly as in the presymplectic/Poisson case (or in Wade's $\mathcal{E}^1 (M)$-Dirac case) consider the vector bundle 
\[
\D  L := D L \oplus J^1 L.
\]
Vector bundle $\D  L$ was first considered by Chen and Liu in \cite{CL2010}. They named it the \emph{omni-Lie algebroid} because it plays a similar role for Lie algebroids as Weinstein's \emph{omni-Lie algebras} do for Lie algebras \cite{W2000}. The omni-Lie algebroid $\D L$ is an instance of \emph{$E$-Courant algebroid} \cite{CLS2010} and of \emph{$AV$-Courant algebroid} \cite{LB2011}. Additionally, it is a \emph{contact Courant algebroid} in the sense of Grabowski \cite{G2013}. Grabowski's contact Courant algebroids are actually line bundle theoretic versions of Courant-Jacobi algebroids \cite{GM2002} (called generalized Courant algebroids in \cite{NC2004}). For the purposes of this paper we will not need the whole technology developed in References \cite{NC2004,GM2002,CLS2010,LB2011,G2013}. It will suffice to notice that $\D L$ possesses the following structures:
\begin{enumerate}
\item two natural projections
\begin{equation}\label{eq:pr}
\pr_{D} : \D L \longrightarrow D L, \quad \text{and} \quad \pr_{J^1} : \D L \longrightarrow J^1 L ,
\end{equation}
\item a non-degenerate, symmetric, $L$-valued $2$-form
\[
\bla -, - \bra : \Gamma (\D L) \times \Gamma (\D L) \to \Gamma (L),
\]
given by:
\[
\bla (\Delta , \varphi), (\square, \psi) \bra := \langle \Delta ,\psi \rangle + \langle \square , \varphi \rangle,
\]
and
\item a (non-skew symmetric, Dorfman-like) bracket
\[
\blq -,- \brq : \Gamma (\D L) \times \Gamma (\D L) \to \Gamma (\D L),
\]
given by:
\begin{equation}\label{eq:Dorf_brack}
\blq (\Delta, \varphi), (\square, \psi) \brq := \left([\Delta, \square], \Ll_\Delta \psi - i_\square d_{D} \varphi \right)
\end{equation}
\end{enumerate}
for all $\Delta, \square \in \Der L$, and all $\varphi, \psi \in \Gamma (J^1 L)$.

An easy computation exploiting Cartan calculus on the Lie algebroid $D L$ shows that the bracket $\blq -, - \brq$ satisfies
\begin{align}
\blq \alpha , \beta \brq + \blq \beta , \alpha \brq & = d_{D} \bla \alpha , \beta \bra , \label{eq:cour1}\\ 
\blq \alpha , \blq \beta, \gamma \brq \brq & = \blq \blq \alpha, \beta \brq , \gamma \brq + \blq \beta , \blq \alpha , \gamma \brq \brq ,  \label{eq:cour2}\\
\blq \alpha , f \beta \brq  & = f \blq \alpha , \beta \brq + (\sigma \circ \pr_{D})(\alpha)(f) \beta , \label{eq:cour3}
\end{align}

and, moreover,
\begin{equation}
\bla \blq \alpha, \beta \brq , \gamma \bra + \bla \blq \alpha, \gamma \brq , \beta \bra = \pr_{D} (\alpha) \left( \bla \beta, \gamma \bra \right). \label{eq:cour4}
\end{equation}

for all $\alpha, \beta, \gamma \in \Gamma (\D L)$ and $f \in C^\infty (M)$. 

\begin{remark}
When $L = \R_M$ is the trivial line bundle, then $\D L = \Ee^1 (M)$, and (the skew-symmetrization of) bracket (\ref{eq:Dorf_brack}) is to be compared with the bracket on $\Gamma (\Ee^1 (M))$ used by Wade \cite[Section 3]{W2000}.
\end{remark}

\begin{definition}\label{def:dirac}
A \emph{Dirac-Jacobi bundle} over a manifold $M$ is a line bundle $L \to M$ equipped with a \emph{Dirac-Jacobi structure}, i.e.~a vector subbundle $\mathfrak L \subset \D L$ such that
\begin{enumerate}
\item $ \mathfrak L $ is \emph{maximal isotropic} with respect to $\bla -, - \bra$, and
\item $\Gamma (\mathfrak L )$ is \emph{involutive} with respect to $\blq -, - \brq$, i.e.~$\blq \Gamma (\mathfrak L ), \Gamma (\mathfrak L ) \brq \subset \Gamma (\mathfrak L )$.
\end{enumerate}
\end{definition}

\begin{remark}\label{rem:omni-lie}
Maximal isotropic, involutive subbundles $\mathfrak L$ of the omni-Lie algebroid $\D L$ were first considered in \cite{CL2010}. Among other things, the authors show that, if $\mathfrak L \cap J^1 L = 0$, then $\mathfrak L$ is the \emph{graph of a Jacobi structure} and vice-versa. See Example \ref{ex:Jacobi} below for more details. Dirac structures in omni-Lie algebroids were also considered in \cite{CLS2011} from a somewhat different perspective.
\end{remark}

\begin{remark}\label{rem:char}
The bilinear map $\bla -, - \bra$ has split signature in any local basis of $\Gamma (L)$. Hence a subbundle $ \mathfrak L  \subset \D L$ is maximal isotropic if and only if it is both isotropic and coisotropic, if and only if it is either isotropic or coisotropic and, additionally, $\rk \mathfrak L = (\rk \D L) / 2 = \dim M +1$. Now, for a vector bundle $V \to M$ and a distribution $W$ in $V$, denote by $W^0$ the annihilator of $W$ in $V^\ast \otimes L = \operatorname{Hom} (V, L)$, i.e.~$W^0$ is the distribution consisting of $\phi \in V^\ast \otimes L$ such that $\langle \phi, w \rangle = 0$ for all $w \in W$. The following point-wise equalities hold
\begin{equation}\label{eq:char}
\pr_{D} (\mathfrak L )^0 = \mathfrak L  \cap J^1 L \quad \text{and} \quad \pr_{J^1} (\mathfrak L ) = (\mathfrak L  \cap D L)^0,
\end{equation}
for every maximal isotropic subbundle $ \mathfrak L $ of $\D L$.

Consider the short exact sequence
\begin{equation}\label{eq:Sp_dual}
0 \longrightarrow \langle \mathbb 1 \rangle \longrightarrow D L  \overset{\sigma}{\longrightarrow} TM \longrightarrow 0, 
\end{equation}
where $\mathbb 1 : \Gamma (L) \to \Gamma (L)$ is the identity operator and generates the subbundle $\langle \mathbb 1 \rangle = \operatorname{End} L$ of linear endomorphisms of $L$. Taking duals and tensoring by $L$ in (\ref{eq:Sp_dual}), we get the so called  
\emph{Spencer sequence}
\begin{equation}\label{eq:Spenc}
0 \longleftarrow L \overset{\pr_L}{\longleftarrow} J^1 L \longleftarrow T^\ast M \otimes L \longleftarrow 0,
\end{equation}
where $\pr_L$ is the canonical projection given by $j^1 \lambda \mapsto \lambda$, and the embedding $T^\ast M \otimes L \INTO J^1 L$ adjoint to the symbol $\sigma $ is given by $df \otimes \lambda \mapsto j^1 (f \lambda) -f j^1 \lambda$, where $\lambda \in \Gamma (L)$, and $f \in C^\infty (M)$. In what follows we will always regard $T^\ast M \otimes L$ as a subbundle of $J^1 L$ understanding the embedding $T^\ast M \otimes L \INTO J^1 L$ (beware that a different convention for the Spencer sequence is often used where the embedding $T^\ast M \otimes L \INTO J^1 L $ maps $df \otimes \lambda$ to $f j^1 \lambda - j^1 f \lambda$. However, this choice is not adjoint to (\ref{eq:Sp_dual})).

In addition to (\ref{eq:char}), one can easily check two more pairs of point-wise equalities:
\begin{equation}\label{eq:char1}
\begin{aligned}
((\sigma \circ \pr_{D}) \mathfrak L)^0 & = \mathfrak L  \cap (T^\ast M \otimes L),  \\ 
 (\pr_L \circ \pr_{J^1}) \mathfrak L  & = (\mathfrak L  \cap \langle \mathbb 1 \rangle)^0,
\end{aligned}
\end{equation}
and
\begin{equation}\label{eq:char2}
\begin{aligned}
(\pr_{D} \mathfrak L \cap \langle \mathbb 1 \rangle)^0 & = \pr_L( \mathfrak L \cap J^1 L), \\
  \pr_{J^1} (\mathfrak L ) \cap (T^\ast M \otimes L) & = (\sigma (\mathfrak L  \cap D L))^0.
\end{aligned}
\end{equation}
for every maximal isotropic subbundle $ \mathfrak L $ of $\D L$.
\end{remark}

\begin{remark}
Let $ \mathfrak L $ be a maximal isotropic subbundle of $\D L$. It is involutive (with respect to bracket $\blq -, - \brq$) iff
\[
\bla \blq \alpha, \beta \brq , \gamma \bra = 0,
\]
for all $\alpha , \beta , \gamma \in \Gamma ( D)$. Now, from (\ref{eq:cour1}), (\ref{eq:cour3}), and (\ref{eq:cour4}), the expression 
\[
\Upsilon_{\mathfrak L} (\alpha , \beta , \gamma) := \bla \blq \alpha, \beta \brq , \gamma \bra
\]
is skew-symmetric and $C^\infty (M)$-linear in its arguments $\alpha, \beta, \gamma$. Hence it defines a section $\Upsilon_{\mathfrak L}$ of $\wedge^3 D^\ast \otimes L$. We call $\Upsilon_{\mathfrak L}$ the \emph{Courant-Jacobi tensor} of $ \mathfrak L $. The involutivity condition on a maximal isotropic subbundle $ \mathfrak L $ is thus $\Upsilon_{\mathfrak L} = 0$.
\end{remark}

\begin{remark}
When $L = \R_M$, Definiton \ref{def:dirac} gives back Wade's definition of a Dirac structure in $\Ee^1 (M)$ \cite[Definition 3.2]{W2000}. However, there are interesting examples of Dirac-Jacobi structures on non-trivial line bundles. For instance, every Jacobi bundle is a Dirac-Jacobi bundle of a specific kind (see Example~\ref{ex:Jacobi} below). Finally, it should be mentioned that Dirac structures in $\Ee^1 (M)$ are special instances of Vaisman's Dirac structures in the \emph{stable big tangent bundle of index $h$} (see \cite{V2007} for more details).
\end{remark}

\begin{example}\label{ex:2form}
Let $\omega \in \Omega^2_L = \Gamma (\wedge^2 (D L)^\ast \otimes L)$ be a $2$-cochain in the der-complex (see Section \ref{sec:line}). Since $\omega$ is skew-symmetric, the graph
\[
D_\omega := \operatorname{graph} \omega_\flat = \{ (\Delta, \omega_\flat (\Delta)): \Delta \in D L \}
\]
of $\omega_\flat : D L \to J^1 L$ is a maximal isotropic subbundle of $\D L$. Moreover $ \mathfrak L _\omega$ is involutive if and only if $d_{D} \omega = 0$. Indeed, an easy computation shows that
\[
\Upsilon_{\mathfrak L_\omega} (\alpha_1, \alpha_2, \alpha_3) = (d_{D}\omega) (\Delta_1 , \Delta_2, \Delta_3),
\]
for all $\alpha_i = (\Delta_i, \omega_\flat (\Delta_i)) \in \Gamma (\mathfrak L _\omega)$, $i = 1,2,3$. So $ \mathfrak L _\omega$ is a Dirac-Jacobi structure on $L$ if and only if $\omega$ is a $2$-cocycle in $(\Omega_L^\bullet, d_{D})$. Conversely, a Dirac-Jacobi structure $ \mathfrak L  \subset \D L$ is of the form $ \mathfrak L _\omega$ for some $2$-cocycle $\omega$  in $(\Omega_L^\bullet, d_{D})$ if and only if $ \mathfrak L  \cap J^1 L = \mathbf{0}$. If, additionally, $ (\mathbb 1, 0) \notin \mathfrak L _x$, then $\omega_x \notin \ker i_{\mathbb 1}$, and vice-versa, $x \in M$. Hence, from Proposition \ref{prop:prec}, $L$-valued precontact forms $\theta : TM \to L$ identify with Dirac-Jacobi structures $ \mathfrak L $ on $L$ such that 
\begin{equation}\label{eq:Dirac_prec}
\mathfrak L \cap J^1 L = \mathbf{0},
\end{equation}
and nowhere zero precontact forms correspond to Dirac-Jacobi structures such that (\ref{eq:Dirac_prec}) and, additionally, $ (\mathbb 1, 0) \notin \mathfrak L _x$ for any $x \in M$. In particular, Dirac-Jacobi bundles encompass (non-necessarily coorientable) precontact manifolds.
\end{example}

\begin{example}\label{ex:Jacobi}
Let $\{-,-\} : \Gamma (L) \times \Gamma (L) \to \Gamma (L)$ be a skew-symmetric, first order bidifferential operator. Interpret $\{-,-\}$ as a section $J$ of $\wedge^2 (J^1 L)^\ast \otimes L$, by putting
\[
J(j^1 \lambda, j^1 \mu) = \{ \lambda, \mu\}
\]
for all $\lambda, \mu \in \Gamma (L)$. Clearly, $J$ defines a morphism $J^\sharp : J^1 L \to (J^1 L)^\ast \otimes L = D L$. Since $J$ is skew-symmetric, the graph
\[
\mathfrak L_J := \operatorname{graph} J^\sharp = \{ (J^\sharp(\psi),\psi): \psi \in J^1 L \}
\]
of $J^\sharp$ is a maximal isotropic subbundle of $\D L$. Moreover $ \mathfrak L _J$ is involutive if and only if $J$ is a Lie bracket. Indeed, an easy computation shows that
\[
\Upsilon_{\mathfrak L_J} (\alpha_1, \alpha_2, \alpha_3) = 2 \{\lambda_1 , \{ \lambda_2,  \lambda_3\}\} + \text{cyclic permutations},
\]
for all $\alpha_i  \in \Gamma (\mathfrak L _J)$ of the form $\alpha_i = (J^\sharp (j^1 \lambda_i), j^1 \lambda_i)$, $\lambda_i \in \Gamma (L)$, $i = 1,2,3$. So $ \mathfrak L _J$ is a Dirac-Jacobi structure on $L$ if and only if $(L, \{-,-\})$ is a Jacobi bundle. Conversely, a Dirac-Jacobi structure $ \mathfrak L  \subset \D L$ is of the form $ \mathfrak L _J$ for some Jacobi bundle $(L, \{-,-\})$ if and only if $ \mathfrak L  \cap D L = \mathbf{0}$. Hence Jacobi bundles $(L, \{-,-\})$ identify with Dirac-Jacobi bundles $(L, \mathfrak L)$ such that 
\begin{equation}\label{eq:DJ_jac}
\mathfrak L \cap D L = \mathbf{0},
\end{equation}
as first noticed in \cite{CL2010} (see Theorem 3.16 therein).
\end{example}

\begin{example}\label{ex:inv}
Let $V \subset D L$ be a vector subbundle and let $V^0 \subset J^1 L$ be its annihilator, i.e.~$V^0 := \{\psi \in J^1 L : \langle \Delta, \psi \rangle = 0 \text{ for all } \Delta \in V\}$. Clearly, $V \oplus V^0 \subset \D L$ is an isotropic subbundle. Moreover, it is maximal isotropic by dimension counting. Finally it is involutive if and only if $V$ is involutive, i.e.~$\Gamma (V)$ is preserved by the commutator of derivations. Indeed, an easy computation shows that
\[
\Upsilon_{V \oplus V^0} (\alpha_1, \alpha_2, \alpha_3) = \langle [\Delta_1 , \Delta_2] , \psi_3 \rangle + \langle [\Delta_2 , \Delta_3] , \psi_1  \rangle + \langle [\Delta_3 , \Delta_1] , \psi_2  \rangle ,
\]
for all $\alpha_i = (\Delta_i, \psi_i) \in \Gamma (V \oplus V^0)$, $i = 1,2,3$. So, involutive vector subbundles of $D L$ can be regarded as Dirac-Jacobi structures on $L$. Notice that the image of a flat linear connection $\nabla : TM \to D L$ is an involutive vector subbundle of $D L$. Accordingly, a flat connection in $L$ can be regarded as a Dirac-Jacobi structure on $L$ which we denote by $ \mathfrak L _\nabla$. 
On the other hand, the identity operator $\mathbb{1} : \Gamma (L) \to \Gamma (L)$ also spans an involutive vector subbundle of $D L$. We denote by $ \mathfrak L _\mathbb{1}$ the corresponding Dirac-Jacobi structure. Notice that $\D L = \mathfrak L _\nabla \oplus \mathfrak L_\mathbb{1}$ for every flat linear connection $\nabla$ in $L$.
\end{example}

\begin{example}\label{ex:lcps}
A lcps structure $(L, \nabla, \underline \omega)$ (see discussion preceding Remark \ref{rem:lcps}) defines a Dirac-Jacobi bundle as follows. Put $\omega := \sigma^\ast \underline \omega$ (see Remark \ref{rem:lcps}) and consider the Dirac-Jacobi structure $ \mathfrak L _\nabla$ defined by $\nabla$ (Example \ref{ex:inv}). \emph{Deform} $ \mathfrak L _\nabla$ to a new vector subbundle
\[
\mathfrak L_{\nabla, \underline \omega} := \{ (\Delta, \psi + \omega_\flat (\Delta)) : (\Delta, \psi) \in \mathfrak L _\nabla \}.
\]
of $\D L$. Since $\omega$ is skew-symmetric, and $\langle \Delta , \psi \rangle = 0$ for all $(\Delta , \psi) \in \mathfrak L _\nabla $, then $ \mathfrak L _{\nabla, \underline \omega}$ is a maximal isotropic subbundle. Moreover, it is involutive if and only if $d_\nabla \underline \omega = 0$. Indeed, an easy computation shows that
\[
\Upsilon_{\mathfrak L_{\nabla, \underline\omega}} (\alpha_1, \alpha_2, \alpha_3) = (d_{D} \omega)(\nabla_{X_1}, \nabla_{X_2}, \nabla_{X_3}) = (d_\nabla \underline \omega )(X_1, X_2, X_3), 
\] 
for all $\alpha_i \in \Gamma (\mathfrak L _{\nabla, \underline \omega})$, where $X_i := \sigma (\mathrm{pr}_{D} \alpha_i)$, $i = 1,2,3$, and, in the last equality, we used (\ref{eq:d_nabla}). Dirac-Jacobi bundle $(L, \mathfrak L_{\nabla, \underline \omega})$ contains a full information on the lcps structure $(L, \nabla, \underline \omega)$. Hence  lcps manifolds identify with certain Dirac-Jacobi bundles.
\end{example}

\begin{example}\label{ex:hom_Poisson}
Recall that a \emph{homogeneous Poisson structure} is a pair $(\pi, Z)$ where $\pi$ is a Poisson bivector, and $Z$ is a vector field such that $\Ll_Z \pi = -\pi$. A homogeneous Poisson structure on $M$ can be regarded as a Dirac-Jacobi structure on the trivial line bundle $\R_M$ as follows. Let $\pi$ be a bivector field and let $Z$ be a vector field on $M$. Put
\begin{align}\label{eq:hom_Poisson}
 \mathfrak L_{(\pi,Z)} :=  \{ (h - hZ + \pi^\sharp (\eta), \eta \otimes 1 + \eta (Z) j^1 1) \in \D \R_M : h \in \R, \eta \in T^\ast M \},
\end{align}
where $\pi^\sharp : T^\ast M \to TM$ is the vector bundle morphism induced by $\pi$. It is easy to see that $\mathfrak L_{(\pi,Z)} \subset \D \R_M$ is a maximally isotropic subbundle, and it is a Dirac-Jacobi structure if and only if $(\pi, Z)$ is a homogeneous Poisson structure \cite[Section 4]{W2000}. In this case, $(\pi, Z)$ is completely determined by $\mathfrak L_{(\pi,Z)}$. Even more, Dirac-Jacobi structures of the form $\mathfrak L_{(\pi, Z)}$ can be characterized according to the following
\begin{proposition}\label{prop:hom_Poisson}
A Dirac Jacobi structure $\mathfrak L \subset \D \R_M$ is of the form $\mathfrak L_{(\pi, Z)}$ for some homogeneous Poisson structure $(\pi, Z)$ if and only if $\tau : \mathfrak L \cap D \R_M \to \R_M$ is an isomorphism (here $\tau : D \R_M \to \R_M$ is the projection $\Delta \mapsto \Delta (1)$).
\end{proposition}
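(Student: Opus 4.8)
The plan is to pass to the canonical trivialization of $\D\R_M$ induced by the trivial flat connection $\nabla$ on $\R_M$: writing $\nabla_X + f\mathbb 1\leftrightarrow(X,f)$ one has $D\R_M\cong TM\oplus\R_M$, writing $j^1\lambda\leftrightarrow(d\lambda,\lambda)$ one has $J^1\R_M\cong T^\ast M\oplus\R_M$, and hence $\D\R_M\cong\Ee^1(M)=(TM\oplus\R_M)\oplus(T^\ast M\oplus\R_M)$. In these coordinates the $L$-twisted pairing reads $\langle(X,f),(\xi,g)\rangle=\langle X,\xi\rangle+fg$, the projection $\tau$ becomes $(X,f)\mapsto f$, and unwinding (\ref{eq:hom_Poisson}) gives
\[
\mathfrak L_{(\pi,Z)}=\bigl\{\,(\pi^\sharp\eta-hZ,\ h\,;\ \eta,\ \eta(Z)):h\in\R,\ \eta\in T^\ast M\,\bigr\}.
\]
The ``only if'' direction then drops out: forcing the jet part of an element of $\mathfrak L_{(\pi,Z)}$ to vanish forces $\eta=0$, so $\mathfrak L_{(\pi,Z)}\cap D\R_M$ is the line spanned by $\Delta_0:=\mathbb 1-\nabla_Z$, and $\tau(\Delta_0)=1$; thus $\tau$ restricts to an isomorphism onto $\R_M$.

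For the converse, suppose $\tau\colon\mathfrak L\cap D\R_M\to\R_M$ is an isomorphism. I would first take the unique section $\Delta_0\in\Gamma(\mathfrak L\cap D\R_M)$ with $\tau(\Delta_0)=1$ and set $Z:=-\sigma(\Delta_0)$, so that $\Delta_0=\mathbb 1-\nabla_Z$. Since $\ker\tau=\operatorname{im}\nabla$ inside $D\R_M$, the injectivity of $\tau$ on $\mathfrak L\cap D\R_M$ gives $\mathfrak L\cap\operatorname{im}\nabla=0$; that $\mathfrak L$ contains no nonzero pure-tangent derivation is the single structural fact extracted from the hypothesis, and it is the crux of the argument. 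Next, isotropy of $\mathfrak L$ applied to $\Delta_0$ yields $\bla\alpha,\Delta_0\bra=0$ for every $\alpha=(X,f;\xi,g)\in\mathfrak L$, which unwinds to $g=\xi(Z)$: the $\pr_L$-part of the jet component of any element of $\mathfrak L$ is pinned down by $Z$.

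Finally, to produce $\pi$, I would split $\mathfrak L=\langle\Delta_0\rangle\oplus\mathfrak L_1$ with $\mathfrak L_1:=\ker(\tau\circ\pr_{D}\colon\mathfrak L\to\R_M)$, which is a smooth rank-$\dim M$ subbundle since $\tau\circ\pr_{D}$ is a surjective vector bundle map (it hits $\Delta_0$). The projection $\mathfrak L_1\to T^\ast M$, $(X,0;\xi,\xi(Z))\mapsto\xi$, has kernel contained in $\mathfrak L\cap\operatorname{im}\nabla=0$, so it is a bundle isomorphism; writing its inverse as $\eta\mapsto(\rho\eta,0;\eta,\eta(Z))$ defines a bundle morphism $\rho\colon T^\ast M\to TM$, and isotropy of $\mathfrak L$ applied to two elements of $\mathfrak L_1$ forces $\langle\rho\eta,\eta'\rangle+\langle\rho\eta',\eta\rangle=0$, i.e.\ $\rho=\pi^\sharp$ for a unique bivector field $\pi$. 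Assembling the splitting with these descriptions identifies $\mathfrak L$ with $\mathfrak L_{(\pi,Z)}$ as subbundles of $\D\R_M$; since $\mathfrak L$ is involutive, the fact recorded just before the statement --- that $\mathfrak L_{(\pi,Z)}$ is a Dirac-Jacobi structure precisely when $(\pi,Z)$ is a homogeneous Poisson structure \cite[Section 4]{W2000} --- then forces $(\pi,Z)$ to be a homogeneous Poisson structure. I do not expect a genuine obstacle: the computations are elementary linear algebra in the trivialization, and the only point requiring a little care is checking that each auxiliary object ($\mathfrak L\cap D\R_M$, $\mathfrak L_1$, and $\rho$) really is a smooth vector subbundle or morphism of the claimed rank, which in every case follows from its being presented as the kernel, or the inverse, of a surjective vector bundle map.
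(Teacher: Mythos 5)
Your proof is correct and follows essentially the same route as the paper's: both directions hinge on setting $\Delta_0=\tau^{-1}(1)$, $Z=-\sigma(\Delta_0)$, observing that isotropy pins down the $\pr_L$-component of the jet part, and extracting $\pi$ from the skew bilinear form that isotropy induces, before invoking the already-recorded equivalence between involutivity of $\mathfrak L_{(\pi,Z)}$ and $(\pi,Z)$ being homogeneous Poisson. The only (cosmetic) difference is that you realize the bivector through the splitting $\mathfrak L=\langle\Delta_0\rangle\oplus\mathfrak L_1$ and the isomorphism $\mathfrak L_1\simeq T^\ast M$, whereas the paper works with the projected bundle $\mathcal T=\pr_{J^1}(\mathfrak L)\simeq T^\ast M$ and the induced form $\Pi$, then checks the inclusion $\mathfrak L_{(\pi,Z)}\subset\mathfrak L$.
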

\begin{proof}
Let $(\pi, Z)$ be a homogeneous Poisson structure. It follows from (\ref{eq:hom_Poisson}) that $\mathfrak L_{(\pi, Z)} \cap D \R_M = \{ h - hZ : h \in \R \}$, hence $\operatorname{pr} : \mathfrak L_{(\pi,Z)} \cap D \R_M \to \R_M$, $h - hZ \mapsto h$ is an isomorphism.

Conversely, let $\mathfrak L \subset \D \R_M$ be a Dirac-Jacobi structure such that $\tau : \mathfrak L \cap D \R_M \to \R_M$ is an isomorphism. Put 
\[
\Delta := \tau^{-1} (1) \in \Gamma (D \R_M) \quad \text{and}  \quad Z := -\sigma (\Delta) = \Delta (1) - \Delta \in \mathfrak X (M).
\]
Next we define a bivector field $\pi$. To do this, notice, first of all, that $\operatorname{pr}_{J^1} (\mathfrak L) = (\mathfrak L \cap D \R_M)^0$ is a vector subbundle in $J^1 \R_M$. Even more, the canonical projection $\zeta : J^1 \R_M \to T^\ast M$, $j^1 h \mapsto dh$, restricts to an isomorphism \[
\zeta : \operatorname{pr}_{J^1} (\mathfrak L) \to T^\ast M.
\]
In the following we denote $\mathcal T := \operatorname{pr}_{J^1} (\mathfrak L)$. The vector bundle $\mathcal T$ is equipped with a skew-symmetric bilinear map $\Pi : \wedge^2 \mathcal T \to \R_M$ given by
\[
\Pi (\phi, \psi) := \langle \Delta, \psi \rangle,
\]
for all $(\Delta, \phi), (\square, \psi) \in \mathfrak L$. From the fact that $\mathfrak L$ is isotropic and the first identity in (\ref{eq:char}), $\Pi$ is well defined. It is easy to see that $\Pi$ (and $\mathcal T$) completely determine $\mathfrak L$. Indeed
\begin{equation}\label{eq:L_Pi}
\mathfrak L = \{ (\Delta, \psi) : \psi \in \mathcal T \text{ and } \Pi (\psi, \phi) = \langle \Delta , \phi \rangle \text{ for all } \phi \in \mathcal T \}.
\end{equation}
Now, bilinear map $\Pi$, identifies, via isomorphism $\zeta :  \mathcal T \to T^\ast M$, with a bivector field $\pi \in \Gamma (\wedge^2 TM)$. Define $\mathfrak L_{(\pi, Z)} \subset \D \R_M$ via (\ref{eq:hom_Poisson}). Then, as already noticed, $\mathfrak L_{(\pi, Z)}$ is a maximally isotropic subbundle of $\D \R_M$. Using (\ref{eq:L_Pi}) it is easy to see that $\mathfrak L_{(\pi, Z)} \subset \mathfrak L$, hence $\mathfrak L_{(\pi, Z)} = \mathfrak L$ (in particular, $(\pi, Z)$ is a homogeneous Poisson structure).
\end{proof}
We conclude that homogeneous Poisson structures identify with Dirac-Jacobi structures $\mathfrak L \subset \D \R_M$ such that $\tau : D \R_M \cap \mathfrak L \to \R_M$ is an isomorphism, or, in other words, the vector subbundle $\mathfrak L \cap DL$ in $D \R_M$ has rank $1$ and is transversal to the image of the standard inclusion $TM \INTO D \R_M$.
\end{example}

\begin{example}\label{ex:Dirac}
A Dirac structure $ \mathfrak L  \subset TM \oplus T^\ast M$ can be regarded as a Dirac-Jacobi structure $\widehat{\mathfrak L}$ on the line bundle $\R_M$ by putting \cite[Remark 3.1]{W2000}:
\[
\widehat{\mathfrak L} := \{(X, \eta \otimes 1 + h j^1 1) \in \D \R_M : h \in \R \text{ and } (X, \eta) \in \mathfrak L  \}.
\]
Being the image of $\widehat{\mathfrak L}$ under projection $\D \R_M \to TM \oplus T^\ast M$, Dirac structure $ \mathfrak L $ is completely determined by $\widehat{\mathfrak L}$. In other words, correspondence $ \mathfrak L  \mapsto \widehat{\mathfrak L}$ is an inclusion of Dirac structures on $M$ into Dirac-Jacobi structures on $\R_M$.
\end{example}


\begin{example}\label{ex:gauge}
New Dirac-Jacobi structures can be obtained via \emph{gauge transformations} (see also \cite[Example 3.6]{B2013}). Let $\omega \in \Omega^2_L$ and let $ \mathfrak L $ be a Dirac-Jacobi structure on $L$. Define a vector subbundle $\tau_\omega \mathfrak L \subset \D L$ by putting
\[
\tau_\omega \mathfrak L := \{ (\Delta, \psi + \omega_\flat (\Delta)) : (\Delta, \psi) \in \mathfrak L  \}.
\] 
Since $\omega$ is skew symmetric, $\tau_\omega \mathfrak L$ is isotropic. Moreover, it shares the same rank as $ \mathfrak L $, hence it is maximal isotropic. An easy computation shows that
\[
\begin{aligned}
\Upsilon_{\tau_\omega \mathfrak L} (\alpha_1, \alpha_2, \alpha_3 ) & = \Upsilon_{\mathfrak L} (\alpha_1', \alpha_2', \alpha_3') + (d_{D} \omega)(\Delta_1, \Delta_2, \Delta_3) \\
& =  (d_{D} \omega)(\Delta_1, \Delta_2, \Delta_3),
\end{aligned}
\]
for all $\alpha_i = \alpha'_i + (0, \omega_\flat (\Delta_i)) \in \Gamma (\tau_\omega \mathfrak L)$, where $\alpha'_i \in \Gamma (\mathfrak L )$, and $\Delta_i = \pr_{D} (\alpha_i) = \pr_{D}(\alpha'_i)$, $i = 1,2,3$. Hence $\tau_\omega \mathfrak L$ is a Dirac-Jacobi structure whenever 
\[
d_{D} \omega |_{\pr_{D} (\mathfrak L )} = 0
\]
(as in the case of $ \mathfrak L _{\nabla, \underline \omega}$ in Example \ref{ex:lcps}). In particular, if $\omega$ is $d_{D}$-closed, then $\tau_\omega \mathfrak L$ is a Dirac-Jacobi structure for all $ \mathfrak L $.
\end{example}

\section{Characteristic foliation of a Dirac-Jacobi bundle}\label{sec:char_fol}
Dirac structures are equivalent to presymplectic foliations, i.e.~foliations\linebreak equipped with a presymplectic structure on each leaf. Similarly, Dirac-Jacobi bundles are equivalent to foliations equipped with either a precontact form or an lcps structure on each leaf. This was proved in \cite{IM2002} for Dirac-Jacobi structures on the trivial line bundle $\R_M$. In this section we discuss the general case. 

Let $(L, \mathfrak L)$ be a Dirac-Jacobi bundle over a manifold $M$. Vector bundle $ \mathfrak L  \to M$ is naturally a Lie algebroid acting on the line bundle $L$. Indeed, denote by $[-,-]_{\mathfrak L} : \Gamma (\mathfrak L ) \times \Gamma (\mathfrak L ) \to \Gamma (\mathfrak L )$ the restriction of the bracket (\ref{eq:Dorf_brack}) to sections of $ \mathfrak L $. Moreover, denote by $\rho_{\mathfrak L} : \mathfrak L \to TM$ the composition of $\pr_{D}$ (see (\ref{eq:pr})) restricted to $ \mathfrak L $, followed by the symbol map $\sigma : D L \to TM$. From (\ref{eq:cour1}) and (\ref{eq:cour3}), $[-,-]_{\mathfrak L}$ and $\rho_{\mathfrak L}$ are the Lie bracket and the anchor of a Lie algebroid. Additionally, denote by $\nabla^{\mathfrak L} : \mathfrak L \to D L$ the restriction of $\mathrm{pr}_{D}$ to $ \mathfrak L $ so that $\rho_{\mathfrak L} = \sigma \circ \nabla^{\mathfrak L}$. Since $\pr_{D}$ intertwines the bracket (\ref{eq:Dorf_brack}) and the commutator of derivations, $\nabla^{\mathfrak L}$ is a Lie algebroid representation.

As usual, the image of the anchor $\rho_{\mathfrak L}$ is an integrable, non-necessarily regular, smooth distribution in $TM$, integrating to a foliation which we denote by $\Ff_{\mathfrak L} $ and call the \emph{characteristic foliation of the Dirac-Jacobi bundle $(L, \mathfrak L)$}. Leaves of $\Ff_{\mathfrak L} $ are called \emph{characteristic leaves of $(L, \mathfrak L)$}. Similarly, the image of the flat connection $\nabla^{\mathfrak L}$ is an involutive, non-necessarily regular, smooth distribution in the Lie algebroid $D L$. We denote it by $\Ii_{\mathfrak L}$. The symbol $\sigma : D L \to TM$ maps $\Ii_{\mathfrak L}$ surjectively onto $T \Ff_{\mathfrak L} $. 

Distribution $\Ii_{\mathfrak L}$ is equipped with an $L$-valued, skew-symmetric bilinear map $\omega : \wedge^2 \Ii_{\mathfrak L} \to L$ given by
\[
\omega (\Delta, \square) := \langle \square, \varphi \rangle,
\]
for all $(\Delta, \varphi), (\square, \psi) \in \mathfrak L $. From the fact that $\mathfrak L$ is isotropic and the second identity in (\ref{eq:char}), $\omega$ is well defined. 

Our next aim is to show that $\Ii_{\mathfrak L}$ and $\omega$ induce, on every characteristic leaf, either a precontact form, or a lcps structure. First we prove a 

\begin{lemma}\label{lem:rank}
The rank of $\Ii_{\mathfrak L}$ is constant along characteristic leaves.
\end{lemma}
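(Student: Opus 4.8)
The statement asserts that $\rk \Ii_{\mathfrak L}$ is constant along characteristic leaves. Since $\Ii_{\mathfrak L} = \operatorname{im} \nabla^{\mathfrak L}$ and $\nabla^{\mathfrak L} = \pr_D|_{\mathfrak L}$, and since $\mathfrak L$ is a \emph{vector bundle} of constant rank $\dim M + 1$, the rank of $\Ii_{\mathfrak L}$ at a point $x$ equals $(\dim M + 1) - \rk_x(\mathfrak L \cap J^1 L)$, because the fiberwise kernel of $\pr_D$ restricted to $\mathfrak L$ is exactly $\mathfrak L \cap J^1 L$. So the claim is equivalent to: the rank of $\mathfrak L \cap J^1 L$ is constant along characteristic leaves. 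The plan is to exploit that $\mathfrak L$ carries a Lie algebroid structure (bracket $[-,-]_{\mathfrak L}$, anchor $\rho_{\mathfrak L}$) and that $\mathfrak L \cap J^1 L$ is, in a suitable sense, an invariant subobject; a foliated/flow argument then forces its rank to be locally constant along the orbits of $\rho_{\mathfrak L}$, i.e.\ along characteristic leaves.

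\textbf{Key steps.} First I would observe that $\mathfrak L \cap J^1 L = \ker(\nabla^{\mathfrak L}) = \pr_D(\mathfrak L)^0 \cap \dots$, and more usefully that by the first identity in \eqref{eq:char}, $\mathfrak L \cap J^1 L = (\pr_D \mathfrak L)^0 = (\operatorname{im}\nabla^{\mathfrak L})^0 = \Ii_{\mathfrak L}^0$. Thus $\mathfrak L \cap J^1L$ is the kernel of the isotropic pairing restricted appropriately, and equivalently it is the subbundle-like distribution $\ker \nabla^{\mathfrak L}$ inside the Lie algebroid $\mathfrak L$. The second step is to show $\Gamma(\mathfrak L \cap J^1 L)$ — more precisely the sections of $\mathfrak L$ that lie in $J^1 L$ — is closed under the bracket $[-,-]_{\mathfrak L}$, hence is (the sections of) an ideal, using \eqref{eq:Dorf_brack}: if $\Delta = \square = 0$ then $\blq(0,\varphi),(0,\psi)\brq = (0, -i_0 d_D\varphi) = (0,0)$, so $\Gamma(\mathfrak L)\cap\Gamma(J^1L)$ is an abelian subalgebra on which the anchor vanishes; moreover \eqref{eq:cour3} shows this subspace is $C^\infty(M)$-stable, so it really is the sections of a (possibly singular) distribution, namely $\mathfrak L\cap J^1 L$, and it is an ideal because $\blq(\Delta,\varphi),(0,\psi)\brq = (0, \Ll_\Delta\psi)$ and one checks $\Ll_\Delta\psi \in \Gamma(\mathfrak L)$ when $(0,\psi)\in\Gamma(\mathfrak L)$ and $(\Delta,\varphi)\in\Gamma(\mathfrak L)$ by isotropy/involutivity of $\mathfrak L$. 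Third, I would invoke the geometric picture of Remark \ref{rem:inf_aut}: a section $(\Delta,\varphi)$ of $\mathfrak L$ integrates to a flow on $\D L$ preserving the bracket $\blq-,-\brq$ and the pairing $\bla-,-\bra$, hence preserving the maximal isotropic involutive subbundle $\mathfrak L$ and preserving $J^1 L$ (since $\pr_D$ and the Lie algebroid structure on $DL$ are functorial under the induced automorphisms); therefore this flow preserves $\mathfrak L\cap J^1 L$, so it carries fibers of $\mathfrak L\cap J^1L$ isomorphically onto fibers. Since the characteristic leaf through $x$ is (by the Stefan–Sussmann picture) swept out by such flows of sections of $\mathfrak L$ whose symbols span $T\Ff_{\mathfrak L}$, the rank of $\mathfrak L\cap J^1 L$ is constant along the leaf, and hence so is $\rk\Ii_{\mathfrak L} = \dim M + 1 - \rk(\mathfrak L\cap J^1 L)$.

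\textbf{Main obstacle.} The delicate point is the flow argument: I need that the one-parameter family of vector bundle automorphisms of $L$ generated by a derivation $\Delta = \sigma(\pr_D\alpha)$ for $\alpha\in\Gamma(\mathfrak L)$ lifts to automorphisms of $\D L = DL\oplus J^1L$ that (i) preserve the Dorfman bracket $\blq-,-\brq$ and the pairing $\bla-,-\bra$ — which follows because these are built functorially from the Atiyah algebroid structure and the tautological representation, both natural under $\Phi_t$ — and (ii) preserve $\mathfrak L$ itself, which is the genuinely nontrivial input and uses precisely that $\mathfrak L$ is involutive (an infinitesimally invariant, hence flow-invariant, subbundle, by the standard argument that $\Ll_\alpha\Gamma(\mathfrak L)\subset\Gamma(\mathfrak L)$). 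Once both hold, preservation of $J^1L$ is automatic (the $J^1L$ summand is canonical), so preservation of $\mathfrak L\cap J^1L$ follows, and the rank statement is immediate. An alternative, more computational route avoiding flows: show directly that $\rk(\mathfrak L\cap J^1L)$ is lower semicontinuous (as an intersection image cannot always be used, but here one can use that $\pr_{J^1}(\mathfrak L) = (\mathfrak L\cap DL)^0$ has locally constant rank issues too) and that it is also upper semicontinuous along leaves via the ideal property; I expect the flow argument to be cleaner and is the one I would write up.
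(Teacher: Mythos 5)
Your reduction is fine: since $\mathfrak L$ has constant rank $\dim M+1$ and $\mathfrak L\cap J^1L=\ker(\pr_{D}|_{\mathfrak L})=(\pr_{D}\mathfrak L)^0$, constancy of $\rk \Ii_{\mathfrak L}$ along a leaf is equivalent to constancy of $\rk(\mathfrak L\cap J^1L)$. The gap is in the flow step, exactly at the point you flag as delicate. The canonical lift to $\D L=DL\oplus J^1L$ of the flow of the derivation $\Delta=\pr_{D}\alpha$ (the one coming from Remark \ref{rem:inf_aut}, whose generator acts on sections by $(\square,\psi)\mapsto([\Delta,\square],\Ll_\Delta\psi)$) does \emph{not} preserve $\mathfrak L$ in general: for $\alpha=(\Delta,\varphi)\in\Gamma(\mathfrak L)$ one has $([\Delta,\square],\Ll_\Delta\psi)=\blq\alpha,(\square,\psi)\brq+(0,i_\square d_{D}\varphi)$, and the correction term $(0,i_\square d_{D}\varphi)$ need not lie in $\Gamma(\mathfrak L)$. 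This is the same phenomenon as in Dirac geometry, where the flow of $X$ for $(X,\xi)$ a section of a Dirac structure moves the Dirac structure by the gauge transformation attached to $d\xi$. Likewise, ``preserves $\bla-,-\bra$ and $\blq-,-\brq$'' cannot by itself imply invariance of $\mathfrak L$, since an automorphism of these structures merely permutes the maximal isotropic involutive subbundles. A second, smaller issue: $\mathfrak L\cap J^1L$ has upper semicontinuous rank and is in general not a smooth distribution, so its flow-invariance cannot be deduced from the fact that the sections of $\mathfrak L$ lying in $\Gamma(J^1L)$ form an abelian ideal --- those sections need not span its fibers.

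Both problems are repairable within your strategy. By (\ref{eq:cour3}), $\blq\alpha,-\brq$ is itself a derivation of the vector bundle $\D L$ with symbol $\rho_{\mathfrak L}(\alpha)$, so by Remark \ref{rem:inf_aut} it generates a flow of automorphisms of $\D L$; this flow preserves the genuine vector bundle $\mathfrak L$ (involutivity plus the usual linear ODE argument in a local frame of $\mathfrak L$) and the summand $J^1L$ (since $\blq\alpha,(0,\psi)\brq=(0,\Ll_\Delta\psi)$), hence it maps $(\mathfrak L\cap J^1L)_x$ isomorphically onto the corresponding fiber along the flow, with no smoothness hypothesis on the intersection. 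That yields the lemma. The paper's own proof sidesteps the issue entirely: it works inside $DL$ rather than $\D L$, observes that $\Ii_{\mathfrak L}$ is spanned by the derivations $\nabla^{\mathfrak L}_\alpha$, which are closed under the commutator, and concludes that the lifted flow $d_D\Phi$ of $\nabla^{\mathfrak L}_\alpha$ on $D(L|_\Oo)$ preserves the involutive distribution $\Ii_{\mathfrak L}$ --- the gauge ambiguity lives entirely in the $J^1L$ summand and never enters.
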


\begin{proof}
Let $\Oo$ be a characteristic leaf. The bundle map $\rho_{\mathfrak L} : \mathfrak L|_\Oo \to T \Oo$ is surjective. Hence  it splits and, for every vector field $X \in \mathfrak{X} (\Oo)$, there exists $\alpha \in \Gamma (\mathfrak L |_\Oo)$, such that $\rho_{\mathfrak L} (\alpha) = X$. It follows that $\nabla^{\mathfrak L}_\alpha$ is a derivation of $D (L|_\Oo)$. So $\nabla^{\mathfrak L}_\alpha$ generates a flow $\Phi = \{ \Phi_t \}$ of infinitesimal automorphisms of the line bundle $L|_\Oo \to \Oo$ (see Remark \ref{rem:inf_aut}) which lifts to a flow $d_D \Phi := \{d_D \Phi_t \}$ of infinitesimal automorphisms of the vector bundle $D (L|_\Oo) \to \Oo$. Since $\Ii_{\mathfrak L}$ is an involutive distribution, it is preserved under $d_D \Phi$. In particular, $\rk \Ii_{\mathfrak L}$ is constant along the integral curves of $X$. The assertion follows from arbitrariness of $X$ and connectedness of $\Oo$.
\end{proof}

\begin{corollary}\label{cor:char_fol}
For every characteristic leaf $\Oo$, the rank of $\Ii_{\mathfrak L}|_\Oo$ is either $\dim \Oo + 1$, or $\dim \Oo$. In the first case, $\Ii_{\mathfrak L} |_\Oo = D (L|_\Oo)$, while in the second case, $\Ii_{\mathfrak L} |_\Oo$ is the image of a flat connection in $L|_\Oo$.
\end{corollary}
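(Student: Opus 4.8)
The plan is to realise $\Ii_{\mathfrak L}|_\Oo$ as a transitive Lie subalgebroid of the Atiyah algebroid $D(L|_\Oo)$, and then to apply the classification of such subalgebroids recalled in Section \ref{sec:cont_lcs_geom} (stated there for $D L$, but valid verbatim for the line bundle $L|_\Oo$): a transitive Lie subalgebroid of $D(L|_\Oo)$ is either $D(L|_\Oo)$ itself, or of the form $\Ii_\nabla$ for a unique flat connection $\nabla$ in $L|_\Oo$.

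Fix a characteristic leaf $\Oo$. Since $\Oo$ is a leaf of $\Ff_{\mathfrak L}$, the anchor $\rho_{\mathfrak L} = \sigma \circ \nabla^{\mathfrak L}$ is tangent to $\Oo$ along $\Oo$, so $\mathfrak L|_\Oo$ is a Lie algebroid over $\Oo$ and the restriction of the representation $\nabla^{\mathfrak L}$ takes values in $D(L|_\Oo)$ (viewed, as in Section \ref{sec:line}, as the subbundle of $D L$ over $\Oo$ consisting of derivations whose symbol is tangent to $\Oo$). This yields a representation $\nabla^{\mathfrak L}|_\Oo : \mathfrak L|_\Oo \to D(L|_\Oo)$, i.e.\ a morphism of Lie algebroids over $\id_\Oo$, whose image is exactly $\Ii_{\mathfrak L}|_\Oo$. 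By Lemma \ref{lem:rank} this image has locally constant rank, hence is a vector subbundle of $D(L|_\Oo)$; being the constant-rank image of a Lie algebroid morphism, it is a Lie subalgebroid (locally, a frame of $\Ii_{\mathfrak L}|_\Oo$ lifts to sections of $\mathfrak L|_\Oo$, and $\nabla^{\mathfrak L}|_\Oo$ intertwines the brackets, so $\Gamma(\Ii_{\mathfrak L}|_\Oo)$ is closed under the commutator of derivations).

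Now $\sigma$ maps $\Ii_{\mathfrak L}$ onto $T \Ff_{\mathfrak L}$, so it restricts to a surjection $\sigma : \Ii_{\mathfrak L}|_\Oo \to T\Oo$; thus $\Ii_{\mathfrak L}|_\Oo$ is transitive. Its kernel under $\sigma$ is $\Ii_{\mathfrak L}|_\Oo \cap \langle \mathbb 1 \rangle$, which by the exact sequence (\ref{eq:Sp_dual}) has rank at most $1$, so $\rk \Ii_{\mathfrak L}|_\Oo$ equals $\dim \Oo$ or $\dim \Oo + 1$. If it equals $\dim \Oo + 1 = \rk D(L|_\Oo)$, then the full-rank subbundle $\Ii_{\mathfrak L}|_\Oo$ coincides with $D(L|_\Oo)$. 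If it equals $\dim \Oo$, then $\sigma$ restricts to a vector bundle isomorphism of $\Ii_{\mathfrak L}|_\Oo$ onto $T\Oo$; its inverse is a linear connection $\nabla$ in $L|_\Oo$ with $\Ii_{\mathfrak L}|_\Oo = \Ii_\nabla$, and $\nabla$ is flat because $\Ii_{\mathfrak L}|_\Oo$ is involutive — equivalently, by the classification above, a proper transitive Lie subalgebroid of $D(L|_\Oo)$ is necessarily $\Ii_\nabla$.

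The one genuinely delicate point is the second paragraph: giving meaning to ``$\Ii_{\mathfrak L}|_\Oo$'', and to its involutivity, when $\Oo$ is merely an immersed (leaf) submanifold. Funnelling everything through the Lie algebroid $\mathfrak L$ — for which $\Oo$ is a genuine leaf, so that $\mathfrak L|_\Oo$ is a bona fide Lie algebroid and $\nabla^{\mathfrak L}|_\Oo$ a bona fide representation — sidesteps having to restrict sections of $\Ii_{\mathfrak L}$ directly, after which the local frame-lifting used above is routine.
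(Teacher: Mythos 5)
Your proof is correct and follows essentially the same route as the paper, which deduces the corollary in one line from Lemma \ref{lem:rank} (constant rank along the leaf) together with surjectivity of $\rho_{\mathfrak L} = \sigma \circ \nabla^{\mathfrak L} : \mathfrak L|_\Oo \to T\Oo$, relying implicitly on the classification of transitive Lie subalgebroids of the Atiyah algebroid recalled in Section \ref{sec:cont_lcs_geom}. You simply make explicit the details the paper leaves to the reader (that $\Ii_{\mathfrak L}|_\Oo$ is a transitive Lie subalgebroid of $D(L|_\Oo)$, and the dichotomy via the rank of $\Ii_{\mathfrak L}|_\Oo \cap \langle \mathbb 1\rangle$); note only that the statement's "first case"/"second case" are swapped relative to the order in which the two ranks are listed, and your argument matches the intended (correct) pairing.
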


\begin{proof}
It immediately follows from Lemma \ref{lem:rank} and surjectivity of $\sigma \circ \nabla^{\mathfrak L} = \rho_{\mathfrak L} : \mathfrak L|_\Oo \to T \Oo$.
\end{proof}

According to the above corollary, and similarly as for Jacobi bundles, characteristic leaves can be of two different kinds.

\begin{definition}\label{def:char_fol}
A characteristic leaf $\Oo$ is said
\begin{itemize}
\item  \emph{precontact} if $\rk \Ii_{\mathfrak L}|_\Oo = \dim \Oo + 1$,
\item  \emph{lcps} if $\rk \Ii_{\mathfrak L}|_\Oo = \dim \Oo$.
\end{itemize}
Every point in a precontact (resp.~lcps) leaf is said a \emph{precontact} (resp.~\emph{lcps}) \emph{point}.
\end{definition}

Definition (\ref{def:char_fol}) is motivated by the following

\begin{proposition}\label{prop:char_fol}
The $2$-form $\omega$ induces, on every characteristic leaf $\Oo$, a $2$-cochain $\omega_\Oo$ in $ (\Omega^\bullet_{L|_\Oo} , d_{D})$.
\begin{enumerate}
\item If $\Oo$ is precontact, then $\omega_\Oo = -d_D (\theta_\Oo \circ \sigma)$ is the $2$-cocycle corresponding to a (necessarily unique) $L|_\Oo$-valued precontact form $\theta_\Oo : T \Oo \to L|_\Oo$.
\item If $\Oo$ is lcps, then $\Ii_{\mathfrak L}|_\Oo$ is the image of a flat connection $\nabla^\Oo$ in $L|_\Oo$, and $\omega_\Oo = \sigma^\ast \underline \omega{}_\Oo$, for a (necessarily unique) lcps structure $(L|_\Oo, \nabla^\Oo, \underline \omega{}_\Oo)$.
\end{enumerate}
\end{proposition}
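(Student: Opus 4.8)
The plan is to first make sense of the claim that $\omega$ restricts to a well-defined $2$-cochain $\omega_\Oo$ on $\Ii_{\mathfrak{L}}|_\Oo$, i.e.~an element of $\Gamma(\wedge^2 (\Ii_{\mathfrak L}|_\Oo)^\ast \otimes L|_\Oo)$. By Corollary \ref{cor:char_fol}, $\Ii_{\mathfrak L}|_\Oo$ is a Lie subalgebroid of $D(L|_\Oo)$ which is transitive (its symbol surjects onto $T\Oo$), hence it is either all of $D(L|_\Oo)$ or the image $\Ii_{\nabla^\Oo}$ of a flat connection; in both cases it has its own $L|_\Oo$-valued de Rham (der-)complex with differential which I will still denote $d_D$ (for the lcps case this is $d_{\nabla^\Oo}$ under the identification $\Ii_{\nabla^\Oo}\simeq T\Oo$, as explained right before Remark \ref{rem:lcps}). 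I would first check that the formula $\omega(\Delta,\square) = \langle \square, \varphi\rangle$ (for $(\Delta,\varphi),(\square,\psi)\in\mathfrak L$) already defines, by isotropy of $\mathfrak L$ and the second equality in (\ref{eq:char}), a well-defined $L$-valued skew bilinear form on the distribution $\Ii_{\mathfrak L}$ globally — this is stated in the text just before Lemma \ref{lem:rank}, so it can be quoted. Restricting sections of $\mathfrak L$ to $\Oo$ and projecting gives the restricted form $\omega_\Oo$ on $\Ii_{\mathfrak L}|_\Oo$; here I use that $\mathfrak L|_\Oo$ is a Lie algebroid acting on $L|_\Oo$ with $\nabla^{\mathfrak L}|_\Oo$ landing in $D(L|_\Oo)$, which is the content of the restriction construction from Section \ref{sec:line} ($D(E'|_M)\hookrightarrow DE'$).

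The core of the proof is to show $d_D \omega_\Oo = 0$ on $\Ii_{\mathfrak L}|_\Oo$. My strategy is to compute $(d_D\omega_\Oo)(\alpha,\beta,\gamma)$ for sections $\alpha,\beta,\gamma$ of $\mathfrak L|_\Oo$ (here identifying $\Ii_{\mathfrak L}|_\Oo$-vectors with the $\pr_D$-images of $\mathfrak L$-sections, using that $\rho_{\mathfrak L}$ splits over the leaf as in the proof of Lemma \ref{lem:rank}, so such sections exist and span pointwise) and match it to the Courant--Jacobi tensor $\Upsilon_{\mathfrak L}$. Writing $\alpha = (\Delta_\alpha,\varphi_\alpha)$ etc., the key observation is the identity
\[
\langle \nabla^{\mathfrak L}_\alpha, \varphi_\beta \rangle \;=\; \langle \Delta_\alpha, \varphi_\beta \rangle \;=\; -\,\omega(\Delta_\beta,\Delta_\alpha) \;=\; \tfrac12\bigl(\langle\Delta_\alpha,\varphi_\beta\rangle - \langle\Delta_\beta,\varphi_\alpha\rangle\bigr),
\]
where the last step uses isotropy of $\mathfrak{L}$, i.e.~$\langle\Delta_\alpha,\varphi_\beta\rangle + \langle\Delta_\beta,\varphi_\alpha\rangle = \bla\alpha,\beta\bra = 0$. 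Then expanding the Lie-algebroid-differential formula for $d_D\omega_\Oo$ — three ``derivative'' terms of the form $\Delta_\alpha(\omega(\Delta_\beta,\Delta_\gamma))$ plus three ``bracket'' terms $-\omega([\Delta_\alpha,\Delta_\beta],\Delta_\gamma)$, using that $[\Delta_\alpha,\Delta_\beta] = \nabla^{\mathfrak L}_{[\alpha,\beta]_{\mathfrak L}}$ since $\nabla^{\mathfrak L}$ is a representation (a fact recorded just after the definition of $\nabla^{\mathfrak L}$ in Section \ref{sec:char_fol}) — and substituting the $\omega$-values by the pairings above, I expect everything to collapse, via equation (\ref{eq:cour4}), exactly to $\Upsilon_{\mathfrak L}(\alpha,\beta,\gamma) = \bla\blq\alpha,\beta\brq,\gamma\bra$, which vanishes because $\mathfrak L$ is involutive (Definition \ref{def:dirac}(2), or the Courant--Jacobi tensor characterization). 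In fact this computation is a leaf-wise instance of Examples \ref{ex:2form}, \ref{ex:inv} and \ref{ex:lcps} combined, so I would try to phrase it so as to reuse those $\Upsilon$-formulas rather than redo the bracket expansion from scratch.

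Finally I interpret the result according to the type of the leaf. If $\Oo$ is precontact, $\Ii_{\mathfrak L}|_\Oo = D(L|_\Oo)$ and $\omega_\Oo$ is a $d_D$-closed $2$-cochain in the full der-complex $(\Omega^\bullet_{L|_\Oo},d_D)$; by Proposition \ref{prop:prec} it corresponds to an $L|_\Oo$-valued precontact form $\theta_\Oo : T\Oo \to L|_\Oo$, and tracing through that proof gives $\omega_\Oo = -d_D(\theta_\Oo\circ\sigma)$ as asserted. If $\Oo$ is lcps, Corollary \ref{cor:char_fol} gives $\Ii_{\mathfrak L}|_\Oo = \operatorname{im}\nabla^\Oo$ for a unique flat connection $\nabla^\Oo$ in $L|_\Oo$, and I additionally need to observe that $\omega_\Oo$, being defined only on $\Ii_{\mathfrak L}|_\Oo \simeq T\Oo$ via $\sigma$, comes from an honest $L|_\Oo$-valued $2$-form $\underline\omega{}_\Oo$ on $\Oo$ with $\omega_\Oo = \sigma^\ast\underline\omega{}_\Oo$; closedness $d_D\omega_\Oo = 0$ on $\Ii_{\nabla^\Oo}$ is, by (\ref{eq:d_nabla}) of Remark \ref{rem:lcps}, exactly $d_{\nabla^\Oo}\underline\omega{}_\Oo = 0$, so $(L|_\Oo,\nabla^\Oo,\underline\omega{}_\Oo)$ is a lcps structure. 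The main obstacle I anticipate is bookkeeping: being careful that $\omega$ is genuinely well-defined on the \emph{distribution} $\Ii_{\mathfrak L}$ (not just on $\mathfrak L$) so that its leafwise restriction makes sense pointwise even where ranks jump, and checking that the der-differential computed intrinsically on the subalgebroid $\Ii_{\mathfrak L}|_\Oo$ agrees with what one gets by restricting ambient sections — both are routine given the functoriality of $D(-)$ from Section \ref{sec:line} and the representation property of $\nabla^{\mathfrak L}$, but they are the points where a careless argument would break.
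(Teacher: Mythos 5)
Your proposal is correct and follows essentially the same route as the paper's proof: restrict $\omega$ to the leaf (extending it by the condition $i_{\mathbb 1}\omega_\Oo=0$ in the lcps case), identify $d_{D}\omega_\Oo$ with the Courant--Jacobi tensor $\Upsilon_{\mathfrak L}$ --- which vanishes by involutivity --- and then invoke Proposition \ref{prop:prec} in the precontact case and Remark \ref{rem:lcps} together with (\ref{eq:d_nabla}) in the lcps case; the paper merely compresses your central computation into the phrase ``a direct computation shows''. The only thing to fix is a sign in your displayed identity: with the paper's convention $\omega(\Delta,\square)=\langle\square,\varphi\rangle$ one has $\langle\Delta_\alpha,\varphi_\beta\rangle=\omega(\Delta_\beta,\Delta_\alpha)=-\omega(\Delta_\alpha,\Delta_\beta)$, so the middle term of your chain should be $-\omega(\Delta_\alpha,\Delta_\beta)$ rather than $-\omega(\Delta_\beta,\Delta_\alpha)$.
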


\begin{proof}
Let $\Oo$ be a characteristic leaf. Then, $\Ii_{\mathfrak L}|_\Oo$ is a transitive subalgebroid in $D (L|_\Oo)$.

\begin{enumerate}
\item Let $\Oo$ be precontact. From Corollary \ref{cor:char_fol}, $\Ii_{\mathfrak L}|_\Oo = D (L|_\Oo)$, and $\omega_\Oo := \omega|_\Oo$ is a $2$-cochain in $(\Omega^\bullet_{L|_\Oo}, d_{D})$. A direct computation shows that 
\[
(d_{D} \omega) (\Delta_1, \Delta_2, \Delta_3) = \Upsilon_{\mathfrak L} (\alpha_1, \alpha_2, \alpha_3) = 0,
\]
for all $\alpha_i = (\Delta_i, \psi_i) \in \mathfrak L |_\Oo$, $i = 1,2,3$. So $\omega_\Oo$ is a $2$-cocycle. Hence, from the proof of Proposition \ref{prop:prec}, there is $\theta_\Oo$ as in the statement.

\item Now, let $\Oo$ be lcps. From Corollary \ref{cor:char_fol}, $\Ii_{\mathfrak L}|_\Oo$ is the image of a flat connection $\nabla^\Oo$ in $L|_\Oo$. Use connection $\nabla^\Oo$ to split $D (L|_\Oo)$ as $\operatorname{im} \nabla^\Oo \oplus \langle \mathbb 1 \rangle$. The restriction $\omega |_\Oo$ is only defined on $\Ii_{\mathfrak L}|_\Oo = \operatorname{im} \nabla^\Oo $ but can be uniquely prolonged to a $2$-cochain $\omega_\Oo$ in $(\Omega^\bullet_{L|_\Oo}, d_{D})$ such that $i_{\mathbb 1} \omega_\Oo = 0$. It follows that $\omega_\Oo = \sigma^\ast \underline \omega{}_\Oo$ for a unique $L|_\Oo$-valued $2$-form $\underline \omega{}_\Oo$ on $\Oo$ (cf.~Remark \ref{rem:lcps}). A direct computation shows that
\[
(d_{\nabla^\Oo} \underline \omega{}_\Oo ) (X_1, X_2, X_3) = (d_{D} \omega)(\nabla^\Oo_{X_1}, \nabla^\Oo_{X_2}, \nabla^\Oo_{X_3}) = \Upsilon_{\mathfrak L} (\alpha_1, \alpha_2, \alpha_3) = 0,
\]
for all $\alpha_i = (\nabla^\Oo_{X_i}, \psi_i) \in \mathfrak L |_\Oo$, $X_i \in T \Oo$, $i = 1, 2, 3$. So $(L|_\Oo, \nabla^\Oo, \underline \omega{}_\Oo)$ is a lcps structure on $\Oo$.
\end{enumerate}
\end{proof}

\begin{remark}\label{rem:char_fol}
Let $(L, \mathfrak L)$ be a Dirac-Jacobi bundle on $M$. The Dirac-Jacobi structure $ \mathfrak L $ is completely determined by the distribution $\Ii_{\mathfrak L}$ and the $2$-form $\omega: \wedge^2 \Ii_{\mathfrak L} \to L$. Specifically, 
\begin{equation}\label{eq:fol}
\mathfrak L = \{ (\Delta , \psi) : \Delta \in \Ii_{\mathfrak L} \text{ and } i_\Delta \omega = \psi |_{\Ii_{\mathfrak L}} \}.
\end{equation}
Indeed, it is immediate to check that the right hand side of (\ref{eq:fol}) contains $ \mathfrak L $ while it is contained in the orthogonal complement of $ \mathfrak L $ with respect to $\bla -, - \bra$.

Equivalently, $ \mathfrak L $ is completely determined by its characteristic foliation equipped with the induced precontact/lcps structures on leaves. In more details, let $\Oo$ be a characteristic leaf. By the very definition of characteristic foliation, the restricted bundle $ \mathfrak L |_\Oo$ can be regarded as a subbundle in $\D (L|_\Oo)$. Actually $(L|_\Oo, \mathfrak L|_\Oo)$ is a Dirac-Jacobi bundle. Specifically, $ \mathfrak L |_\Oo$ is the Dirac-Jacobi structure corresponding to $\omega_\Oo$ if $\Oo$ is precontact (see Example~\ref{ex:2form}) and to $(\nabla^\Oo, \underline \omega{}_\Oo)$ if $\Oo$ is lcps (see Example \ref{ex:lcps}).

Conversely, let $L \to M$ be a line bundle, and let $\Ff$ be a foliation\linebreak equipped with either an $L|_\Oo$-valued precontact form $\theta_\Oo$ or a lcps structure $(L|_\Oo, \nabla^\Oo, \underline \omega{}_\Oo)$ on each leaf $\Oo$. The foliation $\Ff$ determines a distribution $\Ii$ in $D L$ and a $2$-form $\omega : \wedge^2 \Ii \to L$ by 
\[
\Ii |_\Oo =  
\begin{cases}
D (L|_\Oo) & \text{if $\Oo$ is a precontact leaf} \\
\operatorname{im} \nabla^\Oo & \text{if $\Oo$ is a lcps leaf}
 \end{cases}
\]
and 
\[
\omega |_\Oo = 
\begin{cases}
- d_{D} (\theta_\Oo \circ \sigma) & \text{if $\Oo$ is a precontact leaf} \\
\sigma^\ast \underline \omega{}_\Oo & \text{if $\Oo$ is a lcps leaf}.
 \end{cases}
\]
In its turn, the pair $(\Ii, \omega)$ determines a distribution $ \mathfrak L $ in $\D L$ by
\[
\mathfrak L := \{ (\Delta , \psi) : \Delta \in \Ii \text{ and } i_\Delta \omega = \psi |_{\Ii} \}.
\]
If $ \mathfrak L $ is a smooth subbundle, we call $\Ff$ a (\emph{smooth}) \emph{precontact}/\emph{lcps foliation}. In this case, $ \mathfrak L $ is a Dirac-Jacobi structure and $\Ff$ is its characteristic foliation. So Dirac-Jacobi bundles are equivalent to precontact/lcps foliations. 
\end{remark}

\section{On the local structure of Dirac-Jacobi bundles}\label{sec:loc_struct}
In this section we study the local structure of Dirac-Jacobi bundles around both precontact, and lcps points. As a corollary we show that the parity of the dimension of precontact (resp.~lcps) leaves is locally constant (Corollary \ref{cor:parity}). 
We also discuss the \emph{transverse structures} to characteristic leaves (Propositions \ref{prop:transv} and \ref{prop:transv_uniq}). 
Our analysis parallels that of Dufour and Wade for Dirac manifolds \cite{DW2008}.

Let $L \to M$ be a line bundle. Moreover, let $(z^i)$ be coordinates on $M$ and let $\mu$ be a local basis of $\Gamma (L)$. Recall that $\Gamma (J^1 L)$ is locally generated by $(dz^i \otimes \mu, j^1 \mu)$. Here we regard $T^\ast M \otimes L$ as a vector subbundle in $J^1 L$ via the embedding $T^\ast M \otimes L \INTO J^1 L$ adjoint to the symbol map $\sigma : D L \to TM$ (see Remark \ref{rem:char}). The adjoint basis of $(dz^i \otimes \mu, j^1 \mu)$ in $\Der L$ is $(\delta_i , \mathbb{1})$, where $\delta_i (f \mu) := \frac{\partial f}{\partial z^i}  \mu$, for all $f \in C^\infty (M)$.

\begin{theorem}\label{theor:local}
Let $(L, \mathfrak L)$ be a Dirac-Jacobi bundle over a smooth manifold $M$, let $x_0 \in M$, and let $\Oo$ be the leaf of the characteristic foliation $\Ff_{\mathfrak L} $ through $x_0$.

If $\Oo$ is lcps, then, locally around $x_0$, there are coordinates $(x^i, y^a)$ on $M$, a generator $\mu$ of $\Gamma (L)$, and there is a basis $( \alpha_i , \beta^a , \beta)$ of $\Gamma (\mathfrak L )$, where $i = 1, \ldots, \dim \Oo$, and $a = 1, \ldots, \operatorname{codim} \Oo$, such that 
\begin{equation}\label{eq:norm_form_lcps}
\begin{aligned}
\alpha_i & = \left( \delta_i + E_i^a \delta_a + E_i \mathbb{1} ,\ F_{ij} dx^j \otimes \mu \right), \\
\beta^a & = \left( G^{ab} \delta_b + G^a \mathbb{1}, \ (dy^a - E_{i}^a dx^i) \otimes \mu \right), \\
\beta & =  \left( - G^{b} \delta_b, \ j^1 \mu - E_{i} dx^i \otimes \mu \right),
\end{aligned}
\end{equation}
with $F_{ij} + F_{ji} = G^{ab} + G^{ba} = 0$.

On the other hand, if $\Oo$ is precontact, then, locally around $x_0$, there are coordinates $(x^i, y^a)$ on $M$, a generator $\mu$ of $\Gamma (L)$, and there is a basis $( \alpha_i , \alpha, \beta^a)$ of $\Gamma (\mathfrak L )$, where $i = 1, \ldots, \dim \Oo$, and $a = 1, \ldots, \operatorname{codim} \Oo$, such that 
\begin{equation}\label{eq:norm_form_precont}
\begin{aligned}
\alpha_i & = \left( \delta_i + E_i^a \delta_a ,\ F_{ij} dx^j \otimes \mu - F_i j^1 \mu \right), \\
\alpha & =  \left( \mathbb{1} + E^a \delta_a , \ F_{i} dx^i \otimes \mu \right),
\\
\beta^a & = \left( G^{ab} \delta_b, \ (dy^a - E_{i}^a dx^i) \otimes \mu -E^a j^1 \mu \right),
\end{aligned}
\end{equation}
with $F_{ij} + F_{ji} = G^{ab} + G^{ba} = 0$.
\end{theorem}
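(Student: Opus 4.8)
The plan is to construct the asserted local frame by hand, in the spirit of Dufour and Wade's local models for Dirac manifolds \cite{DW2008}, with $D L$ playing the role of $TM$ and $J^1 L$ that of $T^\ast M$. Two features are new compared with the Dirac case: the line $\langle \mathbb 1 \rangle \subset D L$ and, dually, the Spencer projection $\pr_L : J^1 L \to L$. Keeping track of where these two extra directions sit inside $\mathfrak L$ is exactly what produces the two distinct shapes, and the dichotomy enters the argument only through Corollary \ref{cor:char_fol}, i.e.\ through whether $\mathbb 1 \in \Ii_{\mathfrak L}|_\Oo$ (precontact case) or not (lcps case).

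First I would pass to a chart around $x_0$ with coordinates $(x^i, y^a)$, $i \le \dim \Oo$ and $a \le \operatorname{codim} \Oo$, with $x_0$ the origin and $\Oo$ locally equal to $\{ y = 0 \}$, and fix a generator $\mu$ of $\Gamma(L)$ near $x_0$ --- in the lcps case, chosen flat along $\Oo$ for the connection $\nabla^\Oo$ of Proposition \ref{prop:char_fol} --- so as to have the local frame $(\delta_i, \delta_a, \mathbb 1)$ of $D L$ and the dual frame $(dx^i \otimes \mu, dy^a \otimes \mu, j^1 \mu)$ of $J^1 L$ of the paragraph preceding the theorem. Since $\Oo$ is a characteristic leaf, the anchor $\rho_{\mathfrak L} : \mathfrak L|_\Oo \to T\Oo$ is fibrewise onto; choosing a splitting, lifting $\partial_{x^i}$ and extending off $\Oo$ yields sections of $\mathfrak L$ whose $\pr_{D}$-component has $\delta_j$-block equal to the identity along $\Oo$, hence invertible near $x_0$ after shrinking the chart. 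A $C^\infty$-linear change among them then produces $\alpha_i \in \Gamma(\mathfrak L)$ with $\pr_{D}\alpha_i = \delta_i + E_i^a\delta_a + E_i\mathbb 1$ and $E_i^a|_\Oo = E_i|_\Oo = 0$.

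Here the two cases separate. If $\Oo$ is precontact, Corollary \ref{cor:char_fol} furnishes a section $\alpha$ of $\mathfrak L$ with $\pr_{D}\alpha|_\Oo = \mathbb 1$; normalizing its $\mathbb 1$-coefficient to $1$ near $x_0$ and then trading $\delta_j$- and $\mathbb 1$-components between $\alpha$ and the $\alpha_i$ --- re-inverting the still-invertible leading block --- brings $\pr_{D}\alpha_i$ to $\delta_i + E_i^a\delta_a$ and $\pr_{D}\alpha$ to $\mathbb 1 + E^a\delta_a$. If $\Oo$ is lcps, the $E_i$ survive (there is no such $\alpha$), and instead --- since $\mathfrak L \cap \langle \mathbb 1 \rangle = 0$ along $\Oo$, so $\pr_L\circ\pr_{J^1}$ is onto $L$ there by (\ref{eq:char1}) --- one produces a section $\beta$ of $\mathfrak L$ with $\pr_L\pr_{J^1}\beta = \mu$ near $x_0$. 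In either case one completes the collection to a frame of $\mathfrak L$ (which has rank $\dim M + 1$) by $\operatorname{codim}\Oo$ further sections $\beta^a$: by (\ref{eq:char}) and (\ref{eq:char2}) the directions of $\pr_{J^1}(\mathfrak L)$ missing along $\Oo$ are spanned by the $dy^a\otimes\mu$, so after a further shrinking one can take $\pr_{J^1}\beta^a = dy^a\otimes\mu + (\text{a combination of the } dx^i\otimes\mu \text{ and } j^1\mu)$, and then subtract multiples of $\alpha$ (resp.\ $\beta$) from the $\beta^a$ to remove the unwanted components of $\pr_{D}\beta^a$.

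Finally I would normalize the $J^1 L$-parts and symmetrize. Subtracting suitable multiples of the $\beta^a$ (and, in the lcps case, of $\beta$) from the $\alpha_i$ puts $\pr_{J^1}\alpha_i$ into the span of the $dx^j\otimes\mu$ (lcps) or of the $dx^j\otimes\mu$ together with $j^1\mu$ (precontact), and likewise normalizes $\pr_{J^1}\beta$ to $j^1\mu$ plus a combination of the $dx^j\otimes\mu$ in the lcps case; this names the coefficients $F_{ij}$ and $F_i$. Imposing $\bla -, - \bra = 0$ on every pair of frame vectors, including the diagonal ones, then pins down the rest: $\bla\alpha_i,\alpha_j\bra = 0$ and $\bla\beta^a,\beta^b\bra = 0$ give $F_{ij}+F_{ji}=0$ and $G^{ab}+G^{ba}=0$; $\bla\beta,\beta\bra = 0$ (lcps) forces the $\mathbb 1$-component of $\pr_{D}\beta$ to vanish; and the mixed pairings $\bla\alpha_i,\alpha\bra$, $\bla\alpha_i,\beta^a\bra$, $\bla\alpha,\beta^a\bra$ (precontact) and $\bla\alpha_i,\beta\bra$, $\bla\beta^a,\beta\bra$ (lcps) force precisely the cross-terms $-F_i j^1\mu$, $-E_i^a dx^i$, $-E^a j^1\mu$, $-E_i dx^i\otimes\mu$ and the form $-G^b\delta_b$ of $\pr_{D}\beta$ displayed in (\ref{eq:norm_form_precont})--(\ref{eq:norm_form_lcps}); the remaining isotropy relations are then automatic. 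The only real difficulty I expect is organizational: the successive changes of frame --- peeling off the $\mathbb 1$- and $j^1\mu$-directions, then the transversal ones, then symmetrizing --- must be done in an order in which each normalization survives the next, and the chart must be shrunk repeatedly to keep the relevant leading blocks invertible; involutivity of $\mathfrak L$ itself is used only indirectly, via Corollary \ref{cor:char_fol}.
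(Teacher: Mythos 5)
Your proposal is correct and follows essentially the same route as the paper's proof: both adapt the Dufour--Wade Gaussian-elimination argument, choosing coordinates with $\Oo = \{y^a = 0\}$ and $\mu$ flat along $\Oo$ in the lcps case, building the frame by successive invertible changes that peel off the $\delta_i$-block, then the $\mathbb 1$/$j^1\mu$ directions, then the transversal $\beta^a$'s, and finally invoking isotropy of $\mathfrak L$ to force the relations $F_{ij}+F_{ji}=G^{ab}+G^{ba}=0$ and the displayed cross-terms. The only (cosmetic) difference is that you justify the initial adapted frame via surjectivity of the anchor and the identities (\ref{eq:char})--(\ref{eq:char2}), where the paper reads it off directly from the description of $\Ii_{\mathfrak L}|_\Oo$ in Corollary \ref{cor:char_fol}.
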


\begin{proof}
The proof is an adaptation of the proof of \cite[Theorem 3.2]{DW2008}. Let $\Oo$ be a lcps leaf. Choose coordinates $(x^i, y^a)$ on $M$ around $x_0$ such that $\Oo = \left\{ y^a = 0 \right\}$, and let $\mu$ be a local generator of $\Gamma (L)$ around $x_0$ such that $\mu |_\Oo$ is constant with respect to the flat connection $\nabla^\Oo$ induced by $ \mathfrak L $ in $L|_\Oo$ (see Proposition~\ref{prop:char_fol}). In particular, $\Ii_{\mathfrak L}|_\Oo$ is locally generated by $\delta_i |_\Oo$. So, there is a local basis of $\Gamma (\mathfrak L )$, around $x_0$, of the form
\[
\begin{aligned}
\overline{\overline{\alpha}}_i & = ( \delta_i + \overline{\overline{\Delta}}_i ,\ \overline{\overline{k}}_i ), \\
\overline{\overline{\beta}}{}^a & = ( \overline{\overline{\square}}{}^a, \ \overline{\overline{h}}{}^a ), \\
\overline{\overline{\beta}} & =  ( \overline{\overline{\square}}, \ \overline{\overline{h}} ),
\end{aligned}
\]
with $\overline{\overline\Delta}_i |_\Oo = \overline{\overline\square}{}^a |_\Oo = \overline{\overline\square} |_\Oo = 0$. In particular,
\[
\delta_i + \overline{\overline{\Delta}}_i = (\delta_i^j + \Delta_i^j)\delta_i + \text{a linear combination of $(\delta_a, \mathbb 1)$},
\]
for some smooth functions $\Delta_i^j$ (here $\delta_i^j$ is the Kronecker symbol). Since $\Delta_i^j (x_0) = 0$, matrix $\| \delta_i^j + \Delta_i^j \|$ is invertible around $x_0$. Let $\|A_j^i \|$ be its inverse, and put $\overline{\alpha}_i := A_i^j \overline{\overline{\alpha}}_j$. Then $\overline{\alpha}_i$ is of the form 
\[
\overline{\alpha}_i = (\delta_i + \text{a linear combination of $(\delta_a, \mathbb{1})$ vanishing on $\Oo$},\ \overline{k}_i ).
\]
On the other hand, $\overline{\overline\square}{}^a, \overline{\overline\square}$ are of the form 
\[
\begin{aligned}
\overline{\overline{\square}}{}^a & = B^{ai} \delta_i + \text{a linear combination of $(\delta_a, \mathbb 1)$} ,\\
\overline{\overline{\square}} & = B^i \delta_i + \text{a linear combination of $(\delta_a, \mathbb 1)$} .
\end{aligned}
\]
Put $\overline{\beta}{}^a = \overline{\overline{\beta}}{}^a - B^{ai} \overline{\alpha}_i $, and $\overline{\beta} = \overline{\overline{\beta}} - B^{i} \overline{\alpha}_i $ . Then $\overline{\beta}{}^a, \overline{\beta}$ are of the form
\[
\begin{aligned}
\overline{\beta}{}^a & = (\text{a linear combination of $(\delta_a, \mathbb{1})$ vanishing on $\Oo$},\ \overline{h}{}^a ), \\
\overline{\beta} & = (\text{a linear combination of $(\delta_a, \mathbb{1})$ vanishing on $\Oo$},\ \overline{h} ).
\end{aligned}
\]
Moreover, since $ \mathfrak L $ is isotropic,
$
\langle \delta_i,  \overline{h}{}^a \rangle|_\Oo = \langle \delta_i,  \overline{h} \rangle|_\Oo = 0
$. System $(\overline{\alpha}_i, \overline{\beta}{}^a, \overline{\beta})$ is a local basis of $\Gamma (\mathfrak L )$ around $x_0$. Its representative matrix in the basis $(\delta_i, \delta_a, \mathbb 1, dx^i \otimes \mu, dy^a \otimes \mu, j^1 \mu)$ of $\Gamma (\D L)$ is of the form:
\[
\begin{tabular}{ccccc}
& $\delta_x$ & $\delta_y, \mathbb 1$ & $dx \otimes \mu$ & $dy \otimes \mu,  j^1 \mu$ \\
${\overline{\alpha}}$ & \multicolumn{1}{||c}{$\mathbb{Id}$} & $\mathbb{A}$ &
{$\ast$} &\multicolumn{1}{c||}{$\ast$}\\
$\overline{\beta}$ & \multicolumn{1}{||c}{$\mathbb{0}$} & $\mathbb{B} $  &
{$\mathbb{C} $} &  \multicolumn{1}{c||}{$\mathbb{D}$}
\end{tabular}\ ,
\] 
where $\mathbb{Id}$ is the identity matrix, and $\mathbb{A}, \mathbb{B}, \mathbb{C}$ vanish at $x_0$. It follows that matrix $\mathbb{D}$ is invertible at $x_0$, hence it is invertible around $x_0$. Put 
\[
\left( \begin{array}{c} 
\beta^a \\
\beta
\end{array}
\right)
= \mathbb D ^{-1} \left( \begin{array}{c} 
\overline{\beta}{}^a \\
\overline{\beta}
\end{array}
\right).
\]
Finally, let $\overline{k}_i$ be locally given by 
\[
\overline{k}_i = (\overline{k}_{ij} dx^j + \overline{k}_{ia} dy^a ) \otimes \mu + \overline{K}_i j^1 \mu,
\]
and put 
\[
\alpha_i = \overline{\alpha}_i - \overline{k}_{ia} \beta^a -\overline{K}_{i} \beta.
\]
System $(\alpha_i, {\beta}{}^a, {\beta})$ is a local basis of $\Gamma (\mathfrak L )$ around $x_0$. Its representative matrix is of the form:
\[
\begin{tabular}{ccccc}
& $\delta_x$ & $\delta_y , \mathbb 1$ & $dx \otimes \mu$ & $dy\otimes \mu ,  j^1 \mu$ \\
${\alpha}$ & \multicolumn{1}{||c}{$\mathbb{Id}$} & $\mathbb{E} $ &
{$\mathbb{F}$} &\multicolumn{1}{c||}{$\mathbb{0}$}\\
${\beta}$ & \multicolumn{1}{||c}{$\mathbb{0}$} & $\mathbb{G}$  &
{$\mathbb{H} $} &  \multicolumn{1}{c||}{$\mathbb{Id}$}
\end{tabular}\ .
\] 
Since $ \mathfrak L $ is isotropic, 
\[
\mathbb F + \mathbb F^t = \mathbb G + \mathbb G^t = \mathbb H + \mathbb E^t = 0,
\]
where $(-)^t$ denotes transposition. This concludes the proof of the first part of the statement.

The proof of the second part is very similar. We report it here for completeness. Let $\Oo$ be a precontact leaf. Choose again coordinates $(x^i, y^a)$ on $M$, around $x_0$, such that $\Oo = \left\{ y^a = 0 \right\}$, and let $\mu$ be any local generator of $\Gamma (L)$ around $x_0$. Thus, there is a local basis of $\Gamma (\mathfrak L )$, around $x_0$, of the form
\[
\begin{aligned}
\overline{\overline{\alpha}}_i & = ( \delta_i + \overline{\overline{\Delta}}_i ,\ \overline{\overline{k}}_i ),  \\
\overline{\overline{\alpha}} & =  ( \mathbb{1} + \overline{\overline{\Delta}}, \ \overline{\overline{k}} ), \\
\overline{\overline{\beta}}{}^a & = ( \overline{\overline{\square}}{}^a, \ \overline{\overline{h}}{}^a ),
\end{aligned}
\]
with $\overline{\overline\Delta}_i |_\Oo = \overline{\overline\square}{}^a |_\Oo = \overline{\overline\Delta} |_\Oo = 0$. In particular,
\[
\left( \begin{array}{c}
\delta_i + \overline{\overline{\Delta}}_i \\
\mathbb 1 + \overline{\overline{\Delta}}
\end{array} \right)  = (\mathbb{Id} + \mathbb{M} )\left( \begin{array}{c}
\delta_i  \\
\mathbb 1 
\end{array} \right)  + \text{a linear combination of $(\delta_a)$},
\]
for some function matrix $\mathbb M$. Since $\mathbb M(x_0) = 0$, matrix $\mathbb{Id} + \mathbb M$ is invertible around $x_0$. Put 
\[
\left( \begin{array}{c}
\overline{\alpha}_i \\
\overline{\alpha}
\end{array} \right)  =  (\mathbb{Id} + \mathbb M)^{-1 }\left( \begin{array}{c}
\overline{\overline{\alpha}}_i \\
\overline{\overline{\alpha}}
\end{array} \right) .
\]
Then $\overline{\alpha}_i, \overline{\alpha}$ are of the form
\[
\begin{aligned}
\overline{\alpha}{}_i & = ( \delta_i + \text{a linear combination of $\delta_a$} ,\  \overline{k}_i),\\
\overline{\alpha} & = (\mathbb 1 + \text{a linear combination of $\delta_a$},\  \overline{k}) .
\end{aligned}
\]
On the other hand, the $\overline{\overline{\square}}{}^a$ are of the form 
\[
\overline{\overline{\square}}{}^a  = B^{ai} \delta_i + B^a \mathbb 1 + \text{a linear combination of $\delta_a$}.
\]
Put $\overline{\beta}{}^a = \overline{\overline{\beta}}{}^a - B^{ai} \overline{\alpha}_i - B^a \overline{\alpha}$. Then the $\overline{\beta}{}^a$ are of the form
\[
\overline{\beta}{}^a = (\text{a linear combination of $\delta_a$ vanishing on $\Oo$},\ \overline{h}{}^a ).
\]
Moreover, since $ \mathfrak L $ is isotropic,
$
\langle \delta_i,  \overline{h}{}^a \rangle|_\Oo = 0
$. System $(\overline{\alpha}_i, \overline{\alpha}, \overline{\beta}{}^a)$ is a local basis of $\Gamma (\mathfrak L )$ around $x_0$. Its representative matrix is of the form:
\[
\begin{tabular}{ccccc}
& $\delta_x , \mathbb 1$ & $\delta_y$ & $dx \otimes \mu , j^1 \mu$ & $dy \otimes \mu$ \\
$\overline{\alpha}$ & \multicolumn{1}{||c}{$\mathbb{Id}$} & $\mathbb{A}$ &
{$\ast$} &\multicolumn{1}{c||}{$\ast$}\\
$\overline{\beta}$ & \multicolumn{1}{||c}{$\mathbb{0}$} & $\mathbb{B} $  &
{$\mathbb{C} $} &  \multicolumn{1}{c||}{$\mathbb{D}$}
\end{tabular}\ ,
\] 
where $\mathbb{A}, \mathbb{B}, \mathbb{C}$ vanish at $x_0$. It follows that matrix $\mathbb{D}$ is invertible at $x_0$, hence it is invertible around $x_0$. Let $\| \Delta^b_a \|$ be its inverse and put $\beta^a = \Delta^a_b \beta^b$. Finally, let $\overline{k}_i, \overline k$ be locally given by 
\[
\begin{aligned}
\overline{k}_i & = (\overline{k}_{ij} dx^j + \overline{k}_{ia} dy^a ) \otimes \mu + \overline{K}_i j^1 \mu, \\
\overline{k} & = (\overline{K}{}'_{j} dx^j + \overline{k}_{a} dy^a ) \otimes \mu + \overline{K} j^1 \mu, 
\end{aligned}
\]
and put 
\[
\begin{aligned}
\alpha_i & = \overline{\alpha}_i - \overline{k}_{ia} \beta^a, \\
\alpha & = \overline{\alpha} - \overline{k}_{a} \beta^a.
\end{aligned}
\]
System $(\alpha_i, \alpha, {\beta}{}^a)$ is a local basis of $\Gamma (\mathfrak L )$ around $x_0$. Its representative matrix is of the form:
\[
\begin{tabular}{ccccc}
& $\delta_x , \mathbb 1$ & $\delta_y $ & $dx \otimes \mu  ,  j^1 \mu$ & $dy\otimes \mu$ \\
${\alpha}$ & \multicolumn{1}{||c}{$\mathbb{Id}$} & $\mathbb{E} $ &
{$\mathbb{F}$} &\multicolumn{1}{c||}{$\mathbb{0}$}\\
${\beta}$ & \multicolumn{1}{||c}{$\mathbb{0}$} & $\mathbb{G}$  &
{$\mathbb{H} $} &  \multicolumn{1}{c||}{$\mathbb{Id}$}
\end{tabular}\ .
\] 
Since $ \mathfrak L $ is isotropic, 
\[
\mathbb F + \mathbb F^t = \mathbb G + \mathbb G^t = \mathbb H + \mathbb E^t = 0.
\]
This concludes the proof.
\end{proof}

\begin{corollary}\label{cor:point}
Let $(L, \mathfrak L)$ be a Dirac-Jacobi bundle over a manifold $M$ and let $\Oo = \{x_0\} \subset M$ be a zero-dimensional lcps leaf of the characteristic foliation. Then, around $x_0$, $\mathfrak L = \mathfrak L_J$ for some (local) Jacobi bracket $J : \Gamma (L) \times \Gamma (L) \to \Gamma (L)$, vanishing at $x_0$ (see Example \ref{ex:Jacobi}).
\end{corollary}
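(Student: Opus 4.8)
The plan is to feed the hypotheses into Definition~\ref{def:char_fol} and then invoke Example~\ref{ex:Jacobi}. Since $\Oo=\{x_0\}$ is a zero-dimensional \emph{lcps} leaf, Definition~\ref{def:char_fol} gives $\rk_{x_0}\Ii_{\mathfrak L}=\dim\Oo=0$. As $\Ii_{\mathfrak L}$ is the image of $\nabla^{\mathfrak L}=\pr_{D}|_{\mathfrak L}$, this says $\pr_{D}(\mathfrak L_{x_0})=0$, i.e.\ $\mathfrak L_{x_0}\subset J^1_{x_0}L$; and since $\mathfrak L$ is maximal isotropic, $\rk\mathfrak L=\dim M+1=\rk J^1 L$, so in fact $\mathfrak L_{x_0}=J^1_{x_0}L$.

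First I would use this to get $\mathfrak L\cap DL=\mathbf 0$ on a neighbourhood $U$ of $x_0$: indeed $\mathfrak L\cap DL$ is the kernel of $\pr_{J^1}$ restricted to $\mathfrak L$, hence a distribution with upper semicontinuous rank, and its fibre at $x_0$ is $J^1_{x_0}L\cap D_{x_0}L=0$, so it vanishes identically near $x_0$. By Example~\ref{ex:Jacobi}, $\mathfrak L|_U=\mathfrak L_J$ for a unique skew-symmetric first order bidifferential operator $J$ on $L|_U$, and $J$ satisfies the Jacobi identity — so $(L,J)$ is a Jacobi bundle — because $\Gamma(\mathfrak L)$ is involutive, $\mathfrak L$ being a Dirac-Jacobi structure.

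It then remains to see that $J$ vanishes at $x_0$, i.e.\ that $J^\sharp_{x_0}=0$, and this is immediate from $\mathfrak L_{x_0}=J^1_{x_0}L$: since $\mathfrak L_J=\operatorname{graph}J^\sharp$, we get $\operatorname{graph}(J^\sharp_{x_0})=\mathfrak L_{x_0}=J^1_{x_0}L=\operatorname{graph}(0\colon J^1_{x_0}L\to D_{x_0}L)$, whence $J^\sharp_{x_0}=0$. Alternatively, everything can be extracted straight from Theorem~\ref{theor:local}: for $\dim\Oo=0$ the lcps normal form \eqref{eq:norm_form_lcps} carries no coordinates $x^i$ and no generators $\alpha_i$, so $\mathfrak L$ is locally spanned by $\beta^a=(G^{ab}\delta_b+G^a\mathbb 1,\,dy^a\otimes\mu)$ and $\beta=(-G^b\delta_b,\,j^1\mu)$; the $J^1 L$-components of these form a local frame of $J^1 L$ (so $\mathfrak L\cap DL=\mathbf 0$), the operator $J$ is read off from $J^\sharp(dy^a\otimes\mu)=G^{ab}\delta_b+G^a\mathbb 1$ and $J^\sharp(j^1\mu)=-G^b\delta_b$, and the functions $G^{ab},G^a,G^b$ vanish at $x_0$ by the way the frame is built in the proof of Theorem~\ref{theor:local}.

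I do not expect a genuine obstacle here: the statement is a bookkeeping consequence of the two inputs that (a) a zero-dimensional lcps characteristic leaf forces $\mathfrak L_{x_0}=J^1_{x_0}L$, and (b) Example~\ref{ex:Jacobi} identifies Dirac-Jacobi structures transverse to $DL$ with Jacobi brackets. The one point that deserves a little care is that one must verify $J^\sharp$ vanishes at $x_0$ \emph{exactly}, and not merely that the characteristic distribution of $\mathfrak L_J$ does; this is precisely what (a) supplies.
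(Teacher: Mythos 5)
Your proof is correct, and your primary argument takes a genuinely different (and lighter) route than the paper. The paper proves this corollary by invoking Theorem \ref{theor:local}: for $\dim\Oo=0$ the lcps normal form (\ref{eq:norm_form_lcps}) has no $\alpha_i$'s, the basis reduces to $\beta^a=(G^{ab}\delta_b+G^a\mathbb 1,\,dy^a\otimes\mu)$ and $\beta=(-G^b\delta_b,\,j^1\mu)$, whence $\mathfrak L\cap DL=\mathbf 0$ and the conclusion follows from Example \ref{ex:Jacobi} (with the vanishing at $x_0$ coming from the vanishing of $\mathbb G$ at $x_0$ built into the proof of that theorem) --- this is exactly your ``alternative'' reading. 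Your main argument instead bypasses the normal form entirely: Definition \ref{def:char_fol} gives $(\Ii_{\mathfrak L})_{x_0}=\pr_{D}(\mathfrak L_{x_0})=0$, so by the rank count $\mathfrak L_{x_0}=J^1_{x_0}L$; upper semicontinuity of $\rk(\mathfrak L\cap DL)$ then propagates $\mathfrak L\cap DL=\mathbf 0$ to a neighbourhood, Example \ref{ex:Jacobi} produces $J$, and $J^\sharp_{x_0}=0$ is read off from $\operatorname{graph}(J^\sharp_{x_0})=J^1_{x_0}L$. This is more elementary in that it uses only the definition of an lcps leaf and general semicontinuity of kernels, rather than the full local structure theorem; what it loses is only the explicit coordinate description of $J$ that the normal form provides (which the paper does not need here either). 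Both arguments are complete, and you correctly flag the one delicate point --- that $J$ must vanish at $x_0$ exactly, which your identification $\mathfrak L_{x_0}=J^1_{x_0}L$ settles cleanly.
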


\begin{proof}
Since $\dim \Oo = 0$, it follows from Theorem \ref{theor:local} (\ref{eq:norm_form_lcps}) that, locally\linebreak  around $x_0$, there are coordinates $(y^a)$, a generator $\mu$ of $\Gamma (L)$, and a basis $(\beta^a, \beta)$ of $\Gamma (\mathfrak L)$, $a = 1, \ldots, \dim M$, such that
\[
\begin{aligned}
\beta^a & = \left( G^{ab} \delta_b + G^a \mathbb{1},\  dy^a \otimes \mu \right), \\
\beta & =  \left( - G^{b} \delta_b,\  j^1 \mu \right).
\end{aligned}
\]
Hence $\mathfrak L \cap DL = 0$. This concludes the proof.
\end{proof}

\begin{corollary}\label{cor:point_2}
Let $(L, \mathfrak L)$ be a Dirac-Jacobi bundle over a manifold $M$ and let $\Oo = \{x_0\} \subset M$ be a zero-dimensional precontact leaf of the characteristic foliation. Then, around $x_0$, after trivializing $L$, we have $\mathfrak L = \mathfrak L_{(\pi, Z)}$ for some (local) homogeneous Poisson structure $(\pi, Z)$ such that both $\pi$ and $Z$ vanish at $x_0$ (see Example \ref{ex:hom_Poisson}).
\end{corollary}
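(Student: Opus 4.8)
The plan is to specialize Theorem~\ref{theor:local} to the case $\dim\Oo=0$ and to recognize the resulting local normal form as that of a homogeneous Poisson structure by means of Proposition~\ref{prop:hom_Poisson}; this runs parallel to the way Corollary~\ref{cor:point} is deduced from Theorem~\ref{theor:local} (with Example~\ref{ex:hom_Poisson} and Proposition~\ref{prop:hom_Poisson} now playing the role of Example~\ref{ex:Jacobi}). Concretely, I would first apply the precontact case of Theorem~\ref{theor:local} with $\Oo=\{x_0\}$: since there are no $i$-indices, around $x_0$ one obtains coordinates $(y^a)$, $a=1,\dots,\dim M$, a generator $\mu$ of $\Gamma(L)$, and a basis $(\alpha,\beta^a)$ of $\Gamma(\mathfrak L)$ with
\[
\alpha=\bigl(\mathbb 1+E^a\delta_a,\ 0\bigr),\qquad \beta^a=\bigl(G^{ab}\delta_b,\ dy^a\otimes\mu-E^a\,j^1\mu\bigr),\qquad G^{ab}+G^{ba}=0.
\]
Inspecting the construction in the proof of Theorem~\ref{theor:local} --- where every coefficient multiplying a $\delta_a$ appears as a ``linear combination of the $\delta_a$ vanishing on $\Oo$'' --- one also sees that $E^a(x_0)=G^{ab}(x_0)=0$. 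From here on I trivialize $L$ via $\mu$, so that $\D L=\D\R_M$, $\mathbb 1$ is the identity derivation of $\R_M$ and $\delta_a=\partial/\partial y^a$; in particular $\tau(\mathbb 1)=1$ and $\tau(\delta_a)=0$, where $\tau:D\R_M\to\R_M$, $\Delta\mapsto\Delta(1)$.

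Next I would determine $\mathfrak L\cap D\R_M$. A section of $\mathfrak L$ has the form $c\,\alpha+c_a\,\beta^a$, with $\pr_{J^1}$-part $c_a(dy^a\otimes\mu-E^a j^1\mu)$; as $(dy^1\otimes\mu,\dots,dy^{\dim M}\otimes\mu,j^1\mu)$ is a local frame of $J^1\R_M$, this vanishes only when all $c_a=0$. Hence $\mathfrak L\cap D\R_M=\langle\,\mathbb 1+E^a\delta_a\,\rangle$ is of rank one, and $\tau(\mathbb 1+E^a\delta_a)=1$ is nowhere zero, so $\tau:\mathfrak L\cap D\R_M\to\R_M$ is an isomorphism. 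Proposition~\ref{prop:hom_Poisson} then yields a (local) homogeneous Poisson structure $(\pi,Z)$ with $\mathfrak L=\mathfrak L_{(\pi,Z)}$.

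Finally I would check that $\pi$ and $Z$ vanish at $x_0$ by reading them off the normal form, following the proof of Proposition~\ref{prop:hom_Poisson}. There one has $Z=-\sigma\bigl(\tau^{-1}(1)\bigr)=-\sigma(\mathbb 1+E^a\delta_a)=-E^a\,\partial/\partial y^a$, which vanishes at $x_0$ since $E^a(x_0)=0$; and $\pi$ is the bivector corresponding, via the canonical isomorphism $\zeta:\pr_{J^1}(\mathfrak L)\to T^\ast M$ (induced by $j^1 h\mapsto dh$, so that $\zeta(\pr_{J^1}\beta^a)=dy^a$), to the bilinear form $\Pi$ with $\Pi(\pr_{J^1}\beta^a,\pr_{J^1}\beta^b)=\langle \pr_{D}\beta^a,\pr_{J^1}\beta^b\rangle=G^{ab}\mu$. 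Thus $\pi(dy^a,dy^b)=G^{ab}$, which vanishes at $x_0$ since $G^{ab}(x_0)=0$; as $\{dy^a|_{x_0}\}$ spans $T^\ast_{x_0}M$, we conclude $\pi_{x_0}=0$.

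The substance of the argument is really Theorem~\ref{theor:local} and Proposition~\ref{prop:hom_Poisson}, which are already available; the only point requiring a little attention is extracting the vanishing $E^a(x_0)=G^{ab}(x_0)=0$ from the proof of Theorem~\ref{theor:local}, after which everything reduces to a direct comparison with the explicit description (\ref{eq:hom_Poisson}) of $\mathfrak L_{(\pi,Z)}$ (equivalently, with the characterization at the end of Example~\ref{ex:hom_Poisson}).
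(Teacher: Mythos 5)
Your proposal is correct and follows essentially the same route as the paper: specialize the precontact normal form of Theorem~\ref{theor:local} to $\dim\Oo=0$, observe that $\mathfrak L\cap D\R_M$ is the rank-one bundle generated by $\mathbb 1+E^a\delta_a$ (transversal to the $\delta_a$'s, so $\tau$ is an isomorphism), and invoke Proposition~\ref{prop:hom_Poisson}. Your explicit verification that $E^a(x_0)=G^{ab}(x_0)=0$, hence $Z_{x_0}=0$ and $\pi_{x_0}=0$, is a detail the paper leaves implicit, and it is correctly carried out.
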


\begin{proof}
Since $\dim \Oo = 0$, it follows from Theorem \ref{theor:local} (\ref{eq:norm_form_precont}) that, locally\linebreak  around $x_0$, there are coordinates $(y^a)$, a generator $\mu$ of $\Gamma (L)$, and a basis $(\alpha, \beta^a)$ of $\Gamma (\mathfrak L)$, $a = 1, \ldots, \dim M$, such that
\[
\begin{aligned}
\alpha & = (\mathbb{1} + E^a \delta_a , \ 0), \\
\beta^a & = \left( G^{ab} \delta_b,\  dy^a \otimes \mu - E^a j^1 \mu \right).
\end{aligned}
\]
Now, $\mathfrak L \cap DL$ is generated by $\mathbb{1} + E^a \delta_a$ which is transversal to the subbundle generated by the $\delta_a$'s. Upon using $\mu$ to identify $L$ with $\R_M$ around $x_0$, the assertion follows from Proposition \ref{prop:hom_Poisson}.
\end{proof}


\begin{corollary}\label{cor:parity}
Let $(L, \mathfrak L)$ be a Dirac-Jacobi bundle over a manifold $M$. Then the parity of the dimension of precontact (resp.~lcps) leaves of the characteristic foliation $\Ff_{\mathfrak L} $ is locally constant on $M$.
\end{corollary}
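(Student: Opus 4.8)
The plan is to read off, directly from the two normal forms of Theorem~\ref{theor:local}, both the dimension and the type (precontact or lcps) of every characteristic leaf passing near a given point, and then to invoke the elementary fact that a skew-symmetric matrix has even rank.

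Fix $x_0\in M$, let $\Oo$ be the characteristic leaf through $x_0$ and set $p:=\dim\Oo$. Assume first that $\Oo$ is lcps and use the coordinates $(x^i,y^a)$, the generator $\mu$, and the local frame $(\alpha_i,\beta^a,\beta)$ of $\mathfrak L$ given by \eqref{eq:norm_form_lcps} on a neighbourhood $U$ of $x_0$. For $q\in U$ the fibre $(T\Ff_{\mathfrak L})_q=\rho_{\mathfrak L}(\mathfrak L_q)$ is spanned by
\[
\rho_{\mathfrak L}(\alpha_i)=\partial_{x^i}+E_i^a\partial_{y^a},\qquad \rho_{\mathfrak L}(\beta^a)=G^{ab}\partial_{y^b},\qquad \rho_{\mathfrak L}(\beta)=-G^b\partial_{y^b},
\]
evaluated at $q$. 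The first $p$ of these vectors have linearly independent $\partial_{x^i}$-components and the remaining ones are vertical, so $\rk_q T\Ff_{\mathfrak L}=p+r(q)$, where $r(q)$ is the rank of the matrix whose rows are $\bigl(G^{ab}(q)\bigr)_b$ ($a=1,\dots,\operatorname{codim}\Oo$) and $\bigl(G^b(q)\bigr)_b$. Since the leaf $\Oo_q$ through $q$ is an integral manifold of $T\Ff_{\mathfrak L}$, this gives $\dim\Oo_q=p+r(q)$.

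Next I would pin down the type of $\Oo_q$. By Corollary~\ref{cor:char_fol} together with the exactness of \eqref{eq:Sp_dual} (so that $\ker\sigma=\langle\mathbb 1\rangle$ and $\sigma\colon\Ii_{\mathfrak L}\to T\Ff_{\mathfrak L}$ is onto), $\Oo_q$ is precontact precisely when $\mathbb 1_q\in(\Ii_{\mathfrak L})_q=\pr_{D}(\mathfrak L)_q$, the latter being spanned, by \eqref{eq:norm_form_lcps}, by $\delta_i+E_i^a\delta_a+E_i\mathbb 1$, $G^{ab}\delta_b+G^a\mathbb 1$ and $-G^b\delta_b$. Writing $\mathbb 1$ as a linear combination of these and comparing $\delta_i$-components (which vanish for $\mathbb 1$) forces the coefficients of the $\alpha_i$ to vanish; the skew-symmetry $G^{ab}+G^{ba}=0$ then rules out a nonzero coefficient on $\beta$ and shows that a solution exists iff the vector $(G^b)$ is \emph{not} in the span of the rows $\bigl((G^{ab})_b\bigr)_a$. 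Hence $\Oo_q$ is precontact iff $(G^b)\notin\operatorname{im}(G^{ab})$ and lcps iff $(G^b)\in\operatorname{im}(G^{ab})$ (using that for a skew-symmetric matrix the row space, column space and image coincide). Comparing with the formula for $r(q)$ — which equals $\rk(G^{ab})$ or $\rk(G^{ab})+1$ according as $(G^b)$ lies in $\operatorname{im}(G^{ab})$ or not — and recalling that $\rk(G^{ab})$ is even, we conclude that $\dim\Oo_q\equiv p\pmod 2$ if $\Oo_q$ is lcps, and $\dim\Oo_q\equiv p+1\pmod2$ if $\Oo_q$ is precontact. Thus, in the neighbourhood $U$ of $x_0$, the parity of $\dim\Oo_q$ depends only on the type of $\Oo_q$, which is exactly the assertion. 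The case where $\Oo$ is precontact is entirely parallel, using \eqref{eq:norm_form_precont} instead: one finds $\dim\Oo_q=p+r(q)$ with $r(q)=\rk(G^{ab})$ or $\rk(G^{ab})+1$ according as $(E^a)\in\operatorname{im}(G^{ab})$ or not, while $\Oo_q$ is precontact iff $(E^a)\in\operatorname{im}(G^{ab})$, which yields the same dichotomy.

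The linear-algebra computations involved (expressing $\mathbb 1$ in terms of the frame, counting the rank of the vertical part of $T\Ff_{\mathfrak L}$) are routine. The only point that carries real weight is the interplay between the skew-symmetry of $(G^{ab})$ and the precontact/lcps alternative: the rank of the vertical part of $T\Ff_{\mathfrak L}$ jumps between an even number and its successor exactly when the nearby leaf switches between lcps and precontact. I do not expect any obstacle beyond this bookkeeping, since Theorem~\ref{theor:local} already isolates all the local data needed.
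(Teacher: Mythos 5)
Your proof is correct and follows essentially the same route as the paper's: both read the relevant ranks off the normal forms of Theorem \ref{theor:local} and conclude from the evenness of the rank of a skew-symmetric matrix. The only difference is bookkeeping — the paper computes $\rk_x \Ii_{\mathfrak L}$ and compares leaves of the same type only, whereas you compute $\rk_x T\Ff_{\mathfrak L}$ and separately decide the type of the nearby leaf via the criterion $\mathbb 1 \in \Ii_{\mathfrak L}$; as a byproduct your argument also yields the opposite-parity statement recorded in the Remark following Corollary \ref{cor:parity}.
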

\begin{proof}
Pick a point $x_0 \in M$ and let $x$ be a point in a connected neighborhood of $x_0$ where $ \mathfrak L $ has one of the forms given by Theorem \ref{theor:local}. Let $\Oo$ (resp.~$\Oo'$) be the characteristic leaf through $x_0$ (resp.~$x$). If $\Oo$ and $\Oo^\prime$ are both lcps then $ \mathfrak L _x$ is given by (\ref{eq:norm_form_lcps}) and $(\Ii_{\mathfrak L})_x = \nabla^{\Oo'} (T_x \Oo')$ is spanned by
\[
\delta_i + E_i^a \delta_a + E_i \mathbb 1, \ G^{ab} \delta_b + G^a \mathbb 1, \ -G^b \delta_b.
\]
Hence
\[
\dim \Oo' = \rk_x\! \Ii_{\mathfrak L} = \rk_{x_0}\Ii_{\mathfrak L} + \rk_x \mathbb G = \dim \Oo + \rk_x \mathbb G.
\]
Since $\mathbb G$ is a skew-symmetric matrix, it follows that the parity of $\dim \Oo'$ and  $\dim \Oo$ is the same. The precontact case can be discussed in a similar way.
\end{proof}

\begin{remark}
Locally, the dimension of a lcps leaf and the dimension of a precontact leaf have different parities. This can be proved either in a similar way as in the proof of Corollary \ref{cor:parity}, or exploiting the \emph{Dirac-ization trick} (see Remark \ref{rem:opposite} in the Appendix).
\end{remark}

Theorem \ref{theor:local} shows also that the regularity of the characteristic foliation on one side and that of distribution $\Ii_{\mathfrak L}$ on the other side are (partially) intertwined. Namely, notice, first of all, that, when the smooth distribution $\Ii_{\mathfrak L}$ is regular, $\Ff_{\mathfrak L} $ is not necessarily a regular foliation as the following example shows.

\begin{example}\label{ex:1}
Let $M = \R$, $L = \R_M$, and $ \mathfrak L  = V \oplus V^0$ (Example \ref{ex:inv}), with $V \subset D L = T\R \times \R$ the smooth, involutive, vector subbundle generated by $ x \partial / \partial x + 1 $. Then $\Ii_{\mathfrak L} = V$ which is a vector subbundle, hence a regular distribution, but $T \Ff_{\mathfrak L}  = \sigma (\Ii_{\mathfrak L}) = \langle x \partial / \partial x \rangle$ which has rank zero for $x = 0$, and rank one otherwise.
\end{example}

However, when $\Ff_{\mathfrak L} $ is a regular foliation, then $\Ii_{\mathfrak L}$ is necessarily a regular distribution.

\begin{corollary}
Let $(L, \mathfrak L)$ be a Dirac-Jacobi bundle over a smooth manifold $M$. If $\Ff_{\mathfrak L} $ is a regular foliation, then $\Ii_{\mathfrak L}$ is a regular distribution. In particular, it is a vector bundle over each connected component of $M$.
\end{corollary}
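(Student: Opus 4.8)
The plan is to reduce the statement to the assertion that $\rk \Ii_{\mathfrak L}$ is locally constant: since $\Ii_{\mathfrak L}$ is a smooth distribution (it is the image of the bundle map $\nabla^{\mathfrak L}$), once its rank is locally constant it is a regular distribution, hence a vector subbundle over each connected component of $M$. So I would fix an arbitrary $x_0 \in M$, let $\Oo$ be the characteristic leaf through $x_0$, put $r := \dim \Oo$, and — using regularity of $\Ff_{\mathfrak L}$ — pass to a connected neighbourhood $U$ of $x_0$ on which every characteristic leaf has dimension $r$ and on which one of the two normal forms of Theorem \ref{theor:local} is available. Corollary \ref{cor:char_fol} already confines $\rk_x \Ii_{\mathfrak L}$ to $\{ r, r+1 \}$ for every $x \in U$, so the whole issue is to decide which of the two values occurs, uniformly on $U$.

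If $\Oo$ is precontact I would argue straight from the normal form (\ref{eq:norm_form_precont}): the $D L$-components of the basis sections of $\Gamma(\mathfrak L)$ include the $r+1$ sections $\delta_i + E_i^a \delta_a$ $(i=1,\dots,r)$ and $\mathbb 1 + E^a \delta_a$, which are pointwise linearly independent on $U$ (the first $r$ are triangular in the $\delta_1,\dots,\delta_r$ directions, and $\mathbb 1$ occurs in none of the others). Hence $\rk_x \Ii_{\mathfrak L} \ge r+1$, and together with the bound from Corollary \ref{cor:char_fol} this forces $\rk_x \Ii_{\mathfrak L} = r+1$ throughout $U$. This case is essentially free.

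The lcps case is where the work lies. From the normal form (\ref{eq:norm_form_lcps}) the sections $\delta_i + E_i^a \delta_a + E_i \mathbb 1$ $(i=1,\dots,r)$ of $\Ii_{\mathfrak L}$ are pointwise independent, and they span a space complementary (inside $D_xL$) to the span of the remaining directions $\delta_b,\mathbb 1$; consequently $(\Ii_{\mathfrak L})_x$ is the internal direct sum of their span with $W_x := \operatorname{span}\{ G^{ab}(x)\delta_b + G^a(x)\mathbb 1 \}_a + \operatorname{span}\{ G^b(x)\delta_b \}$, so $\rk_x \Ii_{\mathfrak L} = r + \dim W_x$ and it suffices to show $W_x = 0$ on $U$. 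I would assume toward a contradiction that $\dim W_x = 1$ at some $x \in U$. Then $\rk_x \Ii_{\mathfrak L} = r+1$, so by Lemma \ref{lem:rank}, Corollary \ref{cor:char_fol} and Definition \ref{def:char_fol} the characteristic leaf through $x$ is precontact and $\Ii_{\mathfrak L}$ restricted to it is $D(L|_{\text{leaf}})$, which contains $\mathbb 1_x$. Projecting $(\Ii_{\mathfrak L})_x$ onto the $\delta_1,\dots,\delta_r$ components gives $\mathbb 1_x \in W_x$, hence $W_x = \langle \mathbb 1_x\rangle$ and every generator of $W_x$ is a multiple of $\mathbb 1_x$; in particular $G^{ab}(x) = G^b(x) = 0$. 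But then the basis section $\beta$ of (\ref{eq:norm_form_lcps}) specializes at $x$ to $\beta(x) = (0,\ j^1 \mu - E_i(x)\, dx^i \otimes \mu) \in \mathfrak L_x \cap J^1 L$, and pairing it, via $\bla -,-\bra$, against an element $(\mathbb 1_x, \varphi) \in \mathfrak L_x$ (one exists because $\mathbb 1_x \in \pr_{D}(\mathfrak L_x) = (\Ii_{\mathfrak L})_x$) yields $\langle \mathbb 1_x,\ j^1 \mu - E_i(x)\, dx^i \otimes \mu\rangle = \mu|_x \neq 0$, using $T^\ast M \otimes L = \langle \mathbb 1\rangle^0 \subset J^1 L$. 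This violates maximal isotropy of $\mathfrak L_x$. Hence $W_x = 0$ and $\rk_x \Ii_{\mathfrak L} = r$ for all $x \in U$.

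In either case $\rk \Ii_{\mathfrak L}$ is constant on $U$, and as $x_0$ was arbitrary it is locally constant, which gives the claim. The only real obstacle is the lcps case: the crude dimension count through the normal form bounds a possible rank jump by one but does not by itself rule it out, and what closes the gap is the interplay between maximal isotropy and the explicit section $\beta$, which at a hypothetical rank-jump point degenerates into a jet pairing nontrivially with the identity derivation $\mathbb 1$.
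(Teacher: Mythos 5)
Your proof is correct, and it reaches the conclusion by a mechanism that is genuinely different from the paper's. Both arguments start from the normal forms of Theorem \ref{theor:local}, but the paper (which only details the precontact case, leaving the lcps case ``to the reader'') computes $\rk_x \Ii_{\mathfrak L} = \dim \Oo + 1 + \rk_x \mathbb G$, compares it with the two possible values of $\dim T_x \Ff_{\mathfrak L}$, and concludes that $\rk_x \mathbb G$ can jump by at most one and hence, being the rank of a skew-symmetric matrix, not at all. You instead combine Lemma \ref{lem:rank} and Corollary \ref{cor:char_fol} with the regularity hypothesis to confine $\rk_x \Ii_{\mathfrak L}$ to $\{r, r+1\}$ on a neighbourhood; this makes the precontact case immediate (the $r+1$ visibly independent sections in (\ref{eq:norm_form_precont}) saturate the bound, which in fact forces $\mathbb G \equiv 0$ there, slightly more than the paper extracts), and you close the lcps case with an isotropy contradiction: at a hypothetical jump point one gets $\mathbb 1_x \in (\Ii_{\mathfrak L})_x$, hence $G^{ab}(x) = G^b(x) = 0$, so that $\beta(x) = \left(0,\ j^1\mu - E_i(x)\, dx^i \otimes \mu\right)$ pairs nontrivially under $\bla -,- \bra$ with any $(\mathbb 1_x, \varphi) \in \mathfrak L_x$, contradicting maximal isotropy. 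All the individual steps check out (in particular $\langle \mathbb 1_x, dx^i \otimes \mu\rangle = 0$ because $T^\ast M \otimes L \subset \langle \mathbb 1\rangle^0$, and $\dim W_x \le 1$ is indeed guaranteed by your upper bound). What each approach buys: yours supplies in full the lcps half that the paper omits and trades the parity trick for a structural use of the pairing, while the paper's parity mechanism is uniform across the two cases and is the same device reused in Corollary \ref{cor:parity} and Proposition \ref{prop:KD}.
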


\begin{proof}
We need to prove that the rank of $\Ii_{\mathfrak L}$ is locally constant. First prove that $\rk \Ii_{\mathfrak L} $ is constant around every precontact point. Thus, let $\Oo$ be a precontact leaf of the characteristic distribution, $x_0 \in \Oo$, and let $x$ be a point in a connected neighborhood $U$ of $x_0$ where $ \mathfrak L $ has the form (\ref{eq:norm_form_precont}). Then $(\Ii_{\mathfrak L})_x$ is spanned by
\[
\delta_i + E_i^a, \ \mathbb 1 + E^a \delta_a, \ G^{ab} \delta_b.
\]
In particular, $\rk_x \Ii_{\mathfrak L} = \dim \Oo + 1 + \rk_x \mathbb G$, and it can only change in $U$ if the rank of $\mathbb G$ changes. On the other hand, $T_x \Ff_{\mathfrak L}  = \sigma (\Ii_{\mathfrak L})_x$ is spanned by
\[
\frac{\partial}{\partial x^i} + E^a_i \frac{\partial}{\partial y^a}, \ E^a  \frac{\partial}{\partial y^a}, \ G^{ab}  \frac{\partial}{\partial y^b},
\]
and there are two cases. Either $\mathbf E := \| E^a \|^t$ is in the image of $\mathbb G$, and in this case $\dim T_x \Ff_{\mathfrak L}  = \dim \Oo + \rk_x \mathbb G$, or $\mathbf E$ is not in the image of $\mathbb G$, and in this case $\dim T_x \Ff_{\mathfrak L}  = \dim \Oo + 1 + \rk_x \mathbb G$. Since $\dim T_x \Ff_{\mathfrak L} $ is locally constant, we conclude that $\rk_x \mathbb G$ can only change by one in $U$. However, $\mathbb G$ is skewsymmetric so that $\rk_x \mathbb G$ is even for all $x$. Hence $\rk_x \mathbb G$ is constant in $U$ and $\rk_x \Ii_{\mathfrak L}$ is constant in $U$ as well. In a similar way one can prove that $\rk \Ii_{\mathfrak L}$ is constant around every lcps point. Details are left to the reader.
\end{proof}

We conclude this section with one further application of Theorem \ref{theor:local}. Namely, we show that, similarly as for Poisson structures \cite{W1983}, \textcolor{black}{for Jacobi structures \cite{DLM}}, and for Dirac structures \cite{DW2008}, for every Dirac-Jacobi bundle over $M$, and every point $x_0 \in M$,
there is
\begin{enumerate}
\item \emph{a Jacobi structure transverse to $\Oo$}, if $x_0$ is a lcps point,
\item \emph{a homogeneous Poisson structure transverse to $\Oo$}, if $x_0$ is a precontact point,
\end{enumerate}
where $\Oo$ is the characteristic leaf through $x_0$.

\begin{proposition}\label{prop:transv}
Let $(L, \mathfrak L)$ be a Dirac-Jacobi bundle over a smooth manifold $M$, let $\Oo$ be a leaf of the characteristic foliation $\Ff_{\mathfrak L} $, and let $x_0 \in \Oo$. Additionally, let $Q \subset M$ be a submanifold transverse to $\Oo$ at $x_0$, with $\dim Q = \operatorname{codim} \Oo$, i.e.~$x_0 \in Q$ and $T_{x_0} M = T_{x_0} \Oo \oplus T_{x_0} Q$. Locally around $x_0$, 
\begin{enumerate}
\item if $\Oo$ is a lcps leaf, then $\mathfrak L$ induces a Jacobi bracket $J_Q : \Gamma (L|_Q) \times \Gamma (L|_Q) \to \Gamma (L|_Q)$ on the restricted line bundle $L|_Q$, with $(J_Q)_{x_0} = 0$;  
\item if $\Oo$ is a precontact leaf, then $\mathfrak L$, together with a trivialization of the restricted line bundle $L|_Q$, induces a homogeneous Poisson structure $(\pi_Q, Z_Q)$ on $Q$, with $(\pi_Q)_{x_0} = (Z_Q)_{x_0} = 0$.
\end{enumerate}
\end{proposition}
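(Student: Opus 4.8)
The plan is to reduce the statement to the local normal form of Theorem~\ref{theor:local} and then to identify the restriction of $\mathfrak L$ to $Q$ with a Jacobi bracket (in the lcps case) or a homogeneous Poisson structure (in the precontact case), using Example~\ref{ex:Jacobi} and Proposition~\ref{prop:hom_Poisson}, equivalently Corollaries~\ref{cor:point} and~\ref{cor:point_2}. Inspecting the proof of Theorem~\ref{theor:local} one sees that the coordinates $(x^i,y^a)$ there may be taken to be \emph{any} local system adapted to $\Oo$, i.e.~with $\Oo = \{y^a = 0\}$. Since $Q$ is transverse to $\Oo$ at $x_0$ and $\dim Q + \dim\Oo = \dim M$, the submanifolds $\Oo$ and $Q$ meet only at $x_0$ locally, and one can choose coordinates $(x^i,y^a)$ centered at $x_0$ with $\Oo = \{y^a = 0\}$ \emph{and} $Q = \{x^i = 0\}$ (in the precontact case we also use $\mu|_Q$ to trivialize $L|_Q$). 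So near $x_0$ the structure $\mathfrak L$ is given by~(\ref{eq:norm_form_lcps}) or~(\ref{eq:norm_form_precont}).

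Next I would form the restriction $\mathfrak L_Q \subset \D(L|_Q)$ of $\mathfrak L$ to $Q$: recalling that $D(L|_Q)\subset DL|_Q$ consists of the derivations with symbol tangent to $Q$, and that $1$-jets of $L$ along $Q$ pull back to $1$-jets of $L|_Q$, declare $(\Delta,\psi)\in\D(L|_Q)$ to lie in $\mathfrak L_Q$ iff $(\Delta,\widetilde\psi)\in\mathfrak L|_Q$ for some $\widetilde\psi\in J^1L|_Q$ restricting to $\psi$. The key point is that if $c^i\alpha_i + c_a\beta^a + c\,\beta\in\mathfrak L|_Q$ is written in the normal-form basis~(\ref{eq:norm_form_lcps}), the $\partial/\partial x^i$-component of the symbol of its $D$-part is exactly $c^i$, so this symbol is tangent to $Q = \{x^i = 0\}$ iff all $c^i$ vanish. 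Hence $\mathfrak L_Q$ is spanned near $x_0$ by $\beta^a|_Q,\beta|_Q$ (lcps case) and, by the same argument applied to~(\ref{eq:norm_form_precont}), by $\alpha|_Q,\beta^a|_Q$ (precontact case). These $\dim Q + 1$ sections are pointwise independent near $x_0$ (their $J^1(L|_Q)$-parts already span $J^1(L|_Q)$, together with the nowhere-zero derivation $\mathbb 1 + E^a\delta_a$ in the precontact case), so $\mathfrak L_Q$ is a smooth subbundle; it is isotropic by the adjointness of $F_\ast$ and $F^\ast$, of rank $\dim Q + 1 = \tfrac12\rk\D(L|_Q)$, hence maximal isotropic; and it is involutive because $\mathfrak L$ is --- by the $\iota$-relatedness of the Dorfman bracket $\blq-,-\brq$ under $L|_Q\INTO L$ (a routine consequence of~(\ref{eq:F-related}) and $F^\ast$-naturality of the der-complex), noting that the bracket of two $Q$-restrictable sections of $\mathfrak L$ has identically vanishing $\alpha_i$-component by the symbol argument above, hence restricts into $\mathfrak L_Q$. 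Thus $(L|_Q,\mathfrak L_Q)$ is a Dirac-Jacobi bundle.

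It remains to identify $\mathfrak L_Q$ near $x_0$. The structure functions $E^a_i, E_i, G^{ab}, G^a, G^b$ of~(\ref{eq:norm_form_lcps}) --- respectively $E^a_i, E^a, G^{ab}$ of~(\ref{eq:norm_form_precont}) --- vanish along $\Oo$, in particular at $x_0$; so the $\pr_{D}$-image of $(\mathfrak L_Q)_{x_0}$ is $\{0\}$ in the lcps case and the line $\langle\mathbb 1\rangle$ (whose symbol is $0$) in the precontact case. Consequently $\rho_{\mathfrak L_Q}$ vanishes at $x_0$, so $\{x_0\}$ is a zero-dimensional characteristic leaf of $(L|_Q,\mathfrak L_Q)$, and by Definition~\ref{def:char_fol} it is a lcps leaf in the first case and a precontact leaf in the second. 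Corollary~\ref{cor:point} then yields, in the lcps case, $\mathfrak L_Q = \mathfrak L_{J_Q}$ near $x_0$ for a Jacobi bracket $J_Q$ on $L|_Q$ with $(J_Q)_{x_0} = 0$ (cf.~Example~\ref{ex:Jacobi}); and Corollary~\ref{cor:point_2} yields, in the precontact case and after trivializing $L|_Q$ by $\mu|_Q$, $\mathfrak L_Q = \mathfrak L_{(\pi_Q,Z_Q)}$ near $x_0$ for a homogeneous Poisson structure with $(\pi_Q)_{x_0} = (Z_Q)_{x_0} = 0$ (cf.~Example~\ref{ex:hom_Poisson} and Proposition~\ref{prop:hom_Poisson}). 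Since $\mathfrak L_Q$ \emph{is} the restriction of $\mathfrak L$ to $Q$, this is exactly the assertion.

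The main obstacle is the bookkeeping of the second step: arranging the normal-form coordinates of Theorem~\ref{theor:local} so that the \emph{given} transversal $Q$ is a coordinate slice, and then verifying that $\mathfrak L_Q$ --- a priori only the distribution produced by the restriction procedure --- is in fact a smooth, maximal isotropic, involutive subbundle of $\D(L|_Q)$. Once this is in place, the identification of $\mathfrak L_Q$ with a Jacobi bracket or a homogeneous Poisson structure vanishing at $x_0$ is a direct reading of the normal form through the examples and corollaries already established.
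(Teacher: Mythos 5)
Your proof is correct and follows essentially the same route as the paper's: choose the normal-form coordinates of Theorem \ref{theor:local} so that $Q$ is the slice $\{x^i = 0\}$, restrict $\mathfrak L$ to $Q$ (the paper phrases this as the backward image along $L|_Q \hookrightarrow L$ and checks the clean intersection condition $\mathfrak L \cap (N^\ast Q \otimes L|_Q) = 0$ via Proposition \ref{prop:bw} and Example \ref{ex:bw}, whereas you exhibit the spanning sections $\beta^a|_Q, \beta|_Q$, resp.\ $\alpha|_Q, \beta^a|_Q$, and verify rank, isotropy and involutivity by hand), observe that $\{x_0\}$ is a zero-dimensional lcps resp.\ precontact leaf of $(L|_Q,\mathfrak L_Q)$, and conclude by Corollaries \ref{cor:point} and \ref{cor:point_2}. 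The only deviation is that you re-derive directly the fact that the restriction is a Dirac-Jacobi bundle instead of citing the general backward-image machinery of Section \ref{sec:morphisms}, which is a difference of bookkeeping rather than of method.
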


\begin{proof}
Let $x_0$ be a lcps \textcolor{black}{(resp.~precontact)} point. First we show that $\mathfrak L$ induces a Dirac-Jacobi structure on $\mathfrak L_Q$, at least around $x_0$. This is a consequence of Proposition \ref{prop:bw} below, and of Theorem \ref{theor:local}. Namely, from Example \ref{ex:bw}, it is enough to check that $\mathfrak L \cap (N^\ast Q \otimes L|_Q)$ has constant rank around $x_0$ (here $N^\ast Q$ is the \emph{conormal bundle} to $Q$). Thus, choose coordinates around $x_0$ as in Theorem \ref{theor:local} (\ref{eq:norm_form_lcps}) \textcolor{black}{(resp.~(\ref{eq:norm_form_precont}))} with the additional property that $Q = \{x^i = 0 \}$. Using (\ref{eq:norm_form_lcps}) \textcolor{black}{(resp.~(\ref{eq:norm_form_precont}))}, it is straightforward to check that $\mathfrak L \cap (N^\ast Q \otimes L|_Q) = 0$. Hence $\mathfrak L$ induces a Dirac-Jacobi structure on $\mathfrak L|_Q$ given by
\[
\mathfrak L_Q := \{ (\Delta, i_Q^\ast \psi) \in \D L|_Q : (\Delta, \psi) \in \mathfrak L \},
\]
where $i_Q : Q \INTO M$ is the inclusion (see Example \ref{ex:bw} (\ref{eq:bw_sub}) for more details). Finally, it is easy to see that $\{x_0\}$ is a lcps \textcolor{black}{(resp.~precontact)} leaf of the characteristic distribution of $(L|_Q, \mathfrak L_Q)$. The assertion now follows from Corollary~\ref{cor:point} \textcolor{black}{(resp.~\ref{cor:point_2})}.
\end{proof}


Similarly as for Poisson \cite{W1983}, Jacobi \cite{DLM} and Dirac \cite{DW2008} manifolds, the transverse structures described in Proposition \ref{prop:transv} are actually independent of the choice of $Q$, up to isomorphisms, and do only depend on the characteristic leaf $\Oo$, as explained by the following

\begin{proposition}\label{prop:transv_uniq}
Let $(L, \mathfrak L)$ be a Dirac-Jacobi bundle over a smooth manifold $M$, let $\Oo$ be a leaf of the characteristic foliation $\Ff_{\mathfrak L} $, and let $x_0, x'_0 \in \Oo$. Additionally, let $Q,Q' \subset M$ be submanifolds transverse to $\Oo$ at $x_0, x_0'$, respectively, with $\dim Q = \dim Q' = \operatorname{codim} \Oo$, i.e.~$x_0 \in Q$, $x_0' \in Q'$, and 
\[
T_{x_0} M = T_{x_0} \Oo \oplus T_{x_0} Q, \quad  T_{x'_0} M = T_{x'_0} \Oo \oplus T_{x'_0} Q'.
\]
\begin{enumerate}
\item if $\Oo$ is a lcps leaf, then there are neighborhoods $U,U'$ of $x_0, x_0'$ in $Q,Q'$, respectively, and a Jacobi isomorphism $(U, L|_U, J_Q) \simeq (U', L|_{U'}, J_{Q'})$, where $J_Q, J_{Q'}$ are the Jacobi structures induced by $\mathfrak L$ as in Proposition~\ref{prop:transv}.(1);
\item if $\Oo$ is a precontact leaf, and we fix a trivialization of $L$ in a neighborhood of both $x_0$ and $x'_0$, then there are neighborhoods $U,U'$ of $x_0, x_0'$ in $Q,Q'$, respectively, and an isomorphism of homogeneous Poisson manifolds $(U, \pi_Q, Z_Q) \simeq (U', \pi_{Q'}, Z_{Q'})$, where $(\pi_Q, Z_Q), (\pi_{Q'},Z_{Q'})$ are the homogeneous Poisson structures induced by $\mathfrak L$, and the fixed trivializations, as in Proposition \ref{prop:transv}.(2).
\end{enumerate}
\end{proposition}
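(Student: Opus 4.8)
The plan is to follow the pattern of the analogous uniqueness results for Poisson \cite{W1983}, Jacobi \cite{DLM}, and Dirac \cite{DW2008} manifolds: I would produce a local automorphism of the Dirac-Jacobi bundle $(L,\mathfrak L)$ carrying the transverse datum $(x_0,Q)$ onto $(x_0',Q')$, and then conclude by the naturality of the construction in Proposition \ref{prop:transv}. The basic tool is that any section $\alpha\in\Gamma(\mathfrak L)$ yields a derivation $\nabla^{\mathfrak L}_\alpha=\pr_{D}(\alpha)\in\Der L$ whose (possibly time-dependent) flow $\{\Phi_t\}$ is a flow of vector bundle automorphisms of $L$ covering the flow of $\rho_{\mathfrak L}(\alpha)\in\mathfrak X(M)$ (Remark \ref{rem:inf_aut}); since $\blq\alpha,-\brq$ is, infinitesimally, the induced action of $(\Phi_t)_\ast$ on $\Gamma(\D L)$, involutivity of $\Gamma(\mathfrak L)$ gives $(\Phi_t)_\ast\mathfrak L=\mathfrak L$. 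Such an automorphism $F$ sends a transversal at $x_0$ to a transversal at $\underline F(x_0)$ and, by the functoriality of backward images (Proposition \ref{prop:bw} and Example \ref{ex:bw}) together with Examples \ref{ex:Jacobi} and \ref{ex:hom_Poisson} and Corollaries \ref{cor:point} and \ref{cor:point_2}, it intertwines the induced transverse Jacobi (resp.~homogeneous Poisson) structures. It therefore suffices to connect $(x_0,Q)$ and $(x_0',Q')$ by such automorphisms.

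Next I would reduce to a single chart. Fix a smooth path $\gamma\colon[0,1]\to\Oo$ from $x_0$ to $x_0'$. Theorem \ref{theor:local} supplies a normal-form chart around every point of $\Oo$, and by compactness finitely many of them cover $\gamma([0,1])$; so it is enough to prove the following local claim. Let $U$ be such a chart, with coordinates $(x^i,y^a)$, generator $\mu$ of $\Gamma(L|_U)$, and basis $(\alpha_i,\dots)$ of $\Gamma(\mathfrak L|_U)$ as in (\ref{eq:norm_form_lcps}) (resp.~(\ref{eq:norm_form_precont})), so that $\Oo\cap U=\{y=0\}$; in the precontact case take $\mu$ to be a prescribed trivialization of $L$. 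If $p_0,p_1\in\Oo\cap U$ and $Q_0,Q_1\subset U$ are transverse to $\Oo$ at $p_0,p_1$ with $\dim Q_0=\dim Q_1=\operatorname{codim}\Oo$, then there is an automorphism of $(L,\mathfrak L)$ defined near $p_0$ and carrying $(p_0,Q_0)$ onto $(p_1,Q_1)$, respecting $\mu$ in the precontact case. Chaining these automorphisms over the chart cover, with the last one arranged so that the image of $Q$ is exactly $Q'$, then proves the proposition.

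To prove the local claim I would shrink $U$ so that $Q_0=\{x=q_0(y)\}$ and $Q_1=\{x=q_1(y)\}$ are graphs (then $q_0(0),q_1(0)$ are the $x$-coordinates of $p_0,p_1$), and use time-dependent sections $\alpha_t=f^i_t\,\alpha_i\in\Gamma(\mathfrak L)$. By (\ref{eq:norm_form_lcps}) (resp.~(\ref{eq:norm_form_precont})) one has $\rho_{\mathfrak L}(\alpha_t)=f^i_t(\partial_{x^i}+E_i^a\partial_{y^a})$, and, since the functions $E_i^a$ vanish along the characteristic leaf $\Oo\cap U=\{y=0\}$, the rank-$\dim\Oo$ distribution $\langle\partial_{x^i}+E_i^a\partial_{y^a}\rangle$ lies inside $T\Ff_{\mathfrak L}$, is tangent to $\Oo\cap U$ along $\Oo\cap U$, and is transverse to the graphs $Q_t:=\{x=(1-t)q_0(y)+tq_1(y)\}$ near $\Oo\cap U$. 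A Moser-type argument then produces time-dependent functions $f^i_t$ for which $\rho_{\mathfrak L}(\alpha_t)$ generates an isotopy $\psi_t$ with $\psi_t(Q_0)=Q_t$; since $\psi_t$ preserves $\Oo\cap U$ and each $Q_t$, it automatically sends $p_0=Q_0\cap\Oo$ to $p_1=Q_1\cap\Oo$. As $\alpha_t\in\Gamma(\mathfrak L)$, the flow of $\nabla^{\mathfrak L}_{\alpha_t}$ is an automorphism of $(L,\mathfrak L)$ covering $\psi_t$, and in the precontact case $\nabla^{\mathfrak L}_{\alpha_t}=f^i_t(\delta_i+E_i^a\delta_a)$ annihilates $\mu$, so the flow fixes $\mu$; along the contractible set $\gamma([0,1])$ the normal-form generators can be chosen compatibly with a single trivialization, so that the composite automorphism respects the prescribed trivializations.

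I expect the main obstacle to be the Moser-type step. The naive isotopy of transversals is generated by the coordinate fields $\partial_{x^i}$, but off $\Oo$ these need not lie in $T\Ff_{\mathfrak L}$ nor lift to $\mathfrak L$; one must instead realize the isotopy using only the controlled, constant-rank fields $\partial_{x^i}+E_i^a\partial_{y^a}$, which amounts to solving for the $f^i_t$ a linear system of the form ``identity plus a term vanishing along $\Oo$'', hence invertible near $\Oo$, and then verifying that its time-one flow effects exactly the deformation $Q_0\rightsquigarrow Q_1$ and the motion $p_0\mapsto p_1$. Everything else is the bookkeeping assembled in the first two paragraphs.
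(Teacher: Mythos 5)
Your overall strategy (transport $(x_0,Q)$ to $(x_0',Q')$ by flows of the derivations $\pr_{D}(\alpha_i)$ coming from the normal form of Theorem \ref{theor:local}, after chaining charts along a path in $\Oo$) is the same as the paper's, but the step you rely on to conclude is false, and it is precisely the step the paper is careful to avoid. You claim that for $\alpha=(\Delta,\varphi)\in\Gamma(\mathfrak L)$ the flow of $\Delta=\pr_{D}(\alpha)$ preserves $\mathfrak L$ because $\blq\alpha,-\brq$ is the infinitesimal action of that flow. It is not: the infinitesimal action of the flow of $\Delta$ on a section $\beta=(\square,\chi)$ of $\D L$ is $([\Delta,\square],\Ll_\Delta\chi)$, which differs from $\blq\alpha,\beta\brq$ by the term $(0,\,i_\square d_{D}\varphi)$. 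Involutivity therefore only gives $(\Phi_t)_\ast\mathfrak L=\tau_{\omega_t}\mathfrak L$ for a time-dependent gauge $2$-form built from $d_{D}\varphi$, and this equals $\mathfrak L$ only when $d_{D}\varphi$ vanishes on $\wedge^2\Ii_{\mathfrak L}$ --- which fails for the normal-form sections $\alpha_i$, whose $J^1L$-components $F_{ij}\,dx^j\otimes\mu$ (resp.\ $F_{ij}\,dx^j\otimes\mu-F_ij^1\mu$) are not $d_{D}$-closed in general. (The same phenomenon occurs already in Dirac geometry: the flow of $X$ with $(X,i_X\omega)\in\Gamma(\mathfrak L_\omega)$ preserves $\mathfrak L_\omega$ only if $d\,i_X\omega=0$.) Since a gauge transform of a graph-type Dirac-Jacobi structure is in general a different structure, your map $F$ need not intertwine the backward images on $Q$ and $Q'$, and the appeal to naturality collapses.

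The paper's proof never asserts that the flows preserve $\mathfrak L$. Instead it extends the transverse structure to an auxiliary structure defined on the whole normal-form chart $V$ --- the bidifferential operator $J_V$ built from the coefficients $G^{ab},G^a$ in the lcps case, the pair $(\pi_V,Z_V)$ built from $G^{ab},E^a$ in the precontact case --- and reads off from specific components of the Courant-Jacobi tensor, namely $\Upsilon_{\mathfrak L}(\beta^a,\beta^b,\beta^c)=0$ and $\Upsilon_{\mathfrak L}(\beta^a,\beta^b,\alpha_i)=0$ (together with $\Upsilon_{\mathfrak L}(\beta^a,\beta^b,\alpha)=0$ and $\Upsilon_{\mathfrak L}(\beta^a,\alpha,\alpha_i)=0$ in the precontact case), that this auxiliary structure is indeed Jacobi (resp.\ homogeneous Poisson) and that the $\Delta_i$ (resp.\ the $X_i$) are infinitesimal automorphisms of \emph{it}, not of $\mathfrak L$. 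Since $J_V$ (resp.\ $(\pi_V,Z_V)$) restricts to $Q$ and $Q'$ with restrictions $J_Q,J_{Q'}$ (resp.\ the transverse homogeneous Poisson structures), the composed flow is the desired isomorphism. To repair your argument you would either have to carry out this computation, or replace your flows by the generalized flows that combine $\Phi_t$ with the compensating gauge transformations and then check that the gauge part acts trivially on the transverse data --- which amounts to the same computation. Note also that the paper dispenses with your Moser step by choosing the normal-form coordinates so that $Q=\{x^i=0\}$ and $Q'=\{x^i=c^i\}$ from the outset, which the proof of Theorem \ref{theor:local} permits.
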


\begin{proof} If $\dim \Oo = 0$ there is nothing to prove. So, assume $\dim \Oo > 0$. Then we can choose $x_0 \neq x_0'$.
\begin{enumerate}
\item Let $\Oo$ be a lcps leaf. Since $\Oo$ is connected there is no loss of generality if we assume that $x_0, x_0'$ are in the same neighborhood $V$ where $\mathfrak L$ has the form (\ref{eq:norm_form_lcps}). Additionally, it follows from the proof of Theorem \ref{theor:local}, that we can choose the coordinates $(x^i,y^a)$ so that, around $x_0$ and $x_0'$, $Q = \{ x^i = 0 \}$ and $Q' = \{ x^i = c^i \}$, respectively, where the $c^i$'s are constants. Consider the (local) derivations $\Delta_i := \pr_{D} (\alpha_i) = \delta_i + E_i^a \delta_a + E_i \mathbb 1\in \Gamma (DL)$, $i = 1, \ldots, \dim \Oo$. They correspond to (local) infinitesimal automorphisms of $L \to M$. Integrating, and composing their flows if necessary, we can construct a (local) automorphism $F$ of $L \to M$, mapping $L|_Q \to Q$ to $L|_{Q'} \to Q'$. It remains to show that $F : L|_Q \to L|_{Q'}$ is a Jacobi map, i.e.~it identifies $J_Q$ and $J_{Q'}$. To see this, consider the skewsymmetric bidifferential operator $J_V : \wedge^2 J^1 L|_V \to L|_V$ given by:
\[
J_V \left(j^1 \lambda, j^1 \nu \right)= \left( 2G^{ab} \frac{\partial f}{\partial y^a}   \frac{\partial g}{\partial y^b}  +G^a \left(f \frac{\partial g}{\partial y^a} - g \frac{\partial f}{\partial y^a} \right) \right) \mu,
\]
where $\lambda = f \mu$ and $\nu = g \mu$ for some local functions $f,g$. A direct computation shows that the conditions $\Upsilon_{\mathfrak L} (\beta^a, \beta^b, \beta^c) = 0$ are equivalent to $J_V$ being a Jacobi structure, and the conditions $\Upsilon_{\mathfrak L} (\beta^a, \beta^b, \alpha_i) = 0$ are equivalent to $\Delta_i$ being an infinitesimal Jacobi automorphism. It follows that $F : L|_V \to L|_V$ is a Jacobi map. Additionally $J_V$ restricts to both $Q$ and $Q'$ and the restrictions agree with $J_Q$ and $J_{Q'}$ respectively. Hence $F : L|_Q \to L|_{Q'}$ is a Jacobi map as well. 
\item First of all recall that, according to Dazord, Lichnerowicz and Marle \cite{DLM}, an isomorphism of homogeneous Poisson manifolds $(M,\pi, Z)$, $(M, \pi', Z')$ is a Poisson isomorphism $F : (M,\pi) \to (M', \pi')$ such that $F_\ast Z - Z'$ is a Hamiltonian vector field, i.e.~$F_\ast Z - Z' = (\pi')^\sharp (dh)$ for some function $h \in C^\infty (M')$. 

Now, go back to the statement and let $\Oo$ be a precontact leaf. Fix once for all a nowhere zero section $\mu$ of $L$ in a connected neighborhood containing both $x_0$ and $x_0'$. This is always possible (e.g., choose an embedded curve $\gamma$ connecting $x_0$ and $x_0'$ and a nowhere zero section $\mu'$ of $L$ along $\gamma$. Now extend $\mu'$ to a global section $\mu$ of $L$. It follows that $\mu$ is nowhere zero in a tubular neighborhood of $\gamma$). Similarly as above, we can restrict to the case when $x_0,x_0'$ are in the same neighborhood $V$ where $\mathfrak L$ has the form (\ref{eq:norm_form_precont}), and, additionally, $Q = \{ x^i = 0 \}, Q' = \{ x^i = c^i \}$, for some constants $c^i$. Consider the vector fields $X_i = (\sigma \circ \pr_D)(\alpha_i) = \frac{\partial}{\partial x^i} + E^a_i \frac{\partial}{\partial y^a}$, $i = 1, \ldots \dim \Oo$. Integrating them, and composing their flows if necessary, we can construct a (local) diffeomorphism $F : V \to V$ mapping $Q$ to $Q'$. It remains to show that $F: Q \to Q'$ is an isomorphism of homogeneous Poisson manifolds, i.e.~it identifies $\pi_Q$ and $\pi_{Q'}$, and, additionally, $F_\ast Z_Q -Z_{Q'} = \pi_{Q'}^\sharp (dh)$ for some function $h \in C^\infty (Q')$. To see this, consider the bivector $\pi_V$ and the vector field $Z_V$ on $V$ given by
\[
\pi_V = G^{ab} \frac{\partial}{\partial y^a} \wedge \frac{\partial}{\partial y^b}, \quad Z_V = - E^a \frac{\partial}{\partial y^a}.
\]
A direct computation shows that the conditions $\Upsilon_{\mathfrak L} (\beta^a, \beta^b, \beta^c) = 0$ are equivalent to $\pi_V$ being a Poisson bivector, and the conditions $\Upsilon_{\mathfrak L} (\beta^a, \beta^b, \alpha) = 0$ are quivalent to $\Ll_{Z_V} \pi_V = - \pi_V$. Hence $(\pi_V, Z_V)$ is a homogeneous Poisson structure on $V$. Additionally, $\Upsilon_{\mathfrak L} (\beta^a, \beta^b, \alpha_i) = 0$ are equivalent to $X_i$ being an infinitesimal Poisson isomorphism, and conditions $\Upsilon_{\mathfrak L} (\beta^a, \alpha, \alpha_i) = 0$ are equivalent to $\Ll_{X_i} Z_V = \pi_V^\sharp (dF_i)$. It follows that the flow of $X_i$ consists of automorphisms of the homogeneous Poisson manifold $(V, \pi_V, Z_V)$ so that $F : V \to V$ itself is an automorphism of $(V, \pi_V, Z_V)$. Finally, both $\pi_V$ and $Z_V$ restrict to $Q$ (resp.~$Q'$) and their restrictions agree with $\pi_Q$ and $Z_Q$ (resp.~$\pi_{Q'}$ and $Z_{Q'}$). Hence $F : Q \to Q'$ is an isomorphism of homogeneous Poisson manifolds.\vspace{-2em}
\end{enumerate}
\end{proof}

\section{Null distributions, admissible sections and admissible functions}\label{sec:null_distr}
\subsection{Null distributions of a Dirac-Jacobi bundle}\label{subsec:null_distr}
Let $L \to M$ be a line bundle and let $\theta : TM \to L$ be an $L$-valued precontact form on $M$. Denote by $K$ the null distribution of $\theta$ (see Remark \ref{rem:K_omega}). We make the following
\begin{assumption}\label{ass:simple}
Distribution $K$ is simple and $L$ is the pull-back, along the projection $\pi : M \to M_{\mathrm{red}}$, of a line bundle $L_{\mathrm{red}} \to M_{\mathrm{red}}$ over the leaf space $M_{\mathrm{red}}$ of $K$.
\end{assumption}
If, additionally, a certain cohomology class vanishes (see below), then $\theta$ descends to a unique contact form $ \theta_{\mathrm{red}} : T M_{\mathrm{red}} \to L_{\mathrm{red}}$ such that $\theta = \pi^\ast \theta_{\mathrm{red}}$. Contact manifold $(M_{\mathrm{red}}, \theta_{\mathrm{red}})$ is the \emph{contact reduction} $(M, \theta)$. All the above assumptions are always valid locally when $K$ is regular. So, morally, precontact forms are pull-backs of contact forms along submersions.

Similarly, let $(L, \nabla, \underline \omega)$ be a lcps structure on a smooth manifold $M$. Denote by $K := \ker \underline \omega{}_\flat$ the null distribution of $\underline \omega$. Make Assumption \ref{ass:simple} as above. If a certain cohomology class vanishes, then $\nabla = \pi^\ast \nabla_{\mathrm{red}} $, and $\underline\omega = \pi^\ast \underline \omega{}_{\mathrm{red}} $ for a unique lcs structure $(L_{\mathrm{red}}, \nabla_{\mathrm{red}}, {\underline \omega}{}_{\mathrm{red}})$ on $M_{\mathrm{red}}$. The lcs manifold $(M_{\mathrm{red}}, L_{\mathrm{red}}, \nabla_{\mathrm{red}}, \underline \omega{}_{\mathrm{red}})$ is the \emph{lcs reduction} of $(M, L, \nabla, \underline \omega)$. Again all the above assumptions are always valid locally when $K$ is regular. So, morally, lcps structures are pull-backs of lcs structures along submersions.

We can repeat the above discussion for each leaf of a precontact/lcps foliation and conclude that, morally, precontact/lcps foliations are pull-backs of contact/lcs foliations along submersions. In their turn precontact/lcps foliations are equivalent to Dirac-Jacobi bundles (see Remark \ref{rem:char_fol}), and contact/lcs foliations are equivalent to Jacobi bundles \cite{Kiri1976}. So, morally, Dirac-Jacobi bundles are pull-backs of Jacobi bundles along submersions. In this section, we clarify this claim (see, e.g., \cite[Section~4.3]{B2013} for the analogous situation in Dirac geometry).

Let $(L, \mathfrak L)$ be a Dirac-Jacobi bundle over a smooth manifold $M$. There are distinguished subdistributions $E_{\mathfrak L}  \subset \Ii_{\mathfrak L} \subset D L$, and $K_{\mathfrak L}  \subset T \Ff_{\mathfrak L}  \subset TM$, defined by
\[
E_{\mathfrak L}  := \mathfrak L  \cap D L, \quad \text{and} \quad K_{\mathfrak L}  := \sigma (E_{\mathfrak L} ).
\]
We call $E_{\mathfrak L} $ and $K_{\mathfrak L} $ the \emph{null der-distribution} and the \emph{null distribution} of $(L, \mathfrak L)$, respectively. This terminology is motivated by the following remark, which immediately follows from (\ref{eq:fol}). Let $\Oo$ be a characteristic leaf of $(L, \mathfrak L)$. If $\Oo$ is precontact, then $E_{\mathfrak L} |_\Oo = K_{\omega_\Oo}$ is the null distribution of the $2$-cocycle $\omega_\Oo $ in $( \Omega^\bullet_{L|_\Oo}, d_{D})$ corresponding to the precontact form $\theta_\Oo$ on $\Oo$. Hence $K_{\mathfrak L} |_\Oo$ is the null distribution of $\theta_\Oo$ (see Remark \ref{rem:K_omega}). On the other hand, if $\Oo$ is lcps, then $E_{\mathfrak L} |_\Oo = K_{\omega_\Oo} \cap \operatorname{im} \nabla^\Oo$, and $K_{\mathfrak L} |_\Oo = \ker (\underline \omega{}_\Oo)_\flat$ is the null distribution of the lcps structure $(L|_\Oo, \nabla^\Oo, \underline \omega_\Oo)$ on $\Oo$.

The following remark provides a way how to characterize Dirac-Jacobi structures in Examples \ref{ex:2form}-\ref{ex:Dirac}. 

\begin{remark}\label{rem:list}
It is straightforward to check the following.
\begin{itemize}
\item When $ \mathfrak L  = \mathfrak L _\omega$ for some $2$-cocycle $\omega$ in $(\Omega_L^\bullet, d_{D})$ (Example \ref{ex:2form}), then $\Ff_{\mathfrak L} $ consists of just one precontact leaf $\Oo = M$, and $\omega_\Oo = \omega$. Moreover, $E_{\mathfrak L}  = K_\omega$, and $K_{\mathfrak L} $ is the null distribution of the $L$-valued precontact form corresponding to $\omega$.
\item When $ \mathfrak L  = \mathfrak L _J$ for some Jacobi bracket $J : \Gamma (L) \times \Gamma (L) \to \Gamma (L)$ (Example \ref{ex:Jacobi}), then $\Ff_{\mathfrak L} $ is the contact/lcs characteristic foliation of the Jacobi bundle $(L,J)$. Hence $E_{\mathfrak L}  = 0$ and $K_{\mathfrak L}  = 0$.
\item When $ \mathfrak L  = V \oplus V^0$ for some involutive vector subbundle $V \subset D L$ (Example \ref{ex:inv}), then $V$ is a Lie algebroid and $\Ff_{\mathfrak L} $ is its characteristic foliation. Moreover, $\Ii_{\mathfrak L}|_\Oo = V|_\Oo$, and $\omega_\Oo = 0$ for every characteristic leaf $\Oo$. Hence $E_{\mathfrak L}  = V$ and $K_{\mathfrak L}  = T \Ff_{\mathfrak L} $. In particular, if $V$ is the image of a flat connection $\nabla : TM \to D L$ in $L$, then $\Ff_{\mathfrak L} $ consists of just one lcps leaf $\Oo = M$ and $\Ii_{\mathfrak L} = \operatorname{im} \nabla$. On the other hand, if $ \mathfrak L  = \mathfrak L _{\mathbb{1}}$, then $\Ff_{\mathfrak L} $ is a precontact foliation with zero dimensional leaves.
\item When $ \mathfrak L  = \mathfrak L _{\nabla, \underline \omega}$ for a lcps structure $(L, \nabla, \underline\omega)$ (Example \ref{ex:lcps}), then $\Ff_{\mathfrak L}  $ consists of just one lcps leaf $\Oo = M$, $\Ii_{\mathfrak L} = \operatorname{im} \nabla$, and $\omega_\Oo = \sigma^\ast \underline \omega$. Moreover $K_{\mathfrak L} $ is the null distribution of $\underline \omega$ and $E_{\mathfrak L} $ is the image via $\nabla$ of $K_{\mathfrak L} $. 
\item When $L = \R_M$ and $ \mathfrak L  = \mathfrak L_{(\pi, Z)}$ for some homogeneous Poisson structure $(\pi, Z)$ (Example \ref{ex:hom_Poisson}), then $T\Ff_{\mathfrak L} $ is spanned by $Z$ and the Hamiltonian vector fields with respect to $\pi$. In particular, the characteristic leaves are the flowouts of the symplectic leaves of $\pi$ along $Z$. For every symplectic leaf $\Pp$ of $\pi$, the vector field $Z$ is either everywhere tangent or everywhere transversal to $\Pp$. Accordingly, there are two kinds of characteristic leaves $\Oo$ of $\mathfrak L_{(\pi,Z)}$. Either $\Oo = \Pp$ for some symplectic leaf $\Pp$ of $\pi$ such that $Z$ is tangent to $\Pp$, or $\Oo$ is a disjoint union of a one-parameter family of symplectic leaves of $\pi$, and $Z$ is everywhere transversal to the symplectic leaves in the family (in particular, $Z|_\Oo$ is everywhere non null). In the first case, $\Oo$ is a precontact leaf (with respect to $\mathfrak L_{(\pi, Z)}$) and its precontact form $\theta : T\Oo \to \R_\Oo$ is $\theta = i_Z \omega$, where $\omega$ is the symplectic structure induced by $\pi$ on $\Oo$. In the second case, $\Oo$ is a lcps leaf. Specifically, $\nabla^\Oo$ is the connection in $\R_\Oo$ whose connection $1$-form $\eta := \nabla^\Oo 1$ is uniquely determined by the conditions 1) $\eta$ vanishes on symplectic leaves of $\pi$, and 2) $\eta (Z) = 1$. Moreover, the lcps form $\underline\omega{}_\Oo : \wedge^2 T \Oo \to \R_\Oo$, is uniquely determined by the conditions 1) $\underline\omega{}_\Oo$ agrees with the symplectic structures induced by $\pi$ on the symplectic leaves of $\pi$, and 2) $i_Z \underline\omega{}_\Oo = 0$. Finally $K_{\mathfrak L}$ is spanned by $Z$.
\item When $L = \R_M$ and $ \mathfrak L  = \widehat{\mathcal{L}}$ for some standard Dirac structure $\mathcal L$ (Example \ref{ex:Dirac}), then $\Ff_{\mathfrak L} $ is the presymplectic foliation of $\mathcal L$ \cite[Section~4.2]{B2013}. This means that all leaves $\Oo$ are lcps. In more details, $\nabla^\Oo$ is the trivial connection in $\R_\Oo$, $\underline\omega{}_\Oo$ is the presymplectic structure on $\Oo$ induced by $ \mathfrak L $, and $K_{\mathfrak L} |_\Oo$ is its null distribution, for every leaf $\Oo$.
\end{itemize}
\end{remark}

The rank of the null der-distribution $E_{\mathfrak L} $ is an upper-semicontinuous function. Hence, if $E_{\mathfrak L} $ is smooth, then it is also regular. On the other hand, the null distribution $K_{\mathfrak L} $ may well be smooth, without being regular, as Example \ref{ex:1} above shows. In that case $E_{\mathfrak L}  = \Ii_{\mathfrak L} = V$ which is smooth, while $K_{\mathfrak L}  = \sigma (V) = T \Ff_{\mathfrak L}  $ which is smooth but not regular. So, even when $E_{\mathfrak L} $ is a vector bundle, $K_{\mathfrak L} $ may fail to be regular. However, when $K_{\mathfrak L} $ is regular, then $E_{\mathfrak L} $ is necessarily regular, according to the following

\begin{proposition}\label{prop:KD}
Let $(L, \mathfrak L)$ be a Dirac-Jacobi bundle over a smooth manifold $M$. If the null distribution $K_{\mathfrak L} $ is regular, so is the null der-distribution $E_{\mathfrak L} $. In particular, both $K_{\mathfrak L} $ and $E_{\mathfrak L} $ are vector bundles over each connected component of $M$.
\end{proposition}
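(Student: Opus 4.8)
The strategy is to combine the elementary fact that $\sigma\colon DL\to TM$ has a one-dimensional kernel with the two normal forms of Theorem~\ref{theor:local} and a parity argument. First, a remark that uses no normal form. Since $\ker\sigma=\langle\mathbb 1\rangle$ is one-dimensional and $K_{\mathfrak L}=\sigma(E_{\mathfrak L})$ by definition, at every point $x\in M$ the restriction $\sigma\colon E_{\mathfrak L,x}\to K_{\mathfrak L,x}$ is surjective with kernel of dimension $0$ or $1$; hence $\rk_x K_{\mathfrak L}\le\rk_x E_{\mathfrak L}\le\rk_x K_{\mathfrak L}+1$. Consequently, if $\rk K_{\mathfrak L}$ is locally constant, then near each point $\rk E_{\mathfrak L}$ takes at most two consecutive values, and it will suffice to exclude the jump.

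Now fix $x_0\in M$, let $\Oo$ be the characteristic leaf through $x_0$, put $r:=\dim\Oo$, and pass to a connected neighbourhood $U$ of $x_0$ on which $\rk K_{\mathfrak L}$ is constant and on which $\Gamma(\mathfrak L)$ has a frame of the form (\ref{eq:norm_form_lcps}) or (\ref{eq:norm_form_precont}) (Theorem~\ref{theor:local}). In either case one computes $E_{\mathfrak L}|_U=\mathfrak L|_U\cap DL$ directly from the frame: a combination of the frame sections lies in $DL$ exactly when its $J^1L$-component vanishes, and solving the resulting linear system (using $F_{ij}+F_{ji}=0$ and $G^{ab}+G^{ba}=0$) gives the following. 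In the lcps case $E_{\mathfrak L,x}$ is spanned by the $\sum_i c^i\,\pr_{D}(\alpha_i)|_x$ with $(c^i)$ in the kernel of the skew-symmetric matrix $\|F_{ij}(x)\|$, so that $\rk_x E_{\mathfrak L}=r-\rk\|F_{ij}(x)\|$; moreover $\sigma$ is injective on $E_{\mathfrak L}|_U$, because the vector fields $\sigma(\pr_{D}(\alpha_i))=\partial/\partial x^i+E_i^a\,\partial/\partial y^a$ are pointwise linearly independent, whence $\rk_x E_{\mathfrak L}=\rk_x K_{\mathfrak L}$ on $U$ and we are done in this case. In the precontact case the same computation yields $\rk_x E_{\mathfrak L}=(r+1)-\rk\widetilde{\mathbf F}_x$, where $\widetilde{\mathbf F}_x$ is the $(r+1)\times(r+1)$ matrix assembled from $\|F_{ij}(x)\|$ and the column $\|F_i(x)\|$ which, up to negating one row, is skew-symmetric.

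Since a skew-symmetric matrix has even rank, in the precontact case $\rk_x E_{\mathfrak L}$ has the constant parity of $r+1$ throughout $U$. Combined with the first remark, on $U$ the value $\rk_x E_{\mathfrak L}$ lies in $\{\rk K_{\mathfrak L},\rk K_{\mathfrak L}+1\}$ and is of fixed parity; as these two integers have opposite parities, only one of them is admissible, so $\rk E_{\mathfrak L}$ is constant on $U$. Thus $\rk E_{\mathfrak L}$ is locally constant on all of $M$. Finally, $E_{\mathfrak L}=\mathfrak L\cap DL$ is an intersection of vector subbundles, hence a distribution with upper semicontinuous rank which is smooth if and only if its rank is locally constant; having just shown the latter, $E_{\mathfrak L}$ is a smooth, regular distribution, and a smooth regular distribution over a connected manifold is a vector subbundle --- as is $K_{\mathfrak L}$ by hypothesis.

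The only genuine work is the bookkeeping in the precontact case: extracting from (\ref{eq:norm_form_precont}) the explicit description of $E_{\mathfrak L}$ and recognising the coefficient matrix of its defining linear system as skew-symmetric up to a sign on one row. Everything else is formal, and in the lcps case the parity argument is not even needed, as $\sigma$ is visibly injective on $E_{\mathfrak L}$ in the normal form.
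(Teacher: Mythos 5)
Your proof is correct and follows the same overall route as the paper: localize via Theorem \ref{theor:local}, read off $E_{\mathfrak L}=\mathfrak L\cap DL$ from the normal form as the kernel of a skew-symmetric system, and exploit the evenness of the rank of a skew-symmetric matrix; the lcps case is handled identically (there $\sigma$ is injective on $E_{\mathfrak L}$ in the normal form, so $\rk E_{\mathfrak L}=\rk K_{\mathfrak L}$ outright and no parity is needed). The one place you genuinely diverge is the precontact case. The paper computes $\rk_x K_{\mathfrak L}$ itself from (\ref{eq:norm_form_precont}), splitting into cases according to whether $\|E^a(x)\|$ and $\|F_i(x)\|$ vanish (obtaining $\rk_x K_{\mathfrak L}=\dim\Oo+1-\rk_x\mathbb F$ in Cases I and IIa, and $\rk_x K_{\mathfrak L}=\dim\Oo-\rk_x\mathbb F$ in Case IIb), and then invokes the evenness of $\rk\mathbb F$ to show the two regimes cannot coexist on a connected set where $\rk K_{\mathfrak L}$ is constant. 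You bypass that case analysis entirely by combining the sandwich $\rk_x K_{\mathfrak L}\le\rk_x E_{\mathfrak L}\le\rk_x K_{\mathfrak L}+1$ (which holds because $\ker\sigma=\langle\mathbb 1\rangle$ is a line) with the fixed parity of $\rk_x E_{\mathfrak L}=(\dim\Oo+1)-\rk\widetilde{\mathbf F}_x$: of two consecutive integers only one has the admissible parity. This is a little cleaner, since it never needs to relate $\rk K_{\mathfrak L}$ to the matrix $\mathbb F$, only to know it is locally constant. Your bookkeeping is also consistent with the paper's: the block $\mathbb F$ appearing in the representative matrix in the proof of Theorem \ref{theor:local} is your $\widetilde{\mathbf F}$ with one row negated and is honestly skew-symmetric (the relation $\mathbb F+\mathbb F^t=0$ is recorded there), which is precisely the evenness your argument requires.
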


\begin{proof}
The statement follows from Theorem \ref{theor:local}. Recall that we defined $\Ii_{\mathfrak L} := \pr_{D} \mathfrak L$, and let $\omega : \wedge^2 \Ii_{\mathfrak L} \to L$, be the bilinear form induced by $ \mathfrak L $. Notice preliminarily that
\begin{equation}\label{eq:E_omega}
 E_{\mathfrak L}  = \{ \Delta \in \Ii_{\mathfrak L} :  i_\Delta \omega  = 0 \}. \\
\end{equation}
Now, first prove that $\rk E_{\mathfrak L} $ is constant around every point of a precontact characteristic leaf $\Oo$. Thus, let $x_0 \in \Oo$ and let $x$ be a point in a neighborhood $U$ of $x_0$ where $ \mathfrak L $ has the form (\ref{eq:norm_form_precont}) and, additionally, the rank of $K_{\mathfrak L} $ is constant in $U$. An easy computation shows that
\begin{equation}\label{eq:Dcap}
(E_{\mathfrak L} )_x = \{ X^i (\delta_i + E^a_i \delta_a) + X(\mathbb 1 + E^a \delta_a) : \mathbb{F}\cdot \mathbf{X} = 0\}
\end{equation}
where we denote $\mathbf X := \| X^i, X \|^t $. Hence,
\[
\rk_x E_{\mathfrak L}  = \dim \Oo + 1 - \rk_x \mathbb F.
\]
It follows from (\ref{eq:Dcap}) that
\begin{equation}\label{eq:sigma_Dcap}
(K_{\mathfrak L} )_x = \left\{ X^i \left( \frac{\partial}{\partial x^i} + E^a_i \frac{\partial}{\partial y^a} \right) + X E^a \frac{\partial}{\partial y^a} : \mathbb{F}\cdot \mathbf{X} = 0\right\}.
\end{equation}
Now, distinguish two cases:

\emph{Case I: $\mathbf{E} := \|E^a (x)\| \neq 0 $}. In this case, 
\[
\frac{\partial}{\partial x^i} + E^a_i \frac{\partial}{\partial y^a}, \quad \text{and} \quad E^a \frac{\partial}{\partial y^a}
\]
are linearly independent. Hence 
\[
\rk_x K_{\mathfrak L}  = \dim \Oo + 1 - \rk_x \mathbb F.
\]

\emph{Case II: $\mathbf{E} := \|E^a (x) \| = 0 $}. In this case
\[
(K_{\mathfrak L} )_x = \left\{ X^i \left( \frac{\partial}{\partial x^i} + E^a_i \frac{\partial}{\partial y^a} \right) : \mathbb{F}\cdot \mathbf{X} = 0\right\}.
\]
and there are two possibilities:
\begin{itemize}
\item\emph{Case IIa: $\mathbf{E} = 0 $ and $\mathbf{F} := \|F_i (x)\| \neq 0$}. Then the kernel of projection $\mathbf{X} \mapsto \|X^i\|$ does not intersect $\ker \mathbf F$ and 
\[
\rk_x K_{\mathfrak L}  = \dim \Oo + 1 - \rk_x \mathbb F,
\]
again. 
\item\emph{Case IIb: $\mathbf{E} = 0 $ and $\mathbf{F} := \|F_i (x)\| = 0$}. Then  $\ker \mathbf F$ contains the kernel of projection $\mathbf{X} \mapsto \|X^i\|$ and 
\[
\rk_x K_{\mathfrak L}  = \dim \Oo - \rk_x \mathbb F.
\]
\end{itemize}
Since $\mathbb F$ is a skew-symmetric matrix, its rank is even. So \emph{Case IIb} (on one side) and \emph{Case I or IIa} (on the other side) cannot occur simultaneously for two different points $x,x^\prime$ in $U$, if $\rk K_{\mathfrak L} $ is to be constant in $U$. Hence either
\[
\rk_x K_{\mathfrak L}  = \dim \Oo +1 - \rk_x \mathbb F = \rk_x E_{\mathfrak L} ,
\]
or
\[
\rk_x K_{\mathfrak L}  = \dim \Oo - \rk_x \mathbb F = \rk_x E_{\mathfrak L}  -1,
\]
for all $x \in U$. In any case, $\rk E_{\mathfrak L} $ is constant in $U$.

It remains to prove that $\rk E_{\mathfrak L} $ is constant around every point of a lcps chracteristic leaf $\Oo$. This case is simpler. Let $x_0 \in \Oo$ and let $x$ be a point in a neighborhood of $x_0$ where $ \mathfrak L $ has the form (\ref{eq:norm_form_lcps}), and, additionally, $\rk K_{\mathfrak L} $ is constant in $U$. Then 
\[
(E_{\mathfrak L} )_x = \{ X^i (\delta_i + E^a_i \delta_a + E_i \mathbb 1) : \mathbb F \cdot \mathbf X = 0 \},
\]
where $\mathbf X = \| X^i \|$. Hence $\rk_x E_{\mathfrak L}  = \dim \Oo - \rk_x \mathbb F$. On the other hand,
\[
(K_{\mathfrak L} )_x = \left\{ X^i \left(\frac{\partial}{\partial x^i} + E^a_i \frac{\partial}{\partial y^a} \right) : \mathbb F \cdot \mathbf X = 0 \right\}.
\] 
Hence $\rk_x K_{\mathfrak L}  = \dim \Oo - \rk_x \mathbb F = \rk_x E_{\mathfrak L} $. This concludes the proof.

\end{proof}

\begin{remark}\label{rem:KD}
Suppose that the null distribution $K_{\mathfrak L} $ is regular, so that the null der-distribution $E_{\mathfrak L} $ is regular as well. It is easy to see that \emph{Case IIb} in the proof of Proposition \ref{prop:KD} occurs precisely when $(\mathbb 1, 0) \in \Gamma (\mathfrak L )$ (use $\mathbf E = \mathbf F = 0$ in (\ref{eq:norm_form_precont})). Hence  the same proof shows that, when $K_{\mathfrak L} $ is regular, (in each connected component of $M$) there are only two possibilities:
\begin{itemize}
\item $(\mathbb 1_x, 0) \notin \mathfrak L_x$ for any $x \in M$, and $\rk E_{\mathfrak L}  = \rk K_{\mathfrak L} $, or
\item there are only precontact leaves, $(\mathbb 1, 0) \in \Gamma (\mathfrak L )$, and $\rk E_{\mathfrak L}  =\linebreak \rk K_{\mathfrak L}  + 1$.
\end{itemize}
When the second case occurs, then $\Ii_{\mathfrak L} \subset T^\ast M \otimes L = \langle \mathbb 1 \rangle^0$.
\end{remark}

\subsection{Jacobi reduction of Dirac-Jacobi bundles}\label{subsec:adm}
Let $(L, \mathfrak L)$ be a Dirac-Jacobi bundle over a smooth manifold $M$. In the following, we denote by $J_1 L \to M$ the bundle of first order differential operators $L \to \R_M$. It is the dual vector bundle of $J^1 L$, and $J_1 L \otimes L = D L$. Moreover, denote by $\sigma^\ast : T^\ast M \to (D L)^\ast = J^1 L \otimes L^\ast$ the dual map of the symbol $\sigma : D L \to TM$. In other words, $\sigma^\ast$ is obtained from the embedding $T^\ast M \otimes L \INTO J^1 L$ tensoring by $L^\ast$.
\begin{definition}
A (possibly local) section $\lambda \in \Gamma (L)$ is \emph{admissible} for the Dirac-Jacobi structure $ \mathfrak L $ if there exists $\Delta \in \Der L$ such that $(\Delta, j^1 \lambda) \in \Gamma (\mathfrak L) $. A derivation $\Delta$ like that is called \emph{Hamiltonian}. The space of admissible sections is denoted by $\Gamma_{\mathrm{adm}} (L)$. Similarly, a function $f \in C^\infty (M)$ is \emph{admissible} for $ \mathfrak L $ if there exists $F \in \Gamma (J_1 L)$ such that $(F, \sigma^\ast (df)) \in \Gamma (\mathfrak L  \otimes L^\ast) \subset \D L \otimes L^\ast = J_1 L \oplus (D L)^\ast$. A differential operator $F$ like that is called \emph{Hamiltonian}. The space of admissible functions is denoted by $C^\infty_{\mathrm{adm}} (M)$.
\end{definition}

\begin{remark}
The Hamiltonian derivation (resp.~differential operator) associated to an admissible section (resp.~function) is not uniquely determined. Namely, it can be changed by adding any (smooth) section of $E_{\mathfrak L} $ (resp.~$K_{\mathfrak L} $).
\end{remark}

\begin{proposition}\label{prop:Jac_mod}
The pair $(C^\infty_{\mathrm{adm}}(M), \Gamma_{\mathrm{adm}} (L))$ is a \emph{Jacobi module} in the sense that
\begin{enumerate}
\item $C^\infty_{\mathrm{adm}}(M)$ is a commutative, associative, unital algebra (actually a subalgebra in $C^\infty (M)$), and $\Gamma_{\mathrm{adm}} (L)$ is a $C^\infty_{\mathrm{adm}}(M)$-module,
\item $\Gamma_{\mathrm{adm}} (L)$ is a Lie algebra and $C^\infty_{\mathrm{adm}}(M)$ is a $\Gamma_{\mathrm{adm}} (L)$-module,
\item the action $\lambda \mapsto X_\lambda$ of $\Gamma_{\mathrm{adm}} (L)$ on $C^\infty_{\mathrm{adm}}(M)$ satisfies
\begin{equation}\label{eq:adm1}
X_\lambda (fg) = f X_\lambda (g) + g X_\lambda (f)
\end{equation}
for all $\lambda \!\in\! \Gamma_{\mathrm{adm}} (L)$, and $f,g \!\in\! C^\infty_{\mathrm{adm}}(M)$, i.e.~$\Gamma_{\mathrm{adm}} (L)$ acts on $C^\infty_{\mathrm{adm}}(M)$ by derivations,
\item the Lie bracket $\{-,-\}$ on $\Gamma_{\mathrm{adm}} (L)$ satisfies
\begin{equation}\label{eq:adm2}
\{ \lambda , f \mu \} = X_\lambda (f) \mu + f \{ \lambda , \mu \},
\end{equation}
for all $\lambda, \mu \in \Gamma_{\mathrm{adm}} (L)$, and $f \in C^\infty_{\mathrm{adm}}(M)$, i.e.~$\{-,-\}$ is a first order, differential operator with scalar-type symbol in each entry.
\end{enumerate}
\end{proposition}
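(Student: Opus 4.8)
The plan is to carry everything with \emph{explicit} Hamiltonians, using isotropy of $\mathfrak L$ (and of $\mathfrak L$ paired against its $L^\ast$-twist $\mathfrak L\otimes L^\ast$) for all well-definedness and skew-symmetry statements, and involutivity of $\mathfrak L$ together with $d_{D}^2=0$ for all the bracket identities. For admissible $\lambda,\mu\in\Gamma(L)$ with Hamiltonian derivations $\Delta_\lambda,\Delta_\mu$ the bracket will be $\{\lambda,\mu\}:=\Delta_\lambda(\mu)$, and for admissible $\lambda$ the operator acting on $C^\infty_{\mathrm{adm}}(M)$ will be $X_\lambda:=\sigma(\Delta_\lambda)$. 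Part (1) is the easy half: the constant $1$ is admissible with Hamiltonian $0$ since $(0,0)\in\mathfrak L\otimes L^\ast$; if $f,g$ are admissible with Hamiltonian operators $F,G$ then $f+g$ and $fg$ are admissible with Hamiltonians $F+G$ and $gF+fG$, because $\sigma^\ast\circ d$ is a derivation and $\mathfrak L\otimes L^\ast$ is a $C^\infty(M)$-submodule of $\D L\otimes L^\ast$; and if $\lambda$ is admissible then $f\lambda$ is admissible with Hamiltonian $f\Delta_\lambda+F\otimes\lambda$, since $j^1(f\lambda)=f\,j^1\lambda+(\sigma^\ast df)\otimes\lambda$ and $(F,\sigma^\ast df)\otimes\lambda\in\Gamma(\mathfrak L)$. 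Hence $C^\infty_{\mathrm{adm}}(M)$ is a unital subalgebra of $C^\infty(M)$ and $\Gamma_{\mathrm{adm}}(L)$ a submodule of $\Gamma(L)$.

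For the Lie algebra structure I would use the Cartan calculus on $DL$ together with $d_{D}j^1=d_{D}^2=0$ and $\Ll_{\Delta_\lambda}j^1\mu=d_{D}\langle\Delta_\lambda,j^1\mu\rangle=j^1(\Delta_\lambda\mu)$ to compute $\blq(\Delta_\lambda,j^1\lambda),(\Delta_\mu,j^1\mu)\brq=([\Delta_\lambda,\Delta_\mu],\,j^1(\Delta_\lambda\mu))$; by involutivity this lies in $\Gamma(\mathfrak L)$, so $\{\lambda,\mu\}=\Delta_\lambda(\mu)$ is admissible with Hamiltonian $[\Delta_\lambda,\Delta_\mu]$. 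Isotropy of $\mathfrak L$ then makes $\{\lambda,\mu\}$ independent of the chosen Hamiltonians (replacing $\Delta_\lambda$ by $\Delta_\lambda+\epsilon$ with $\epsilon\in E_{\mathfrak L}$ changes $\Delta_\lambda\mu$ by $\epsilon(\mu)=\bla(\epsilon,0),(\Delta_\mu,j^1\mu)\bra=0$) and skew-symmetric ($\Delta_\lambda\mu+\Delta_\mu\lambda=\bla(\Delta_\lambda,j^1\lambda),(\Delta_\mu,j^1\mu)\bra=0$; in particular $\Delta_\lambda\lambda=0$). The Jacobi identity is then formal: since $[\Delta_\lambda,\Delta_\mu]$ is a Hamiltonian for $\{\lambda,\mu\}$ and $\{\mu,\nu\},\{\lambda,\nu\}$ are admissible, $\{\{\lambda,\mu\},\nu\}=[\Delta_\lambda,\Delta_\mu](\nu)=\Delta_\lambda(\Delta_\mu\nu)-\Delta_\mu(\Delta_\lambda\nu)=\{\lambda,\{\mu,\nu\}\}-\{\mu,\{\lambda,\nu\}\}$.

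For (2)--(4) set $X_\lambda:=\sigma(\Delta_\lambda)$; it is a priori defined only modulo $K_{\mathfrak L}$, but for admissible $f$ and $\epsilon\in E_{\mathfrak L}$ one has $\sigma(\epsilon)(f)=\langle\epsilon,\sigma^\ast df\rangle=\bla(\epsilon,0),(F,\sigma^\ast df)\bra=0$ by isotropy of $\mathfrak L$ against $\mathfrak L\otimes L^\ast$, so $X_\lambda$ acts unambiguously on $C^\infty_{\mathrm{adm}}(M)$, and $X_{\{\lambda,\mu\}}=[X_\lambda,X_\mu]$ because $\sigma$ is a Lie algebroid morphism. The crux is that $X_\lambda(f)$ is again admissible: I would fix a local nowhere-zero $\mu_0\in\Gamma(L)$, rewrite admissibility of $f$ as $(\square_f,\psi_f)\in\Gamma(\mathfrak L)$ with $\psi_f:=j^1(f\mu_0)-f\,j^1\mu_0=(\sigma^\ast df)\otimes\mu_0$, put $\Delta_\lambda(\mu_0)=:h_\lambda\mu_0$, and compute via the Cartan calculus $\blq(\Delta_\lambda,j^1\lambda),(\square_f,\psi_f)\brq=([\Delta_\lambda,\square_f],\,(\sigma^\ast d(X_\lambda f))\otimes\mu_0+h_\lambda\psi_f)$; by involutivity this is in $\Gamma(\mathfrak L)$, and subtracting $h_\lambda(\square_f,\psi_f)\in\Gamma(\mathfrak L)$ and tensoring by $\mu_0^{-1}$ yields $(([\Delta_\lambda,\square_f]-h_\lambda\square_f)\otimes\mu_0^{-1},\,\sigma^\ast d(X_\lambda f))\in\Gamma(\mathfrak L\otimes L^\ast)$, so $X_\lambda(f)$ is locally admissible; patching the local Hamiltonians with a partition of unity subordinate to a trivializing cover is legitimate because $\mathfrak L\otimes L^\ast$ is a submodule. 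Granted this, (\ref{eq:adm1}) is the ordinary Leibniz rule for the vector field $X_\lambda$, the computation $\{\lambda,f\mu\}=\Delta_\lambda(f\mu)=\sigma(\Delta_\lambda)(f)\mu+f\Delta_\lambda(\mu)=X_\lambda(f)\mu+f\{\lambda,\mu\}$ gives (\ref{eq:adm2}), and $X_{\{\lambda,\mu\}}=[X_\lambda,X_\mu]$ makes $C^\infty_{\mathrm{adm}}(M)$ a $\Gamma_{\mathrm{adm}}(L)$-module, finishing (2)--(4).

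I expect the only genuinely non-formal step to be the admissibility of $X_\lambda(f)$. The Hamiltonian of an admissible \emph{function} naturally lives in the $L^\ast$-twisted bundle $\D L\otimes L^\ast$, which carries no intrinsic Dorfman-like bracket, so one is forced to untwist along a local trivialization $\mu_0$, run the bracket computation there, notice that the spurious term $h_\lambda\psi_f$ is itself a section of $\mathfrak L$ and can be absorbed, and then verify that the locally-constructed Hamiltonian operators glue. Everything else reduces either to isotropy of $\mathfrak L$ --- responsible for well-definedness, skew-symmetry, and the fact that $E_{\mathfrak L}$ and $K_{\mathfrak L}$ act trivially --- or to involutivity of $\mathfrak L$ combined with $d_{D}^2=0$.
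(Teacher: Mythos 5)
Your proposal is correct and follows essentially the same route as the paper's (much terser) proof: the same definitions $\{\lambda,\mu\}:=\Delta_\lambda(\mu)$ and $X_\lambda:=\sigma(\Delta_\lambda)$, well-posedness via isotropy (Lemma \ref{lem:adm}), skew-symmetry via isotropy, and the Jacobi identity via involutivity. The details the paper leaves to the reader --- in particular the closure statements, namely that $\{\lambda,\mu\}$ is admissible with Hamiltonian $[\Delta_\lambda,\Delta_\mu]$ and that $X_\lambda(f)$ is admissible, the latter via the local untwisting by $\mu_0$ and absorption of the $h_\lambda\psi_f$ term --- are exactly the right ones and you carry them out correctly.
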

\begin{proof}
The product of two admissible functions $f,g$ is admissible as well. Indeed, let $F, G : J^1 L \!\to\! \R_M$ be linear maps such that $(F, \sigma^\ast (df)), (G, \sigma^\ast (dg)) \in \Gamma (\mathfrak L  \otimes L^\ast)$. Since $\sigma^\ast d (fg) = f \sigma^\ast (dg) + g \sigma^\ast (df)$, then $(fG+gF, \sigma^\ast d(fg)) \in \Gamma (\mathfrak L  \otimes L^\ast)$ as well. Similarly, the product of an admissible function times an admissible section is an admissible section as well. Next, let $\lambda, \mu$ be admissible sections and let $f$ be an admissible function. Define
\[
\{\lambda, \mu \} := \Delta (\mu),
\]
with $(\Delta, j^1 \lambda) \in \Gamma (\mathfrak L) $. Well-posedness of $\{-,-\}$ follows from Lemma \ref{lem:adm}. Skew-symmetry follows from isotropy, and the Jacobi identity follows from involutivity of $ \mathfrak L $. Similarly, define
\[
X_\lambda (f) := \sigma (\Delta) (f).
\]
The same arguments as above show that correspondence $\lambda \mapsto X_\lambda$ is a well-defined Lie-algebra action. Identities (\ref{eq:adm1}), (\ref{eq:adm2}) are straightforward. Details are left to the reader.
\end{proof}

\begin{lemma}\label{lem:adm}
Let $\lambda$ (resp.~$f$) be an admissible section (resp.~function), and let $x \in M$. Then $\square (\lambda) = 0$ (resp.~$X (f) = 0$) for all $\square \in (E_{\mathfrak L} )_x$ (resp.~$X \in (K_{\mathfrak L} )_x$).
\end{lemma}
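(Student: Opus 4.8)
The plan is to derive both statements straight from isotropy of $\mathfrak L$ (respectively of $\mathfrak L \otimes L^\ast$), by pairing the ``graph element'' witnessing admissibility against an arbitrary element of $E_{\mathfrak L}$ (respectively $K_{\mathfrak L}$), the latter regarded as sitting inside $\mathfrak L$ (respectively $\mathfrak L \otimes L^\ast$).

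First I would treat the section case. Let $\lambda \in \Gamma_{\mathrm{adm}}(L)$ with Hamiltonian derivation $\Delta$, so that $(\Delta, j^1\lambda) \in \Gamma(\mathfrak L)$, and let $\square \in (E_{\mathfrak L})_x = (\mathfrak L \cap DL)_x$, which we regard as the element $(\square, 0) \in \mathfrak L_x \subset (\D L)_x$. Since $\mathfrak L$ is isotropic, evaluation at $x$ gives $\bla (\Delta, j^1\lambda)(x), (\square, 0) \bra = 0$. By the definition of $\bla -, - \bra$ the left-hand side is $\langle \square, j^1_x \lambda \rangle$, and using $d_{D}\lambda = j^1\lambda$ together with $\langle \square, d_{D}\lambda\rangle = i_\square\, d_{D}\lambda = \Ll_\square \lambda = \square(\lambda)$ (the last equality because $\square$ acts on $\Gamma(L)$ through the tautological representation), one concludes $\square(\lambda) = 0$.

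Then I would run the same argument one twist up for the function case. Fix $f \in C^\infty_{\mathrm{adm}}(M)$ with Hamiltonian operator $F$, so $(F, \sigma^\ast(df))(x) \in (\mathfrak L \otimes L^\ast)_x$, where $\D L \otimes L^\ast = J_1 L \oplus (DL)^\ast$ with the identifications $DL \otimes L^\ast = J_1 L$ and $J^1 L \otimes L^\ast = (DL)^\ast$. For $\square \in (E_{\mathfrak L})_x$ and any $\ell \in L^\ast_x$, the element $(\square, 0)\otimes \ell$ lies in $(DL \otimes L^\ast)_x = (J_1 L)_x$, and in fact inside $(E_{\mathfrak L}\otimes L^\ast)_x \subset (\mathfrak L \otimes L^\ast)_x$. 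The form $\bla -, - \bra$ induces an $L^\ast$-valued symmetric pairing on $\D L \otimes L^\ast$ for which $\mathfrak L \otimes L^\ast$ is isotropic (this is immediate from isotropy of $\mathfrak L$); pairing $(F, \sigma^\ast(df))(x)$ with $(\square, 0)\otimes \ell$, the $J_1 L$-component $F_x$ drops out, being paired against an element of $J_1 L$, and what survives is the pairing of $\square$ against the $(DL)^\ast$-component $\sigma^\ast(df)_x$, times $\ell$, that is $\langle \square, \sigma^\ast(df)_x\rangle\,\ell = \langle \sigma(\square), df\rangle\,\ell = \sigma(\square)(f)\,\ell$. Isotropy forces $\sigma(\square)(f)\,\ell = 0$ in $L^\ast_x$ for every $\ell$, hence $\sigma(\square)(f) = 0$; since every $X \in (K_{\mathfrak L})_x$ is of the form $\sigma(\square)$ for some $\square \in (E_{\mathfrak L})_x$, this is precisely $X(f) = 0$.

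The only genuine care needed is in the function case: one must keep track of how the tensor factors of $L$ and $L^\ast$ cancel (using that $L$ is a line bundle, so $L \otimes L^\ast \otimes L^\ast \to L^\ast$ contracting the first two or the first and third factors coincide) so that the $L^\ast$-valued pairing of $(F, \sigma^\ast(df))$ against $(\square, 0)\otimes \ell$ collapses exactly to $\sigma(\square)(f)\,\ell$; once that bookkeeping is in place, the conclusion is immediate. Everything else reduces to isotropy of $\mathfrak L$ and the tautological identity $\langle \Delta, j^1\lambda\rangle = \Delta(\lambda)$.
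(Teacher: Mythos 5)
Your argument is correct and is essentially the paper's own proof: in the section case you pair $(\Delta, j^1\lambda)$ against $(\square,0)\in\mathfrak L$ and invoke isotropy, exactly as the paper does, and in the function case you carry out in detail the $L^\ast$-twisted version that the paper dismisses with ``similarly for admissible functions.'' The bookkeeping with $\langle\square,\sigma^\ast(df)\rangle = \sigma(\square)(f)$ and the surjectivity of $\sigma : E_{\mathfrak L}\to K_{\mathfrak L}$ is handled correctly, so there is nothing to add.
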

\begin{proof}
Let $\lambda$, $x$ and $\square$ be as in the statement, then $(\Delta, j^1 \lambda) \in \Gamma (\mathfrak L) $ for some $\Delta \in \Der L$ and $(\square, 0) \in \mathfrak L $. Hence $\square (\lambda) = \langle \square, j^1_x \lambda \rangle = \bla (\Delta_x, j^1_x \lambda), (\square, 0) \bra = 0$ by isotropy of $ \mathfrak L $. Similarly for admissible functions.
\end{proof}

When $K_{\mathfrak L} $ is a regular distribution, Lemma \ref{lem:adm} can be inverted giving the following

\begin{proposition}\label{prop:adm_null}
Let $K_{\mathfrak L} $ be a regular distribution. A section $\lambda$ of $L$ (resp.~a function $f$ on $M$) is admissible if and only if $\square (\lambda) = 0$ for all $\square \in \Gamma(E_{\mathfrak L} )$ (resp.~$X (f) = 0$ for all $X \in \Gamma(K_{\mathfrak L} )$).
\end{proposition}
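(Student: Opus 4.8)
The plan is to prove the non-trivial implication, that the stated vanishing conditions force admissibility; the reverse implication is immediate from Lemma \ref{lem:adm}. Since admissibility is a componentwise notion, and, by Proposition \ref{prop:KD}, regularity of $K_{\mathfrak L}$ forces both $K_{\mathfrak L}$ and $E_{\mathfrak L} = \mathfrak L \cap D L$ to be vector bundles over each connected component of $M$, I may assume $M$ connected with $K_{\mathfrak L}$, $E_{\mathfrak L}$ honest vector bundles. Then $\Gamma(E_{\mathfrak L})$ and $\Gamma(K_{\mathfrak L})$ span the fibers $(E_{\mathfrak L})_x$, $(K_{\mathfrak L})_x$ at each $x$, so the hypotheses may be rephrased pointwise: $\square(\lambda)(x) = 0$ for all $\square \in (E_{\mathfrak L})_x$ and all $x$ (resp.~$X(f)(x) = 0$ for all $X \in (K_{\mathfrak L})_x$ and all $x$).

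I would treat sections first. Under the $L$-twisted pairing $D L \otimes J^1 L \to L$ one has $\langle \square, j^1_x\lambda\rangle = \square(\lambda)(x)$, so the pointwise hypothesis says precisely that $j^1_x\lambda$ lies in the annihilator of $(E_{\mathfrak L})_x$, which by the second equality of (\ref{eq:char}) equals $\pr_{J^1}(\mathfrak L_x)$. Hence $j^1\lambda$ is a smooth section of $J^1 L$ taking values in the image of $\pr_{J^1}|_{\mathfrak L}\colon \mathfrak L \to J^1 L$; this image is a vector subbundle since the kernel $E_{\mathfrak L}$ of $\pr_{J^1}|_{\mathfrak L}$ has locally constant rank. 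Picking a vector subbundle $\mathfrak L' \subseteq \mathfrak L$ complementary to $E_{\mathfrak L}$, the restriction $\pr_{J^1}|_{\mathfrak L'}$ is a vector bundle isomorphism onto $\pr_{J^1}(\mathfrak L)$, and composing its inverse with $j^1\lambda$ yields a smooth section $(\Delta, j^1\lambda) \in \Gamma(\mathfrak L)$, whose $D L$-component $\Delta$ is thus a Hamiltonian derivation for $\lambda$.

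For functions the same device is applied to $\mathfrak L \otimes L^\ast \subseteq \D L \otimes L^\ast = J_1 L \oplus (D L)^\ast$. The projection of $\mathfrak L \otimes L^\ast$ onto the second factor $(D L)^\ast$ has kernel $(\mathfrak L \otimes L^\ast) \cap J_1 L = E_{\mathfrak L} \otimes L^\ast$, hence locally constant rank, and its image is the annihilator of $E_{\mathfrak L}$ inside $(D L)^\ast$, namely $\pr_{J^1}(\mathfrak L) \otimes L^\ast$ (tensor the second equality of (\ref{eq:char}) by $L^\ast$); in particular it is a vector subbundle. Since $\langle \Delta, \sigma^\ast(df)\rangle = (\sigma \Delta)(f)$ and $K_{\mathfrak L} = \sigma(E_{\mathfrak L})$, the pointwise hypothesis on $f$ says precisely that $\sigma^\ast(df)$ takes values in this subbundle; a vector subbundle of $\mathfrak L \otimes L^\ast$ complementary to $E_{\mathfrak L} \otimes L^\ast$ then provides a smooth lift $(F, \sigma^\ast(df)) \in \Gamma(\mathfrak L \otimes L^\ast)$, so that $f$ is admissible with Hamiltonian operator $F$.

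The one genuinely delicate step is upgrading the pointwise existence of such a lift to a \emph{smooth} Hamiltonian derivation (resp.~differential operator); this is precisely where regularity of $K_{\mathfrak L}$ — and, through Proposition \ref{prop:KD}, of $E_{\mathfrak L}$ — is needed, guaranteeing that the relevant projection has locally constant-rank kernel, hence vector subbundle image admitting a smooth complement. Everything else is the pointwise bookkeeping already contained in (\ref{eq:char}) and in Lemma \ref{lem:adm}.
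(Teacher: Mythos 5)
Your argument is correct and follows essentially the same route as the paper: the forward implication from Lemma \ref{lem:adm}, and the converse by observing that regularity of $K_{\mathfrak L}$ forces (via Proposition \ref{prop:KD}) regularity of $E_{\mathfrak L}$, so that the second identity in (\ref{eq:char}) identifies the annihilator of $E_{\mathfrak L}$ with the regular distribution $\pr_{J^1}(\mathfrak L)$ and a smooth lift $(\Delta, j^1\lambda) \in \Gamma(\mathfrak L)$ exists. The only difference is that you spell out the smoothness of the lift by choosing a complement to $E_{\mathfrak L}$ in $\mathfrak L$, a detail the paper leaves implicit.
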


\begin{proof}
The ``only if part'' of the statement follows from Lemma \ref{lem:adm}. For the ``if part'', recall that, in view of Proposition \ref{prop:KD}, when $K_{\mathfrak L} $ is regular, $E_{\mathfrak L} $ is regular as well. Hence, in view of the second of (\ref{eq:char}), $\pr_{J^1} (\mathfrak L )$ is a regular distribution in $J^1 L$. Now, $\square (\lambda) = 0$ for all $\square \in \Gamma (E_{\mathfrak L} )$ tells us that $j^1 \lambda$ is a section of the annihilator of $E_{\mathfrak L} $, and, from (\ref{eq:char}) again, there is a section $\alpha$ of $ \mathfrak L $ such that $\pr_{J^1} (\alpha) = j^1 \lambda$, whence $\alpha = (\Delta, j^1 \lambda)$ for some $\Delta \in \Der L$. Similarly for admissible functions.
\end{proof}

Now recall that, when $K_{\mathfrak L} $ is a regular distribution, then, in each connected component of $M$, either $(\mathbb 1, 0) \in \Gamma (\mathfrak L) $ and, in this case, $\rk E_{\mathfrak L}  = \rk K_{\mathfrak L}  +1$, or $(\mathbb 1_x, 0) \notin \mathfrak L_x$ for any $x \in M$, and, in this case, $\rk E_{\mathfrak L}  = \rk K_{\mathfrak L}  $ (Remark \ref{rem:KD}). Hence we have the following

\begin{corollary}\label{cor:adm_nabla}
Let $K_{\mathfrak L} $ be a regular distribution. In each connected component of $M$, if $(\mathbb 1, 0) \in \Gamma (\mathfrak L) $, then there are no admissible sections, otherwise there is a canonical $K_{\mathfrak L} $-connection $\nabla^K$ in $L$, and a section of $L$ is admissible if and only if it is $\nabla^K$-constant. 
\end{corollary}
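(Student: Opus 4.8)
The plan is to reduce the whole statement to Proposition \ref{prop:adm_null}, using the structural dichotomy recorded in Remark \ref{rem:KD}. Since that dichotomy is formulated on each connected component of $M$, I would fix a connected component once and for all and treat the two cases in turn. Throughout, $K_{\mathfrak L}$ is assumed regular, so $E_{\mathfrak L}$ is regular as well by Proposition \ref{prop:KD}, and Proposition \ref{prop:adm_null} applies: a section $\lambda$ of $L$ is admissible iff $\square(\lambda) = 0$ for all $\square \in \Gamma(E_{\mathfrak L})$.

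I would first dispose of the case $(\mathbb 1, 0) \in \Gamma(\mathfrak L)$. Here $\mathbb 1$ is a global section of $E_{\mathfrak L} = \mathfrak L \cap D L$, so for an admissible $\lambda$ one may take $\square = \mathbb 1$ in the criterion above and conclude $\lambda = \mathbb 1(\lambda) = 0$. Thus the only admissible section is the zero section, i.e.\ there are no (nonzero) admissible sections.

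In the complementary case, $(\mathbb 1_x, 0) \notin \mathfrak L_x$ for every $x$, equivalently $(E_{\mathfrak L})_x \cap \langle \mathbb 1_x\rangle = 0$ for all $x$. Since $\langle \mathbb 1\rangle = \ker \sigma$ by the exact sequence (\ref{eq:Sp_dual}), the symbol restricts to a fibrewise injective vector bundle morphism $\sigma : E_{\mathfrak L} \to TM$ whose image is, by the very definition of $K_{\mathfrak L}$, exactly $K_{\mathfrak L}$; as $E_{\mathfrak L}$ and $K_{\mathfrak L}$ are vector bundles of the same rank (Remark \ref{rem:KD}), this yields a canonical vector bundle isomorphism $\sigma : E_{\mathfrak L} \overset{\sim}{\longrightarrow} K_{\mathfrak L}$. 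Write $X \mapsto \Delta_X$ for its inverse, and define $\nabla^K : \Gamma(K_{\mathfrak L}) \times \Gamma(L) \to \Gamma(L)$ by $\nabla^K_X \lambda := \Delta_X(\lambda)$. It is $C^\infty(M)$-linear in $X$ because $X \mapsto \Delta_X$ is a vector bundle map, and it obeys the Leibniz rule $\nabla^K_X(f\lambda) = X(f)\lambda + f\nabla^K_X\lambda$ because $\Delta_X$ is a derivation with symbol $X$; hence $\nabla^K$ is a canonical $K_{\mathfrak L}$-connection in $L$. (One may note in passing that $E_{\mathfrak L}$ is closed under the commutator of derivations — apply involutivity of $\mathfrak L$ to pairs $(\Delta,0),(\square,0) \in \Gamma(\mathfrak L)$ and use (\ref{eq:Dorf_brack}) — so that $\nabla^K$ is in fact flat, though this is not needed here.)

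Finally I would match ``admissible'' with ``$\nabla^K$-constant'': $\lambda$ is $\nabla^K$-constant iff $\Delta_X(\lambda) = 0$ for all $X \in \Gamma(K_{\mathfrak L})$, and since every section $\square$ of $E_{\mathfrak L}$ arises this way (take $X = \sigma(\square)$, so $\Delta_X = \square$), this says precisely $\square(\lambda) = 0$ for all $\square \in \Gamma(E_{\mathfrak L})$, which by Proposition \ref{prop:adm_null} is admissibility of $\lambda$. The only point demanding a little care — and the nearest thing to an obstacle — is verifying that in the second case $\sigma|_{E_{\mathfrak L}}$ is genuinely a vector bundle isomorphism onto $K_{\mathfrak L}$, so that $X \mapsto \Delta_X$, and hence $\nabla^K$, is well defined and smooth; this rests squarely on the regularity of both distributions and on the rank bookkeeping of Remark \ref{rem:KD}.
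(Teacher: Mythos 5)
Your proof is correct and follows essentially the same route as the paper: the paper likewise observes that in the non-exceptional case $\rk E_{\mathfrak L} = \rk K_{\mathfrak L}$ forces $E_{\mathfrak L}$ to be the image of a (flat) $K_{\mathfrak L}$-connection, and then invokes Proposition \ref{prop:adm_null}. Your handling of the case $(\mathbb 1,0)\in\Gamma(\mathfrak L)$ (taking $\square=\mathbb 1$ in Lemma \ref{lem:adm}) is a harmless variant of the paper's one-line argument that $\pr_{J^1}(\mathfrak L)\subset T^\ast M\otimes L$; both are immediate from isotropy.
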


\begin{proof}
If $(\mathbb 1, 0) \in \Gamma (\mathfrak L) $ then $\pr_{J^1} (\mathfrak L ) \subset T^\ast M \otimes L$ and there cannot be admissible sections. Otherwise $\rk E_{\mathfrak L}  = \rk K_{\mathfrak L} $, hence $E_{\mathfrak L} $ is the image of a flat $K_{\mathfrak L}$-connection $\nabla^K$ in $L$. The last part of the statement is obvious.
\end{proof}

\begin{example}
Let $ \mathfrak L  = \mathfrak L _{\mathbb 1}$. Then $E_{\mathfrak L}  = \langle \mathbb 1 \rangle$, and $K_{\mathfrak L}  = 0$. Hence every function is admissible but there are no admissible sections. This simple example shows that the situation here is slightly different from that in Dirac geometry. Namely, under similar regularity conditions, a Dirac structure does always possess admissible functions, while Dirac-Jacobi structures may fail to possess admissible sections.
\end{example}

\begin{remark}
Let $K_{\mathfrak L}$ be a regular distribution. The case $(\mathbb 1, 0) \in \Gamma (\mathfrak L)$ is exceptional and can be completely characterized as follows. First of all, if $(\mathbb 1, 0) \in \Gamma (\mathfrak L)$, then all characteristic leaves $\mathcal O$ are precontact, moreover $\mathbb 1 \in \Gamma (\mathcal I_{\mathfrak L})$ and Remark \ref{rem:theta=0} shows that $\theta_\Oo$, and hence $\omega_\Oo$, vanish. It follows that $K_{\mathfrak L} = T \mathcal F_{\mathfrak L}$. So $\mathfrak L = V \oplus V^0$ for some Lie subalgebroid $V \subset DL$ such that $\mathbb 1 \in \Gamma (V)$ (see Example \ref{ex:inv} and the third item in Remark \ref{rem:list}).
\end{remark}

Now on, in this section, we assume that $K_{\mathfrak L} $ is regular and $(\mathbb 1, 0) \notin \Gamma (\mathfrak L) $. Moreover, we make the following

\begin{assumption}\label{ass:simple_D}\ 
\begin{enumerate}
\item The distribution $K_{\mathfrak L} $ is \emph{simple}, i.e.~its leaf space $M_{\mathrm{red}}$ is a smooth manifold and the projection $\pi: M \to M_{\mathrm{red}}$ is a submersion.
\item The line bundle $L$ is isomorphic to the pull-back bundle $\pi^\ast L_{\mathrm{red}}$ for some line bundle $L_{\mathrm{red}} \to M_{\mathrm{red}}$.
\end{enumerate}
\end{assumption}

From Assumption \ref{ass:simple_D}.(1) and Proposition \ref{prop:adm_null}, admissible functions identify with functions on $M_{\mathrm{red}}$. Similarly, we would like to use Assumption~\ref{ass:simple_D}.(2) to identify admissible sections with sections of $L_{\mathrm{red}}$. However, for sections the situation is slightly more delicate. From Corollary \ref{cor:adm_nabla}, admissible sections are $\nabla^K$-constant sections of $L$. Denote by $\phi : L \to \pi^\ast L_{\mathrm{red}}$ the isomorphism in the Assumption \ref{ass:simple_D}.(2). Isomorphism $\phi$ induces another $K_{\mathfrak L} $-connection $\nabla{}_0^K$ in $L$ characterized by the property that $\nabla{}_0^K$-constant sections correspond to pull-back sections $\pi^\ast \lambda_{\mathrm{red}}$ via $\phi$, with $\lambda_{\mathrm{red}} \in \Gamma (L_{\mathrm{red}})$. Hence the composition of the pull-back followed by $\phi^{-1}$ identifies sections of $L_{\mathrm{red}}$ with $\nabla{}_0^K$-constant sections. In general, $\nabla^K \neq \nabla{}_0^K$ and the difference $A :=\nabla^K -  \nabla{}_0^K : K_{\mathfrak L}  \to D L$ is a $1$-form on $K_{\mathfrak L} $ with values in the vector bundle of endomorphisms of $L$. In its turn, the bundle of endomorphisms of $L$ is the trivial line bundle generated by $\mathbb 1$, hence $A$ can be regarded as a section of $K_{\mathfrak L} ^\ast$. Since both $\nabla^K$ and $\nabla{}_0^K$ are flat connections, $A$ is a $1$-cocycle in the de Rham complex of the Lie algebroid $K_{\mathfrak L} $:
\[
\xymatrix{0 \ar[r] & C^\infty (M) \ar[r]^-{d_{K_{\mathfrak L} }} & \Gamma (K_{\mathfrak L} ^\ast)  \ar[r]^-{d_{K_{\mathfrak L} }} & \Gamma (\wedge^2 K_{\mathfrak L} ^\ast)   \ar[r]^-{d_{K_{\mathfrak L} }} &  \cdots.}
\]
Assume that the cohomology class of $A$ in $\Gamma (\wedge^\bullet K_{\mathfrak L} ^\ast)$ vanishes. Then connection $\nabla^K$ is trivial in the following sense: \emph{there exists an(other) isomorphism $L \simeq \pi^\ast L_{\mathrm{red}}$ which identify $\nabla^K$-constant sections with pull-back sections}. To see this, let $A=d_{K_{\mathfrak L} } f$ for some function $f$. The required isomorphism is the composition $\psi : L \to \pi^\ast L_{\mathrm{red}}$ of $e^f : L \to L$, $\lambda \mapsto e^f \lambda$ followed by $\phi : L \to \pi^\ast L_{\mathrm{red}}$. Summarizing, the composition of the pull-back followed by $\psi^{-1}$ identifies sections of $L_{\mathrm{red}}$ with admissible sections, and Proposition \ref{prop:Jac_mod} reveals that the line bundle $L_{\mathrm{red}}$, equipped with the bracket $\{-,-\}$, is a Jacobi bundle. We have thus proved the following

\begin{proposition}
Let $(L, \mathfrak L)$ be a Dirac-Jacobi bundle with regular null distribution, and $(\mathbb 1, 0) \notin \Gamma (\mathfrak L) $. Under assumption \ref{ass:simple_D} and the condition $A = d_{K_{\mathfrak L} }f$ for some $f \in C^\infty (M)$, $L_{\mathrm{red}}$ is a Jacobi bundle and there is an isomorphism of line bundles $L \simeq \pi^\ast L_{\mathrm{red}}$ identifying sections of $L_{\mathrm{red}}$ with admissible sections of $L$ and the Jacobi bracket on $\Gamma (L_{\mathrm{red}})$ with the Jacobi bracket on $\Gamma_{\mathrm{adm}} (L)$.
\end{proposition}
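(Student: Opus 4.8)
The plan is to build the stated isomorphism by chaining three identifications: admissible functions with $C^\infty(M_{\mathrm{red}})$, admissible sections with $\Gamma(L_{\mathrm{red}})$, and the bracket $\{-,-\}$ on $\Gamma_{\mathrm{adm}}(L)$ with a Jacobi bracket on $L_{\mathrm{red}}$. The argument is essentially the discussion preceding the statement, reorganized into a proof; the only place where genuine work is hidden is the identification of sections, where a Lie algebroid cohomology class obstructs the naive guess and the hypothesis $A=d_{K_{\mathfrak L}}f$ is precisely what removes it.

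First, for functions: since $K_{\mathfrak L}$ is regular, Proposition \ref{prop:adm_null} identifies $C^\infty_{\mathrm{adm}}(M)$ with the functions annihilated by all $X\in\Gamma(K_{\mathfrak L})$, and by Assumption \ref{ass:simple_D}.(1) these are exactly the pull-backs $\pi^\ast g$, $g\in C^\infty(M_{\mathrm{red}})$. Thus $\pi^\ast$ is an isomorphism of unital algebras $C^\infty(M_{\mathrm{red}})\cong C^\infty_{\mathrm{adm}}(M)$. For sections, because $(\mathbb 1,0)\notin\Gamma(\mathfrak L)$ and $K_{\mathfrak L}$ is regular, Corollary \ref{cor:adm_nabla} (through Proposition \ref{prop:KD}) furnishes a flat $K_{\mathfrak L}$-connection $\nabla^K$ in $L$ with $\operatorname{im}\nabla^K=E_{\mathfrak L}$ whose flat sections are exactly the admissible ones, while the bundle isomorphism $\phi\colon L\to\pi^\ast L_{\mathrm{red}}$ of Assumption \ref{ass:simple_D}.(2) transports the tautological flat partial connection of a pull-back bundle to a flat $K_{\mathfrak L}$-connection $\nabla^K_0$ whose flat sections are the $\phi^{-1}(\pi^\ast\lambda_{\mathrm{red}})$. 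Both connections being flat, $A:=\nabla^K-\nabla^K_0$ is $d_{K_{\mathfrak L}}$-closed as a section of $K_{\mathfrak L}^\ast=K_{\mathfrak L}^\ast\otimes\operatorname{End}L$. Under the standing hypothesis $A=d_{K_{\mathfrak L}}f$, I would check — a short computation using that the anchor of $K_{\mathfrak L}$ is the inclusion $K_{\mathfrak L}\hookrightarrow TM$ — that the rescaled isomorphism $\psi:=\phi\circ e^f$, where $e^f$ denotes multiplication by $e^f$ in $L$, has the property that $\psi^{-1}(\pi^\ast\lambda_{\mathrm{red}})$ is $\nabla^K$-flat for every $\lambda_{\mathrm{red}}$. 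Hence $\psi^{-1}\circ\pi^\ast$ identifies $\Gamma(L_{\mathrm{red}})$ with $\Gamma_{\mathrm{adm}}(L)$, and being a bundle map over $\pi$ it is linear over the algebra isomorphism of the previous step. This is the step I expect to be the main obstacle: one must verify that the obstruction to choosing a compatible trivialization is genuinely the class of $A$ in $H^1$ of the Lie algebroid $K_{\mathfrak L}$ (so that the single hypothesis $A=d_{K_{\mathfrak L}}f$ suffices), not something finer.

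Finally, transport $\{-,-\}$ along $\psi^{-1}\circ\pi^\ast$. By Proposition \ref{prop:Jac_mod} it is already a Lie bracket, well defined by Lemma \ref{lem:adm}; it remains to see it is first order in each entry, i.e.\ a Jacobi bracket on $L_{\mathrm{red}}$. For this I would first note that any Hamiltonian derivation $\Delta$ of an admissible $\lambda$ normalizes $\Gamma(E_{\mathfrak L})$: for $(\square,0)\in\Gamma(E_{\mathfrak L})$ one has $\blq(\Delta,j^1\lambda),(\square,0)\brq=([\Delta,\square],-i_\square d_{D}j^1\lambda)=([\Delta,\square],0)\in\Gamma(\mathfrak L)$, using $d_{D}j^1=d_{D}d_{D}=0$ and involutivity of $\mathfrak L$; taking symbols, $X_\lambda=\sigma(\Delta)$ normalizes $\Gamma(K_{\mathfrak L})$ and hence descends to a vector field $\overline X_\lambda$ on $M_{\mathrm{red}}$. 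Read through the identifications above, formulas (\ref{eq:adm1}) and (\ref{eq:adm2}) then say exactly that $\{\lambda_{\mathrm{red}},-\}$ satisfies the Leibniz rule with symbol $\overline X_\lambda$ on $L_{\mathrm{red}}$, hence is a derivation; by skew-symmetry the same holds in the other slot, and the Jacobi identity comes from Proposition \ref{prop:Jac_mod}. Therefore $(L_{\mathrm{red}},\{-,-\})$ is a Jacobi bundle, and by construction $\psi^{-1}\circ\pi^\ast$ is an isomorphism of line bundles carrying $\Gamma(L_{\mathrm{red}})$ onto $\Gamma_{\mathrm{adm}}(L)$ and the Jacobi bracket of $L_{\mathrm{red}}$ onto $\{-,-\}$, as required.
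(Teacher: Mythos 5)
Your proposal is correct and follows essentially the same route as the paper, whose own argument is the discussion preceding the statement: identify admissible functions with $C^\infty(M_{\mathrm{red}})$ via Proposition \ref{prop:adm_null}, compare $\nabla^K$ with the pull-back connection $\nabla^K_0$, use $A=d_{K_{\mathfrak L}}f$ to correct $\phi$ by $e^f$, and invoke Proposition \ref{prop:Jac_mod} for the bracket. Your extra verification that Hamiltonian derivations normalize $\Gamma(E_{\mathfrak L})$, so that $\{-,-\}$ descends to a genuinely first-order bidifferential bracket on $L_{\mathrm{red}}$, is a welcome detail the paper leaves implicit.
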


In the generic case, i.e.~$K_{\mathfrak L} $ regular but not simple, both Assumption~\ref{ass:simple_D} and the assumption $A = d_{K_{\mathfrak L} } f$ are still valid locally, but they may fail to be valid globally. However, $M$ is globally equipped with a \emph{Jacobi structure tranverse to $K_{\mathfrak L} $} that we now describe.

Consider the short exact sequence
\begin{equation}\label{eq:transv_der}
0 \longrightarrow K_{\mathfrak L}  \overset{\nabla^K}{\longrightarrow} D L \longrightarrow D^\bot   L \longrightarrow 0. 
\end{equation}
where $D^\bot   L := D L / \operatorname{im} \nabla^K$. Sections of $D^\bot L$ should be interpreted as \emph{derivations of $L$ transverse to $K_{\mathfrak L} $}. Similarly, sections of the tensor product $J_1^\bot L := (D^\bot   L)^\ast \otimes L^\ast = J_1 L / (K_{\mathfrak L} \otimes L^\ast )$ should be interpreted as \emph{first order differential operators $\Gamma (L) \to C^\infty (M)$ transverse to $K_{\mathfrak L} $}.

Now, let $N$ be a submanifold of $M$ \emph{complementary to $K_{\mathfrak L} $}, i.e.~such that $T_x M = T_x N \oplus (K_{\mathfrak L} )_x$ for all $x \in N$. Any $\Delta \in \Gamma (D^\bot   L )$ can be \emph{restricted} to $N$ to give a genuine derivation of $L|_N$ as follows. Let $\Delta = \widetilde{\Delta} + \operatorname{im} \nabla^K$, with $\widetilde{\Delta} \in \Der L$. Moreover, let $\lambda$ be a section of $L|_N$ and let $\widetilde \lambda$ be a section of $L$ such that $\widetilde \lambda |_N = \lambda$, at least locally around a point $x \in N$. Put
\[
(\Delta|_N (\lambda))_x := (\widetilde{\Delta} (\widetilde{\lambda}))_x.
\] 
It is easy to see that $\Delta|_N : \Gamma (L|_N) \to \Gamma (L|_N)$ is a well defined derivation. Sections of $J_1^\bot L$ can be restricted to $N$ in a similar way.

\begin{proposition}\label{prop:transv_jac}
Let $(L, \mathfrak L)$ be a Dirac-Jacobi bundle with regular null distribution, and $(\mathbb 1, 0) \notin \Gamma (\mathfrak L) $. The Dirac-Jacobi structure $ \mathfrak L $ induces a canonical section $J^\bot$ of $\wedge^2 J_1^\bot L \otimes L$. Moreover, for every submanifold $N$ of $M$ complementary to $K_{\mathfrak L} $, $J^\bot$ restricts to a Jacobi bracket $\{-,-\}_N$ on $\Gamma (L|_N)$. 
\end{proposition}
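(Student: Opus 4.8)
The plan is to construct $J^\bot$ directly from $\mathfrak L$, in close analogy with the graph construction $\mathfrak L_J$ of a Jacobi bracket (Example \ref{ex:Jacobi}) but carried out ``transversally to $K_{\mathfrak L}$'', and then to use a complementary submanifold $N$ to cut $J^\bot$ down to a genuine first order bidifferential operator which is recognized, locally, as a Jacobi bracket (a local condition).

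First, the standing hypotheses pay off. Since $K_{\mathfrak L}$ is regular and $(\mathbb 1,0)\notin\Gamma(\mathfrak L)$, Proposition \ref{prop:KD} and Remark \ref{rem:KD} give that $E_{\mathfrak L}=\mathfrak L\cap DL$ is a vector subbundle of $DL$, that $\sigma$ restricts to an isomorphism $E_{\mathfrak L}\overset{\sim}{\longrightarrow}K_{\mathfrak L}$ (so $E_{\mathfrak L}=\operatorname{im}\nabla^K$, cf.\ Corollary \ref{cor:adm_nabla}), and that $E_{\mathfrak L}\cap\langle\mathbb 1\rangle=0$. By the second identity in (\ref{eq:char}), $\pr_{J^1}(\mathfrak L)=E_{\mathfrak L}^0$, the annihilator of $E_{\mathfrak L}$ in $J^1 L$, which is canonically identified with $(J_1^\bot L)^\ast$ (up to the twist by $L$). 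Thus $\pr_{J^1}$ restricts to a vector bundle epimorphism $\mathfrak L\to E_{\mathfrak L}^0$ with kernel $\mathfrak L\cap DL=E_{\mathfrak L}$, while $\pr_{D}$ maps $E_{\mathfrak L}\subset\mathfrak L$ identically onto $E_{\mathfrak L}\subset DL$; hence $\pr_{D}$ descends to a well defined bundle morphism $J^{\bot\sharp}\colon E_{\mathfrak L}^0\to D^\bot L=DL/E_{\mathfrak L}$, sending $\psi$ to the class of $\pr_{D}(\Delta)$ for any $(\Delta,\psi)\in\mathfrak L$. Since any $\psi'\in E_{\mathfrak L}^0$ annihilates $E_{\mathfrak L}$, the assignment $J^\bot(\psi,\psi'):=\langle\Delta,\psi'\rangle$, for $(\Delta,\psi)\in\mathfrak L$, is independent of the chosen $(\Delta,\psi)$, and isotropy of $\mathfrak L$ makes it skew; equivalently $J^\bot$ is the $2$-form $\omega\colon\wedge^2\Ii_{\mathfrak L}\to L$ of Section \ref{sec:char_fol} pushed forward through $J^{\bot\sharp}$, which makes independence of $\nabla^K$ manifest. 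This produces the asserted canonical section $J^\bot\in\Gamma(\wedge^2 J_1^\bot L\otimes L)$.

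Second, fix $N\subset M$ complementary to $K_{\mathfrak L}$, so $TM|_N=TN\oplus K_{\mathfrak L}|_N$. Because $\sigma|_{E_{\mathfrak L}}$ is injective with image $K_{\mathfrak L}$ and $E_{\mathfrak L}\cap\langle\mathbb 1\rangle=0$, a rank count yields the direct sum $DL|_N=D(L|_N)\oplus E_{\mathfrak L}|_N$, where $D(L|_N)\subset DL|_N$ is the subbundle of derivations with symbol tangent to $N$ (Section \ref{sec:line}). Dualizing and twisting by $L$, the $1$-jet pull-back map $i_N^\ast\colon J^1 L|_N\to J^1(L|_N)$ cuts down to an isomorphism $E_{\mathfrak L}^0|_N\overset{\sim}{\longrightarrow}J^1(L|_N)$: it is injective, since an element of $E_{\mathfrak L}^0$ whose pull-back to $N$ vanishes annihilates $TN$ and $K_{\mathfrak L}$, hence $TM$, hence is zero, and the ranks agree. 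Transporting $J^\bot|_N$ through this isomorphism gives a skew, first order bidifferential operator $\{-,-\}_N\colon\Gamma(L|_N)\times\Gamma(L|_N)\to\Gamma(L|_N)$. Concretely, near any $x_0\in N$ one extends $\lambda,\mu\in\Gamma(L|_N)$ to admissible sections $\widehat\lambda,\widehat\mu$ of $L$ — possible because, $K_{\mathfrak L}$ being regular, admissible sections near $x_0$ are exactly the $\nabla^K$-flat ones (Corollary \ref{cor:adm_nabla}) and $N$ meets each $K_{\mathfrak L}$-leaf at most once — and then $\{\lambda,\mu\}_N=\Delta(\widehat\mu)|_N$ with $\Delta$ a Hamiltonian derivation of $\widehat\lambda$; that is, $\{-,-\}_N$ is the restriction to $N$ of the bracket on $\Gamma_{\mathrm{adm}}(L)$ of Proposition \ref{prop:Jac_mod}. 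Well-definedness (independence of the extensions and of the Hamiltonian) follows from $E_{\mathfrak L}^0=\pr_{J^1}(\mathfrak L)$, isotropy, and the isomorphism just established.

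Third, $\{-,-\}_N$ satisfies the Jacobi identity. The transversality above forces $\mathfrak L\cap(N^\ast N\otimes L|_N)=0$ at every point of $N$: a covector annihilating $TN$ that lies in $\pr_{J^1}(\mathfrak L)=E_{\mathfrak L}^0$ annihilates $K_{\mathfrak L}$, hence $TM$, hence vanishes. Thus the rank condition of Proposition \ref{prop:bw} (see Example \ref{ex:bw}) holds identically along $N$, so the backward image $\mathfrak L_N$ of $\mathfrak L$ along $i_N$ is a well defined Dirac-Jacobi structure on $L|_N$; the same vanishing gives $\mathfrak L_N\cap D(L|_N)=0$, whence $\mathfrak L_N=\mathfrak L_{J_N}$ for a Jacobi bracket $J_N$ by Example \ref{ex:Jacobi}, and unwinding the two constructions identifies $J_N=\{-,-\}_N$. (Equivalently, one can localize: on a small neighborhood where $K_{\mathfrak L}$ is simple, $L$ splits over the leaf space, and the cocycle $A$ of Section \ref{subsec:adm} is exact, the reduction result of Section \ref{subsec:adm} exhibits $\{-,-\}_N$, via the local diffeomorphism $\pi|_N$, as the pull-back of the Jacobi reduction; being a Jacobi bracket is a local property.) The main obstacle is keeping this last step honest globally: one must know that $\mathfrak L$ restricts to a \emph{smooth} Dirac-Jacobi structure at \emph{every} point of $N$, equivalently that $\rk\big(\mathfrak L\cap(N^\ast N\otimes L)\big)$ is constant (indeed $0$) along $N$ — exactly where the global regularity of $K_{\mathfrak L}$, through that of $E_{\mathfrak L}$ and the splitting $DL|_N=D(L|_N)\oplus E_{\mathfrak L}|_N$, is indispensable; without it the restriction would only make sense at generic points. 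The remaining work — the $L$-twisted dualities identifying $E_{\mathfrak L}^0$ with $(J_1^\bot L)^\ast\otimes L$ and $E_{\mathfrak L}^0|_N$ with $J^1(L|_N)$, and the check that the two descriptions of $\{-,-\}_N$ agree — is routine bookkeeping.
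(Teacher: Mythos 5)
Your proposal is correct, and its first two thirds — the identification $\pr_{J^1}(\mathfrak L)=E_{\mathfrak L}^0=(J_1^\bot L)^\ast$ twisted by $L$, the definition $J^\bot(\psi,\psi')=\langle\Delta,\psi'\rangle$ with well-definedness from $\psi'\in E_{\mathfrak L}^0$ and skewness from isotropy, and the restriction to $N$ realized by extending sections of $L|_N$ to $\nabla^K$-flat (hence admissible) sections — coincide with the paper's proof. The only divergence is the Jacobi identity: the paper simply observes that $\{\lambda|_N,\mu|_N\}_N=\{\lambda,\mu\}|_N$ for local admissible extensions and imports the Jacobi identity from the Lie algebra $\Gamma_{\mathrm{adm}}(L)$ of Proposition \ref{prop:Jac_mod} (a fact you also record at the end of your second paragraph, which already suffices), whereas your primary argument routes through the backward image $\mathfrak B_{i_N}(\mathfrak L)$, checking $\mathfrak L\cap(N^\ast N\otimes L|_N)=0$ so that Proposition \ref{prop:bw} applies and then invoking Example \ref{ex:Jacobi}. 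That alternative is valid and not circular (Proposition \ref{prop:bw} does not depend on this statement), and it is essentially the content of the paper's later Example \ref{ex:bw}; it buys a cleaner global packaging of $\{-,-\}_N$ as a Dirac-Jacobi structure on $L|_N$, at the cost of forward references to the morphisms section and of one small step you compress: to get $\mathfrak B_{i_N}(\mathfrak L)\cap D(L|_N)=0$ one should first use isotropy against $E_{\mathfrak L}\subset\mathfrak L$ to force the $J^1$-component $\psi\in N^\ast N\otimes L$ to vanish, and only then conclude $\Delta\in E_{\mathfrak L}\cap D(L|_N)=0$.
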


\begin{proof}
From the proof of Corollary \ref{cor:adm_nabla}, the image of $\nabla_K$ is $E_{\mathfrak L} $. So, the dual of sequence (\ref{eq:transv_der}), tensorized by $L$, reads
\[
0 \longleftarrow K_{\mathfrak L} ^\ast \otimes L \longleftarrow J^1 L \longleftarrow E_{\mathfrak L} ^0 \longleftarrow 0.
\]
From (\ref{eq:char}), we have $E_{\mathfrak L} ^0 \!=\! \pr_{J^1} \mathfrak L$. Recall that $J_1^\bot L \!=\! J_1 L / (K_{\mathfrak L}  \otimes L^\ast) = (E_{\mathfrak L} ^0)^\ast$. Hence a section of $\wedge^2 J_1^\bot L \otimes L$ is the same as a morphism
\[
\wedge^2 (\pr_{J^1} \mathfrak L) \longrightarrow L.
\]
Thus, let $\varphi, \psi$ be sections of $\pr_{J^1}\mathfrak L$. There are $\Delta, \square \in \Der L$ such that\linebreak $(\Delta, \varphi), (\square, \psi) \in \Gamma(\mathfrak L )$. Put
\begin{equation}\label{eq:J_bot}
J^\bot (\varphi,\psi) := \langle \square, \psi \rangle.
\end{equation}
The morphism $J^\bot : (\pr_{J^1}\mathfrak L)^{\otimes 2} \to L$ is obviously well defined. Moreover, since $ \mathfrak L $ is isotropic, $J^\bot$ is skew-symmetric. This proves the first part of the Proposition.

For the second part, let $N \subset M$ be a submanifold complementary to $K_{\mathfrak L} $. Restrict $J^\bot$ to $N$ in the obvious way
to get a skew-symmetric, first order, bidifferential operator $\{-,-\}_N$ on sections of $L|_N$. Every section of $L|_N$ can be extended, locally around any point of $N$, to a $\nabla^K$-constant, hence admissible, local section of $L$, and
\[
\{ \lambda|_N ,  \mu|_N \}_N = \{ \lambda , \mu \}|_N
\]
for all admissible, local sections $\lambda, \mu$ of $L$ defined around $N$, where $\{-,-\}$ is the Jacobi bracket of admissible sections. The Jacobi identity for $\{-,-\}_N$ now follows from the Jacobi identity for $\{-,-\}$.
 \end{proof}
 
 \begin{remark}
Every submanifold $N \subset M$ complementary to $K_{\mathfrak L} $ can be locally understood as (the image of) a section of the projection $M \to M_{\mathrm{red}}$ onto the leaf space of $K_{\mathfrak L} $. The proof of Proposition \ref{prop:transv_jac} then shows that $\{-,-\}_N$ is the pull-back of the Jacobi bracket on $\Gamma (L_{\mathrm{red}})$.
\end{remark}

 \section{Morphisms of Dirac-Jacobi bundles} \label{sec:morphisms}
Let $(M,C)$ and $(M',C')$ be precontact manifolds. A \emph{morphism of precontact manifolds} $f : (M,C) \to (M',C')$ is a smooth map $f : M \to M'$ such that $df (C) \subset C'$. In particular, $df$ induces a morphism of line bundles $F : TM/C \to TM' / C'$ over $f$. As it will be clear from what follows, it is in practice useful demanding, additionally, that $F$ is a morphism in the category $\mathbf{VB}^{\mathrm{reg}}$ (see Section \ref{sec:line}), i.e.~it is an isomorphism on fibers (this choice excludes, for instance, the embedding of a line, with the zero rank distribution, as an integral manifold in a contact manifold, from morphisms of precontact manifolds). In terms of the precontact forms $\theta : TM \to TM/C$ and $\theta ' : TM' \to TM' /C$ a morphism of precontact manifolds is a regular morphism of line bundles $F : TM/C \to TM'/C'$ over a smooth map $f : M \to M'$ such that $\theta ' \circ df = F \circ \theta$. On the other hand, let $(L, \{-,-\})$ and $(L', \{-,-\}')$ be Jacobi bundles over manifolds $M$ and $M'$ respectively. A \emph{morphism of Jacobi bundles} or, shortly, a \emph{Jacobi morphism}, is a \emph{regular} morphism $F : L \to L'$ such that $\{F^\ast \lambda' , F^\ast \mu' \} = F^\ast \{ \lambda', \mu' \}'$ for all $\lambda', \mu' \in \Gamma (L')$. The two notions of morphism of precontact manifolds, and Jacobi morphism, do not agree on contact manifolds. Hence, similarly as for Dirac manifolds, there are two distinct notions of morphisms of Dirac-Jacobi bundles. What follows in this section parallels \cite[Section 5]{B2013} (see also \cite{BR2003}). However, morphisms of Dirac-Jacobi bundles exhibit a few novel features which make discussing them slightly more complicated than discussing morphisms of standard Dirac manifolds. 

\subsection{Backward Dirac-Jacobi morphisms}\label{subsec:bw}

Similarly as for Dirac manifolds, under suitable regularity conditions, a Dirac-Jacobi structure can be pulled-back along a regular morphism of line bundles. Namely, let $L \to M$ and $L' \to M'$ be line bundles, and let $F : L \to L'$ be a regular vector bundle morphism over a smooth map $\underline F : M \to M'$. Additionally, let $ \mathfrak L '$ be a Dirac-Jacobi structure on $L'$.

\begin{definition}
The \emph{backward image of $ \mathfrak L '$ along $F$} is the, non-necessarily regular, distribution in $\D L$ defined as:
\[
\mathfrak B_F (\mathfrak L ') := \{ (\Delta, F^\ast \psi ') : (F_\ast \Delta , \psi ') \in \mathfrak L ' \} \subset \D L.
\]
\end{definition}

\begin{definition}
A \emph{backward Dirac-Jacobi map} $F : (L, \mathfrak L) \to (L',\mathfrak L')$ between Dirac-Jacobi bundles is a regular morphism of line bundles $F : L \to L'$ such that $ \mathfrak L  = \mathfrak B_F (\mathfrak L ')$.
\end{definition}

\begin{remark}
Let $\Gamma_F$ be the distribution in $\D L \oplus \underline F^\ast (\D L')$ defined by
\[
\Gamma_F := \{ ((\Delta, \psi) , (\Delta', \psi ')) : \Delta' = F_\ast \Delta \text{ and } \psi = F^\ast \psi ' \}.
\]
Clearly, $\Gamma_F$ is the kernel of the smooth vector bundle epimorphism 
\[
\begin{aligned}
\D L \oplus F^\ast (\D L') & \longrightarrow J^1 L \oplus F^\ast (D L') \\
((\Delta,\psi) , (\Delta', \psi ')) & \longmapsto (\psi - F^\ast\psi ' , F_\ast\Delta - \Delta ')
\end{aligned}
\]
Hence it is a smooth vector bundle (of rank $\dim M + \dim M' + 2$). Notice that $\mathfrak B_F (\mathfrak L ')$ is the image of $\Gamma_F \cap \underline F^\ast \mathfrak L'$ under projection  $\D L \oplus \underline F^\ast (\D L') \to \D L$ onto the first summand. It is easy to see that the kernel of surjection $\Gamma_F \cap \underline F^\ast \mathfrak L' \to \mathfrak B_F (\mathfrak L ')$ consists of points in $\D L \oplus F^\ast (\D L')$ of the form $((0,0), (0, \psi '))$, with $(0,\psi ') \in \mathfrak L '$, and $F^\ast \psi ' = 0$. Summarizing, there is a short (point wise) exact sequence
\begin{equation}\label{eq:exact_bw}
0  \longrightarrow \ker j^1 F \cap \underline F^\ast \mathfrak L' \longrightarrow \Gamma_F \cap \underline F^\ast \mathfrak L'  \longrightarrow \mathfrak B_F (\mathfrak L ') \longrightarrow 0,
\end{equation}
where the second arrow is the inclusion $\underline F^\ast (\D L' ) \INTO \D L \oplus \underline F^\ast (\D L')$, and the third arrow is the projection $\D L \oplus \underline F^\ast (\D L') \to \D L$. Finally, $\mathfrak B_F (\mathfrak L ')$ is a maximal isotropic distribution in $\D L$. To see this, let $x \in M$. Upon selecting a generator $\mu$ in $L_x$ and the generator $F(\mu)$ in $L_{\underline F (x)}$, maximal isotropy of $\mathfrak B_F (\mathfrak L ')$ immediately follows from \cite[Proposition 5.1]{B2013}. In particular, $\mathfrak B_F (\mathfrak L ')$ has constant rank.
\end{remark}

\begin{proposition}\label{prop:bw}
If $\rk (\ker j^1 F \cap \underline F^\ast \mathfrak L')$ is constant, then the backward image $\mathfrak B_F (\mathfrak L ')$ is a vector subbundle of $\D L$. In this case, $(L, \mathfrak B_F (\mathfrak L '))$ is a Dirac-Jacobi bundle.
\end{proposition}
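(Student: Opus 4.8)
The plan is to prove the two assertions separately — first that $\mathfrak B_F(\mathfrak L')$ is a smooth subbundle of $\D L$, and then that it is involutive — since maximal isotropy of $\mathfrak B_F(\mathfrak L')$ has already been recorded in the remark preceding the statement. For smoothness I would read everything off the short exact sequence (\ref{eq:exact_bw}). The middle term $\Gamma_F \cap \underline F^\ast\mathfrak L'$ is the intersection of two genuine vector subbundles of $\D L \oplus \underline F^\ast(\D L')$, hence a distribution of upper semi-continuous rank; but (\ref{eq:exact_bw}) gives the pointwise identity
\[
\rk\big(\Gamma_F \cap \underline F^\ast\mathfrak L'\big) = \rk \mathfrak B_F(\mathfrak L') + \rk\big(\ker j^1 F \cap \underline F^\ast\mathfrak L'\big),
\]
whose first summand is the constant $\dim M + 1$ (maximal isotropy) and whose second summand is constant by hypothesis. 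Thus $\Gamma_F \cap \underline F^\ast\mathfrak L'$ has locally constant rank and so is a vector subbundle, and $\mathfrak B_F(\mathfrak L')$ — the image of this subbundle under the vector bundle morphism $\pr_1 \colon \D L \oplus \underline F^\ast(\D L') \to \D L$ — is a smooth distribution of locally constant rank $\dim M + 1$, hence a vector subbundle of $\D L$.

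For involutivity, by the remark introducing the Courant–Jacobi tensor it is enough to show $\Upsilon_{\mathfrak B_F(\mathfrak L')} = 0$; being a tensor, this can be verified at each point on local sections of $\mathfrak B_F(\mathfrak L')$ of the special form $\alpha = (\Delta, F^\ast\psi')$ with $\Delta \in \Der L$ that is $F$-related to some $\Delta' \in \Der L'$ and $(\Delta',\psi') \in \Gamma(\mathfrak L')$ — provided such sections span $\mathfrak B_F(\mathfrak L')$ pointwise, which I discuss below. Note first that such an $\alpha$ indeed lies in $\Gamma(\mathfrak B_F(\mathfrak L'))$: pointwise $(F_\ast\Delta_x, \psi'_{\underline F(x)}) = (\Delta'_{\underline F(x)},\psi'_{\underline F(x)}) \in \mathfrak L'_{\underline F(x)}$ and $(F^\ast\psi')_x = (j^1F)(\psi'_{\underline F(x)})$.

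For two such sections $\alpha_k = (\Delta_k, F^\ast\psi'_k)$, $k=1,2$, the identities (\ref{eq:F-related}) together with $F^\ast d_D = d_D F^\ast$ turn (\ref{eq:Dorf_brack}) into
\[
\blq \alpha_1, \alpha_2 \brq = \big([\Delta_1,\Delta_2],\ \Ll_{\Delta_1}F^\ast\psi'_2 - i_{\Delta_2}d_D F^\ast\psi'_1\big) = \big([\Delta_1,\Delta_2],\ F^\ast(\Ll_{\Delta'_1}\psi'_2 - i_{\Delta'_2}d_D\psi'_1)\big).
\]
Now $[\Delta_1,\Delta_2]$ is $F$-related to $[\Delta'_1,\Delta'_2]$, and by involutivity of $\mathfrak L'$ the pair $([\Delta'_1,\Delta'_2],\,\Ll_{\Delta'_1}\psi'_2 - i_{\Delta'_2}d_D\psi'_1) = \blq(\Delta'_1,\psi'_1),(\Delta'_2,\psi'_2)\brq$ is a section of $\mathfrak L'$; hence $\blq\alpha_1,\alpha_2\brq$ is again a section of $\mathfrak B_F(\mathfrak L')$ of the special form. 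By isotropy $\bla\blq\alpha_1,\alpha_2\brq,\alpha_3\bra = 0$ for every section $\alpha_3$ of $\mathfrak B_F(\mathfrak L')$, so $\Upsilon_{\mathfrak B_F(\mathfrak L')}$ vanishes on a pointwise spanning set and therefore identically. This shows $\mathfrak B_F(\mathfrak L')$ is involutive, so $(L,\mathfrak B_F(\mathfrak L'))$ is a Dirac-Jacobi bundle.

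The step I expect to be the main obstacle is precisely the spanning claim: a derivation of $L'$ need not lift to an $F$-related derivation of $L$ when $\underline F$ is neither a submersion nor an immersion, so one cannot in general realize a prescribed vector of $\mathfrak B_F(\mathfrak L')_{x_0}$ by a special section directly. I would dispose of this by factoring $F$ through the graph: write $\underline F = \pr_{M'}\circ\gamma$ with $\gamma\colon M \to M\times M'$, $x\mapsto(x,\underline F(x))$, a closed embedding and $\pr_{M'}\colon M\times M'\to M'$ the projection, and factor $F$ accordingly as a regular morphism $L \xrightarrow{\ \sim\ } \gamma^\ast\pr_{M'}^\ast L' \cong \underline F^\ast L'$ followed by the tautological morphism $\pr_{M'}^\ast L' \to L'$. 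Using that backward images compose (a routine unwinding of the definitions), it then suffices to verify the spanning statement in two cases: for a submersion, where $F$-related lifts exist locally via a splitting of $F_\ast$; and for a closed embedding $i_Q\colon Q\hookrightarrow M'$, where $\mathfrak B_{i_Q}(\mathfrak L')$ consists exactly of the $(\Delta, i_Q^\ast\psi)$ with $(\Delta,\psi)$ a section of $\mathfrak L'$ whose $D$-component is tangent to $Q$ — so there all sections are of the special form. One checks along the way that the constant-rank hypothesis is inherited by both factors (automatic for the embedding, and carried over from the submersion case), so the reduction is legitimate.
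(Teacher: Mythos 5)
Your first half—reading the constancy of $\rk\,(\Gamma_F\cap\underline F^\ast\mathfrak L')$ off the exact sequence (\ref{eq:exact_bw}) as the sum of two constant ranks, concluding it is a subbundle and hence so is its image $\mathfrak B_F(\mathfrak L')$—is exactly the paper's argument. For involutivity the two proofs hit the same technical wall, namely producing through a given element of $\mathfrak B_F(\mathfrak L')$ a local section that is $F$-related to a section of $\mathfrak L'$, and they get around it differently. The paper's Lemma \ref{lem:bw} establishes this lifting only at points where $\rk\,d\underline F$ is locally constant, by putting $\underline F$ in constant-rank normal form, and then concludes $\Upsilon_{\mathfrak B_F(\mathfrak L')}=0$ everywhere because such points are dense and $\Upsilon$ is tensorial, hence continuous. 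You instead factor $F$ through its graph as a closed embedding followed by a submersion, where the lifting holds at \emph{every} point, and compose backward images. Both routes work: yours trades the density-plus-continuity step for the verification that backward images compose and that the clean intersection condition passes to the embedding factor (for the submersion factor it is vacuous, since $j^1G$ is injective there and $\ker j^1G=0$ --- note your parenthetical has these two roles swapped). Your bracket computation for $F$-related sections via (\ref{eq:F-related}) is the same as the paper's.

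The one step you should not wave at is the embedding case itself. Knowing that $\mathfrak B_{i_Q}(\mathfrak L')$ consists \emph{pointwise} of the $(\Delta,i_Q^\ast\psi)$ with $(\Delta,\psi)\in\mathfrak L'$ and $\Delta$ tangent to $Q$ does not yet give local \emph{sections} of that form through every point: you need $\mathfrak L'|_Q\cap\bigl(D(L'|_Q)\oplus J^1L'|_Q\bigr)$ to be a smooth subbundle of $\mathfrak L'|_Q$, so that a pointwise element extends to a local section of it along $Q$ (and then off $Q$ arbitrarily). This is where the clean intersection hypothesis enters in your scheme: the orthogonal of $D(L'|_Q)\oplus J^1L'|_Q$ in $\D L'|_Q$ with respect to $\bla-,-\bra$ is $N^\ast Q\otimes L'|_Q$, so the rank identity for maximal isotropics gives $\rk\bigl(\mathfrak L'|_Q\cap(D(L'|_Q)\oplus J^1L'|_Q)\bigr)=\dim Q+1+\rk\bigl(\mathfrak L'\cap(N^\ast Q\otimes L'|_Q)\bigr)$, which is constant exactly when the right-hand intersection is. With that observation supplied, your argument closes.
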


\begin{definition}
Let $L \to M$ and $L' \to M'$ be line bundles and let $F : L \to L'$ be a regular morphism of line bundles over a smooth map $\underline F : M \to M'$. A section $\alpha = (\Delta, \psi)$ of $\D L$ and a section $\alpha' = (\Delta', \psi')$ of $\D L'$ are \emph{$F$-related} if $\psi = F^\ast \psi'$ and, additionally, $\Delta$ and $\Delta'$ are $F$-related.
\end{definition}

\begin{proof}[Proof of Proposition \ref{prop:bw}]
The proof is an adaptation of the analogous proof for the Dirac case \cite[Proof of Proposition 5.9]{B2013}, and it is very similar to that one. We report it for completeness. Let $\rk (\ker j^1 F \cap \underline F^\ast \mathfrak L')$ be constant. Since $\rk \mathfrak B_F (\mathfrak L ')$ is also constant, then, from (\ref{eq:exact_bw}), $\rk \Gamma_F \cap \underline F^\ast \mathfrak L' $ is constant as well. Hence  $\Gamma_F \cap \underline F^\ast \mathfrak L' $ is a vector bundle, and so is $\mathfrak B_F (\mathfrak L ')$. It remains to prove that $\mathfrak B_F (\mathfrak L ')$ is involutive. To see this, it is useful to prove the following

\begin{lemma}\label{lem:bw}
Let $x_0 \in M$ be a point such that the rank of $dF$ and the rank of $\ker j^1 F \cap \underline F^\ast \mathfrak L'$ are constant around $x_0$. Then, for every $\alpha_0 \in \mathfrak B_F (\mathfrak L ')_{x_0}$, there exist a local section $\alpha $ of $\mathfrak B_F (\mathfrak L ')$ and a local section $\alpha^\prime $ of $ \mathfrak L '$ such that $\alpha_{x_0} = \alpha_0$ and, additionally, $\alpha$ and $\alpha'$ are $F$-related.
\end{lemma}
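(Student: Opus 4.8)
The plan is to mimic the Dirac-geometric argument from \cite[Proof of Lemma 5.10]{B2013}, adapting it to line bundles by replacing vector fields with derivations and covector fields with jets. Fix $x_0 \in M$ as in the statement and $\alpha_0 = (\Delta_0, F^\ast\psi'_0) \in \mathfrak B_F(\mathfrak L')_{x_0}$, where $(F_\ast\Delta_0, \psi'_0) \in \mathfrak L'_{\underline F(x_0)}$. First I would extend the pair $(\Delta_0, \psi'_0)$: since $\mathfrak L'$ is a vector subbundle of $\D L'$, choose any local section $\alpha' = (\Delta', \psi')$ of $\mathfrak L'$ around $\underline F(x_0)$ with $\alpha'_{\underline F(x_0)} = (F_\ast\Delta_0, \psi'_0)$. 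The goal is then to produce a local section $\alpha = (\Delta, \psi)$ of $\mathfrak B_F(\mathfrak L')$ with $\psi = F^\ast\psi'$, with $\Delta$ being $F$-related to $\Delta'$, and with $\alpha_{x_0} = \alpha_0$; note that once $\Delta$ is $F$-related to $\Delta'$ and $\psi := F^\ast\psi'$, the pair $(\Delta,\psi)$ automatically lands in $\Gamma(\mathfrak B_F(\mathfrak L'))$, because $(F_\ast\Delta_x,\psi'_{\underline F(x)}) = (\Delta'_{\underline F(x)},\psi'_{\underline F(x)}) \in \mathfrak L'$ for all $x$.

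The substantive step is the construction of a derivation $\Delta$ of $L$ that is $F$-related to $\Delta'$ and satisfies $\Delta_{x_0} = \Delta_0$. Here I would use the constant-rank hypothesis on $dF$ (equivalently on $d\underline F$) around $x_0$ together with the standard fact that, near a point where a smooth map has locally constant rank, one can find local coordinates in which $\underline F$ is linear; then a vector field $X'$ on $M'$ tangent to the image (in the appropriate sense) admits an $\underline F$-related vector field $X$ on $M$ with prescribed value at $x_0$, provided $\sigma(\Delta_0)$ projects correctly — which it does since $(F_\ast\Delta_0,\psi'_0) \in \mathfrak L'$ forces $\sigma(F_\ast\Delta_0) = d\underline F(\sigma(\Delta_0))$ to lie in the image of $d\underline F$. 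Lifting from the symbol level to derivations is then handled using the structure of $DL$: locally $\Der L \simeq \mathfrak X(M) \oplus C^\infty(M)\cdot\mathbb 1$, and since $L = \underline F^\ast L'$ locally (as $F$ is regular), the condition of being $F$-related is, in a local frame $\mu = F^\ast\mu'$, the condition that the $\mathfrak X(M)$-part be $\underline F$-related to the $\mathfrak X(M')$-part of $\Delta'$ and that the $C^\infty$-parts match via pull-back. So $\Delta$ is obtained by choosing an $\underline F$-related lift of $\sigma(\Delta')$ (with value $\sigma(\Delta_0)$ at $x_0$) and pulling back the $\mathbb 1$-component of $\Delta'$; the value $\Delta_{x_0} = \Delta_0$ is arranged because both the symbol and the $\mathbb 1$-component of $\Delta_0$ are determined by those of $F_\ast\Delta_0 = \Delta'_{\underline F(x_0)}$.

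The main obstacle is the bookkeeping of the constant-rank hypothesis on $\ker j^1 F \cap \underline F^\ast\mathfrak L'$: this is what guarantees $\mathfrak B_F(\mathfrak L')$ is an honest vector bundle near $x_0$ (via the exact sequence (\ref{eq:exact_bw})) so that the section $\alpha$ we build is a section of a genuine subbundle, and it is also what ensures the lift does not run into jumps in the fiber dimension of $\mathfrak B_F(\mathfrak L')$. I expect the cleanest route is: first invoke Proposition \ref{prop:bw} to know $\mathfrak B_F(\mathfrak L')$ is a vector subbundle near $x_0$; then carry out the coordinate normal form for $\underline F$ and the explicit lift as above; and finally check $F$-relatedness on generators of $\Gamma(L')$ using the identity $\Delta(F^\ast\lambda') = F^\ast(\Delta'\lambda')$, which in the chosen local frame reduces to the $\underline F$-relatedness of the symbols plus matching of the $\mathbb 1$-components — a routine verification given the setup.
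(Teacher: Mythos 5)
The overall toolkit you propose (constant-rank normal form for $\underline F$, lifting derivations through the local splitting of $DL$ into symbol and $\mathbb 1$-components) is the right one, but there is a genuine gap in the \emph{order} in which you construct the two sections. You begin by choosing an \emph{arbitrary} local section $\alpha' = (\Delta',\psi')$ of $\mathfrak L'$ extending $(F_\ast\Delta_0,\psi'_0)$, and only then try to produce $\Delta$ with $F_\ast\Delta_x = \Delta'_{\underline F(x)}$ for all $x$ near $x_0$. Such a $\Delta$ exists only if, at \emph{every} point $y'$ of the image $S' := \underline F(U)$ near $y_0 := \underline F(x_0)$, the derivation $\Delta'_{y'}$ lies in the image of $d_{D} F$, equivalently $\sigma(\Delta'_{y'})$ is tangent to $S'$. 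An arbitrary extension of $(F_\ast\Delta_0,\psi'_0)$ inside $\mathfrak L'$ has no reason to satisfy this away from $y_0$ --- your tangency check is carried out only at $x_0$. For instance, take $\underline F : \R \to \R^2$, $x\mapsto (x,0)$, trivial line bundles, and $\mathfrak L' = \mathfrak L_{\omega'}$ for a $2$-cocycle $\omega'$: the hypotheses of the lemma hold, sections of $\mathfrak L'$ have completely arbitrary $D$-components, and a generic extension $\Delta'$ of $F_\ast\Delta_0$ has symbol transverse to $S'$ at points $y'\neq y_0$, so no $F$-related $\Delta$ exists for that choice of $\alpha'$.

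The paper's proof avoids this by reversing the construction. Using the normal form it introduces the slice $S = \{x^{r+1}=\cdots=x^m=0\}$, on which $\underline F$ restricts to a diffeomorphism onto $S'$ and hence $F|_S : L|_S \to L'|_{S'}$ is an isomorphism; it first extends $\alpha_0$ to a section $\alpha_S$ of $\mathfrak B_F(\mathfrak L')|_S$ (this is where the constant rank of $\ker j^1 F \cap \underline F^\ast\mathfrak L'$ enters, via (\ref{eq:exact_bw}), to guarantee that $\mathfrak B_F(\mathfrak L')$ is a smooth bundle near $x_0$), then transports $\alpha_S$ through the isomorphism $F|_S$ to obtain $\alpha'_S \in \Gamma(\mathfrak L'|_{S'})$ whose $D$-part is in the image of $d_{D} F$ \emph{by construction}, and only afterwards extends $\Delta_S$ off $S$ compatibly with $\underline F$ and extends $\alpha'_S$ off $S'$ arbitrarily. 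To repair your argument you must likewise manufacture $\alpha'$ (at least along $S'$) from $\alpha_0$ and the bundle $\mathfrak B_F(\mathfrak L')$, rather than choosing it freely inside $\mathfrak L'$; once that is done, the remaining lifting steps you describe go through.
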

\begin{proof}[Proof of Lemma \ref{lem:bw}]
Let $x_0$ be as in the statement and let $r := \rk_{x_0} d \underline F$. Then we can choose coordinates $(x^1, \ldots, x^m)$ in $M$, centered in $x_0$, and coordinates $(y^1, \ldots, y^n)$ in $M'$, centered in $y_0 := \underline F (x_0)$, such that $\underline F$ looks like
\begin{equation}\label{eq:loc}
\underline F (x^1, \ldots, x^m) \mapsto (x^1, \ldots, x^r, 0, \ldots, 0),
\end{equation}
around $x_0$. Moreover, let $\mu '$ be a local generator of $\Gamma (L')$ around $y_0$. Finally let $S$ be the submanifold in $M$ defined by $x^{r+1} = \cdots = x^m = 0$ so that 1) $\underline F$ establishes a (local) diffeomorphism $\underline F : S \to S' = \operatorname{im} \underline F =\{ y^{r+1} = \cdots = y^n = 0 \}$, and 2) $F|_S : L|_S \to L'|_{S'}$ is an isomorphism over $\underline F : S \to S'$. In order to find $\alpha$ and $\alpha '$ as in the statement, first extend $\alpha_0$ to a local section $\alpha_S = (\Delta_S, \psi_S)$ of $\mathfrak B_F (\mathfrak L ')|_S$, and use the isomorphism $F|_S$ to find a section $\alpha'_S = (\Delta'_S, \psi '_S)$ of $ \mathfrak L '|_{S'}$ such that $(\psi_S)_x = F^\ast (\psi '_S)_{\underline F (x)} $ and $F_\ast (\Delta_x) = (\Delta_S')_{\underline F (x)}$ for all $x \in S$. Next, using (\ref{eq:loc}), extend $\Delta_S$ to a section $\Delta$ of $D L$ such that $F_\ast (\Delta_x) = (\Delta_S')_{\underline F (x)}$ for all $x$ in a neighborhood of $x_0$. Then $\alpha := (\Delta, F^\ast \psi'_S)$ is a section of $\mathfrak B_F (\mathfrak L ')$ and $\alpha_{x_0} = \alpha_0$. Finally, extend $\alpha '_S$ to a section of $ \mathfrak L '$ in any way and notice that $\alpha, \alpha'$ are $F$-related.
\end{proof}

Now, we are ready to prove that $\mathfrak B_F (\mathfrak L ')$ is involutive. Let $\Upsilon$ be the Courant-Jacobi tensor of $\mathfrak B_F (\mathfrak L ')$. We want to show that $\Upsilon = 0$. To see this, first compute $\Upsilon$ at a point $x_0$ as in the hypothesis of Lemma \ref{lem:bw}. Thus, let $(\alpha_i)_0 \in \mathfrak B_F (\mathfrak L ')_{x_0}$, and let $\alpha_i$ be sections of $\mathfrak B_F (\mathfrak L ')$, and $\alpha '_i$ sections of $ \mathfrak L '$, such that $(\alpha_i)_{x_0} = (\alpha_i)_0$ and $\alpha_i, \alpha_i '$ are $F$-related, $i =1,2,3$. Then it is easy to see from 
(\ref{eq:F-related}) that
\[
\Upsilon ((\alpha_1)_0, (\alpha_2)_0, (\alpha_3)_0) = \Upsilon (\alpha_1, \alpha_2, \alpha_3)_{x_0} = \Upsilon_{\mathfrak L'} (\alpha'_1, \alpha'_2, \alpha'_3)_{\underline F (x_0)} = 0,
\]
i.e.~$\Upsilon$ vanishes at all points $x_0$ as in the statement of Lemma \ref{lem:bw}. Since such points are dense in $M$, it follows that $\Upsilon$ vanishes everywhere.
\end{proof}

In analogy with the Dirac case, we call \emph{clean intersection condition} the condition that $\ker j^1 F \cap \underline F^\ast \mathfrak L'$ has constant rank (see \cite[Section 5]{B2013}). Clearly, it always holds when $ \mathfrak L ' = \mathfrak L _\omega$ for some $2$-cocycle $\omega$ in $(\Omega^\bullet_{L'}, d_{D})$.

\begin{example}\label{ex:bw}
Let $(L, \mathfrak L)$ be a Dirac-Jacobi bundle over a smooth manifold $M$, and let $\underline F : S \INTO M$ be the inclusion of a submanifold. Equip $S$ with the restricted line bundle $L|_S$ and let $F : L|_S \INTO L$ be the inclusion. Now $\ker j^1 F$ is the annihilator $D (L|_S)^0$ of $D (L|_S)$ in $(J^1 L)|_S$. It is easy to see that, actually, $D (L|_S)^0 = N^\ast S \otimes L|_S \subset T^\ast M |_S \otimes L|_S \subset (J^1 L)|_S$, where $N^\ast S$ is the \emph{conormal bundle} to $S$. Hence the clean intersection condition reads: \emph{$ \mathfrak L  \cap (N^\ast S \otimes L|_S)$ has constant rank}. If this condition is satisfied, then $L|_S$ inherits a Dirac-Jacobi structure
\begin{equation}\label{eq:bw_sub}
\mathfrak B_F (\mathfrak L ) = \{(\Delta , F^\ast \psi) \in \D L|_S : (\Delta, \psi) \in \mathfrak L  \},
\end{equation}
(cf.~\cite[Section 3.1]{C1990}). For instance, let $K_{\mathfrak L} $ be a regular distribution, and let $S \subset M$ be a submanifold complementary to $K_{\mathfrak L} $ (see Subsection \ref{subsec:adm}). Then, $ \mathfrak L  \cap (N^\ast S \otimes L|_S) = 0$ and the clean intersection condition is automatically satisfied. Indeed a form $\eta \in \mathfrak L  \cap (N^\ast S \otimes L|_S)$ annihilates tangent vectors to $S$. Being in $ \mathfrak L $, $\eta$ does also annihilate tangent vectors in $K_{\mathfrak L} $. Since $K_{\mathfrak L} |_S$ and $TS$ span the whole $TM|_S$, it follows that $\eta = 0$. Under the additional assumption that $(\mathbb 1, 0) \notin \Gamma (\mathfrak L) $, the backward image $\mathfrak B_F (\mathfrak L )$ is precisely the Dirac-Jacobi structure corresponding to the bracket $\{-,-\}_S$ induced by $ \mathfrak L $ on $\Gamma (L|_S)$ (Proposition \ref{prop:transv_jac}). Indeed, $\mathfrak B_F (\mathfrak L ) \cap D (L|_S)$ consists of points in $ \mathfrak L |_S$ of the form $(\Delta, 0)$ with $\sigma (\Delta) \in TS$. But, from $(\Delta, 0) \in \mathfrak L $, we also get $\sigma (\Delta) \in K_{\mathfrak L} $, hence $\sigma (\Delta) = 0$, and from $(\mathbb 1,0) \notin \Gamma (\mathfrak L) $, we get $\Delta = 0$. So, from (\ref{eq:DJ_jac}), $\mathfrak B_F (\mathfrak L )$ is the Dirac-Jacobi structure corresponding to a Jacobi bracket $\{-,-\}$. That $\{-,-\} = \{-,-\}_S$ now follows from (\ref{eq:J_bot}).
\end{example}

\subsection{Forward Dirac-Jacobi morphisms}

Under suitable regularity conditions, a Dirac-Jacobi structure can be pushed-forward along a regular morphism of line bundles. Namely, let $L \to M$ and $L' \to M'$ be line bundles, and let $F : L \to L'$ be a regular morphism of line bundles over a smooth map $\underline F : M \to M'$. Additionally, let $ \mathfrak L $ be a Dirac-Jacobi structure on $L$.

\begin{definition}
The \emph{forward image of $ \mathfrak L $ along $F$} is the, non-necessarily regular, distribution in $\underline F^\ast \D L'$ defined as:
\[
\mathfrak F_F (\mathfrak L ) := \{ (F_\ast \Delta, \psi ') : ((\Delta , F^\ast \psi ') \in \mathfrak L ' \} \subset \underline F^\ast (\D L).
\]
\end{definition}

\begin{definition}
A \emph{forward Dirac-Jacobi map} $F : (L, \mathfrak L) \to (L',\mathfrak L')$ between Dirac-Jacobi bundles is a regular morphism of line bundles $F : L \to L'$ such that $\mathfrak F_F (\mathfrak L ) = \underline F^\ast \mathfrak L'$.
\end{definition}

Clearly, an isomorphism $F : L \to L'$ over a diffeomorphism $\underline F : M \to M'$ is a forward Dirac-Jacobi map if and only if it is a backward Dirac-Jacobi map, and this happens iff
\[
(F_\ast \Delta, (F^{-1})^\ast \psi) \in \mathfrak L ' \ \text{for all } (\Delta, \psi) \in \mathfrak L . 
\]
So the two notions of Dirac-Jacobi maps agree for isomorphisms. An isomorphism which is (either a backward or a forward) Dirac-Jacobi map is a \emph{Dirac-Jacobi isomorphism}.

\begin{remark}
The forward image $\mathfrak F_F (\mathfrak L )$ is the image of $\Gamma_F \cap \mathfrak L$ under the projection  $\D L \oplus \underline F^\ast (\D L') \to F^\ast (\D L')$ onto the second summand. It is easy to see that the kernel of the surjection $\Gamma_F \cap \mathfrak L \to \mathfrak F_F (\mathfrak L )$ consists of points in $\D L \oplus \underline F^\ast (\D L')$ of the form $((\Delta,0), (0, 0))$, with $(\Delta,0) \in \mathfrak L $, and $F_\ast \Delta = 0$. Summarizing, there is a short (point wise) exact sequence
\begin{equation}
0  \longrightarrow \ker d_{D} F \cap \mathfrak L \longrightarrow \Gamma_F \cap  \mathfrak L  \longrightarrow \mathfrak F_F (\mathfrak L ) \longrightarrow 0,
\end{equation}
where the second arrow is the inclusion $ \D L \INTO \D L \oplus \underline F^\ast (\D L')$ and the third arrow is the projection $\D L \oplus \underline F^\ast (\D L') \to \underline F^\ast (\D L')$. Finally, $\mathfrak F_F (\mathfrak L )$ is a maximal isotropic distribution in $\D L$. This follows from \cite[Proposition 5.1]{B2013} similarly as for backward images. In particular, $\mathfrak F_F (\mathfrak L )$ has constant rank.
\end{remark}

In order to check whether or not the forward image does actually determine a Dirac structure on $M'$ one should check two things: first, whether or not $\mathfrak F_F (\mathfrak L )$ is a vector subbundle of $\underline{F}^\ast (\D L)$, and second, whether or not $\mathfrak F_F (\mathfrak L )$ descends to a vector subbundle of $\D L$. The first issue is addressed in the following

\begin{proposition}
If $\rk (\ker d_{D} F \cap  \mathfrak L)$ is constant, then the forward image $\mathfrak F_F (\mathfrak L )$ is a vector subbundle of $\underline F ^\ast (\D L')$.
\end{proposition}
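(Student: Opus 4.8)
The plan is to mimic the proof of Proposition~\ref{prop:bw} almost verbatim, replacing the backward exact sequence by its forward analogue recorded just above the statement. First I would invoke the short exact sequence of distributions
\[
0  \longrightarrow \ker d_{D} F \cap \mathfrak L \longrightarrow \Gamma_F \cap  \mathfrak L  \longrightarrow \mathfrak F_F (\mathfrak L ) \longrightarrow 0,
\]
together with the fact, already established in the Remark preceding the statement, that $\mathfrak F_F (\mathfrak L )$ is a maximal isotropic distribution in $\underline F^\ast (\D L')$ and hence has locally constant rank. Since by hypothesis $\rk (\ker d_{D} F \cap \mathfrak L)$ is also locally constant, additivity of ranks along the exact sequence forces $\rk (\Gamma_F \cap \mathfrak L)$ to be locally constant.

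Next I would upgrade this to the statement that $\Gamma_F \cap \mathfrak L$ is a vector subbundle of $\D L \oplus \underline F^\ast (\D L')$. Here $\Gamma_F$ is a smooth vector subbundle of $\D L \oplus \underline F^\ast (\D L')$ (it was exhibited in the same Remark as the kernel of a vector bundle epimorphism), while $\mathfrak L \oplus \underline F^\ast (\D L')$ is manifestly a vector subbundle of the same ambient bundle; therefore $\Gamma_F \cap \mathfrak L$, being the intersection of two vector subbundles, is a distribution with upper semi-continuous rank (intersections of vector subbundles have upper semi-continuous rank, as recalled in the conventions at the start of the paper). A distribution of this kind is a vector subbundle precisely when its rank is locally constant, which has just been verified.

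Finally, $\mathfrak F_F (\mathfrak L )$ is by definition the image of $\Gamma_F \cap \mathfrak L$ under the restriction of the vector bundle projection $\pr : \D L \oplus \underline F^\ast (\D L') \to \underline F^\ast (\D L')$ onto the second summand. Now $\pr$ restricted to the vector subbundle $\Gamma_F \cap \mathfrak L$ is a vector bundle morphism whose kernel is exactly $\ker d_{D} F \cap \mathfrak L$, a subbundle of constant rank by hypothesis; hence this morphism has locally constant rank, and its image is therefore a vector subbundle of $\underline F^\ast (\D L')$. This image is $\mathfrak F_F (\mathfrak L )$, which completes the argument.

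As for difficulty, there is essentially no obstacle here: the whole content is a rank count through the exact sequence, exactly as in the Dirac case \cite[Proof of Proposition 5.9]{B2013}. The only point deserving a line of care is the exactness of the displayed sequence at its middle term, i.e.~that the kernel of the surjection $\Gamma_F \cap \mathfrak L \to \mathfrak F_F (\mathfrak L )$ is precisely $\ker d_{D} F \cap \mathfrak L$, which is the elementary identification recorded in the Remark preceding the statement.
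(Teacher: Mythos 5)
Your argument is correct and is precisely the route the paper takes: its proof consists of the single sentence that the claim follows from the forward analogue of the exact sequence and ``a similar argument as that in the proof of Proposition~\ref{prop:bw}'', and your rank count through that sequence, the upper semi-continuity of the rank of the intersection $\Gamma_F \cap \mathfrak L$, and the constant-rank image argument are exactly the details being elided there. Nothing to add.
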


\begin{proof}
It immediately follows from (\ref{eq:exact_bw}) by a similar argument as that in the proof of Proposition \ref{prop:bw}.
\end{proof}

We call \emph{clean intersection condition}, the condition that $\ker d_{D} F \cap \mathfrak L$ has constant rank. Clearly, it always holds when $ \mathfrak L  = \mathfrak L _J$ for some Jacobi bundle $(L,J)$ over $M$.

\begin{remark}\label{rem:fw}
The clean intersection condition is equivalent to the condition that $\ker d \underline F \cap  K_{\mathfrak L}$ has constant rank. Indeed, $\ker d_{D} F$ is mapped point-wise isomorphically onto $\ker d \underline F$ by the symbol map $\sigma : D L \to TM$. Hence $\ker d_{D} F \cap \mathfrak L = \ker d_{D} F \cap E_{\mathfrak L} $ is mapped point-wise isomorphically onto $\ker d \underline F \cap K_{\mathfrak L} $. In particular, $\ker d_{D} F \cap \mathfrak L$ has constant rank if and only if so does $\ker d \underline F \cap  K_{\mathfrak L} $.
\end{remark}

Next issue is addressed as follows. The fiber $\mathfrak F_F (\mathfrak L )_x$ of $\mathfrak F_F (\mathfrak L )$ over a point $x \in M$ is a subspace in the fiber of $\underline F ^\ast (\D L')$ over $x$. The latter is the fiber $\D_{\underline F (x)} L'$ of $\D L'$ over $\underline F (x)$. Hence, as for standard Dirac structures, it is natural to give the following

\begin{definition}
Dirac-Jacobi structure $ \mathfrak L $ is \emph{$F$-invariant} if $\mathfrak F_F (\mathfrak L )_x$ is independent of the choice of $x$ in a fiber of $\underline F$.
\end{definition}

Now, suppose for a moment that $\rk d \underline F$ is constant. Then the image of $\underline F$ is locally a submanifold. If $ \mathfrak L $ satisfies the clean intersection condition with respect to $F$, and, additionally, it is $F$-invariant, then $\mathfrak F_F (\mathfrak L )$ descends to a vector subbundle of $\D L'$ (over every smooth piece of $\underline F (M)$) whose fiber at $\underline F (x)$ is $\mathfrak F_F (\mathfrak L )_x$, $x \in M$. In particular, in order to get a vector subbundle of $\D L'$ over the whole $M'$, we need $\underline F$ to be a surjective submersion. Even more, we have the following

\begin{proposition}
Let $\underline F : M \to M'$ be a surjective submersion. If the rank of $\ker d_{D} F \cap  \mathfrak L$ is constant, and $ \mathfrak L $ is $F$-invariant, then the forward image $\mathfrak F_F (\mathfrak L )$ descends to a Dirac-Jacobi structure on $L'$.
\end{proposition}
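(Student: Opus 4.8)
The plan is to construct the descended structure $\mathfrak L'\subset\D L'$ explicitly and then verify the two axioms of Definition \ref{def:dirac}, following the pattern of the backward case (Proposition \ref{prop:bw} and Lemma \ref{lem:bw}); the simplification here is that, $\underline F$ being a submersion, $\rk d\underline F$ is automatically constant, so no density argument will be needed.

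First I would treat the descent as a subbundle. By the preceding proposition, the clean intersection condition $\rk(\ker d_{D}F\cap\mathfrak L)$ constant already makes $\mathfrak F_F(\mathfrak L)$ a vector subbundle of $\underline F^\ast(\D L')$ which is fibrewise maximal isotropic with respect to $\bla-,-\bra$, hence of constant rank $\dim M'+1$. By $F$-invariance the fibre $\mathfrak F_F(\mathfrak L)_x\subset\D_{\underline F(x)}L'$ depends only on $\underline F(x)$, so, using surjectivity of $\underline F$, one sets $\mathfrak L'_y:=\mathfrak F_F(\mathfrak L)_x$ for any $x\in\underline F^{-1}(y)$. This is a well-defined family of subspaces of $\D L'$ of rank $\dim M'+1$, and it is a smooth subbundle $\mathfrak L'$: near any $y_0\in M'$ choose a local section $s$ of the submersion $\underline F$; since $\underline F\circ s=\id$ one has $s^\ast(\underline F^\ast\D L')=\D L'$ locally and $\mathfrak L'=s^\ast\mathfrak F_F(\mathfrak L)$ there, exhibiting $\mathfrak L'$ locally as a smooth subbundle, with independence of $s$ being precisely $F$-invariance. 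By construction $\underline F^\ast\mathfrak L'=\mathfrak F_F(\mathfrak L)$, and since the pairing on $\underline F^\ast\D L'$ at $x$ is just the pairing on $\D_{\underline F(x)}L'$, fibrewise maximal isotropy of $\mathfrak F_F(\mathfrak L)$ is the same statement as maximal isotropy of $\mathfrak L'$.

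The substance of the proof is involutivity, i.e.~that the Courant--Jacobi tensor $\Upsilon_{\mathfrak L'}$ vanishes. The engine is a lifting lemma adapting Lemma \ref{lem:bw}: for every $x_0\in M$ and every section $\tau$ of $\mathfrak L'$ defined near $\underline F(x_0)$, there is, on a neighbourhood of $x_0$, a section $\alpha$ of $\mathfrak L$ that is $F$-related to $\tau$ --- that is $\alpha=(\Delta,F^\ast\psi')$ where $\tau=(\Delta',\psi')$ and $\Delta,\Delta'$ are $F$-related --- with $\alpha_{x_0}$ any prescribed element of $\mathfrak L_{x_0}$ lying over $\tau_{\underline F(x_0)}$. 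Indeed, under the clean intersection condition the pointwise exact sequence for $\mathfrak F_F(\mathfrak L)$ shows that $\Gamma_F\cap\mathfrak L$ is a vector subbundle of $\D L\oplus\underline F^\ast(\D L')$ and that its projection onto $\underline F^\ast(\D L')$ is a surjective bundle map onto $\mathfrak F_F(\mathfrak L)=\underline F^\ast\mathfrak L'$; lifting the local section $\tau\circ\underline F$ of $\underline F^\ast\mathfrak L'$ through this map and taking the $\D L$-component yields $\alpha$, and the freedom in the lift realizes any prescribed value at $x_0$. Granting this, fix $y_0\in M'$, pick $x_0\in\underline F^{-1}(y_0)$, take sections $\tau_1,\tau_2,\tau_3$ of $\mathfrak L'$ near $y_0$ and $F$-related lifts $\alpha_1,\alpha_2,\alpha_3\in\Gamma(\mathfrak L)$ near $x_0$. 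Because $\pr_{D}$ intertwines $\blq-,-\brq$ with the commutator of derivations and, by \eqref{eq:F-related} together with $F^\ast d_{D}=d_{D}F^\ast$, contractions and Lie derivatives of $F$-related derivations pull back correctly, one checks that $\blq\alpha_i,\alpha_j\brq$ is $F$-related to $\blq\tau_i,\tau_j\brq$ on both the $D L$- and the $J^1 L$-components; then naturality of $\bla-,-\bra$ gives $\Upsilon_{\mathfrak L}(\alpha_1,\alpha_2,\alpha_3)=F^\ast\big(\Upsilon_{\mathfrak L'}(\tau_1,\tau_2,\tau_3)\circ\underline F\big)$. The left side vanishes since $\mathfrak L$ is involutive, so $\Upsilon_{\mathfrak L'}(\tau_1,\tau_2,\tau_3)$ vanishes at $y_0$; as $\underline F$ is surjective and the rank hypotheses of the lifting lemma hold everywhere ($\rk d\underline F$ is constant because $\underline F$ is a submersion, $\rk(\ker d_{D}F\cap\mathfrak L)$ by assumption), we get $\Upsilon_{\mathfrak L'}\equiv 0$, so $(L',\mathfrak L')$ is a Dirac--Jacobi bundle with $\underline F^\ast\mathfrak L'=\mathfrak F_F(\mathfrak L)$.

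The main obstacle I anticipate is exactly this last step: setting up the lifting lemma (for which the clean intersection condition and $F$-invariance are both genuinely needed, so that $\Gamma_F\cap\mathfrak L$ is a subbundle surjecting onto $\underline F^\ast\mathfrak L'$) and carefully verifying that the Dorfman-type bracket of $F$-related sections is again $F$-related --- the $J^1 L$-component of this requires the identities \eqref{eq:F-related} and $F^\ast d_{D}=d_{D}F^\ast$, together with some bookkeeping for the $L$-twisted duality pairing under the identification $F:L\to L'$. Everything else is routine.
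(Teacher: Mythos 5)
Your proof is correct and follows exactly the route the paper intends but leaves to the reader: descent of the maximal isotropic subbundle via $F$-invariance and local sections of the submersion, followed by a lifting lemma in the style of Lemma \ref{lem:bw} combined with the $F$-relatedness identities (\ref{eq:F-related}) and $F^\ast d_{D}=d_{D}F^\ast$ to transport the vanishing of the Courant--Jacobi tensor. Your lifting argument via the constant-rank surjection $\Gamma_F\cap\mathfrak L\to\mathfrak F_F(\mathfrak L)$ is if anything cleaner than the coordinate construction of Lemma \ref{lem:bw}, and you correctly observe that the density argument of the backward case is unnecessary here because $\underline F$ is a submersion.
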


\begin{proof}
The clean intersection condition and the $F$-invariance guarantee that $\mathfrak F_F (\mathfrak L )$ is a vector bundle descending to a maximal isotropic vector subbundle of $\D L'$. Denote it by $ \mathfrak L '$. It remains to prove that $ \mathfrak L '$ is involutive. This can be done similarly as in Proposition \ref{prop:bw} and \cite[Proposition 5.9]{B2013}. Details are left to the reader.
\end{proof}

\begin{example}\label{ex:fw}
Let $(L, \mathfrak L)$ be a Dirac-Jacobi bundle over a smooth manifold $M$, and let $L_N \to N$ be a line bundle. Moreover, let $F : L \to L_N$ be a regular morphism of line bundles over a surjective submersion $\underline F : M \to N$ with connected fibers, such that $\ker d \underline F \subset K_{\mathfrak L} $. In this case, the clean intersection condition holds, without further assumptions, in view of Remark~\ref{rem:fw}. Moreover, if, additionally,  $\ker d_{D} F \subset \mathfrak L$, then $ \mathfrak L $ is $F$-invariant. To see this, first notice that the Lie derivative $\Ll_\square$ along a derivation $\square \in \Gamma (\ker d_{D} F) \subset \Der L$ preserves sections of $ \mathfrak L $. Namely, let $(\Delta, \psi) \in \Gamma (\D L)$, and put $\Ll_\square (\Delta, \psi) := ([\square , \Delta], \Ll_\square \psi)$. Then $\Ll_\square (\Delta, \psi) \in \Gamma (\mathfrak L) $ for all $(\Delta, \psi) \in \Gamma (\mathfrak L) $. Indeed 
\[
\Ll_\square (\Delta, \psi) = ([\square , \Delta], \Ll_\square \psi) = \blq (\square, 0), (\Delta,\psi) \brq.
\]
In its turn, since $(\square, 0) \in \Gamma (\ker d_{D} F) \subset \Gamma (\mathfrak L)$, then $\blq (\square, 0), (\Delta,\psi) \brq$ is in $\Gamma (\mathfrak L)$ by involutivity of $ \mathfrak L $. In particular, $ \mathfrak L $ is preserved by the flow of $\square$, for all $\square \in \Gamma (\ker d_{D} F)$. Hence, since fibers of $\underline F$ are connected, for any $x, x'$ in the same $\underline F$-fiber, there is a Dirac-Jacobi isomorphism $G : L \to L$, over a diffeomorphism $\underline G: M \to M$, such that $\underline G$ maps $x$ to $x'$ and, additionally, $G$ is $F$-vertical, i.e.~$F \circ G = F$. It is now easy to see that
\[
\begin{aligned}
 \mathfrak F_F (\mathfrak L )_{x'} & = \{ (F_\ast \Delta, \psi ') : ( \Delta, F^\ast \psi ') \in \mathfrak L _{x'} \} \\
                                     & = \{ ( (F \circ G)_\ast \Delta ' , \psi ' ) : (\Delta ' , (F \circ G)^\ast \psi ') \in \mathfrak L _x \} \\
                                     & = \mathfrak F_{F \circ G} (\mathfrak L )_x = \mathfrak F_{F} (\mathfrak L )_x,
 \end{aligned}
\]
so that $ \mathfrak L $ is $F$-invariant. Hence  $\mathfrak F_F (\mathfrak L )$ descends to a Dirac-Jacobi structure on $L_N$. Denote it by $ \mathfrak L _N$. By construction $E_{\mathfrak L_N} = \mathfrak L _N \cap D L_N = (d_{D} F) (E_{\mathfrak L} )$. For instance, let $\ker d_{D} F = E_{\mathfrak L} $, so that $\ker d \underline F = K_{\mathfrak L} $. This means that distribution $K_{\mathfrak L} $ is simple, its leaf space is $N$, and $\underline F : M \to N$ is the natural projection. Additionally, $(\mathbb 1, 0) \notin \Gamma (\mathfrak L) $ and the canonical $(\ker d \underline F)$-connection in $L = \underline F^\ast L_N$ is the one induced by $ \mathfrak L $ (see Subection \ref{subsec:adm}). Hence $ \mathfrak L _N$ is the Dirac-Jacobi structure corresponding to the Jacobi bundle induced on $N$ by $(L, \mathfrak L)$. Clearly $F: (L, \mathfrak L) \to (\mathfrak L _N, L_N)$ is both a forward Dirac-Jacobi map and, from Example \ref{ex:bw}, a backward Dirac-Jacobi map.
\end{example}

\begin{remark}
The two conditions $\ker d \underline F \subset K_{\mathfrak L} $, and $\ker d_{D} F \subset \mathfrak L$ appearing in Example \ref{ex:fw} are not equivalent. While from $\ker d_{D} F \subset \mathfrak L$ immediately follows $\ker d \underline F \subset K_{\mathfrak L} $, the converse is not true. For instance, let $M = \R^2$, $L = \R_M$ and let $ \mathfrak L  = \mathfrak L _\nabla$, where $\nabla$ is any flat connection in $L$. So $E_{\mathfrak L} $ is the image of $\nabla$ and $K_{\mathfrak L}  = TM$. In particular, $\ker d \underline F \subset K_{\mathfrak L} $ for any $F$. Now let $N$ consist of one point $\ast$, $L_N = \R_{\{\ast\}} \to N = \{ \ast \}$, and let $F : M \times \R \to \R_{\{ \ast \}} = \R$ be the projection onto the second factor. It is easy to see that  $\ker d_{D} F$ is the image of the canonical trivial connection $\nabla_0$ in $L$. Hence, unless $\nabla = \nabla_0$, the kernel of $d_{D} F$ is not contained into $ \mathfrak L $.
\end{remark}

\section{Coisotropic embeddings of Dirac-Jacobi bundles}\label{sec:coisotrop}

Let $(L, \mathfrak L)$ be a Dirac-Jacobi bundle over a smooth manifold $M$, and let $S \subset M$ be a submanifold. The inclusion $\iota : L|_S \INTO L$ of the restricted line bundle is a regular morphism. Hence $ \mathfrak L $ induces a Dirac-Jacobi structure  $\mathfrak B_\iota (\mathfrak L )$ on $L|_S$ (up to a clean intersection condition, see Subsection \ref{subsec:bw}). When $ \mathfrak L $ is the Dirac-Jacobi structure corresponding to a precontact form $\theta : TM \to L$, then $\mathfrak B_\iota (\mathfrak L )$ is the Dirac-Jacobi structure corresponding to the pull-back precontact form $\iota^\ast \theta : TS \to L|_S$. On the other hand, when $ \mathfrak L $ is the Dirac-Jacobi structure corresponding to a Jacobi bracket $\{-,-\}$ on $L$, in general $\mathfrak B_\iota (\mathfrak L )$ does not correspond to a Jacobi bracket on $L|_S$. In other words, Jacobi brackets ``do not pass to submanifolds'' and we need to work in the general setting of Dirac-Jacobi bundles when dealing with submanifolds in a Jacobi manifold.

Coisotropic submanifolds are of a special interest. In Poisson geometry, they model first class constraints of Hamiltonian systems. Moreover the graph of a Poisson morphism is a coisotropic submanifold is a suitable \emph{product Poisson manifold}. Similar considerations hold in Jacobi geometry. For instance the graph of a Jacobi map is a coisotropic submanifold in a suitable \emph{product Jacobi manifold} \cite{ILMM1997}.

Now, a presymplectic manifold can be coisotropically embedded in a symplectic manifold if and only if the null distribution is regular, and the coisotropic embedding is essentially unique. This classical result, due to Gotay (see \cite{G1982} for details), plays an important role in symplectic geometry. For instance, Gotay's Theorem can be used to show that deformations of a coisotropic submanifold $S$ in a symplectic manifold are controlled by an $L_\infty$-algebra depending only on the intrinsic presymplectic geometry of $S$ \cite{OP2005}. There is a contact version of Gotay's Theorem stating that a precontact manifold can be coisotropically embedded in a contact manifold if and only if the null distribution is regular, and the coisotropic embedding is essentially unique. One can use the latter result to study deformations of a coisotropic submanifold in a contact manifold (see \cite{LOTV2014} for more details). Similarly, any Dirac manifold can be regarded as a coisotropic submanifold (with the pull-back Dirac structure) in a Poisson manifold, if and only if the null distribution is regular \cite[Section 8]{CZ2009}. This can be used, for instance, to reduce the quantization problem of the Poisson algebra of admissible functions on a Dirac manifold to the quantization problem of the Poisson algebra of basic functions on a coisotropic submanifold \cite{CF2007}. It is natural to ask: \emph{can one unify the above mentioned coisotropic embedding theorems in contact and Dirac geometry, proving an analogous result for Dirac-Jacobi bundles?} In this section we show that the answer is affirmative (see Theorem \ref{theor:coisotrop}).

Let $(L, \{-,-\})$ be a Jacobi bundle. As already remarked, the Jacobi bracket $\{-,-\}$ can be regarded as a morphism $J : \wedge^2 J^1 L \to L$ and defines a morphism $J^\sharp : J^1 L \to J_1 L \otimes L = D L$, $\psi \mapsto J(\psi, -)$. Recall that a submanifold $S \subset M$ is called \emph{coisotropic} (with respect to $(L,J)$) if $X_\lambda := (\sigma \circ J^\sharp)(\lambda)$ is tangent to $S$ for all sections $\lambda$ of $L$ such that $\lambda|_S = 0$. More information about coisotropic submanifolds in Jacobi geometry may be found, e.g., in \cite{LOTV2014}.

Let $(L_S, \mathfrak L)$ be a Dirac-Jacobi bundle over a manifold $S$.

\begin{definition}
A \emph{coisotropic embedding} of $(L_S, \mathfrak L)$ in a Jacobi bundle $(L, J)$ over a manifold $M$ is an embedding $\iota : L_S \INTO L$ over an embedding $\underline \iota : S \INTO M$, such that
\begin{enumerate}
\item the image of $\underline \iota$ is a coisotropic submanifold of $M$, and
\item  $ \mathfrak L  = \mathfrak B_\iota (\mathfrak L _J)$, i.e. $\iota : (L_S, \mathfrak L) \to (L, \mathfrak L_J)$ is a backward Dirac-Jacobi map.
\end{enumerate}
\end{definition} 

\begin{theorem}\label{theor:coisotrop}
Dirac-Jacobi bundle $(L_S, \mathfrak L)$ can be embedded coisotropically (i.e.~there is a coisotropic embedding) into a Jacobi bundle if and only if the null der-distribution $E_{\mathfrak L}  = \mathfrak L  \cap D L_S$ is regular.
\end{theorem}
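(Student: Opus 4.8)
The plan is to prove the two implications separately, the pattern being that of Gotay's coisotropic embedding theorem \cite{G1982} and its Dirac-geometric version \cite[Section 8]{CZ2009}.

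\emph{Necessity.} Suppose $\iota : (L_S, \mathfrak L) \INTO (L, \mathfrak L_J)$ is a coisotropic embedding over $\underline\iota : S \INTO M$, so $\mathfrak L = \mathfrak B_\iota (\mathfrak L_J)$. As recalled in Example \ref{ex:bw}, $\ker j^1 \iota = N^\ast S \otimes L|_S \subset (J^1 L)|_S$, where $N^\ast S$ is the conormal bundle. Unwinding the definition of backward image together with $\mathfrak L_J = \operatorname{graph} J^\sharp$, one finds that $(\Delta,0) \in \mathfrak B_\iota(\mathfrak L_J)$ iff $\iota_\ast \Delta = J^\sharp(\psi)$ for some $\psi \in N^\ast S \otimes L|_S$; and coisotropy of $\underline\iota(S)$ says exactly that $\sigma(J^\sharp(\psi))$ is tangent to $S$ for all such $\psi$ (elements of $N^\ast S \otimes L|_S$ being precisely the restrictions to $S$ of $j^1\lambda$ with $\lambda|_S = 0$). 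Hence the tangency constraint is vacuous and
\[
E_{\mathfrak L} = \mathfrak L \cap D L_S = \bigl\{ \iota_\ast^{-1}\bigl(J^\sharp(\psi)\bigr) : \psi \in N^\ast S \otimes L|_S \bigr\}
\]
is the \emph{image} of the vector bundle morphism $N^\ast S \otimes L|_S \to D L_S$, $\psi \mapsto \iota_\ast^{-1}(J^\sharp(\psi))$, so that $\rk E_{\mathfrak L}$ is lower semi-continuous. But $E_{\mathfrak L} = \mathfrak L \cap D L_S$ is also an intersection of two vector subbundles of $\D L_S$, so $\rk E_{\mathfrak L}$ is upper semi-continuous. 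Therefore $\rk E_{\mathfrak L}$ is locally constant, i.e.\ $E_{\mathfrak L}$ is regular.

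\emph{Sufficiency.} Assume now $E := E_{\mathfrak L} = \mathfrak L \cap D L_S$ is a vector subbundle of $D L_S$, of locally constant rank $r$. The model for the ambient manifold is a tubular neighbourhood $M$ of the zero section in the total space of $p : E^\ast \otimes L_S \to S$; then $\operatorname{codim} S = r$ in $M$, the normal bundle of $S$ is $E^\ast \otimes L_S$, and there is a canonical identification $N^\ast S \otimes L|_S \cong E$ of the twisted conormal bundle with $E$ (after setting $L := p^\ast L_S$ and letting $\iota : L_S \INTO L$ be the tautological regular morphism over the zero section $\underline\iota$). It remains to produce, near $S$, a Jacobi bracket $J$ on $L$ — equivalently, by Example \ref{ex:Jacobi}, a maximal isotropic involutive $\mathfrak L_J \subset \D L$ with $\mathfrak L_J \cap D L = \mathbf 0$ — such that $\mathfrak B_\iota(\mathfrak L_J) = \mathfrak L$ and $\underline\iota(S)$ is coisotropic.

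To build $J$, choose an auxiliary linear connection on $E^\ast\otimes L_S \to S$, so that near $S$ both $D L$ and $J^1 L$ split off the horizontal lift of $D L_S$, the $r$-dimensional vertical directions, and $\langle \mathbb 1 \rangle$. Describing $\mathfrak L$ through the pair $(\Ii_{\mathfrak L}, \omega)$ of Remark \ref{rem:char_fol} (with $E = \ker(\omega|_{\Ii_{\mathfrak L}})$), one transports it along $p$ and adds a Gotay-type correction — the der-complex analogue of the exact form $d\theta$ built from the tautological section of $p^\ast(E^\ast\otimes L_S)$ — to obtain a section $J$ of $\wedge^2(J^1 L)^\ast\otimes L$ whose sharp map $J^\sharp$ carries, along $S$, the bundle $N^\ast S\otimes L|_S\cong E$ isomorphically onto $E = \mathfrak L\cap D L_S$. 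This is exactly what makes the clean intersection condition of Example \ref{ex:bw} hold along $S$, hence (being an open condition) near $S$, and makes $\mathfrak B_\iota(\mathfrak L_J) = \mathfrak L$: restricting $\operatorname{graph}J^\sharp$ to $S$ and applying $\iota^\ast$ collapses precisely the $E$-directions, returning $(\Ii_{\mathfrak L}, \omega)$. Coisotropy is then automatic, since $J^\sharp(N^\ast S\otimes L|_S) = E\subset D L_S$ consists of derivations with symbol tangent to $S$. The remaining point — that $J$ is a \emph{Jacobi} bracket, i.e.\ $\Upsilon_{\mathfrak L_J} = 0$ — is checked first along $S$, where, via the splitting of $\D L|_S$ and the formulas of Examples \ref{ex:2form}--\ref{ex:lcps}, $\Upsilon_{\mathfrak L_J}|_S$ reduces to $\Upsilon_{\mathfrak L} = 0$ plus the closedness of the Gotay term; a Moser-type flow argument, using the contracting homotopy $i_{\mathbb 1}$ of the der-complex to control the $J^1 L$-component, then propagates involutivity to a full neighbourhood of $S$. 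Shrinking $M$, $(L,J)$ is the desired ambient Jacobi bundle.

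Alternatively, sufficiency can be reduced to the Poisson case \cite[Theorem 8.1]{CZ2009}: the Dirac-ization of $(L_S,\mathfrak L)$ from Appendix \ref{app:Dirac_trick} is a homogeneous Dirac structure whose null distribution is regular iff $E_{\mathfrak L}$ is; one embeds it coisotropically, $\R^\times$-equivariantly (using the essential uniqueness of Gotay's embedding), into a homogeneous Poisson manifold, and de-Dirac-izes. In either approach, the main obstacle is the same analytic step: writing down the Gotay-type correction and proving the resulting $J$ satisfies the Jacobi identity in a neighbourhood of $S$ (not merely along $S$) — the line-bundle, der-complex upgrade of ``$p^\ast\omega_S + d\theta$ is closed and nondegenerate near the zero section'', with the extra $\langle\mathbb 1\rangle$-direction and the precontact/lcps dichotomy of the characteristic leaves of $\mathfrak L$ to be tracked throughout.
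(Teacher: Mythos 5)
Your necessity argument coincides with the paper's: $\rk E_{\mathfrak L}$ is upper semi-continuous as the rank of an intersection of subbundles, and lower semi-continuous because coisotropy identifies $E_{\mathfrak L}$ with the image $J^\sharp (N^\ast S \otimes L_S)$ of a vector bundle morphism; hence it is locally constant. That half is correct.

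In the sufficiency direction you have the right model (a neighbourhood of the zero section of $\pi : E_{\mathfrak L}^\dag = E_{\mathfrak L}^\ast \otimes L_S \to S$, a splitting $D L_S = G \oplus E_{\mathfrak L}$, the pullback of $\mathfrak L$, and a correction built from a tautological $1$-cochain), but you misidentify where the work lies, and the step you single out as the main analytic obstacle --- verifying $\Upsilon_{\mathfrak L_J} = 0$ along $S$ and then ``propagating involutivity to a neighbourhood by a Moser-type flow'' --- is neither needed nor a valid argument: vanishing of the Courant--Jacobi tensor on a submanifold does not imply its vanishing nearby, and no flow argument produces that implication. In the paper's proof involutivity costs nothing. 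The backward image $\mathfrak B_\pi (\mathfrak L)$ is a Dirac--Jacobi structure by Proposition \ref{prop:bw} (the clean intersection condition is automatic for the bundle projection $\pi$), and the Gotay-type correction is precisely the gauge transformation $\tau_{\omega_G}$ by the exact, hence $d_{D}$-closed, $2$-cocycle $\omega_G = - d_{D} \Theta_G$, which preserves involutivity by Example \ref{ex:gauge}. So $\mathfrak L_G = \tau_{\omega_G} \mathfrak B_\pi (\mathfrak L)$ is a Dirac--Jacobi structure on all of $E_{\mathfrak L}^\dag$ from the outset, and the Jacobi identity for the eventual bracket is automatic. What actually has to be proved near the zero section is that $\mathfrak L_G$ is of the form $\mathfrak L_J$, i.e.~that $\mathfrak L_G \cap D L = \mathbf 0$ there (Example \ref{ex:Jacobi}); equivalently, that every characteristic leaf $\widehat \Oo = \pi^{-1}(\Oo)$ is genuinely contact (resp.~lcs) near $\mathbf 0$. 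This is a nondegeneracy statement, hence an open condition, and the paper establishes it by an explicit block-matrix computation of $\omega_{\widehat\Oo}|_{\mathbf 0}$ (resp.~$\underline\omega{}_{\widehat\Oo}|_{\mathbf 0}$) in the decomposition $G_\Oo \oplus E_{\mathfrak L}|_\Oo \oplus \widehat\Oo$, where the Gotay term contributes the duality pairing between $E_{\mathfrak L}|_\Oo$ and $\widehat\Oo$ in the off-diagonal blocks. Your proposal never carries out this nondegeneracy check, which is the real content of the sufficiency half; reorganizing the argument as ``involutive everywhere by construction, nondegenerate along $\mathbf 0$ by linear algebra, hence nondegenerate nearby by openness'' is what closes the gap. (The alternative Dirac-ization route you sketch would additionally require an $\R^\times$-equivariant refinement of the Cattaneo--Zambon embedding, which you assert but do not supply; the paper does not take that route.)
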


\begin{proof}
The proof parallels the proof of the analogous proposition for Dirac manifolds \cite[Theorem 8.1]{CZ2009}. Let $(L_S, \mathfrak L)$ be a Dirac-Jacobi bundle over $S$, and let $\iota : L_S \to L$ be a coisostropic embedding into a Jacobi-bundle $(L, J)$ (over an embedding $\underline \iota : S \to M$). In the following, we use $\underline \iota$ to regard $S$ as a submanifold in $M$ and identify $L_S$ with $L|_S$. The rank of $E_{\mathfrak L}  = \mathfrak L  \cap D L_S$ is an upper semi-continuous function on $S$. On the other hand, it is also lower semi-continuous. Indeed, 
\[
\begin{aligned}
\mathfrak L = \mathfrak B_\iota (\mathfrak L _J) & = \{ (\Delta , \iota^\ast \psi) : (\Delta, \psi) \in \mathfrak L _J \} \\
 & = \{(J^\sharp (\psi) , \iota^\ast \psi) : \psi \in (J^1 L)|_S \text{ and } J^\sharp (\psi) \in D L_S \}.
 \end{aligned}
\]
Since $S$ is coisotropic, it follows that
\[
E_{\mathfrak L}  = \{ J^\sharp (\psi) : \psi \in \ker j^1 \iota \}.
\]
But $\ker j^1 \iota = N^\ast S \otimes L_S$ is a vector bundle (see Example \ref{ex:bw}). So, $E_{\mathfrak L}  = J^\sharp (N^\ast S \otimes L_S)$ is a smooth distribution. Hence it is regular.

Conversely, let $E_{\mathfrak L} $ be a regular distribution in $D L_S$. Restricting to connected components of $S$, if necessary, we can assume that $E_{\mathfrak L} $ is a vector bundle. We want to show that there is a Jacobi bundle on a neighborhood of the zero section $\mathbf 0$ of the vector bundle $\pi: E_{\mathfrak L} ^\dag := E_{\mathfrak L} ^\ast \otimes L_S \to S$ such that $\mathbf 0 : S \to E_{\mathfrak L} ^\dag$ is a coisotropic embedding.

Pick a complement $G$ of $E_{\mathfrak L} $ in $D L_S$, i.e.~$D L_S = G \oplus E_{\mathfrak L} $. Equip $E_{\mathfrak L} ^\dag$ with the pull-back line bundle $L := \pi^\ast L_S$, and define a maximal isotropic subbundle $ \mathfrak L _G \subset \D L$, depending on $G$, as follows. Abusing the notation, we denote again by $\pi : L = \pi^\ast L_S \to L_S$ the projection. Thus, take the backward image $\mathfrak B_\pi (\mathfrak L ) \subset \D L$. The clean intersection condition is automatically satisfied.  Hence $\mathfrak B_\pi (\mathfrak L )$ is a Dirac-Jacobi structure on $L$.

There is a canonical $1$-cochain $\Theta_G$ in $(\Omega^\bullet_L, d_{D})$. The value of $\Theta_G$ at $\varepsilon \in E_{\mathfrak L} ^\dag$ is the composition:
\begin{equation}\label{eq:Theta_G}
\xymatrix{D_{\varepsilon} L \ar[r]^-{d_{D} \pi} & D_x L_S \ar[r] &(E_{\mathfrak L} )_x \ar[r]^-{\varepsilon} & (L_S)_x}, 
\end{equation}
where $x = \pi (\varepsilon)$ and the second arrow is the projection with kernel $G_x$.
Take the differential $\omega_G := -d_{D} \Theta_G$. There is an alternative description of $\omega_G$. Namely, the splitting $D L_S = G \oplus E_{\mathfrak L} $ induces a splitting $J^1 L_S = (G^\ast \otimes L_S) \oplus E_{\mathfrak L} ^\dag$, whence an embedding $E_{\mathfrak L} ^\dag \INTO J^1 L_S$ of vector bundles. Recall that $J^1 L_S$ is equipped with a canonical contact form $\theta$ taking values in the pull-back line bundle $J^1 L_S \times_S L_S$ (see, e.g., \cite[Example 5.5]{LOTV2014}). Consider the $2$-cocycle $\omega_{J^1 L_S} := - d_{D} (\theta \circ \sigma)$. It is easy to see that $\omega_G$ coincides with the pull-back of $\omega_{J^1 L_S}$ along the embedding $L \INTO J^1 L_S \times_S L_S$ over the embedding $E_{\mathfrak L} ^\dag \INTO J^1 L_S$.

Next, use $\omega_G$ to ``gauge transform'' $\mathfrak B_\pi (\mathfrak L )$ and get a Dirac-Jacobi structure (see Example \ref{ex:gauge}):
\[
\mathfrak L_G := \tau_{\omega_G}\mathfrak B_\pi (\mathfrak L ) = \{ (\Delta, \psi + i_\Delta \omega_G) : (\Delta, \psi) \in \mathfrak B_\pi (\mathfrak L ) \}.
\]
We claim that $ \mathfrak L _G$ is the Dirac-Jacobi structure corresponding to a Jacobi bracket on $\Gamma(L)$, at least around the (image of the) zero section $\mathbf 0$ of $\pi$. To prove the claim, it suffices to show that the characteristic leaves of $ \mathfrak L _G$ are either genuinely contact or genuinely lcs around $\mathbf 0$. Thus, describe the characteristic foliation of $ \mathfrak L _G$. It is easy to see that $\Ii_{\mathfrak L_G} = \pr_{D} \mathfrak L_G = \pi_\ast^{-1} \Ii_{\mathfrak L}$. It immediately follows that all characteristic leaves of $ \mathfrak L _G$ are of the form $\widehat{\Oo} := \pi^{-1} (\Oo)$ where $\Oo$ is a characteristic leaf of $ \mathfrak L $, and, for every characteristic leaf $\Oo$ of~$\mathfrak L$,
\[
\Ii_{\mathfrak L_G}|_{\widehat \Oo} = \pi_{\ast}^{-1} (\Ii_{\mathfrak L}|_\Oo) = 
\begin{cases}
D (L|_{\widehat \Oo}) & \text{if $\Oo$ is precontact} \\
\pi_\ast^{-1} (\operatorname{im} \nabla^\Oo) & \text{if $\Oo$ is lcps}
\end{cases}
\]
(here, when $\Oo$ is lcps, $\nabla^\Oo$ is the connection in $L_S|_{\Oo}$). In particular, $\widehat \Oo$ is precontact (resp.~lcps) if and only if $\Oo$ is such. Compute the $2$-cocycle 
$\omega_{\widehat \Oo}$. First, let $\Oo$ be precontact. A direct check shows that, in this case,
\begin{equation}\label{eq:omega_O}
\omega_{\widehat \Oo} = \pi^\ast \omega_\Oo + \iota^\ast \omega_G,
\end{equation}
where $\iota : L|_{\widehat \Oo} \INTO L$ is the inclusion. Now, we show that $\omega_{\widehat \Oo}$ is non-degenerate around the image of the zero section $\mathbf 0 : \widehat \Oo \to \Oo$, which, abusing the notation, we denote again by $\mathbf 0$. It suffices to show that the point-wise restriction $\omega_{\widehat \Oo}|_{\mathbf 0}$ is non-degenerate. Recall that the exact sequence
\[
0 \longrightarrow (\ker d \pi)|_{\mathbf 0} \longrightarrow T \widehat \Oo|_{\mathbf 0} \overset{d \pi}{\longrightarrow} T\Oo \longrightarrow 0
\]
splits via the inclusion $d \mathbf{0} : T\Oo \to T\widehat \Oo|_{\mathbf 0}$. Moreover, since $\widehat \Oo = E_{\mathfrak L} ^\dag|_\Oo \to \Oo$ is a vector bundle, then $(\ker d \pi)|_{\mathbf 0} \simeq \widehat \Oo$. Summarizing, there is a direct sum decomposition $T\widehat \Oo|_{\mathbf 0} = T\Oo \oplus \widehat \Oo$. Denote by $p_{\widehat \Oo} : T\widehat \Oo|_{\mathbf 0} \to \widehat \Oo$ the projection with kernel $T \Oo$.

Similarly, the exact sequence
\[
0 \longrightarrow (\ker d_{D} \pi)|_{\mathbf 0} \longrightarrow D (L|_{\widehat \Oo})|_{\mathbf 0} \overset{d_{D} \pi}{\longrightarrow} D (L_S|_\Oo) \longrightarrow 0
\]
splits via the inclusion $d_{D} \mathbf{0} : D (L_S|_\Oo) \to D (L|_{\widehat \Oo}) |_{\mathbf 0}$. Moreover, the composition $\sigma \circ p_{\widehat \Oo} : (\ker d_{D} \pi)|_{\mathbf 0} \to \widehat \Oo$ is an isomorphism. Hence there is a direct sum decomposition $D (L|_{\widehat \Oo})|_{\mathbf 0} = D ( L_S|_\Oo) \oplus \widehat \Oo$. Accordingly, a section $\varepsilon$ of $\widehat \Oo$ identifies with the unique differential operator $\mathbb D_{\varepsilon} : L|_{\widehat \Oo} \to L|_{\widehat \Oo}$ such that $\mathbb D_{\varepsilon} \lambda = 0$ for all fiber-wise constant sections $\lambda \in \Gamma (L|_{\widehat \Oo})$, and $\mathbb D_{\varepsilon} e = \langle \varepsilon, e \rangle$ for all fiber-wise linear sections $e \in \Gamma (L|_{\widehat \Oo})$ (a fiber-wise linear section of $L$ is a section of the form $\sum f \otimes \lambda$ where the $f$'s are fiber-wise linear function on $\widehat \Oo$ and the $\lambda$'s are fiber-wise constant sections of $L|_{\widehat \Oo}$, i.e.~pull-back sections $\pi^\ast \lambda_0$, with $\lambda_0 \in \Gamma (L_S|_\Oo)$ - in particular, fiber-wise linear sections of $L|_{\widehat \Oo}$ identify with sections of $E_{\mathfrak L} |_\Oo$). Now, notice that $E_{\mathfrak L} |_\Oo \subset D (L_S|_\Oo)$ and $D (L_S|_\Oo) = G_\Oo \oplus E_{\mathfrak L} |_\Oo$ where $G_\Oo = G \cap D (L_S|_\Oo)$ ($G_\Oo$ differs from $G|_\Oo$ in general, because sections of $G$ are derivations $\Gamma (L_S) \to \Gamma (L_S)$ but, in general, not all of them are tangent to $\Oo$). Summarizing, there is a direct sum decomposition $D (L|_{\widehat \Oo})|_{\mathbf 0} = G_\Oo \oplus E_{\mathfrak L} |_\Oo \oplus \widehat \Oo$. 

From (\ref{eq:omega_O}), $\omega_{\widehat \Oo}|_{\mathbf 0} = (\pi^\ast \omega_\Oo)|_{\mathbf 0} + (\iota^\ast \omega_G)|_{\mathbf 0}$. Compute the first summand $(\pi^\ast \omega_\Oo)|_{\mathbf 0}$. Since $\widehat \Oo$ is the kernel of $ \pi_\ast : D (L|_{\widehat \Oo})|_{\mathbf 0} \to D (L_S|_\Oo)$, and, from (\ref{eq:E_omega}), $E_{\mathfrak L} |_\Oo = \ker \omega_{\Oo}$, then $(\pi^\ast \omega_\Oo)|_{\mathbf 0}$ is given by matrix
\[
\begin{tabular}{cccc}
                & $G_\Oo$                                                          & $E_{\mathfrak L} |_\Oo$ & $\widehat \Oo$  \\
$G_\Oo$         & \multicolumn{1}{||c}{$\omega_\Oo |_{G_\Oo}$} & $0$  &\multicolumn{1}{c||}{$0$}\\
$E_{\mathfrak L} |_\Oo$         & \multicolumn{1}{||c}{$0$}                          & $0$  &\multicolumn{1}{c||}{$0$} \\
$\widehat \Oo$ & \multicolumn{1}{||c}{$0$}                         & $0$  &\multicolumn{1}{c||}{$0 $} 
\end{tabular}\ ,
\]
where $\omega_\Oo |_{G_\Oo}$ is non-degenerate. Now, compute the second summand $(\iota^\ast \omega_G)|_{\mathbf 0}$. From (\ref{eq:Theta_G}), $\Theta_G$ vanishes on $\mathbf 0$. Hence, for all $\Delta, \square \in D (L|_{\widehat \Oo})$,
\[
(\iota^\ast \omega_G)|_{\mathbf 0}(\Delta|_{\mathbf 0}, \square|_{\mathbf 0}) = \Delta |_{\mathbf 0} (\Theta_G (\square)) - \square |_{\mathbf 0} (\Theta_G (\Delta)).
\]
Let $\Delta|_{\mathbf 0} \in \Gamma (G)$. Clearly, $\Delta$ can be chosen in such a way that $(p_{E} \circ d_{D} \pi) \Delta = 0$ so that $\Theta_G (\Delta) = 0$ (here $p_E : D (L_S|_\Oo) \to E_{\mathfrak L} |_\Oo$ is the projection with kernel $G_\Oo$). Since $\Delta|_{\mathbf 0}$ is tangent to $\mathbf 0$, we also have 
\[
\Delta |_{\mathbf 0} (\Theta_G (\square))= \Delta |_{\mathbf 0} (\Theta_G (\square)|_{\mathbf 0}) = 0.
\]
 Hence $(\iota^\ast \omega_G)|_{\mathbf 0}(\Delta|_{\mathbf 0}, \square|_{\mathbf 0}) = 0$ whenever $\Delta |_{\mathbf 0} \in \Gamma (G)$. Similarly, 
 \[
 (\iota^\ast \omega_G)|_{\mathbf 0}(\Delta|_{\mathbf 0}, \square|_{\mathbf 0}) = 0
 \]
  when both $\Delta|_{\mathbf 0}, \square|_{\mathbf 0} \in \Gamma (E_{\mathfrak L} |_\Oo)$ or $\Delta|_{\mathbf 0}, \square|_{\mathbf 0} \in \Gamma (\widehat \Oo)$. Finally, let $\Delta|_{\mathbf 0} = \varepsilon \in \Gamma (\widehat \Oo)$ and $\square|_{\mathbf 0} = e  \in \Gamma (E_{\mathfrak L} |_\Oo)$. In particular, $\square$ is tangent to $\mathbf 0$ so that 
  \[
  \square |_{\mathbf 0} (\Theta_G (\Delta)) = \square |_{\mathbf 0} (\Theta_G (\Delta)|_{\mathbf 0}) = 0 .
  \] Choose $\square$ such that $(p_{E} \circ d_{D} \pi) \Delta = e$ and $\Delta = \mathbb D_{\varepsilon}$. With these choices, it is easy to see that $\Delta |_{\mathbf 0} (\Theta_G (\square)) = \langle \varepsilon, e \rangle$. We conclude that $(\iota^\ast \omega_G)|_{\mathbf 0}$ is given by the matrix
\[
\begin{tabular}{cccc}
                & $G_\Oo$                                                          & $E_{\mathfrak L} |_\Oo$ & $\widehat \Oo$  \\
$G_\Oo$         & \multicolumn{1}{||c}{$0$}                          & $0$  &\multicolumn{1}{c||}{$0$}\\
$E_{\mathfrak L} |_\Oo$         & \multicolumn{1}{||c}{$0$}                          & $0$  &\multicolumn{1}{c||}{$- \langle -,- \rangle$} \\
$\widehat \Oo$ & \multicolumn{1}{||c}{$0$}                         & $\langle -, -\rangle$  &\multicolumn{1}{c||}{$0 $} 
\end{tabular}\ ,
\]
where $\langle -, -\rangle : \widehat \Oo \otimes E_{\mathfrak L} |_\Oo \to L_S|_\Oo$ is the duality pairing (twisted by $L_S|_\Oo$). Hence $\omega_{\widehat \Oo}|_{\mathbf 0}$ is given by the non-degenerate matrix
\[
\begin{tabular}{cccc}
               & $G_\Oo$                                                          & $E_{\mathfrak L} |_\Oo$ & $\widehat \Oo$  \\
$G_\Oo$         & \multicolumn{1}{||c}{$\omega_\Oo |_{G_\Oo}$}                          & $0$  &\multicolumn{1}{c||}{$0$}\\
$E_{\mathfrak L} |_\Oo$         & \multicolumn{1}{||c}{$0$}                          & $0$  &\multicolumn{1}{c||}{$- \langle -,- \rangle$} \\
$\widehat \Oo$ & \multicolumn{1}{||c}{$0$}                         & $\langle -, -\rangle$  &\multicolumn{1}{c||}{$0 $} 
\end{tabular}\ .
\]
So, locally around $\mathbf 0$, the characteristic leaf $\widehat \Oo$ is actually a contact leaf.

Now, let $\Oo$ be lcps. A direct computation shows that $\omega_{\widehat \Oo} = \sigma^\ast \underline{\omega}{}_{\widehat \Oo}$ with
\[
\underline{\omega}{}_{\widehat \Oo} = \pi^\ast \underline \omega{}_\Oo + (\nabla^{\widehat \Oo})^\ast  \iota^\ast\omega_G,
\]
where the second summand is defined by $(\nabla^{\widehat \Oo})^\ast \iota^\ast \omega_G (X, Y) := \omega_G (\nabla^{\widehat \Oo}_X, \nabla^{\widehat \Oo}_Y)$, for all $X,Y \in T \widehat \Oo$. Similarly as above, we want to show that $\underline{\omega}{}_{\widehat \Oo}$ is non-degenerate around the image $\mathbf 0$ of the zero section of $\widehat \Oo \to \Oo$ and, to do this, it suffices to prove that $\underline{\omega}{}_{\widehat \Oo}|_{\mathbf 0}$ is non-degenerate. First of all, notice that the symbol $\sigma : D L_S \to TS$ establishes an isomorphism between $E_{\mathfrak L} |_\Oo$ and $\underline E{}_{\mathfrak L}|_\Oo := \ker \underline \omega{}_\Oo$ whose inverse isomorphism is given by the connection $\nabla^\Oo$. In particular, $\underline E{}_{\mathfrak L}|_\Oo$ is a vector subbundle of $T\Oo$, and there is a direct sum decomposition $T\Oo = \underline G{}_\Oo \oplus \underline E{}_{\mathfrak L}|_\Oo $, where 
\[
\underline G{}_\Oo := \sigma (G \cap D (L|_{\widehat \Oo})) = \sigma (G) \cap T \Oo.
\]
 Hence there is a direct sum decomposition $T \widehat \Oo|_{\mathbf 0} = \underline G_\Oo \oplus \underline E{}_{\mathfrak L}|_\Oo \oplus \widehat \Oo$. Similarly as in the contact case, one proves that $\underline \omega{}_{\widehat \Oo}|_{\mathbf 0}$ is given by the non-degenerate matrix
\[
\begin{tabular}{cccc}
               & $\underline G{}_\Oo$                                                          & $\underline E{}_{\mathfrak L}|_\Oo$ & $\widehat \Oo$  \\
$\underline G{}_\Oo$         & \multicolumn{1}{||c}{$\underline \omega{}_\Oo |_{\underline G{}_\Oo}$}                          & $0$  &\multicolumn{1}{c||}{$0$}\\
$\underline E{}_{\mathfrak L}|_\Oo$         & \multicolumn{1}{||c}{$0$}                          & $0$  &\multicolumn{1}{c||}{$- \langle -,- \rangle$} \\
$\widehat \Oo$ & \multicolumn{1}{||c}{$0$}                         & $\langle -, -\rangle$  &\multicolumn{1}{c||}{$0 $} 
\end{tabular}\ .
\]
where $\langle -, -\rangle : \widehat \Oo \otimes \underline E{}_{\mathfrak L}|_\Oo \to L_S|_\Oo$ is the duality pairing. Details are left to the reader. So, locally around $\mathbf 0$, the characteristic leaf $\widehat \Oo$ is a lcs leaf.

It remains to prove that the zero section of $E_{\mathfrak L} ^\dag$ is a coisotropic embedding of $S$. This immediately follows from \cite[Corollary 3.3.(3)]{LOTV2014} and the fact that $\mathbf 0 \cap \widehat \Oo = \Oo$ for every characteristic leaf $\Oo$ of $(L_S, \mathfrak L)$.
\end{proof}

The construction in the proof of Theorem \ref{theor:coisotrop} depends, a priori, on the choice of a complementary vector bundle $G$. However, two different choices of $G$ determine isomorphic Jacobi bundles, at least around the zero section of $E_{\mathfrak L} ^\dag$, as shown by the following 

\begin{proposition}
Let $(L_S, \mathfrak L)$ be a Dirac-Jacobi bundle over a manifold $S$, such that $E_{\mathfrak L} $ is a vector bundle and let $G_0,G_1 \subset D L_S$ be two complementary vector subbundles, i.e.~$E\oplus G_0 = E \oplus G_1 = D L_S$. Finally, let $(L:= \pi^\ast L_S, J_{0})$ (resp.~$(L:= \pi^\ast L_S, J_{1})$) be the Jacobi structure determined by $G_0$ (resp., $G_1$) on a neighborhood of the (image of the) zero section $\mathbf 0$ of $\pi : E_{\mathfrak L} ^\dag \to S$ as in the proof of Theorem \ref{theor:coisotrop}. Then there is a Jacobi isomorphism $\Phi : (L, J_{0}) \to (L, J_{1})$ locally defined around $\mathbf 0$, such that $\Phi \circ \mathbf 0 = \mathbf 0$.
\end{proposition}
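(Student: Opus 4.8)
The plan is to run a Moser-type argument, exactly parallel to the uniqueness parts of Gotay's theorem and of its Poisson and contact counterparts \cite[Section 8]{CZ2009}, \cite{LOTV2014}, \cite{G1982}. First I would join $G_0$ and $G_1$ by a smooth path: the vector subbundles of $D L_S$ complementary to $E_{\mathfrak L}$ form an affine space (write $G_1$ as the graph of a bundle map $G_0\to E_{\mathfrak L}$ and rescale it), so there is a smooth family $G_t$, $t\in[0,1]$, with $D L_S=G_t\oplus E_{\mathfrak L}$ and the prescribed endpoints. Feeding $G_t$ into the construction in the proof of Theorem \ref{theor:coisotrop} yields a smooth family of $1$-cochains $\Theta_t:=\Theta_{G_t}$, of $d_{D}$-exact (hence closed) $2$-cochains $\omega_t:=-d_{D}\Theta_t$, and of Dirac-Jacobi structures $\mathfrak L_t:=\tau_{\omega_t}\mathfrak B_\pi(\mathfrak L)$ on $L=\pi^\ast L_S$ (well defined, by Example \ref{ex:gauge}). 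By that same proof, for each $t$ the structure $\mathfrak L_t$ coincides with $\mathfrak L_{J_t}$ for a Jacobi bracket $J_t$ on a neighbourhood of the zero section $\mathbf 0$; since $[0,1]$ is compact, one neighbourhood $U$ works for all $t$ simultaneously. The endpoints are $(L,J_0)$ and $(L,J_1)$.

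Two remarks organise the argument. Since gauge transformations compose additively and $\mathfrak L_0=\tau_{\omega_0}\mathfrak B_\pi(\mathfrak L)$, we have $\mathfrak L_t=\tau_{\beta_t}\mathfrak L_0$ with $\beta_t:=\omega_t-\omega_0=-d_{D}\gamma_t$ exact, $\gamma_t:=\Theta_t-\Theta_0\in\Omega^1_L=\Gamma(J^1L)$; moreover, formula (\ref{eq:Theta_G}) shows that every $\Theta_G$ vanishes along $\mathbf 0$, so $\gamma_t|_{\mathbf 0}=0$, and in particular $\dot\gamma_t|_{\mathbf 0}=0$, for all $t$. Secondly, on $U$ the null der-distribution of $\mathfrak L_0=\mathfrak L_{J_0}$ vanishes, $E_{\mathfrak L_0}=\mathfrak L_0\cap D L=\mathbf 0$; hence $\mathfrak L_0$ is the graph of the bundle map $J_0^\sharp:J^1L\to D L$ (Example \ref{ex:Jacobi}), so for every $\psi\in\Gamma(J^1L)$ there is a unique $\Delta\in\Der L$ with $(\Delta,\psi)\in\Gamma(\mathfrak L_0)$, namely $\Delta=J_0^\sharp\psi$.

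Now the Moser step. Set $X_t:=J_0^\sharp(\dot\gamma_t)\in\Gamma(D L)$, the (generalised) Hamiltonian derivation of $\dot\gamma_t$ for $J_0$; it is smooth, satisfies $(X_t,\dot\gamma_t)\in\Gamma(\mathfrak L_0)$ by the previous paragraph, and vanishes along $\mathbf 0$ because $\dot\gamma_t$ does. Let $\{\Phi_t\}$ be the flow of the time-dependent derivation $X_t$ (a family of vector bundle automorphisms of $L$ over diffeomorphisms, with $\Phi_0=\id$); since $X_t$ vanishes along $\mathbf 0$, after possibly shrinking $U$ this flow is defined on $U$ for all $t\in[0,1]$ and restricts to the identity over $\mathbf 0$. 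The standard gauge--Moser computation — differentiate $\Phi_t^\ast\mathfrak L_t=\tau_{\Phi_t^\ast\beta_t}(\Phi_t^\ast\mathfrak L_0)$ in $t$, using that $\mathfrak B$ commutes with gauge transformations, the identities (\ref{eq:F-related}) for $F$-related derivations, the involutivity of $\mathfrak L_0$ (so that $\blq (X_t,\dot\gamma_t),-\brq$ preserves $\Gamma(\mathfrak L_0)$), and $\dot\beta_t=-d_{D}\dot\gamma_t$ — shows that with this choice of $X_t$ one has $\frac{d}{dt}\Phi_t^\ast\mathfrak L_t=0$, hence $\Phi_t^\ast\mathfrak L_t=\mathfrak L_0$ for all $t$. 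Setting $\Phi:=\Phi_1$ gives an isomorphism of line bundles with $\mathfrak B_\Phi(\mathfrak L_1)=\mathfrak L_0$ and $\Phi\circ\mathbf 0=\mathbf 0$; being an isomorphism, $\Phi$ is then both a backward and a forward Dirac-Jacobi map between $(L,\mathfrak L_{J_0})$ and $(L,\mathfrak L_{J_1})$ (see Section \ref{sec:morphisms}), i.e.~a Jacobi isomorphism $(L,J_0)\to(L,J_1)$.

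I expect the part requiring real care to be the gauge--Moser identity itself: one must track how the flow of a derivation of $L$ acts on a maximal isotropic, involutive subbundle of $\D L$ while a time-dependent gauge transformation is simultaneously applied, and verify that the deformation $-d_{D}\dot\gamma_t$ of $\mathfrak L_t$ is exactly cancelled by the infinitesimal action of $(X_t,\dot\gamma_t)\in\Gamma(\mathfrak L_0)$. This is entirely analogous to \cite[Section 8]{CZ2009} and \cite{LOTV2014}, the only genuinely new features being that derivations replace vector fields and leafwise precontact/lcps structures replace (pre)symplectic ones; the two structural inputs that make it work — exactness of $\beta_t$ with a primitive vanishing along $\mathbf 0$, and the vanishing of $E_{\mathfrak L_0}$ over $U$ — were recorded above.
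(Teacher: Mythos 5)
Your overall strategy --- interpolate $G_0$ and $G_1$ by the graphs $G_t$ of $tA$, feed the path into the construction of Theorem \ref{theor:coisotrop} to get $\Theta_t$, $\omega_t=-d_{D}\Theta_t$ and $\mathfrak L_t=\tau_{\omega_t}\mathfrak B_\pi(\mathfrak L)=\mathfrak L_{J_t}$ near $\mathbf 0$, and then run a Moser argument with a time-dependent derivation vanishing along $\mathbf 0$ --- is exactly the paper's. The genuine gap is in your choice of Moser derivation: you set $X_t:=J_0^\sharp(\dot\gamma_t)$, i.e.~you take the Hamiltonian derivation of $\dot\gamma_t=\tfrac{d}{dt}\Theta_{G_t}$ with respect to the \emph{fixed} structure $J_0$, so that $(X_t,\dot\gamma_t)\in\Gamma(\mathfrak L_0)$. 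The Moser identity requires instead $(X_t,\dot\gamma_t)\in\Gamma(\mathfrak L_t)$, i.e.~$X_t=J_t^\sharp(\dot\gamma_t)$ --- precisely the derivation $\Delta_t$ used in the paper. Indeed, when you differentiate $\Phi_t^\ast\mathfrak L_t$, the combination of the Lie-derivative term and the infinitesimal gauge term is $\blq(X_t,\dot\gamma_t),(\Delta,\psi)\brq$ evaluated on sections $(\Delta,\psi)$ of $\mathfrak L_t$, and involutivity of $\mathfrak L_0$ says nothing about $\blq(X_t,\dot\gamma_t),\Gamma(\mathfrak L_t)\brq\subset\Gamma(\mathfrak L_t)$ unless $(X_t,\dot\gamma_t)$ itself lies in $\Gamma(\mathfrak L_t)$.

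The failure is already visible in the symplectic model of your computation. Writing $\omega_t=\omega_0-d\gamma_t$, your equation $\Phi_t^\ast\omega_t=\omega_0$ differentiates to $d\,i_{X_t}\omega_0=d\dot\gamma_t+d\,\Ll_{X_t}\gamma_t$; the choice $i_{X_t}\omega_0=\dot\gamma_t$ leaves the uncancelled term $d\,\Ll_{X_t}\gamma_t$, whereas $i_{X_t}\omega_t=\dot\gamma_t$ absorbs it. (Concretely, for $\omega_t=\omega_0+t\,d\sigma$ your recipe gives a $t$-independent $X$ with $i_X\omega_0=-\sigma$, and then $\Phi_1^\ast\omega_1-\omega_0=\Phi_1^\ast d\sigma-\int_0^1\Phi_s^\ast d\sigma\,ds$, which is nonzero in general.) Everything else in your write-up is sound and agrees with the paper: the affine path $G_t$, the uniform neighbourhood by compactness of $[0,1]$, the vanishing of $\Theta_{G_t}$ and hence of the Moser derivation along $\mathbf 0$, and the conclusion that the resulting line-bundle automorphism is a Jacobi isomorphism. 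Once $J_0^\sharp$ is replaced by $J_t^\sharp$ your argument becomes the paper's; the paper then avoids carrying out the full gauge--Moser computation on $\D L$ by restricting to the characteristic leaves $\widehat\Oo$ (which are the same for all $J_t$) and invoking the contact, respectively lcs, Moser lemmas leafwise.
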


\begin{proof}
The proof parallels the proof of \cite[Proposition 8.2]{CZ2009}. We use the same notations as in the proof of Theorem \ref{theor:coisotrop}. Let $G_0, G_1$ be as in the statement, and let $A : G_0 \to E_{\mathfrak L} $ be the vector bundle morphism whose graph is $G_1 \subset G_0 \oplus E_{\mathfrak L}  = D L_S$. Let $G_t$ be the graph of the vector bundle morphism $tA : G_0 \to E_{\mathfrak L} $, $t \in [0,1]$. The $G_t$'s interpolate between $G_0$ and $G_1$. Clearly, $D L_S = G_t \oplus E$ for all $t$. Hence every $G_t$ determines a Dirac-Jacobi structure $ \mathfrak L _t$ on $L$. Moreover, $ \mathfrak L _t$ corresponds to a Jacobi bundle $(L, J_t)$, at least around $\mathbf 0$. We claim that the time-$1$ flow $\Phi$ of the (time dependent) derivation 
\[
\Delta_t := J_t^\sharp \left(\frac{d}{dt} \Theta_{G_t} \right) \in \Der L
\]
fixes $\mathbf 0$ and maps $J_0$ to $J_1$. Since $\Theta_{G_t}$ vanishes on $\mathbf 0$ for all $t$'s, so does $\Delta_t$, hence $\Phi \circ\mathbf 0 = \mathbf 0$ (in particular, $\Phi$ is well defined around $\mathbf 0$). Finally, notice that the $J_t$'s share the same characteristic foliation: 
\[
\{ \widehat \Oo: \Oo \text{ is a characteristic leaf of } D_S \}.
\]
In particular, $\Delta_t$ is tangent to $\widehat \Oo$ for all $\Oo$. Denote by $\omega_{\widehat \Oo}(t)$ the $2$-cochain on $\widehat \Oo$ induced by $ \mathfrak L _t$. If $\Oo$ is precontact then $\omega_{\widehat \Oo}(t)$ is non degenerate around $\mathbf 0$ and
\[
\omega_{\widehat \Oo} (t) = \omega_{\widehat \Oo}(0) + \iota^\ast (\omega_{G_t} - \omega_{G_0}).
\]
It follows from the \emph{contact version of the Moser lemma} that $\Phi$ maps $\omega_{\widehat \Oo} (0)$ to $\omega_{\widehat \Oo} (1)$. Similarly in the lcps case. Details are left to the reader. So $\Phi$ maps the contact/lcs foliation of $J_0$ to the contact/lcs foliation of $J_1$, hence it maps $J_0$ to $J_1$.
\end{proof}

Any Jacobi bundle as the one constructed in the proof of Theorem \ref{theor:coisotrop} will be called a \emph{Jacobi thickening} of $(L_S, \mathfrak L)$. The above proposition shows that the Jacobi thickening is unique up to isomorphisms. Hence every Dirac-Jacobi bundle $(L_S, \mathfrak L)$ such that $E_{\mathfrak L} $ is a regular distribution, admits an essentially canonical coisotropic embedding. 

\begin{remark}
Let $(L_S, \mathfrak L)$ be a Dirac-Jacobi bundle such that $E_{\mathfrak L} $ is a regular distribution, and let $(L, J)$ be a Jacobi thickening. The proof of Theorem \ref{theor:coisotrop} shows that the contact/lcs characteristic leaves of $(L, J)$ are Jacobi thickenings of the precontact/lcps characteristic leaves of $(L_S, \mathfrak L)$. In the case when $S$ has just one characteristic leaf, i.e.~$(L_S, \mathfrak L)$ corresponds to either a precontact or a lcps manifold, Jacobi thickening reproduces either the contact thickening of \cite{LOTV2014} or the lcs thickening of \cite{LO2012}. 
\end{remark}

\begin{example}
Let $L_S$ be a line bundle over a smooth manifold $S$, and let $V \subset D L_S$ be an involutive vector subbundle. The vector bundle $V$ inherits from $D L_S$ a Lie algebroid structure. Moreover, the inclusion $V \INTO D L_S$ is a representation of the Lie algebroid $V$ in the line bundle $L_S$. So $V$ is a Jacobi algebroid in the sense of \cite{LOTV2014}. In particular, there is a Jacobi bundle $(L := \pi^\ast L_S, J)$, with fiberwise linear Jacobi bracket $J$, over the total space of the vector bundle $\pi : V^\ast \otimes L_S \to S$ (see \cite{LOTV2014} for details). Now, let $ \mathfrak L  = V \oplus V^0$ be the Dirac-Jacobi structure on $L_S$ corresponding to $V$. The distribution $E_{\mathfrak L}  = V$ is automatically regular, hence $(L_S,\mathfrak L)$ admits a Jacobi thickening. Namely, let $G \subset D L_S$ be a complementary vector subbundle, i.e.~$V \oplus G = D L_S$. Then $G$ determines a Jacobi bundle $(L , J_G)$ on a neighborhood of the zero section $\mathbf 0$ of $\pi: V^\ast \otimes L \to S$, and $\mathbf 0$ is a coisotropic embedding. We claim that $J_G$ is in fact independent of $G$ and coincides with $J$. In other words, we claim that, for every $\alpha, \beta \in \Gamma (V)$ and $\varepsilon \in V^\ast \otimes L_S$,
\begin{equation}\label{eq:Vdag}
J_G (j_{\varepsilon}^1 \alpha, j_{\varepsilon}^1 \beta) = \langle \varepsilon , [\alpha, \beta ] \rangle,
\end{equation}
where, in the left hand side, we interpret $\alpha, \beta$ as fiber-wise linear sections of $L$ (see the proof of Theorem \ref{theor:coisotrop} for a definition of fiber-wise linear sections).

In order to prove (\ref{eq:Vdag}), recall that $\omega_\Oo =0$ for every characteristic leaf $\Oo$ of $V \oplus V^0$. Now denote by $\ell$ the restriction $L_S|_\Oo$. Assume that $\Oo$ is precontact. Then $V|_\Oo = D \ell$ and $V^\ast \otimes L |_\Oo = J^1 \ell$. As in the proof of Theorem~\ref{theor:coisotrop}, denote by $\iota : \widehat \Oo \to V^\ast \otimes L$ the inclusion. By definition $\iota^\ast \Theta_G$ agrees with the contact form on $J^1 \ell$, hence $\omega_{\widehat \Oo} = \pi^\ast \omega_\Oo + \iota^\ast \omega_G = \iota^\ast \omega_G$ agrees with the canonical $2$-cocycle on $J^1 \ell$ and Equation (\ref{eq:Vdag}) holds for all $\varepsilon \in \widehat \Oo$. On the other hand, if $\Oo$ is lcps, then $V|_\Oo = \mathrm{im} \nabla_S$ where $\nabla_S : T \Oo \to D \ell$ is a flat connection. Hence $V^\ast \otimes L |_\Oo$ identifies with $T^\ast \Oo \otimes \ell$ and $L|_{\widehat \Oo}$ identifies with $(T^\ast \Oo \otimes \ell) \times_{\Oo} \ell$. Similarly as in the precontact case, connection $\nabla^{\widehat \Oo}$ in $L|_{\widehat \Oo}$ induced by $ \mathfrak L $ agrees with the pull-back connection $\pi^\ast \nabla_S$, and $\underline \omega{}_{\widehat \Oo}$ agrees with the canonical lcs form on $T^\ast \Oo \otimes \ell$. Hence Equation (\ref{eq:Vdag}) holds for $\varepsilon$ belonging to lcs leaves as well.
\end{example}

\begin{example}
Let $(L_S, \mathfrak L)$ be a Dirac-Jacobi bundle over a smooth manifold $S$. From Proposition \ref{prop:KD}, if the null distribution $K_{\mathfrak L} $ is regular, so is $E_{\mathfrak L} $, hence $(L_S, \mathfrak L)$ can be coisotropically embedded in a Jacobi bundle. So regularity of $K_{\mathfrak L} $ is sufficient for coisotropic embedding. However, it is not necessary as the following example shows. Let $M = \R^2$ with standard coordinates $x,y$ and $L = \R_M$ so that $\D L = (TM \oplus \R_M) \oplus (T^\ast M \oplus \R_M)$. Moreover, let $J \in \Gamma (\wedge^2 (J^1 L)^\ast \otimes L )= \Gamma (\wedge^2 (TM \oplus \R_M))$ be the Jacobi bracket on $C^\infty (M)$ given by  
\[
J = x \frac{\partial}{\partial y} \wedge \frac{\partial}{\partial x} + \frac{\partial}{\partial y} \wedge 1.
\]
It is easy to see that $S = \{ y = 0 \}$ is a coisotropic submanifold. Moreover, $ \mathfrak L _J$ can be pulled-back to $(L_S := L|_S = \R_S)$ giving a Dirac-Jacobi structure $ \mathfrak L  \subset \D L_S =  (TS \oplus \R_S) \oplus (T^\ast S \oplus \R_S)$ generated by 
\[
1 + x \frac{\partial}{\partial x}, \ dx - x \cdot 1.
\]
In particular, $E_{\mathfrak L} $ is spanned by $1 + x \frac{\partial}{\partial x}$ and it is, as expected, a smooth rank one vector bundle. On the other hand, $K_{\mathfrak L} $ is spanned by $x \frac{\partial}{\partial x}$. So its rank is one everywhere except in $x = 0$ where it is zero.
\end{example}

\section{Integration of Dirac-Jacobi structures on line bundles}\label{sec:integr}
A Lie algebroid is \emph{integrable} if it is isomorphic to the Lie algebroid $A(\Gg)$ of a Lie groupoid $\Gg$. An \emph{integration} of a Lie algebroid $A$ is a Lie groupoid $\Gg$ together with an isomorphism $A \simeq A (\Gg)$. A geometric structure $\mathcal X$ on a Lie groupoid $\Gg$, which is compatible with the groupoid structure in a suitable sense, usually determines a geometric structure on $A(\Gg)$ which is compatible with the algebroid structure in a suitable sense, and is to be considered as the ``infinitesimal counterpart of $\mathcal X$''. Conversely, let $A$ be an integrable Lie algebroid, and let $\Gg$ be an integration on $A$. Suppose that $A$ is equipped with a geometric structure $\underline{\mathcal{X}}$, compatible with the algebroid structure. It is natural to wonder whether $\underline{\mathcal{X}}$ is the infinitesimal counterpart of a suitable structure $\mathcal X$ on $\Gg$. If this is the case, we say that $(A, \underline{\mathcal X})$ integrates to $(\Gg, \mathcal X)$. There are many examples of positive answers to the above question. For instance, Poisson manifolds integrate, if at all integrable, to symplectic groupoids. More generally, Dirac manifolds integrate, if at all integrable, to presymplectic groupoids \cite{BCZ2004}. In \cite{IW2006} Iglesias and Wade show that $\mathcal{E}^1 (M)$-Dirac manifolds integrate to cooriented precontact groupoids. In this section we discuss the case of a generic Dirac-Jacobi bundle using the language of Spencer operators introduced by Crainic, Salazar and Struchiner in \cite{CSS2012}.

As already mentioned in the introduction, we assume that the reader is familiar with (fundamentals of) the theory of Lie groupoids, Lie algebroids and their representations (see, e.g., \cite{CF2010} and references therein). Here we will only recall some aspects of the theory. The unfamiliar reader will find a quick introduction to all other aspects relevant for this section in \cite{S2013} (see Section 1.2 therein, see also Chapter 4). 

Let $\Gg \rightrightarrows M$ be a Lie groupoid with source $s$, target $t$ and unit $u$. We identify $M$ with its image under $u$. Denote by $\Gg_2 = \{ (g_1, g_2) \in \Gg \times \Gg: s(g_1) = t (g_2)\}$ the manifold of composable arrows and let $m : \Gg_2 \to \Gg$, $(g_1, g_2) \mapsto g_1g_2$ be the multiplication. We denote by $\pr_1, \pr_2 : \Gg_2 \to \Gg$ the projections onto the first and the second factor respectively.

Recall that the Lie algebroid $A(\Gg)$ of $\Gg$ consists of tangent vectors to the source fibers at points of $M$. Every section $\alpha$ of $A(\Gg)$ corresponds to a unique right invariant, $s$-vertical vector field $\alpha^r$ on $\Gg$ such that $\alpha = \alpha^r |_M$. The Lie bracket in $\Gamma(A(\Gg))$ is induced by the commutator of right invariant vector fields, and the anchor of $A(\Gg) \to TM$ is given by $\rho (\alpha) = t_\ast (\alpha)= t_\ast (\alpha^r|_M)$, for all $\alpha \in \Gamma (A(\Gg))$. We denote by $\phi_\alpha^z$ the time-$z$ flow of $\alpha^r$. 

Now, let $E \to M$ be a vector bundle carrying a representation of $\Gg$, i.e.~a smooth map $\Gg \times_s E \to M$, written $(g, e) \mapsto g \cdot e$, satisfying the usual properties of an action. The infinitesimal counterpart of a $\Gg$-action is a representation of $A(\Gg)$, i.e.~a flat $A(\Gg)$-connection $\nabla : A(\Gg) \to D E$ given by 
\begin{equation}\label{eq:nabla_0}
\nabla_\alpha e = \left. \frac{d}{dz} \right|_{z= 0} g(z)^{-1} \cdot e_{t(g(z))}
\end{equation}
where $z \mapsto g (z)$ is any smooth curve in the $s$-fiber through $x$, such that $g(0) = x$ and $\dot g (0) = \alpha$. Here $x \in M$, and $\alpha \in A(\Gg)_x$.

\begin{proposition}\label{prop_nabla_G}
There is a canonical flat $\ker ds$-connection $\nabla^{\Gg} :\linebreak \ker ds \to D t^\ast E$ in the pull-back bundle $t^\ast E$ such that
\begin{equation}\label{eq:nabla}
\nabla_\alpha = t_\ast (\nabla^\Gg_{\alpha^r}|_M)
\end{equation}
for all $\alpha \in \Gamma (A(\Gg))$.
\end{proposition}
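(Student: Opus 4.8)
The plan is to build the connection $\nabla^{\Gg}$ explicitly by the same ``right-translation'' recipe that gives $\nabla$ in \eqref{eq:nabla_0}, only now carried out over all of $\Gg$ rather than just over $M$. Recall that $\ker ds \to \Gg$ is itself a Lie algebroid: it is an (integrable) involutive subbundle of $T\Gg$, with anchor the inclusion $\ker ds \INTO T\Gg$ and bracket the restriction of the commutator of vector fields. Its sections are generated over $C^\infty(\Gg)$ by the right-invariant vector fields $\alpha^r$, $\alpha \in \Gamma(A(\Gg))$. The pull-back bundle $t^\ast E$ carries a tautological action of $\ker ds$: for $g \in \Gg$ and a smooth curve $z \mapsto h(z)$ inside the $s$-fiber through $g$ with $h(0)=g$, the point $t(h(z))$ sweeps out a curve in $M$ along which $h(z)h(0)^{-1} = h(z)g^{-1}$ is defined (since $s(h(z))=s(g)=t(g h^{-1}(z))$ wait --- the correct bookkeeping is $h(z)g^{-1} \in \Gg$ with $s(h(z)g^{-1}) = t(g)$, mapping the fiber $E_{t(g)}$ to $E_{t(h(z))}$). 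I would therefore \emph{define}, for $\alpha \in \Gamma(A(\Gg))$ and a section $\varepsilon \in \Gamma(t^\ast E)$,
\[
\nabla^{\Gg}_{\alpha^r}\varepsilon := \left.\frac{d}{dz}\right|_{z=0} \big(\phi^{z}_\alpha(-)\big)^{\ast}\varepsilon,
\]
where $\phi^z_\alpha$ is the flow of $\alpha^r$ lifted to an automorphism of $t^\ast E$ via the $\ker ds$-action just described (cf.\ Remark \ref{rem:inf_aut}); and then extend to a genuine $\ker ds$-connection by the Leibniz rule $\nabla^{\Gg}_{f\alpha^r}\varepsilon = f\nabla^{\Gg}_{\alpha^r}\varepsilon$ and $\nabla^{\Gg}_{\alpha^r}(f\varepsilon) = (\alpha^r f)\varepsilon + f\nabla^{\Gg}_{\alpha^r}\varepsilon$ for $f \in C^\infty(\Gg)$, noting that the $\alpha^r$ span $\ker ds$ pointwise. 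One must check this is well defined (independent of how a section of $\ker ds$ is written as a $C^\infty(\Gg)$-combination of the $\alpha^r$); this follows from the associativity of the action, exactly as in the construction of $\nabla$ from a $\Gg$-action.

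The next step is flatness. Since the $\alpha^r$ generate $\Gamma(\ker ds)$ and $[\alpha^r,\beta^r] = [\alpha,\beta]^r$ (right-invariant vector fields close under the bracket, realizing the Lie algebroid bracket of $A(\Gg)$), it suffices to verify
\[
\nabla^{\Gg}_{\alpha^r}\nabla^{\Gg}_{\beta^r} - \nabla^{\Gg}_{\beta^r}\nabla^{\Gg}_{\alpha^r} = \nabla^{\Gg}_{[\alpha,\beta]^r}
\]
on sections, and then extend to arbitrary $C^\infty(\Gg)$-combinations using the torsion-like identities for the curvature of a connection (which are automatic once one has linearity and the Leibniz rule). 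The displayed identity is again a consequence of the action axioms: the flows $\phi^z_\alpha$ lift to a \emph{right} action of the flows on $t^\ast E$, and the curvature of the associated connection measures the failure of this lifted action to respect brackets, which vanishes here because the underlying $\ker ds$-action on $t^\ast E$ is by construction a genuine (flat) action. Concretely one differentiates the cocycle identity for the lifted flow twice.

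Finally, the compatibility formula \eqref{eq:nabla}. Restricting to $M \subset \Gg$: over $M$ the curve $z \mapsto \phi^z_\alpha(x)$ is precisely a curve $z \mapsto g(z)$ in the $s$-fiber through $x$ with $g(0)=x$, $\dot g(0)=\alpha_x$; the lift to $t^\ast E$ restricted along this curve, evaluated against a section of the form $t^\ast e$ for $e \in \Gamma(E)$, reproduces the parallel transport $g(z)^{-1}\cdot e_{t(g(z))}$ appearing in \eqref{eq:nabla_0}. Differentiating at $z=0$ and then pushing forward by $t$ (using that $t_\ast$ intertwines $\ker ds$-data over $\Gg$ near $M$ with $A(\Gg)$-data over $M$, since $t\circ u = \id_M$) gives $t_\ast(\nabla^{\Gg}_{\alpha^r}\!\restriction_M) = \nabla_\alpha$. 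The identity for general sections $\varepsilon$, as opposed to pull-backs $t^\ast e$, follows because $t^\ast e$ together with $C^\infty(M)$-multiples (pulled back along $t$) generate $\Gamma(t^\ast E)$ near $M$ and both sides satisfy the same Leibniz rule with respect to the symbol, which over $M$ is $t_\ast$ applied to $\sigma(\nabla^{\Gg}_{\alpha^r}) = \alpha^r$, i.e.\ $\rho_{A(\Gg)}(\alpha) = t_\ast\alpha$.

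The main obstacle I expect is purely bookkeeping: getting the source/target conventions right so that the $\ker ds$-action on $t^\ast E$ is genuinely defined (one has to be careful that $h(z)g^{-1}$, not $g^{-1}h(z)$, is the composable product and that it acts $E_{t(g)} \to E_{t(h(z))}$), and then checking well-definedness of $\nabla^{\Gg}$ as a connection --- i.e.\ that the formula is tensorial in the $\ker ds$-slot. Neither is deep, but both require care. Everything else (flatness, the formula \eqref{eq:nabla}) is then a differentiation of the action axioms, exactly parallel to the classical passage from a groupoid representation to an algebroid representation.
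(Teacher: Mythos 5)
Your construction is the same as the paper's: the paper defines $\nabla^{\Gg}_X s := \left. \tfrac{d}{dz}\right|_{z=0} g\cdot g(z)^{-1}\cdot s(g(z))$ pointwise along a curve $g(z)$ in the $s$-fiber with $\dot g(0)=X$, which is exactly your lifted-flow formula written infinitesimally, and it likewise obtains flatness by checking $\nabla^{\Gg}_{[X,Y]}=[\nabla^{\Gg}_X,\nabla^{\Gg}_Y]$ on right-invariant fields (which generate $\Gamma(\ker ds)$) and deduces \eqref{eq:nabla} by direct comparison with \eqref{eq:nabla_0} at units. Your proposal is correct and follows essentially the same route.
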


\begin{proof}
Let $g : x \to y \in \Gg$, $X \in \ker d_g s$, and let $z \mapsto g(z)$ be a smooth curve in $s^{-1}(x)$ such that $g(0) = g$, and $\dot g (0) = X$. Define $\nabla^{\Gg}_X \in D_g t^\ast E$ as follows. For $s \in \Gamma (t^\ast E)$ put
\begin{equation}\label{eq:nabla_G}
\nabla^{\Gg}_X s := \left. \frac{d}{dz} \right|_{z=0} g \cdot g(z)^{-1} \cdot s(g (z)).
\end{equation}
It is straightforward to check that $\nabla^{\Gg}$ is a well-defined $\ker ds$-connection in $t^\ast E$. Moreover $\nabla^{\Gg}_{[X,Y]} = [\nabla^{\Gg}_X, \nabla^{\Gg}_Y]$ for all right invariant vector fields $X,Y$. Since right invariant vector fields generate $\Gamma (\ker ds)$, it follows that $\nabla^{\Gg}$ is a flat connection. Finally, Equation (\ref{eq:nabla}) immediately follows from (\ref{eq:nabla_G}) and (\ref{eq:nabla_0}).
\end{proof}

A $t^\ast E$-valued differential form $\omega$ on $\Gg$ is \emph{multiplicative} (see \cite{CSS2012}) if 
\[
(m^\ast \theta)_{(g_1, g_2)} = \pr_1^\ast \theta_{g_1} + g_1 \cdot (\pr_2^\ast \theta_{g_2}),
\]
for all $(g_1, g_2) \in \Gg_2$. Various geometric structures on Lie groupoids are encoded by multiplicative forms (with generically non-trivial coefficients). Standard examples are provided by presymplectic groupoids, contact groupoids and multiplicative foliations of groupoids. Crainic, Salazar and Struchiner realized that the infinitesimal counterparts of vector bundle valued multiplicative forms are what they call \emph{Spencer operators} \cite{CSS2012}.

Let $A \to M$ be a Lie algebroid, with anchor $\rho : A \to TM$, and Lie bracket $[-,-] : \Gamma (A) \times \Gamma (A) \to \Gamma (A)$. Moreover, let $E \to M$ be a vector bundle carrying a representation $\nabla$ of $A$. Crainic, Salazar, and Struchiner define $E$-valued $k$-Spencer operators on $A$, for all positive integers $k$. I'm only interested in the case $k = 1$. 

\begin{definition}
An $E$-valued $1$-Spencer operator on $A$ is a pair $(\Dd, l)$, where 
\[
\Dd : \Gamma (A) \longrightarrow \Omega^1 (M, E)
\]
is a first order differential operator, and 
\[
l : A \longrightarrow E
\]
is a vector bundle morphism such that
\begin{equation}\label{eq:Spenc1}
\Dd(f \alpha)  = f \Dd(\alpha) + df \otimes l (\alpha),
\end{equation}
and, moreover,
\begin{align}
\Dd([\alpha, \beta]) & = \Ll_{\nabla_\alpha} \Dd(\beta) - \Ll_{\nabla_\beta} \Dd(\alpha) \label{eq:Spenc2} \\
l ([\alpha, \beta]) & = \nabla_\alpha l (\beta) - i_{\rho (\beta)} \Dd(\alpha) \label{eq:Spenc3}
\end{align}
for all $\alpha, \beta \in \Gamma (A)$, and $f \in C^\infty (M)$.
\end{definition}

Formula (\ref{eq:Spenc2}) contains the \emph{Lie derivative of an $E$-valued form $\omega$ on $M$ along a derivation $\Delta \in \Der E$} (in the case of Formula (\ref{eq:Spenc2}), $\Delta = \nabla_\alpha$ for some $\alpha \in \Gamma (A)$) which can be defined by
\[
\begin{aligned}
&\quad\ (\Ll_\Delta \omega)(X_1, \ldots, X_k) \\
& = \Delta (\omega (X_1, \ldots, X_k)) - \sum_{i=1}^k \omega (X_1, \ldots, [\sigma (\Delta), X_i], \ldots, X_k),
\end{aligned}
\]
for $\omega \in \Omega^k (M, E)$, $X_1, \ldots, X_k \in \mathfrak X (M)$. In particular, if the flow of $\Delta$ is $\{ \Phi_z \}$, and the flow of the symbol $\sigma(\Delta)$ is $\{ \underline\Phi{}_z \}$, then 
\begin{equation}\label{eq:Lie_Delta}
\Ll_\Delta \omega = \left. \frac{d}{dz} \right|_{z=0} \Phi_z^\ast \omega,
\end{equation}
where 
\[
(\Phi_z^\ast \omega)_x (X_1, \ldots, X_k) = \Phi_z|_{E_x}^{-1} \left( \omega_{\underline \Phi_z (x)} ((\underline \Phi{}_z)_\ast X_1, \ldots, (\underline \Phi{}_z)_\ast X_k) \right),
\]
for all $x \in M$, and $X_1, \ldots, X_k \in T_x M$.

\begin{example}\label{ex:class_Spenc}
Let $E \to M$ be a vector bundle. Recall that $E$-valued one forms on $M$, $\Omega^1 (M, E)$, embed into sections of the first jet bundle $J^1 E$, via the $C^\infty (M)$-linear map
\[
\gamma : \Omega^1 (M, E) \to \Gamma (J^1 E), \quad df \otimes \varepsilon \mapsto j^1 f\varepsilon - f j^1 \varepsilon,
\]
$f \in C^\infty (M)$, $\varepsilon \in \Gamma (E)$, and $\gamma$ fits in an exact sequence
\begin{equation}\label{eq:SESx}
0 \longrightarrow \Omega^1 (M, E) \overset{\gamma}{\longrightarrow} \Gamma(J^1 E) \overset{\pr_E}{\longrightarrow} \Gamma (E) \longrightarrow 0
\end{equation}
Additionally, (\ref{eq:SESx}) splits via the first order differential operator
\[
\Dd^{\mathrm{class}} : \Gamma (J^1 E) \to \Omega^1 (M,E)
\]
 well-defined by 
\[
\Dd^{\mathrm{class}} \left( \sum f j^1 \lambda \right) = \sum df \otimes \lambda 
\]
(see also \cite[Example 2.8]{CSS2012}). More precisely, $\Dd^{\mathrm{class}} \circ \gamma = - \operatorname{id}$. Notice that 
\begin{equation}\label{eq:class_Spenc}
\Dd^{\mathrm{class}}(f \psi) = f \Dd^{\mathrm{class}}(\psi) + df \otimes \pr_E (\psi),
\end{equation}
for all $f \in C^\infty (M)$ and $\psi \in \Gamma (J^1 E)$. Hence $(\Dd^{\mathrm{class}}, \pr_L)$ is an $E$-valued $1$-Spencer operator on the Lie algebroid $J^1 E$ with trivial bracket and anchor. Following Crainic, Salazar and Struchiner, we call $\Dd^{\mathrm{class}}$ the \emph{classical Spencer operator}.
\end{example}

\begin{theorem}[Crainic, Salazar, and Struchiner {\cite{CSS2012}}]\label{theor:CSS}
Let $E \to M$ be a vector bundle carrying a representation of a Lie groupoid $\mathcal G \rightrightarrows M$, with source $s$, target $t$, and unit $u$, and let $A$ be the Lie algebroid of $\mathcal G$. Then any multiplicative form $\theta \in \Omega^1 (\mathcal G, t^\ast E)$ induces an $E$-valued $1$-Spencer operator $(\Dd, l)$ on $A$, given by 
\begin{equation} \label{eq:D}
\begin{aligned}
\Dd(\alpha) & =  u^\ast (\Ll_{\nabla^\Gg_{\alpha^r}} \theta)\\
l (\alpha) & = u^\ast (i_{\alpha^r} \theta)
\end{aligned}.
\end{equation}
If, additionally, $\mathcal G$ is $s$-simply connected, then the above construction establishes a one-to-one correspondence between multiplicative $E$-valued $1$-forms on $\mathcal G$ and $E$-valued $1$-Spencer operators on $A$.
\end{theorem}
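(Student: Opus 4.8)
The statement is a theorem of Crainic, Salazar, and Struchiner, so the honest plan is to explain how the correspondence is set up and why it is bijective, rather than to reprove the whole machinery. The plan is to produce the $1$-Spencer operator from a multiplicative form by the stated formulas, verify the three axioms \eqref{eq:Spenc1}, \eqref{eq:Spenc2}, \eqref{eq:Spenc3}, and then, under the $s$-simply connected hypothesis, construct an inverse by integrating the Spencer data. First I would fix notation: given $\theta \in \Omega^1(\Gg, t^\ast E)$ multiplicative, set $\Dd(\alpha) := u^\ast(\Ll_{\nabla^\Gg_{\alpha^r}}\theta)$ and $l(\alpha) := u^\ast(i_{\alpha^r}\theta)$, using the flat $\ker ds$-connection $\nabla^\Gg$ on $t^\ast E$ from Proposition \ref{prop_nabla_G} and the Lie derivative of an $E$-valued form along a derivation defined via \eqref{eq:Lie_Delta}. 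The Leibniz-type identity \eqref{eq:Spenc1} follows from the computation $\Ll_{\nabla^\Gg_{(f\alpha)^r}}\theta = \Ll_{\nabla^\Gg_{(f\circ t)\alpha^r}}\theta$, expanding the Lie derivative along $(f\circ t)\alpha^r$ by the Cartan-type identity $\Ll_{h\Delta} = h\Ll_\Delta + dh\otimes i_\Delta(\cdot)$ (for derivations acting on forms), and restricting to $M$ via $u$, noting $u^\ast d(f\circ t) = df$ and $u^\ast i_{\alpha^r}\theta = l(\alpha)$.

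The heart of the direct construction is \eqref{eq:Spenc2} and \eqref{eq:Spenc3}. For \eqref{eq:Spenc2}, the key point is that multiplicativity of $\theta$ is equivalent to the right-invariant vector fields $\alpha^r$ having Lie derivatives $\Ll_{\nabla^\Gg_{\alpha^r}}\theta$ that are again ``right-invariant'' $t^\ast E$-valued $1$-forms in the appropriate sense; then the bracket relation for right-invariant vector fields, $[\alpha^r,\beta^r] = [\alpha,\beta]^r$, combined with the flatness of $\nabla^\Gg$ and the graded-commutator identities for Lie derivatives of forms along derivations, yields $\Ll_{\nabla^\Gg_{[\alpha,\beta]^r}}\theta = \Ll_{\nabla^\Gg_{\alpha^r}}\Ll_{\nabla^\Gg_{\beta^r}}\theta - \Ll_{\nabla^\Gg_{\beta^r}}\Ll_{\nabla^\Gg_{\alpha^r}}\theta$; restricting along $u$ and using $u^\ast\Ll_{\nabla^\Gg_{\alpha^r}}(\cdot) = \Ll_{\nabla_\alpha}u^\ast(\cdot)$ (which is Equation \eqref{eq:nabla} rephrased for forms) gives the result. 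For \eqref{eq:Spenc3}, one uses the Cartan identity $[\Ll_\Delta, i_X] = i_{[\sigma(\Delta),X]}$ relating $\Ll_{\nabla^\Gg_{\alpha^r}}(i_{\beta^r}\theta)$ to $i_{\beta^r}\Ll_{\nabla^\Gg_{\alpha^r}}\theta$ plus a correction term involving $[\alpha^r,\beta^r] = [\alpha,\beta]^r$, and then restricts to $M$, where $t_\ast(\nabla^\Gg_{\alpha^r}|_M) = \nabla_\alpha$ and the anchor term produces $i_{\rho(\beta)}\Dd(\alpha)$.

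For the converse, assuming $\Gg$ is $s$-simply connected: given an $E$-valued $1$-Spencer operator $(\Dd, l)$ on $A = A(\Gg)$, one must reconstruct a multiplicative $\theta$. The plan is to define $\theta$ pointwise by ``integrating'' the infinitesimal data along $s$-fibers: for $X \in T_g\Gg$, decompose using $ds$, handle the $\ker ds$ part by the flow of right-invariant vector fields and the formula \eqref{eq:nabla_G}, and pin down $\theta$ on $M = u(M)$ by $\theta|_M = l$ (viewing $l: A \to E \hookrightarrow t^\ast E|_M$ suitably completed using $\Dd$ to define $\theta$ on all of $TM|_M$, not just on $A$). The compatibility conditions \eqref{eq:Spenc1}--\eqref{eq:Spenc3} are exactly what is needed for this $\theta$ to be well-defined (independence of the chosen curve/flow, using $s$-simply connectedness for path-independence) and multiplicative; the uniqueness follows because a multiplicative form is determined by its behaviour near the units together with right-translation. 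I would reference \cite{CSS2012} for the full details of this reconstruction, since it is their result.

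\emph{Main obstacle.} The genuinely hard part is the converse direction: showing that the infinitesimal data integrate to a globally well-defined multiplicative form. This requires the $s$-simply connected hypothesis in an essential way (to get path-independence of the integration over $s$-fibers, via a homotopy argument analogous to Lie's second theorem for algebroids), and checking multiplicativity of the reconstructed $\theta$ against the cocycle-type identities \eqref{eq:Spenc2}--\eqref{eq:Spenc3} is the technical core. The forward direction, by contrast, is a bookkeeping exercise in Cartan calculus on $\Gg$ restricted along $u$, with Proposition \ref{prop_nabla_G} doing the essential work of relating $\nabla^\Gg$ on $t^\ast E$ to the representation $\nabla$ on $E$.
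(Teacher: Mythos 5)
Your outline answers a different question from the one the paper actually addresses: the paper does not prove Theorem \ref{theor:CSS} at all. It imports the result from Crainic--Salazar--Struchiner, and the only content it supplies is the remark immediately following the statement, namely that the compact formulas \eqref{eq:D} --- written in terms of the connection $\nabla^\Gg$ of Proposition \ref{prop_nabla_G} and the Lie derivative along a derivation --- are equivalent, via \eqref{eq:nabla_G} and \eqref{eq:Lie_Delta}, to the formulas (2.8) of \cite{CSS2012}. So the comparison is: you attempt, in outline, to reprove the theorem, while the paper merely translates its statement into the language of derivations; the one piece of work the paper does do (checking that \eqref{eq:D} really is the CSS Spencer operator in disguise) is not isolated in your sketch. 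Within your forward direction, the Leibniz identity \eqref{eq:Spenc1} is argued correctly, but one step is glossed over: the commutation $u^\ast\,\Ll_{\nabla^\Gg_{\alpha^r}}(-) = \Ll_{\nabla_\alpha}\,u^\ast(-)$ is not just \eqref{eq:nabla} ``rephrased for forms'', because $\nabla^\Gg_{\alpha^r}$ is not $u$-related to $\nabla_\alpha$ (its symbol $\alpha^r$ is $s$-vertical, hence not tangent to the unit section), and pushing the Lie derivative down to $M$ genuinely requires multiplicativity of $\theta$ --- this is precisely the kind of right-invariance statement the paper later proves for contractions in Lemma \ref{lem:below} and uses in Proposition \ref{prop:above}. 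Your flagging of the converse (integration over $s$-fibers, where $s$-simple connectedness enters) as the hard part, with a pointer back to \cite{CSS2012}, is consistent with the paper's decision to treat the entire theorem as a citation.
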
 

\begin{remark}
It immediately follows from (\ref{eq:nabla_G}) and (\ref{eq:Lie_Delta}) that Formulas (2.8) in \cite{CSS2012} are indeed equivalent to the more compact (\ref{eq:D}).
\end{remark}

My next aim is to show that a Dirac-Jacobi bundle is the same as a Lie algebroid equipped with a $1$-Spencer operator of a certain kind. It will then follow from Theorem \ref{theor:CSS} that Dirac-Jacobi bundles integrate, if at all integrable, to (non-necessarily coorientable) precontact groupoids (see below). 

It is useful to revise slightly the definition of $1$-Spencer operator, at least in the case when it takes values in a line bundle. Thus, let $A \to M$ be a Lie algebroid.

\begin{proposition}
The following two sets of data are equivalent (see also \cite[Remark 2.9]{CSS2012}):
\begin{enumerate}
\item A line bundle $L \to M$ carrying a representation of $A$, and an $L$-valued $1$-Spencer operator $(\Dd, l)$ on $A$,
\item a vector bundle morphism $(\nabla, \mathscr D) : A \to DL \oplus J^1 L = \D L$ such that
\begin{equation}\label{eq:anchor_DD}
\sigma \circ \nabla = \rho
\end{equation}
and
\begin{equation} \label{eq:bracket_DD}
(\nabla, \mathscr D) ([\alpha, \beta]) = \blq (\nabla, \mathscr D) (\alpha), (\nabla, \mathscr D) (\beta) \brq.
\end{equation}
\end{enumerate}
Moreover, $\operatorname{im} (\nabla, \mathscr D)$ is automatically isotropic for every $(\nabla, \mathscr D)$ as above.
\end{proposition}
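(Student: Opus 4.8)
The plan is to prove the equivalence by unwinding both sets of data into matching formulas, the passage from (1) to (2) being essentially a repackaging and the converse being a matter of checking that each displayed identity in (2) reproduces, component by component, the defining properties of a $1$-Spencer operator. First I would fix a Lie algebroid $A \to M$ and, given the data in (1), \emph{define} $\nabla$ to be the flat $A$-connection associated to the representation of $A$ on $L$ (so that $\sigma \circ \nabla = \rho$, which is precisely \eqref{eq:anchor_DD}, holds by the definition of a representation as a morphism $A \to DL$), and \emph{define} $\mathscr D : A \to J^1 L$ by the classical-Spencer-type formula
\[
\mathscr D (\alpha) := j^1 (l(\alpha)) - \gamma (\Dd (\alpha)),
\]
where $\gamma : T^\ast M \otimes L \INTO J^1 L$ is the Spencer embedding (see Remark \ref{rem:char} and Example \ref{ex:class_Spenc}); in other words $\mathscr D(\alpha)$ is the unique element of $J^1 L$ with $\pr_L (\mathscr D(\alpha)) = l(\alpha)$ and $\Dd^{\mathrm{class}}(\mathscr D (\alpha)) = \Dd(\alpha)$. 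The first thing to verify is that $(\nabla, \mathscr D)$ is $C^\infty(M)$-linear, i.e.~a genuine vector bundle morphism: $\nabla$ is only a connection, not tensorial, and $\mathscr D$ is only a differential operator; but the \emph{combined} map is tensorial exactly because the Leibniz defect $\nabla_{f\alpha} = f\nabla_\alpha + \sigma^{-1}(\text{-})\cdots$ — more precisely $\nabla_{f\alpha} - f\nabla_\alpha$ acts as multiplication by $df$ composed through the symbol — is cancelled against \eqref{eq:Spenc1}, $\Dd(f\alpha) = f\Dd(\alpha) + df \otimes l(\alpha)$, together with \eqref{eq:class_Spenc}. This is the computation I would do carefully: both Leibniz terms land in $T^\ast M \otimes L \subset \D L$ and they cancel by construction of the embedding $\gamma$.

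Next I would check \eqref{eq:bracket_DD}. Splitting the Dorfman-like bracket \eqref{eq:Dorf_brack} into its $DL$-component and its $J^1 L$-component: the $DL$-component of $\blq(\nabla,\mathscr D)(\alpha),(\nabla,\mathscr D)(\beta)\brq$ is $[\nabla_\alpha,\nabla_\beta]$, which equals $\nabla_{[\alpha,\beta]}$ precisely because $\nabla$ is a flat connection (a Lie algebroid morphism $A \to DL$); so the $DL$-component of \eqref{eq:bracket_DD} is automatic. The $J^1 L$-component requires that
\[
\mathscr D([\alpha,\beta]) = \Ll_{\nabla_\alpha}\mathscr D(\beta) - i_{\nabla_\beta} d_{D}\mathscr D(\alpha),
\]
and here I would project along the Spencer sequence \eqref{eq:Spenc}: applying $\pr_L$ to both sides and using $\pr_L \mathscr D = l$ together with the identities $\pr_L \Ll_{\nabla_\alpha} = \nabla_\alpha \pr_L$ and $\pr_L(i_{\nabla_\beta} d_D \psi) = i_{\rho(\beta)}\Dd^{\mathrm{class}}(\psi)$ (a direct Cartan-calculus check on $DL$) reduces the $\pr_L$-component exactly to \eqref{eq:Spenc3}, $l([\alpha,\beta]) = \nabla_\alpha l(\beta) - i_{\rho(\beta)}\Dd(\alpha)$; and the $T^\ast M \otimes L$-component reduces, after applying $\Dd^{\mathrm{class}}$, exactly to \eqref{eq:Spenc2}, $\Dd([\alpha,\beta]) = \Ll_{\nabla_\alpha}\Dd(\beta) - \Ll_{\nabla_\beta}\Dd(\alpha)$. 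I expect the main obstacle to be precisely this bookkeeping: keeping the twist by $L$, the Spencer embedding sign convention fixed in Remark \ref{rem:char}, and the two Cartan operators $\Ll$ and $i\,d_D$ on the der-complex of $DL$ all consistent, so that the two components really do match \eqref{eq:Spenc2} and \eqref{eq:Spenc3} on the nose rather than up to sign or up to a term in the image of $\gamma$. For the converse direction, given $(\nabla,\mathscr D)$ satisfying \eqref{eq:anchor_DD}--\eqref{eq:bracket_DD}, I would run the same identifications backwards: \eqref{eq:anchor_DD} says $\nabla$ is a representation; set $l := \pr_L \circ \mathscr D$ and $\Dd := \Dd^{\mathrm{class}} \circ \mathscr D$; tensoriality of $(\nabla,\mathscr D)$ gives \eqref{eq:Spenc1}, and the two components of \eqref{eq:bracket_DD} give \eqref{eq:Spenc2} and \eqref{eq:Spenc3}. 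Since the two assignments are manifestly mutually inverse (both rest on the canonical splitting $J^1 L = \gamma(T^\ast M \otimes L) \oplus j^1\Gamma(L)$ at the level of the Spencer sequence), this establishes the equivalence.

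Finally, the last sentence — that $\operatorname{im}(\nabla,\mathscr D)$ is automatically isotropic — I would prove directly from the symmetric pairing $\bla -,-\bra$ on $\D L$: for $\alpha, \beta \in A_x$,
\[
\bla (\nabla_\alpha, \mathscr D\alpha), (\nabla_\beta, \mathscr D\beta)\bra = \langle \nabla_\alpha, \mathscr D\beta\rangle + \langle \nabla_\beta, \mathscr D\alpha \rangle,
\]
and I would show this vanishes by polarization: evaluate \eqref{eq:bracket_DD} together with the symmetry identity \eqref{eq:cour1}, $\blq a,b\brq + \blq b,a\brq = d_D\bla a,b\bra$, applied to $a = (\nabla_\alpha,\mathscr D\alpha)$, $b = (\nabla_\beta,\mathscr D\beta)$; the left-hand side of the sum $\blq a,b\brq + \blq b,a\brq$ equals $(\nabla,\mathscr D)([\alpha,\beta]) + (\nabla,\mathscr D)([\beta,\alpha]) = 0$ by \eqref{eq:bracket_DD} and antisymmetry of the Lie bracket, so $d_D \bla a, b\bra = 0$; but $d_D$ is injective on sections of $L$ (the der-complex is acyclic, and $\ker(d_D : \Gamma(L) \to \Gamma(J^1 L)) = \ker j^1 = 0$), whence $\bla a, b\bra = 0$ pointwise. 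This gives isotropy at every point, completing the proof. Combined with the earlier equivalence, one then reads off that a Dirac-Jacobi structure whose intersection patterns are controlled by such a morphism is the same as a Lie algebroid $A$ together with an $L$-valued $1$-Spencer operator, which via Theorem \ref{theor:CSS} will yield the integration statement.
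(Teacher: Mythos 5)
Your overall strategy matches the paper's: you build $\mathscr D$ from $(\Dd,l)$ using the splitting of the Spencer sequence, check $C^\infty(M)$-linearity via (\ref{eq:Spenc1}), translate the $J^1L$-component of (\ref{eq:bracket_DD}) into (\ref{eq:Spenc2})--(\ref{eq:Spenc3}), and invert via $l=\pr_L\circ\mathscr D$, $\Dd=\Dd^{\mathrm{class}}\circ\mathscr D$. (Your $\mathscr D(\alpha)=j^1(l(\alpha))-\gamma(\Dd(\alpha))$ is the negative of the paper's $\gamma(\Dd(\alpha))-j^1 l(\alpha)$; this is a harmless convention difference, and in fact yours is the one literally inverted by $\pr_L$ and $\Dd^{\mathrm{class}}$.) Where you genuinely diverge is the isotropy claim: the paper expands $\bla(\nabla,\mathscr D)(\alpha),(\nabla,\mathscr D)(\beta)\bra$ directly and applies (\ref{eq:Spenc3}) twice to obtain $l([\alpha,\beta])+l([\beta,\alpha])=0$, whereas you feed (\ref{eq:bracket_DD}) and the antisymmetry of the Lie bracket into the symmetrization identity (\ref{eq:cour1}) and conclude from the injectivity of $d_{D}=j^1$ on $\Gamma(L)$. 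Your argument is shorter and makes transparent that isotropy is forced by the bracket condition alone; the paper's version has the minor advantage of isolating exactly which Spencer axiom is responsible. Both are correct.

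One slip to fix: your justification of tensoriality misplaces the cancellation. A representation $\nabla : A\to DL$ is already a vector bundle morphism in the Atiyah-algebroid formalism --- $\nabla_{f\alpha}=f\nabla_\alpha$ as derivations of $L$, the Leibniz rule living in the $\Gamma(L)$-slot, not the $\Gamma(A)$-slot --- so there is no ``Leibniz defect of $\nabla$'' to cancel against anything. The cancellation happens entirely inside $\mathscr D$: by (\ref{eq:Spenc1}) the defect of $\gamma\circ\Dd$ under $\alpha\mapsto f\alpha$ is $\gamma(df\otimes l(\alpha))$, which by the very definition of the Spencer embedding equals $j^1(fl(\alpha))-fj^1 l(\alpha)$, i.e.~the defect of $j^1\circ l$. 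Carrying out the computation you announce will reveal exactly this, and the conclusion you need is unaffected; but as written the mechanism you describe is not the one that operates.
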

\begin{proof}
First let $L \to M$ be a line bundle carrying a representation $\nabla$ of $A$, and let $(\Dd,l)$ be an $L$-valued $1$-Spencer operator on $A$. It follows from (\ref{eq:Spenc1}) that the map $\mathscr D : \Gamma (A) \to \Gamma (J^1 L)$, defined by $ \mathscr D (\alpha) := \gamma (\Dd (\alpha)) - j^1l (\alpha)$, is $C^\infty (M)$-linear. Hence it comes from a vector bundle morphism, also denoted by $\mathscr D : A \to J^1 L$. Consider $(\nabla , \mathscr D) : A \to D L \oplus J^1 L = \D L$. Equation (\ref{eq:anchor_DD}) is automatically satisfied. Now, using (\ref{eq:Spenc2}), (\ref{eq:Spenc3}) and the fact that $d_{D} \circ j^1 = d_{D} \circ d_{D} = 0$, we get
\[
\begin{aligned}
\mathscr D ([\alpha, \beta]) & = \gamma (\Dd ([\alpha, \beta]) - j^1 l ([\alpha, \beta]) \\
                              & = \gamma (\Ll_{\nabla_\alpha} \Dd (\beta) - \Ll_{\nabla_\beta} \Dd (\alpha) ) - j^1 (\nabla_\alpha l (\beta) - i_{\rho (\beta)} \Dd (\alpha)) \\
                              & = \Ll_{\nabla_\alpha} (\gamma (\Dd (\beta)) - j^1 l (\beta) ) - \Ll_{\nabla_\beta} \gamma (\Dd (\alpha)) + d_{D} i_{\rho (\beta)} \Dd (\alpha) \\
                              & = \Ll_{\nabla_\alpha} \mathscr D (\beta) - \Ll_{\nabla_\beta} \gamma (\Dd (\alpha)) + d_{D} i_{\nabla_\beta} \gamma (\Dd (\alpha)) \\
                              & = \Ll_{\nabla_\alpha} \mathscr D (\beta) - i_{\nabla_\beta} d_{D} \gamma (\Dd (\alpha)) \\
                              & = \Ll_{\nabla_\alpha} \mathscr D (\beta) - i_{\nabla_\beta} d_{D} \mathscr D (\alpha),
\end{aligned}
\]
for all $\alpha, \beta \in \Gamma (A)$.
Hence
\[
\begin{aligned}
(\nabla_{[\alpha, \beta]}, \mathscr D ([\alpha, \beta]))
                              & = ([\nabla_\alpha, \nabla_\beta], \Ll_{\nabla_\alpha} \mathscr D (\beta) - i_{\nabla_\beta} d_{D} \mathscr D (\alpha)) \\
                              & = \blq (\nabla_\alpha, \mathscr D (\alpha)), (\nabla_\beta , \mathscr D (\beta))\brq \\
                              & = \blq (\nabla, \mathscr D) (\alpha), (\nabla, \mathscr D) (\beta) \brq.
\end{aligned}
\]
Conversely, let $(\nabla, \mathscr D) : A \to \D L$ be a vector bundle morphism such that (\ref{eq:anchor_DD}) and (\ref{eq:bracket_DD}) hold. Put, $\Dd := \Dd^{\mathrm{class}} \circ \mathscr D$, and $l := \pr_L \circ \mathscr D$, where $\pr_L :\linebreak J^1 L \to L$ is the natural projection and $\Dd^{\mathrm{class}} : \Gamma (J^1 L) \to \Omega^1 (M, L)$ is the classical Spencer operator defined in Example \ref{ex:class_Spenc}. This means that
\begin{equation}\label{eq:newx}
\mathscr D (\alpha) = j^1 l (\alpha) - \gamma (\Dd (\alpha)),
\end{equation}
for all $\alpha \in \Gamma (A)$. Then (\ref{eq:Spenc1}) follows from (\ref{eq:class_Spenc}), $\sigma \circ \nabla = \rho$ follows from (\ref{eq:anchor_DD}), and $\nabla_{[\alpha, \beta]} = [\nabla_\alpha, \nabla_\beta]$, (\ref{eq:Spenc2}) and (\ref{eq:Spenc3}) follow from (\ref{eq:bracket_DD}). Moreover, the construction of $(\nabla, \Dd, l)$ from $(\nabla, \mathscr D)$ clearly inverts the construction of $(\nabla, \mathscr D)$ from $(\nabla, \Dd, l)$.

Finally, compute
\[
\begin{aligned}
\bla (\nabla, \mathscr D) (\alpha), (\nabla, \mathscr D) (\beta) \bra & = \langle \nabla_\alpha, j^1 l (\beta) -\gamma (\mathcal D (\beta))   \rangle +  \langle \nabla_\beta, j^1 l (\alpha) -\gamma (\mathcal D (\alpha))   \rangle \\
                                                 & =  \nabla_\alpha l (\beta) - i_{\rho (\alpha)} \mathcal D (\beta)+ \nabla_\beta l (\alpha) - i_{\rho (\beta)} \mathcal D (\alpha)  \\
                                                 & = l ([\alpha, \beta]) + l ([\beta, \alpha]) \\
                                                 & = 0,
\end{aligned}
\]
where we used (\ref{eq:newx}) and (\ref{eq:Spenc3}).

\end{proof}

\begin{corollary}\label{cor}
Let $A \to M$ be a Lie algebroid with $\rk A = \dim M + 1$, let $L \to M$ be a line bundle carrying a representation of $A$, and let $(\mathcal D, l)$ be an $L$-valued $1$-Spencer operator on $A$ such that the associated vector bundle morphism $(\nabla, \mathscr D) : A \to \D L$ is an embedding, then $(L, \operatorname{im} (\nabla, \mathscr D))$ is a Dirac-Jacobi bundle. Conversely, let $(L, \mathfrak L)$ be a Dirac-Jacobi bundle over $M$, and let $i : \mathfrak L \INTO \D L$ be the inclusion, then $(\Dd := \Dd^{\mathrm{class}} \circ \pr_{J^1} \circ i , l := \pr_L \circ \pr_{J^1} \circ i)$ is an $L$-valued $1$-Spencer operator on the Lie algebroid $ \mathfrak L  \to M$.
\end{corollary}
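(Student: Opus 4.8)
The plan is to read Corollary \ref{cor} off the Proposition just established, which identifies $L$-valued $1$-Spencer operators $(\Dd,l)$ on a Lie algebroid $A$ (carrying a representation $\nabla$ on $L$) with vector bundle morphisms $(\nabla,\mathscr D):A\to\D L$ satisfying the anchor compatibility (\ref{eq:anchor_DD}) and the bracket compatibility (\ref{eq:bracket_DD}), and which moreover guarantees that $\operatorname{im}(\nabla,\mathscr D)$ is isotropic for $\bla -,-\bra$.

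For the first assertion, suppose $(\Dd,l)$ is an $L$-valued $1$-Spencer operator on $A$ whose associated morphism $(\nabla,\mathscr D):A\to\D L$ is a vector bundle embedding, and set $\mathfrak L:=\operatorname{im}(\nabla,\mathscr D)$. First I would observe that $\mathfrak L$ is a vector subbundle of $\D L$ of rank $\rk A=\dim M+1=\tfrac12\rk\D L$, and that by the Proposition it is isotropic; hence it is maximal isotropic by the rank count of Remark \ref{rem:char}. For involutivity, note that $(\nabla,\mathscr D)$ restricts to a vector bundle isomorphism onto $\mathfrak L$, so every section of $\mathfrak L$ is $(\nabla,\mathscr D)(\alpha)$ for a unique $\alpha\in\Gamma(A)$; then (\ref{eq:bracket_DD}) reads $\blq(\nabla,\mathscr D)(\alpha),(\nabla,\mathscr D)(\beta)\brq=(\nabla,\mathscr D)([\alpha,\beta])\in\Gamma(\mathfrak L)$, so $\Gamma(\mathfrak L)$ is closed under $\blq-,-\brq$ and $(L,\mathfrak L)$ is a Dirac-Jacobi bundle.

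For the converse, let $(L,\mathfrak L)$ be a Dirac-Jacobi bundle. By the discussion opening Section \ref{sec:char_fol}, $\mathfrak L\to M$ is a Lie algebroid with bracket $[-,-]_{\mathfrak L}$ (the restriction of the Dorfman bracket), anchor $\rho_{\mathfrak L}$, and the restriction $\nabla^{\mathfrak L}:=\pr_{D}|_{\mathfrak L}:\mathfrak L\to D L$ is a representation of $\mathfrak L$ in $L$ with $\rho_{\mathfrak L}=\sigma\circ\nabla^{\mathfrak L}$. I would take $A:=\mathfrak L$ and let $i:\mathfrak L\INTO\D L$ be the inclusion, so that $i=(\nabla^{\mathfrak L},\pr_{J^1}\circ i)$ is a vector bundle embedding; condition (\ref{eq:anchor_DD}) is $\sigma\circ\nabla^{\mathfrak L}=\rho_{\mathfrak L}$, and condition (\ref{eq:bracket_DD}) holds because for $\alpha,\beta\in\Gamma(\mathfrak L)$ one has $i([\alpha,\beta]_{\mathfrak L})=\blq\alpha,\beta\brq=\blq i(\alpha),i(\beta)\brq$. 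Applying the Proposition in the reverse direction yields an $L$-valued $1$-Spencer operator on $\mathfrak L$, and tracing through the construction of $(\Dd,l)$ from $(\nabla,\mathscr D)$ in the Proposition's proof (namely $\Dd=\Dd^{\mathrm{class}}\circ\mathscr D$, $l=\pr_L\circ\mathscr D$) gives exactly $\Dd=\Dd^{\mathrm{class}}\circ\pr_{J^1}\circ i$ and $l=\pr_L\circ\pr_{J^1}\circ i$.

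The whole argument is bookkeeping on top of the Proposition; the only two points needing care are the rank count that upgrades isotropy to maximal isotropy, and the identification $\Gamma(A)\cong\Gamma(\mathfrak L)$ via $(\nabla,\mathscr D)$ that turns the section-level identity (\ref{eq:bracket_DD}) into genuine closedness of $\Gamma(\mathfrak L)$ under $\blq-,-\brq$.
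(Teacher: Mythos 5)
Your argument is correct and is exactly the intended derivation: the paper states Corollary \ref{cor} without proof as an immediate consequence of the preceding Proposition, and your bookkeeping (rank count $\rk A=\dim M+1=\tfrac12\rk\D L$ upgrading isotropy to maximal isotropy, identity (\ref{eq:bracket_DD}) giving involutivity, and the reverse direction via the Lie algebroid structure on $\mathfrak L$ with representation $\nabla^{\mathfrak L}$) supplies precisely the missing details. No gaps.
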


In other words, a Dirac-Jacobi bundle is essentially the same as a rank $\dim M + 1$ Lie algebroid $ \mathfrak L  \to M$ equipped with a $1$-Spencer operator with values in a line bundle $L$ such that the associated vector bundle morphism $(\nabla, \mathscr D) : \mathfrak L \to \D L$ is injective.

\begin{proposition}\label{prop:above}
Let $\mathcal G \rightrightarrows M$ be a Lie groupoid, with source $s$, target $t$, and unit $u$, $A$ the Lie algebroid of $\mathcal G$, and let $L \to M$ be a line bundle carrying a representation of $\mathcal G$. Moreover, let $\theta \in \Omega^1 (\mathcal G, t^\ast L)$ be a multiplicative form, $(\Dd , l)$ the induced $L$-valued $1$-Spencer operator on $A$, and let $(\nabla, \mathscr D) : A \to \D L$ be the associated vector bundle morphism. Then $(\nabla, \mathscr D)$ is injective if and only if 
\begin{equation}\label{eq:non_deg}
(\nabla^\Gg)^{-1}\left(\left. \ker d_{D} t \cap K_\omega\right|_M \right) = \mathbf{0},
\end{equation}
where $K_\omega = \ker \omega$, and $\omega$ is the $2$-cocycle in $(\Omega^\bullet_{t^\ast L}, d_{D})$ corresponding to $\theta$ via Proposition \ref{prop:prec}.
\end{proposition}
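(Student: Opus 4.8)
The plan is to reduce the claimed equivalence to a clean pointwise statement about the restriction to $M$ of the kernel of the vector bundle morphism $(\nabla,\mathscr D)$. Since $(\nabla,\mathscr D):A\to\D L$ is a vector bundle morphism over the identity of $M$, it is injective as a morphism iff it is injective on each fiber, i.e.\ iff $\ker(\nabla,\mathscr D)=\mathbf 0$. So the whole point is to identify $\ker(\nabla,\mathscr D)_x$, for $x\in M$, with the space $(\nabla^{\Gg})^{-1}\!\left(\ker d_D t\cap K_\omega\big|_M\right)_x$ appearing on the left-hand side of \eqref{eq:non_deg}.

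First I would unwind the definitions. By Theorem \ref{theor:CSS}, $l(\alpha)=u^\ast(i_{\alpha^r}\theta)$ and $\Dd(\alpha)=u^\ast(\Ll_{\nabla^\Gg_{\alpha^r}}\theta)$, and the associated morphism is $(\nabla,\mathscr D)$ with $\mathscr D(\alpha)=\gamma(\Dd(\alpha))-j^1 l(\alpha)$. Using Proposition \ref{prop:prec} applied to the multiplicative precontact form $\theta$ on $\Gg$ (a $t^\ast L$-valued $1$-form on $\Gg$), write $\Theta=\theta\circ\sigma_{\Gg}:D(t^\ast L)\to t^\ast L$ and $\omega=-d_D\Theta$, so that $\Theta=-i_{\mathbb 1}\omega$. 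The key computation is to express $l(\alpha)$ and $\Dd(\alpha)$ at a point $x\in M$ in terms of $\omega$ restricted to $M$: for $\alpha\in A_x$, one has $i_{\alpha^r}\theta = \Theta(\nabla^\Gg_{\alpha^r})$ up to the identification $\sigma_\Gg(\nabla^\Gg_{\alpha^r})=\alpha^r$ (using \eqref{eq:nabla}: $t_\ast(\nabla^\Gg_{\alpha^r}|_M)=\nabla_\alpha$, and $s_\ast(\alpha^r|_M)=0$), while $\Ll_{\nabla^\Gg_{\alpha^r}}\theta$ is governed by $i_{\nabla^\Gg_{\alpha^r}}\omega$ via the Cartan identity $\Ll=[i,d_D]$ together with $\omega=-d_D\Theta$. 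Carrying this through, $(\nabla,\mathscr D)(\alpha)=0$ translates, after restriction to $M$, into the two conditions that $\nabla_\alpha=t_\ast(\nabla^\Gg_{\alpha^r}|_M)=0$ (equivalently $\nabla^\Gg_{\alpha^r}|_M\in\ker d_D t$, since $\nabla^\Gg_{\alpha^r}|_M$ is automatically $s$-vertical, hence in $\ker d_D s$, and one checks $\ker d_D s\cap(\text{that condition})$ forces being in $\Der$ mapping into $\ker d_D t$) and that $i_{\nabla^\Gg_{\alpha^r}}\omega|_M=0$, i.e.\ $\nabla^\Gg_{\alpha^r}|_M\in K_\omega|_M$. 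That is precisely $\nabla^\Gg(\alpha^r|_M)\in\ker d_D t\cap K_\omega|_M$, and since $\nabla^\Gg$ is injective (being a connection) and $\alpha\mapsto\alpha^r|_M=\alpha$ is the identity, this says $(\nabla,\mathscr D)(\alpha)=0$ iff $\nabla^\Gg(\alpha)\in\ker d_D t\cap K_\omega|_M$, i.e.\ iff $\alpha\in(\nabla^\Gg)^{-1}(\ker d_D t\cap K_\omega|_M)$.

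Thus the plan is: (i) express $l,\Dd$ via $\Theta=\theta\circ\sigma_\Gg$ and $\omega=-d_D\Theta$; (ii) evaluate along right-invariant vector fields and restrict to $M$, using $\nabla^\Gg_{\alpha^r}|_M$ and its relation \eqref{eq:nabla} to $\nabla_\alpha$; (iii) show that the vanishing of $(\nabla,\mathscr D)(\alpha)$ is equivalent to $i_{\nabla^\Gg_{\alpha^r}}\omega|_M=0$ together with $\nabla_\alpha=0$; (iv) repackage the latter as membership in $\ker d_D t\cap K_\omega|_M$; (v) conclude by injectivity of $\nabla^\Gg$. Finally $(\nabla,\mathscr D)$ is injective iff this preimage is $\mathbf 0$ for all $x$, which is \eqref{eq:non_deg}.

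The main obstacle I expect is step (iii): carefully disentangling which part of the condition $(\nabla,\mathscr D)(\alpha)=0$ is ``$\nabla_\alpha=0$'' and which is ``$\mathscr D(\alpha)=0$'', and checking that together they are exactly equivalent to the single condition $i_{\nabla^\Gg_{\alpha^r}}\omega|_M=0$ after one identifies $\nabla^\Gg_{\alpha^r}|_M$ inside $D(t^\ast L)|_M$. One must be attentive that $\mathscr D(\alpha)=\gamma(\Dd(\alpha))-j^1 l(\alpha)\in J^1 L$ captures both the ``$T^\ast M\otimes L$ part'' (coming from $\Dd$) and the ``$L$ part'' (coming from $l$), and that $\omega^\flat(\nabla^\Gg_{\alpha^r}|_M)\in J^1(t^\ast L)|_M$ decomposes accordingly; matching these two decompositions is the crux. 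A secondary subtlety is verifying that $\nabla^\Gg_{\alpha^r}|_M$ indeed lies in the copy of $D(t^\ast L)|_M$ on which $\ker d_D t$ and $K_\omega$ make sense, and that the pairing twisted by $t^\ast L$ restricts correctly along $u:M\hookrightarrow\Gg$; these are bookkeeping issues but need to be stated precisely. Once (iii) is in hand, the rest is immediate from the injectivity of the connection $\nabla^\Gg$ and the definition of preimage.
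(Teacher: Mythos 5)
Your strategy coincides with the paper's: reduce to the pointwise kernel of the bundle map $(\nabla,\mathscr D)$, split it as $\ker\nabla\cap\ker\mathscr D$, identify $\ker\nabla=(\nabla^{\Gg})^{-1}\bigl(\ker d_{D}t|_M\bigr)$ via (\ref{eq:nabla}), and use the Cartan identity to rewrite $\mathscr D(\alpha)=\gamma(\Dd(\alpha))-j^1l(\alpha)$ as the pull-back along $u$ of the contraction $i_{\nabla^\Gg_{\alpha^r}}d_{D}\Theta$, with $\Theta=\theta\circ\sigma$. Up to that point the plan is sound and matches the paper step for step; in particular your worry in step (iii) about disentangling the two conditions is unfounded, since the two factors separate cleanly ($\nabla_\alpha=0$ is exactly the $\ker d_{D}t$ condition, $\mathscr D(\alpha)=0$ is exactly the $K_\omega$ condition).

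The genuine gap is in what you dismiss as a ``secondary subtlety'' of bookkeeping in step (iv). The condition $\mathscr D(\alpha_x)=0$ only tells you that $u^\ast\bigl(i_{\nabla^\Gg_{\alpha^r}}d_{D}\Theta\bigr)_x=0$, i.e.\ that the $t^\ast L$-valued $1$-form $i_{\nabla^\Gg_{\alpha^r}}d_{D}\Theta$ vanishes at $u(x)$ against the subspace $D_xL\subset D_{u(x)}(t^\ast L)$ of derivations whose symbol is tangent to $M$. Membership of $(\nabla^\Gg_{\alpha^r})_x$ in $K_\omega$ requires vanishing against \emph{all} of $D_{u(x)}(t^\ast L)$, whose rank is $\dim\Gg+1$ rather than $\dim M+1$. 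Bridging this difference is precisely where the multiplicativity of $\theta$ enters a second time: the paper isolates this as Lemma \ref{lem:below}, which shows that for a multiplicative cochain one has $i_{\widehat\alpha{}^r}\Theta'=t^\ast u^\ast i_{\widehat\alpha{}^r}\Theta'$, applied to $\Theta'=d_{D}\Theta$ (itself multiplicative since $d_{D}$ commutes with the relevant pull-backs). Its proof rests on writing $\widehat\alpha{}^r_g=m_\ast(\widehat\alpha{}^r_{t(g)},0_g)$ and $\Delta_i=m_\ast(t_\ast\Delta_i,\Delta_i)$ and expanding the multiplicativity identity; this is a real argument, not a restriction-of-pairings check. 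Without it you only obtain the inclusion $(\nabla^\Gg)^{-1}\bigl(\ker d_{D}t\cap K_\omega|_M\bigr)\subset\ker(\nabla,\mathscr D)$, hence only the implication ``$(\nabla,\mathscr D)$ injective $\Rightarrow$ (\ref{eq:non_deg})'', and the proposition's ``iff'' is not established.
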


\begin{proof}
It suffices to show that 
\[
\ker \nabla \cap \ker \mathscr D = (\nabla^\Gg)^{-1}\left(\left. \ker d_{D} t \cap K_\omega \right|_M \right).
\]
We concentrate on the right hand side, which is more explicitly given by
\[
(\nabla^\Gg)^{-1} \left(\left.\ker d_{D} t \cap K_\omega \right|_M  \right) = \{ \alpha \in \ker ds |_M : t_\ast (\nabla^\Gg_\alpha |_M) = \omega |_M (\nabla^\Gg_\alpha, -) = 0 \}.
\]
First of all, from (\ref{eq:nabla}), $\ker \nabla = (\nabla^\Gg)^{-1} (\ker d_{D} t |_M)$. We claim that $\ker \mathscr D = (\nabla^\Gg)^{-1} (K_\omega |_M) $. To see this, let $\alpha \in \Gamma (A)$, and $x \in M$. Denote $\widehat \alpha{}^r := \nabla^\Gg_{\alpha^r}$. So, from (\ref{eq:newx}) and (\ref{eq:D}),
\begin{align}
\mathscr D (\alpha_x) & = j^1_x l(\alpha) - \gamma (\Dd (\alpha)_x)  \nonumber \\
                                   & = j^1_x u^\ast (i_{\alpha^r} \theta) - \gamma (u^\ast (\Ll_{\widehat \alpha{}^r} \theta)_x) \nonumber \\
                                   & = u^\ast \left( j^1_x i_{\alpha^r} \theta - \gamma (\Ll_{\widehat \alpha{}^r} \theta)_x  \right). \label{eq:mathscr_D}
\end{align}
Now, put $\Theta = \gamma (\theta) = \sigma^\ast \theta = \theta \circ \sigma \in \Omega^1_{t^\ast L} = \Gamma (J^1 t^\ast L)$, and notice that $\gamma( \Ll_{\widehat \alpha{}^r} \theta) = \Ll_{\widehat \alpha{}^r} \Theta$. Hence, from (\ref{eq:mathscr_D}), $\alpha_x \in \ker \mathscr D$ iff
\[
\begin{aligned}
0 & = \langle \gamma (\Ll_{\widehat \alpha{}^r} \theta)_x  - j^1_x i_{\alpha^r} \theta, \Delta \rangle \\
& = i_\Delta \left( \Ll_{\widehat \alpha{}^r} \Theta  -  d_{D} i_{\widehat \alpha{}^r} \Theta \right) \\
& = i_\Delta \left( i_{\widehat \alpha{}^r} d_{D} \Theta \right), \\
& = i_\Delta u^\ast \left( i_{\widehat \alpha{}^r} d_{D} \Theta \right),
\end{aligned}
\]
for all $\Delta \in D_x L$. We conclude that $\alpha_x \in \ker \mathscr D$ if and only if $u^\ast \left( i_{\widehat \alpha{}^r} d_{D} \Theta \right)_x = 0$. Now, recall that $\theta$ is multiplicative. It follows that $\Theta$ is multiplicative as well, i.e.
\begin{equation}\label{eq:Theta_mult}
(m^\ast \Theta)_{(g_1, g_2)} = \pr_1^\ast \Theta_{g_1} + g_1 \cdot (\pr_2^\ast \Theta_{g_2}),
\end{equation}
for all $(g_1, g_2) \in \Gg_2$, where $m^\ast$ is the pull-back of elements in $\Omega_{t^\ast L}^\bullet$ along the regular line bundle morphism $m^\ast t^\ast L \to t^\ast L$ (similarly for $\pr_1^\ast, \pr_2^\ast$). Since $d_{D}$ commutes with pull-backs, $d_{D} \Theta$ is also multiplicative. So, From Lemma \ref{lem:below} below, $u^\ast \left( i_{\widehat \alpha{}^r} d_{D} \Theta \right)_x = \left( i_{\widehat \alpha{}^r} d_{D} \Theta \right)_x$. Hence
$\alpha_x \in \ker \mathscr D$ if and only if $i_{\widehat \alpha{}^r} \omega_x = 0$, i.e. $(\nabla^\Gg_{\alpha^r})_x \in K_\omega$. This concludes the proof.
\end{proof}

\begin{lemma}\label{lem:below}
Let $\Gg \rightrightarrows M$ and $L \to M$ be as in the statement of Proposition \ref{prop:above}. Moreover, let $\Theta \in \Omega_{t^\ast L}^\bullet$ be multiplicative, i.e.~$\Theta$ satisfies (\ref{eq:Theta_mult}). Then $i_{\widehat \alpha{}^r} \Theta = t^\ast u^\ast i_{\widehat \alpha{}^r} \Theta $, for all $\alpha \in \Gamma (A)$.
\end{lemma}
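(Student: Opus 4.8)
The plan is to prove the identity $i_{\widehat{\alpha}{}^r}\Theta = t^\ast u^\ast i_{\widehat{\alpha}{}^r}\Theta$ by unwinding the multiplicativity condition (\ref{eq:Theta_mult}) along a cleverly chosen composable pair. Recall that $\widehat{\alpha}{}^r = \nabla^\Gg_{\alpha^r}$ is a derivation of $t^\ast L$ lifting the right-invariant vector field $\alpha^r$, which is $s$-vertical; and the contraction $i_{\widehat{\alpha}{}^r}\Theta$ is a section of $\wedge^{\bullet-1}(D(t^\ast L))^\ast \otimes t^\ast L$ on $\Gg$. The identity to prove says that the value of this form at a point $g\colon x\to y$ is obtained by pulling back (via $t$, using that $\Theta$ takes values in $t^\ast L$) its restriction to $M=u(M)$. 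So the content is: the form $i_{\widehat{\alpha}{}^r}\Theta$ is \emph{basic} with respect to $t$, i.e.\ its value at $g$ depends only on $y = t(g)$ and equals the value at $u(y)$.

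First I would fix $g\colon x\to y$ and evaluate $m^\ast\Theta$ at the composable pair $(g, g^{-1})\in\Gg_2$, which composes to $u(y)\in M$. Here $\pr_1(g,g^{-1})=g$ and $\pr_2(g,g^{-1})=g^{-1}$, and $s(g)=x=t(g^{-1})$. Plugging into (\ref{eq:Theta_mult}) gives a relation between $\Theta_{u(y)}$ (pulled back along $m$ and then restricted appropriately), $\Theta_g$, and $g\cdot\Theta_{g^{-1}}$. Next I would contract this whole identity with the right-invariant object $\widehat{\alpha}{}^r$, using that $\alpha^r$ is $s$-vertical so that it interacts transparently with the two projections $\pr_1,\pr_2$: $\pr_1$ sees $\alpha^r$ at $g$, while $\pr_2$ sees the $s$-vertical vector field ``seen at the second slot'', and at the pair $(g,g^{-1})$ the $\pr_2$-component of a tangent vector pushed from the multiplication is governed by $ds$ at $g$, which kills $\alpha^r$ since $\alpha^r\in\ker ds$. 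Thus the $\pr_2$-term drops out after contraction, and one is left with $i_{\widehat{\alpha}{}^r}\Theta$ at $u(y)$ expressed purely through $i_{\widehat{\alpha}{}^r}\Theta$ at $g$ composed with the $t^\ast L$-identification $g\cdot(-)$. Because $\Theta$ lands in $t^\ast L$ and $t\circ m = t\circ\pr_1$, the coefficient twist $g\cdot(-)$ is precisely the identification built into $t^\ast L$ along the fibre of $t$, so it gets absorbed when we say ``equals $t^\ast u^\ast(\cdot)$''. Reassembling, this yields $i_{\widehat{\alpha}{}^r}\Theta{}_g = (t^\ast u^\ast i_{\widehat{\alpha}{}^r}\Theta)_g$ for all $g$.

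A cleaner bookkeeping alternative, which I would run in parallel to double-check signs, is to differentiate (\ref{eq:Theta_mult}) at the unit: fixing $g$ with $t(g)=y$ and letting $g_2$ run over the $s$-fibre $s^{-1}(x)$ through $u(x)$ in the direction $\alpha$, one reads off directly that $i_{\alpha^r}(\pr_2^\ast\Theta)$ is the ``vertical'' contribution, and multiplicativity forces $i_{\widehat{\alpha}{}^r}\Theta$ along $m(g,\cdot)$ to be right-translated from its value at the unit. The key algebraic input is the standard fact (used throughout Lie groupoid theory) that $T_{(g,u(x))}\Gg_2 \to T_g\Gg\oplus T_{u(x)}\Gg$, combined with $dm$, lets one decompose a right-invariant vector field's value, and that $\ker ds$ is exactly what survives in the second slot.

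The main obstacle I anticipate is purely notational: keeping straight the three pull-back operations $m^\ast,\pr_1^\ast,\pr_2^\ast$ on $\Omega^\bullet_{t^\ast L}$ (which act on forms \emph{and} twist coefficients along line-bundle morphisms, cf.\ the discussion around (\ref{eq:Theta_mult})), and verifying that the coefficient twist $g\cdot(-)$ in multiplicativity is \emph{literally} the canonical identification of fibres of $t^\ast L$ along $t$-fibres, so that after contraction the statement becomes the bald equality $i_{\widehat{\alpha}{}^r}\Theta = t^\ast u^\ast i_{\widehat{\alpha}{}^r}\Theta$ with no residual twisting factor. Once that identification is nailed down, the proof is a one-line substitution of $(g,g^{-1})$ into (\ref{eq:Theta_mult}) followed by contraction with $\widehat{\alpha}{}^r$ and use of $\alpha^r\in\ker ds$; everything else is routine.
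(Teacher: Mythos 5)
Your core mechanism is the same one the paper uses: evaluate the multiplicativity identity at a well-chosen composable pair, contract with a lift of $\widehat\alpha{}^r$ whose second component is zero, and let right-invariance do the rest. The difference is the pair: the paper uses $(u(t(g)),g)$, so that $\widehat\alpha{}^r_g = m_\ast(\widehat\alpha{}^r_{t(g)}, 0_g)$ and an arbitrary $\Delta_i \in D_g t^\ast L$ decomposes as $m_\ast(t_\ast\Delta_i,\Delta_i)$; the unit in the first slot makes the coefficient twist $g_1\cdot(-)$ trivial and the whole computation is a one-line substitution. Your pair $(g,g^{-1})$ also works, but it is less convenient in two respects that you do not address. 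First, the constraint defining $T_{(g,g^{-1})}\Gg_2$ is $ds(X_1)=dt(X_2)$, so $X_1=\alpha^r_g\in\ker ds$ only forces $X_2\in\ker dt$, not $X_2=0$; you must \emph{choose} $X_2=0_{g^{-1}}$ and then invoke right-invariance of $\alpha^r$ (and of the derivation $\nabla^{\Gg}_{\alpha^r}$, which the paper proves first) to identify $m_\ast(\widehat\alpha{}^r_g,0_{g^{-1}})$ with $\widehat\alpha{}^r_{u(t(g))}$. Your phrasing ``the $\pr_2$-component \dots is governed by $ds$ at $g$, which kills $\alpha^r$'' conflates these two steps. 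Second, and more substantively: the lemma is stated for $\Theta$ of arbitrary degree and is applied in Proposition \ref{prop:above} to the $2$-cochain $d_{D}\Theta$, so you must also lift the remaining arguments $\Delta_1,\dots,\Delta_k$ to $\Gg_2$ so that their $m_\ast$-images are $u_\ast t_\ast\Delta_i$. With your pair this forces the lifts $(\Delta_i,\mathrm{inv}_\ast\Delta_i)$ along the inversion map, since $m(h,h^{-1})=u(t(h))$; your proposal is silent on this, and it is the only place where your route genuinely costs more than the paper's. Once those two points are supplied, your argument closes and gives the same conclusion.
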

\begin{proof}
First of all, it is easy to check that, from right invariance of $\alpha^r$, right invariance of $\widehat \alpha{}^r = \nabla^\Gg_{\alpha^r}$ follows, i.e., for any $g : x \to y \in \Gg$, right translation $R_g : s^{-1}(y) \to s^{-1}(x)$ maps $\widehat \alpha{}^r|_{s^{-1}(y)}$ to $\widehat \alpha{}^r|_{s^{-1}(x)}$. In particular,
\[
\widehat\alpha{}^r_g = (R_g)_\ast \widehat\alpha{}^r_{t(g)},
\]
for all $g \in \Gg$. Now, let $\Theta \in \Omega^{k+1}_{t^\ast L}$. Take $\Delta_1, \ldots, \Delta_k \in D_g t^\ast L = D _{t(g)} L$. We can express $\widehat \alpha{}^r_g$ and $\Delta_i$ as (cf.~\cite[Proof of Lemma 4.2]{CSS2012})
\[
\widehat\alpha{}^r_g = (R_g)_\ast (\widehat\alpha{}^r_{t(g)}) = m_\ast (\widehat\alpha{}^r_{t(g)}, 0_g) \quad \text{and} \quad \Delta_i = m_\ast (t_\ast (\Delta_i), \Delta_i),
\]
$i= 1, \ldots, k$. Hence, from multiplicativity,
\[
\begin{aligned}
&\quad\ (i_{\widehat \alpha{}^r} \Theta)_g(\Delta_1, \ldots, \Delta_k) \\
                           & = \Theta_g (\widehat \alpha{}^r_g, \Delta_1, \ldots, \Delta_k) \\
                           & = \Theta_g (m_\ast (\widehat\alpha{}^r_{t(g)}, 0_g), m_\ast (t_\ast (\Delta_1), \Delta_1) , \ldots, m_\ast (t_\ast (\Delta_k), \Delta_k))  \\
                           & = (m^\ast \Theta)_{(t(g),g)} ((\widehat\alpha{}^r_{t(g)}, 0_g), (t_\ast (\Delta_1), \Delta_1), \ldots, (t_\ast (\Delta_k), \Delta_k)) \\
                           & = (\pr_1^\ast \Theta_{t(g)} + g \cdot (\pr_2^\ast \Theta_{g})) ((\widehat\alpha{}^r_{t(g)}, 0_g), (t_\ast (\Delta_1), \Delta_1), \ldots, (t_\ast (\Delta_k), \Delta_k)) \\
                           & = \Theta_{t(g)} (\widehat\alpha{}^r_{t(g)}, t_\ast (\Delta_1), \ldots, t_\ast (\Delta_k) ) \\
                           & = (u^\ast (i_{\widehat \alpha{}^r} \Theta))_{t(g)} (t_\ast (\Delta_1), \ldots, t_\ast (\Delta_k) ) \\
                           & = (t^\ast u^\ast (i_{\widehat \alpha{}^r} \Theta))_g (\Delta_1, \ldots, \Delta_k).
\end{aligned}
\]
\end{proof}

\begin{definition} \quad
\begin{enumerate}
\item A \emph{precontact groupoid} is a triple $(\Gg, L, \theta)$ where $\Gg \rightrightarrows M$ is a Lie\linebreak groupoid such that $\dim \Gg = 2 \dim M + 1$, $L \to M$ is a line bundle carrying a representation of $\Gg$, and $\theta \in \Omega^1 ( \Gg, t^\ast L)$ is a multiplicative $1$-form satisfying condition (\ref{eq:non_deg}).
\item A \emph{Dirac-Jacobi Lie algebroid} is a Lie algebroid $A \to M$ equipped with a Lie algebroid isomorphism $A \simeq \mathfrak L$ onto a Dirac-Jacobi structure $\mathfrak L$ on a line bundle.
\end{enumerate}
\end{definition}

Collecting the above results we get the following 

\begin{theorem}\label{theor:int}
Let $\mathcal G \rightrightarrows M$ be a Lie groupoid such that $\dim \Gg =\linebreak 2 \dim M + 1$, and let $A$ be the Lie algebroid of $\mathcal G$. If $\mathcal G$ is $s$-simply connected, construction in Theorem \ref{theor:CSS} establishes a one-to-one correspondence between precontact groupoid structures on $\Gg$ and Dirac-Jacobi Lie algebroid structures on $A$. 
\end{theorem}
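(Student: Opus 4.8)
The plan is to assemble Theorem \ref{theor:int} as a straightforward composition of three correspondences already in place, checking only that the dimension/rank bookkeeping lines up. First I would observe that, since $\mathcal G$ is $s$-simply connected, Theorem \ref{theor:CSS} gives a bijection between multiplicative $t^\ast L$-valued $1$-forms $\theta$ on $\mathcal G$ (for a line bundle $L\to M$ carrying a $\mathcal G$-representation) and $L$-valued $1$-Spencer operators $(\Dd,l)$ on $A=A(\mathcal G)$, for every such $L$. Next, the Proposition preceding Corollary \ref{cor} repackages each $L$-valued $1$-Spencer operator on $A$ as a vector bundle morphism $(\nabla,\mathscr D):A\to \D L$ satisfying the anchor-compatibility (\ref{eq:anchor_DD}) and the bracket-compatibility (\ref{eq:bracket_DD}), with image automatically isotropic. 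Finally, Corollary \ref{cor} says that when $\operatorname{rk} A=\dim M+1$ and $(\nabla,\mathscr D)$ is injective, its image is a maximal isotropic, involutive subbundle of $\D L$, i.e.\ a Dirac-Jacobi structure, and $A\xrightarrow{(\nabla,\mathscr D)}\operatorname{im}(\nabla,\mathscr D)\subset\D L$ is a Lie algebroid isomorphism, so $A$ becomes a Dirac-Jacobi Lie algebroid; conversely every Dirac-Jacobi Lie algebroid arises this way.

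Then I would run the composite. Under the hypothesis $\dim\mathcal G=2\dim M+1$, the Lie algebroid $A$ has rank $\dim\mathcal G-\dim M=\dim M+1$, which is exactly the rank condition required by Corollary \ref{cor} and appearing in the definition of a precontact groupoid and of a Dirac-Jacobi Lie algebroid (recall $\operatorname{rk}\D L=2\dim M+2$, so a maximal isotropic subbundle has rank $\dim M+1$). So the only remaining point is to match the two "genericity" conditions across the correspondence: on the groupoid side, that $\theta$ satisfies the non-degeneracy condition (\ref{eq:non_deg}), $(\nabla^{\mathcal G})^{-1}(\ker d_D t\cap K_\omega|_M)=\mathbf 0$; on the algebroid side, that the associated $(\nabla,\mathscr D):A\to\D L$ is injective. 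But this equivalence is precisely the content of Proposition \ref{prop:above}. Hence, given an $s$-simply connected $\mathcal G$ with $\dim\mathcal G=2\dim M+1$: a precontact groupoid structure on $\mathcal G$ (a line bundle $L$ with $\mathcal G$-representation plus a multiplicative $\theta\in\Omega^1(\mathcal G,t^\ast L)$ satisfying (\ref{eq:non_deg})) goes, via Theorem \ref{theor:CSS} and Proposition \ref{prop:above}, to an $L$-valued $1$-Spencer operator whose $(\nabla,\mathscr D)$ is injective, and then via Corollary \ref{cor} to a Dirac-Jacobi Lie algebroid structure $A\simeq\operatorname{im}(\nabla,\mathscr D)=\mathfrak L$ on $A$; and conversely. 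That each of these steps is a bijection, together with the fact that the intermediate objects ($1$-Spencer operators with injective $(\nabla,\mathscr D)$) are the common image of both ends, gives the claimed one-to-one correspondence.

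The one genuine subtlety, and the step I expect to need the most care, is the bookkeeping of \emph{data} rather than \emph{structure}: a precontact groupoid structure on $\mathcal G$ is not just $\theta$ but the pair (representation of $\mathcal G$ on $L$, multiplicative $\theta$), and correspondingly a Dirac-Jacobi Lie algebroid structure on $A$ carries the datum of the line bundle $L$ and the $A$-representation $\nabla$ implicit in $\mathfrak L\subset\D L$. I would make explicit that the $\mathcal G$-representation on $L$ differentiates to the $A$-representation $\nabla$ entering $(\nabla,\mathscr D)$ (this is (\ref{eq:nabla_0})--(\ref{eq:nabla}) and already used inside the proof of Proposition \ref{prop:above}), and that since $\mathcal G$ is $s$-simply connected an $A$-representation on $L$ integrates uniquely to a $\mathcal G$-representation; this is what lets the "line bundle $+$ representation" part of the data also be transported bijectively. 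Once that is spelled out, no further computation is needed: the proof is literally "apply Theorem \ref{theor:CSS}, then Proposition \ref{prop:above}, then Corollary \ref{cor}, and note the rank matches."

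\begin{proof}
Let $A$ be the Lie algebroid of $\mathcal G$. From $\dim\mathcal G=2\dim M+1$ we get $\operatorname{rk}A=\dim M+1=\tfrac12\operatorname{rk}\D L$ for any line bundle $L\to M$.

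Fix an $s$-simply connected $\mathcal G$. A precontact groupoid structure on $\mathcal G$ consists of a line bundle $L\to M$ carrying a representation of $\mathcal G$ together with a multiplicative $\theta\in\Omega^1(\mathcal G,t^\ast L)$ satisfying (\ref{eq:non_deg}). Since $\mathcal G$ is $s$-simply connected, representations of $\mathcal G$ on $L$ correspond bijectively to representations of $A$ on $L$, the correspondence being given on the infinitesimal side by (\ref{eq:nabla_0}). Having fixed such an $L$ and the induced $A$-representation $\nabla$, Theorem \ref{theor:CSS} gives a one-to-one correspondence between multiplicative forms $\theta\in\Omega^1(\mathcal G,t^\ast L)$ and $L$-valued $1$-Spencer operators $(\Dd,l)$ on $A$ (with respect to $\nabla$), via (\ref{eq:D}). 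By the Proposition preceding Corollary \ref{cor}, such $1$-Spencer operators are the same thing as vector bundle morphisms $(\nabla,\mathscr D):A\to\D L$ satisfying (\ref{eq:anchor_DD}) and (\ref{eq:bracket_DD}), with $\operatorname{im}(\nabla,\mathscr D)$ automatically isotropic. By Proposition \ref{prop:above}, the morphism $(\nabla,\mathscr D)$ associated with $\theta$ is injective if and only if $\theta$ satisfies the non-degeneracy condition (\ref{eq:non_deg}). Finally, since $\operatorname{rk}A=\dim M+1$, Corollary \ref{cor} shows that when $(\nabla,\mathscr D)$ is injective its image $\mathfrak L:=\operatorname{im}(\nabla,\mathscr D)$ is a maximal isotropic, involutive subbundle of $\D L$, i.e.\ a Dirac-Jacobi structure on $L$, and $(\nabla,\mathscr D):A\to\mathfrak L$ is a Lie algebroid isomorphism; that is, $A$ becomes a Dirac-Jacobi Lie algebroid. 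Conversely, Corollary \ref{cor} shows that every Dirac-Jacobi Lie algebroid structure on $A$ (an isomorphism $A\simeq\mathfrak L$ with $\mathfrak L\subset\D L$ a Dirac-Jacobi structure) arises in this way from an $L$-valued $1$-Spencer operator whose associated $(\nabla,\mathscr D)$ is injective.

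Composing these bijections: a precontact groupoid structure on $\mathcal G$ determines (an $A$-representation on $L$ and) an $L$-valued $1$-Spencer operator on $A$ with injective $(\nabla,\mathscr D)$, hence a Dirac-Jacobi Lie algebroid structure on $A$; and conversely, using $s$-simple connectedness to integrate the $A$-representation and Theorem \ref{theor:CSS} to integrate the Spencer operator to a multiplicative $\theta$, which satisfies (\ref{eq:non_deg}) by Proposition \ref{prop:above}. The two constructions are mutually inverse, being compositions of mutually inverse correspondences, so they establish the asserted one-to-one correspondence between precontact groupoid structures on $\mathcal G$ and Dirac-Jacobi Lie algebroid structures on $A$.
\end{proof}
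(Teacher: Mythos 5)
Your proof is correct and follows exactly the route the paper intends: the paper's own ``proof'' of Theorem \ref{theor:int} is the single phrase ``Collecting the above results we get the following,'' i.e.\ precisely the composition of Theorem \ref{theor:CSS}, the equivalence between $1$-Spencer operators and morphisms $(\nabla,\mathscr D):A\to\D L$, Proposition \ref{prop:above}, and Corollary \ref{cor} that you carry out, with the rank bookkeeping $\rk A=\dim M+1$. Your explicit remark that $s$-simple connectedness is also what allows the $A$-representation on $L$ to integrate to a $\mathcal G$-representation (so that the line-bundle-plus-representation part of the data is transported bijectively as well) is a point the paper leaves implicit, and is worth spelling out.
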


\appendix

\section{Dirac-ization trick}\label{app:Dirac_trick}

Jacobi bundles are equivalent to \emph{homogeneous Poisson manifolds} of a special kind, namely principal $\R^\times$-bundles $\widetilde M \to M$ equipped with a Poisson structure $\Pi$ on the total space $\widetilde M$ such that $(\Pi, \mathcal E)$ is a homogeneous Poisson structure, i.e.~$\Ll_{\mathcal E} \Pi = - \Pi = h_{-1}^\ast \Pi$. Here, $h_{-1} : \widetilde M \to \widetilde M$ is the multiplication by $-1 \in \R^\times$, and $\Ee$ is the \emph{Euler vector field} on $\widetilde M$, i.e.~the fundamental vector field corresponding to the canonical generator $1$ in the Lie algebra $\R$ of the structure group $\R^\times$. Often, properties of Jacobi bundles can be proved using their interpretation as homogeneous Poisson structures. This is the so called \emph{Poissonization trick} exploited, for instance, by Crainic and Zhu in \cite{CZ2007} to study integrability of Jacobi brackets. At the same time, precontact manifolds can be understood as \emph{homogeneous presymplectic manifolds} (of a special kind), i.e.~principal $\R^\times$-bundles $\widetilde M \to M$ equipped with a presymplectic structure $\widetilde \omega$ on $\widetilde M$ such that $\Ll_\Ee \widetilde \omega = \widetilde \omega = - h_{-1}^\ast \omega$ (see, e.g., \cite{V2015b}). Actually, this last construction inspired the interpretation of precontact distributions presented in Section \ref{sec:cont_lcs_geom}.

Similarly, Dirac-Jacobi bundles can be understood as \emph{homogeneous Dirac manifolds}. This was already proved by Iglesias-Ponte and Marrero for $\mathcal E^1 (M)$-Dirac structures, i.e.~Dirac-Jacobi structures on trivial line bundles \cite[Section 5]{IM2002}. In this appendix, we consider the general case. Namely, we define homogeneous Dirac manifolds and prove their equivalence with Dirac-Jacobi line bundles. Let us start with the remark that a line bundle $L \to M$ can be understood as a principal $\R^\times$-bundle. Namely, take the \emph{slit} dual bundle $\widetilde M := L^\ast \smallsetminus \mathbf 0$, where $\mathbf 0$ is the (image of the) zero section of $L$. There is an obvious (fiber-wise) action $h : \R^\times \times \widetilde M \to \widetilde M$, $(t, \upsilon) \mapsto h_t \upsilon$, of $\R^\times$ on $\widetilde M$ which makes it a principal $\R^\times$-bundle over $M$. On the other hand, every principal $\R^\times$-bundle is of this form. Now, denote by $\pi : \widetilde M \to M$ the projection, and by $\Ee \in \mathfrak X (\widetilde M)$ the Euler-vector field on $\widetilde M$. Sections of $L$ identify with (degree one) \emph{homogeneous functions} on $\widetilde M$, i.e.~those functions $f$ such that $\Ee (f) = f = - h_{-1}^\ast f$. We denote by $\widetilde \lambda$ the homogeneous function corresponding to section $\lambda \in \Gamma (L)$.

\begin{proposition}\label{prop:homog} \quad
\begin{enumerate}
\item There is an embedding of $C^\infty (M)$-modules $\iota: \Der L \INTO \mathfrak X (\widetilde M)$, $\Delta \mapsto \widetilde \Delta$, with $\widetilde \Delta$ uniquely determined by $\widetilde \Delta (\widetilde \lambda) = \widetilde{\Delta (\lambda)}$. Moreover $\widetilde{[\Delta, \square]} = [\widetilde \Delta, \widetilde \square]$ for all $\Delta, \square \in \Der L$.
\item The image of $\iota$ consists of \emph{degree zero homogeneous vector fields}, i.e., vector fields $X$ on $\widetilde M$ such that $[\Ee, X] = 0$, and $h_{-1}^\ast X = X$.
\item Geometrically, there are canonical isomorphisms $\pi^\ast D L \!\simeq\! \pi^\ast J_1 L \!\simeq\! T \widetilde M$ of vector bundles over $\widetilde M$, and the embedding $\Gamma (D L) \!\INTO\! \Gamma (\pi^\ast D L) \!\simeq\! \Gamma (T \widetilde M)$ agrees with $\iota$.
\end{enumerate}
\end{proposition}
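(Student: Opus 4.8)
The plan is to prove the three claims of Proposition \ref{prop:homog} more or less in the order stated, since each builds on the previous one, and to organize everything around the identification of $\Gamma(L)$ with degree-one homogeneous functions on $\widetilde M = L^\ast \smallsetminus \mathbf 0$.

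First I would establish (1). Given $\Delta \in \Der L$ with symbol $X = \sigma(\Delta) \in \mathfrak X(M)$, I want a vector field $\widetilde\Delta$ on $\widetilde M$ with $\widetilde\Delta(\widetilde\lambda) = \widetilde{\Delta(\lambda)}$ for all $\lambda \in \Gamma(L)$ and $\widetilde\Delta(\pi^\ast g) = \pi^\ast(X(g))$ for all $g \in C^\infty(M)$. The point is that homogeneous functions of degree one, together with pullbacks of functions on $M$, generate $C^\infty(\widetilde M)$ locally (choosing a local frame $\mu$ of $L$ gives a fiber coordinate $t$ with $\widetilde\mu = t$, and then $(\pi^\ast z^i, t)$ are coordinates on $\widetilde M$), so such a derivation of $C^\infty(\widetilde M)$, if it exists, is unique; existence follows by checking in a local frame that the Leibniz rule for $\Delta$, i.e. $\Delta(g\mu) = X(g)\mu + g\,\Delta(\mu)$, makes the two prescriptions compatible on products. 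Then $C^\infty(M)$-linearity of $\iota$ is immediate from uniqueness, injectivity follows because $\widetilde\Delta = 0$ forces $\Delta(\lambda) = 0$ for all $\lambda$, and $\widetilde{[\Delta,\square]} = [\widetilde\Delta,\widetilde\square]$ follows by checking that $[\widetilde\Delta,\widetilde\square]$ satisfies the characterizing property of $\widetilde{[\Delta,\square]}$, using $\widetilde\Delta\widetilde\square(\widetilde\lambda) = \widetilde\Delta(\widetilde{\square(\lambda)}) = \widetilde{\Delta\square(\lambda)}$.

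Next, (2): I would show $\operatorname{im}\iota = \{X \in \mathfrak X(\widetilde M) : [\mathcal E, X] = 0\}$. For the inclusion $\subseteq$, note $\mathcal E(\widetilde\lambda) = \widetilde\lambda$ and $\mathcal E(\pi^\ast g) = 0$, so $[\mathcal E,\widetilde\Delta]$ annihilates both $\pi^\ast g$ and $\widetilde\lambda$ (for the latter, $[\mathcal E,\widetilde\Delta](\widetilde\lambda) = \mathcal E(\widetilde{\Delta\lambda}) - \widetilde\Delta(\widetilde\lambda) = \widetilde{\Delta\lambda} - \widetilde{\Delta\lambda} = 0$), hence vanishes. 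For $\supseteq$, given $X$ with $[\mathcal E,X]=0$, the degree-zero condition means $X$ preserves the bigrading of $C^\infty(\widetilde M)$ and in particular sends degree-one homogeneous functions to degree-one homogeneous functions and pullbacks to pullbacks; so $\lambda \mapsto (\text{section corresponding to } X\widetilde\lambda)$ is $\R$-linear, and the Leibniz rule $X(\widetilde\lambda\cdot\pi^\ast g) = \widetilde{\lambda}\,\pi^\ast(Yg) + \pi^\ast g\cdot X\widetilde\lambda$ (with $Y$ the projected vector field $\pi_\ast X$, well-defined since $X$ is degree zero hence $\pi$-projectable) shows this map is a derivation of $L$ with symbol $Y$, whose image under $\iota$ is $X$.

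Finally (3): the canonical isomorphism $\pi^\ast D L \simeq T\widetilde M$ comes from the standard fact that for a principal $\R^\times$-bundle $\pi: \widetilde M \to M$ with Atiyah algebroid $D L$ one has $\Gamma(D L) = \mathfrak X(\widetilde M)^{\R^\times}$ (the $\R^\times$-invariant vector fields), and invariant vector fields, being in particular $\pi$-related to vector fields on $M$ fiberwise, trivialize the pullback; concretely, a point of $\pi^\ast D L$ over $\xi \in \widetilde M$ is a pair $(\xi, \Delta)$ with $\Delta \in D_{\pi(\xi)}L$, and one sends it to $\widetilde\Delta_\xi \in T_\xi\widetilde M$ — this is a fiberwise iso by dimension count ($\rk D L = \dim M + 1 = \dim\widetilde M$) plus injectivity from (1). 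The isomorphism $\pi^\ast J_1 L \simeq \pi^\ast D L$ is just $J_1 L \otimes L = D L$ combined with the canonical trivialization $\pi^\ast L \simeq \R_{\widetilde M}$ of the pullback of $L$ to $\widetilde M$ (tautological: a point of $L^\ast \smallsetminus \mathbf 0$ is by definition a nonzero functional on the relevant fiber of $L$), and that the embedding $\Gamma(D L) \hookrightarrow \Gamma(\pi^\ast D L) \simeq \Gamma(T\widetilde M)$ agrees with $\iota$ is immediate from how the first isomorphism was built. I expect the only mild subtlety — the "hard part", such as it is — to be bookkeeping the tautological trivialization $\pi^\ast L \simeq \R_{\widetilde M}$ and checking it intertwines the algebraic identity $D L = J_1 L \otimes L$ with the geometric statement $\pi^\ast D L \simeq \pi^\ast J_1 L \simeq T\widetilde M$ in a way compatible with $\iota$; everything else is routine local-coordinate verification.
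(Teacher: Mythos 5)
Your proposal is correct and follows essentially the same route as the paper's proof: both rest on the observation that a vector field on $\widetilde M$ is determined by its action on degree-one homogeneous functions (giving uniqueness in (1) and the two inclusions in (2)), and both realize the isomorphisms of point (3) by the same pointwise formulas — your use of the tautological trivialization of $\pi^\ast L$ is exactly the paper's map $(\upsilon,\Delta)\mapsto(\upsilon,\upsilon\circ\Delta)$. The only difference is that you spell out the local-coordinate and Leibniz-rule verifications that the paper leaves implicit.
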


\begin{proof}
For point (1), it is enough to notice that a vector field on $\widetilde M$ is completely determined by its action on homogeneous functions. The same argument shows that every vector field in the image of $\iota$ is homogeneous of degree zero. To see that every degree zero homogeneous vector field $X$ is in the image of $\iota$, and complete the proof of point (2), notice that $X$ preserves homogeneous functions. For point (3), define an isomorphism $\pi^\ast J_1 L \simeq T \widetilde M$ by mapping $(\upsilon, F) \in \pi^\ast J_1 L$, with $\upsilon \in L_x^\ast \smallsetminus 0$, and $F \in (J_1)_x L$, $x \in M$, to the unique tangent vector $X \in T_\upsilon \widetilde M$ mapping a homogeneous function $\widetilde \lambda$ to $X (\widetilde \lambda) := F (\lambda)$, where $\lambda \in \Gamma (L)$. The last equality can be read from the right to the left to define the inverse isomorphism. Finally, define an isomorphism $\pi^\ast D L \simeq \pi^\ast J_1 L$ by mapping $(\upsilon, \Delta) \in \pi^\ast D L$, with $\upsilon$ as above, and $\Delta \in D_x L$, to $(\upsilon, \upsilon \circ \Delta) \in (J_1)_x L$. The last claim in the statement immediately follows from the definition of isomorphism $\pi^\ast D L \simeq T \widetilde M$.
\end{proof}

\begin{remark}
According to the proof of Proposition \ref{prop:homog}, a tangent vector $X \in T_\upsilon \widetilde M$ identifies with the point $(\upsilon, \Delta) \in \pi^\ast D L$, where $\Delta (\lambda) := X (\widetilde \lambda) \upsilon^\ast$, and $\upsilon^\ast \in L_x$ is the unique point such that $\langle \upsilon , \upsilon^\ast \rangle = 1$.
\end{remark}

\begin{proposition}\label{prop:homog*} \quad
\begin{enumerate}
\item There is an embedding of $C^\infty (M)$-modules $\iota^\vee: \Gamma (J^1 L) \INTO \Omega^1 (\widetilde M)$, $\psi \mapsto \widetilde \psi$, with $\widetilde \psi$ uniquely determined by $\widetilde \psi (\widetilde \Delta) = \widetilde{\langle \Delta , \psi \rangle }$. Moreover, $\widetilde{j^1 \lambda} = d \widetilde \lambda$, for all $\lambda \in \Gamma (L)$.
\item The image of $\iota^\vee$ consists of \emph{degree one homogeneous $1$-forms}, i.e., $1$-forms $\sigma$ on $\widetilde M$ such that $\Ll_\Ee \sigma = \sigma = - h_{-1}^\ast \sigma$.
\item Geometrically, there are canonical isomorphisms $\pi^\ast (D L)^\ast \simeq \pi^\ast J^1 L \simeq T^\ast \widetilde M$ of vector bundles over $\widetilde M$, and the embedding $\Gamma (J^1 L) \INTO \Gamma (\pi^\ast J^1 L) \simeq \Gamma (T^\ast \widetilde M)$ agrees with $\iota^\vee$.
\end{enumerate}
\end{proposition}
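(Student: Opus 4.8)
The plan is to prove Proposition \ref{prop:homog*} in close parallel to Proposition \ref{prop:homog}, exploiting that $1$-forms on $\widetilde M$ are detected by their pairing with the degree-zero homogeneous vector fields produced there, and that the latter span $T\widetilde M$ at every point (since $\pi^\ast DL \simeq T\widetilde M$). First I would establish point (1): given $\psi \in \Gamma (J^1 L)$, define a candidate $1$-form $\widetilde\psi$ on $\widetilde M$ by prescribing its value on each $\widetilde\Delta$, $\Delta \in \Der L$, to be the homogeneous function $\widetilde{\langle \Delta, \psi\rangle}$. Since the $\widetilde\Delta$'s form a $C^\infty(\widetilde M)$-spanning set of vector fields (again by Proposition \ref{prop:homog}(3)), and since $\langle f\Delta, \psi\rangle = f\langle\Delta,\psi\rangle$ for $f \in C^\infty(M)$ while scaling $\widetilde\Delta$ by a (not necessarily homogeneous) function on $\widetilde M$ is handled by the usual $C^\infty$-linearity check, this determines a well-defined $1$-form; uniqueness is immediate because a $1$-form vanishing on all $\widetilde\Delta$ vanishes identically. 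The relation $\widetilde{j^1\lambda} = d\widetilde\lambda$ then follows by testing against $\widetilde\Delta$: $\widetilde{j^1\lambda}(\widetilde\Delta) = \widetilde{\langle\Delta, j^1\lambda\rangle} = \widetilde{\Delta(\lambda)} = \widetilde\Delta(\widetilde\lambda) = (d\widetilde\lambda)(\widetilde\Delta)$, using the defining property of $\iota$ from Proposition \ref{prop:homog}(1) together with $\langle\Delta,j^1\lambda\rangle = \Delta(\lambda)$.

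Next I would prove point (2). For homogeneity, compute $\Ll_\Ee\widetilde\psi$ by pairing with a degree-zero homogeneous vector field $\widetilde\Delta$ (which satisfies $[\Ee,\widetilde\Delta]=0$ by Proposition \ref{prop:homog}(2)): $(\Ll_\Ee\widetilde\psi)(\widetilde\Delta) = \Ee(\widetilde\psi(\widetilde\Delta)) - \widetilde\psi([\Ee,\widetilde\Delta]) = \Ee\big(\widetilde{\langle\Delta,\psi\rangle}\big) = \widetilde{\langle\Delta,\psi\rangle} = \widetilde\psi(\widetilde\Delta)$, where the third equality is the defining property of a degree-one homogeneous function. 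Since the $\widetilde\Delta$'s span pointwise, $\Ll_\Ee\widetilde\psi = \widetilde\psi$. Conversely, a degree-one homogeneous $1$-form $\varsigma$ pairs with each $\widetilde\Delta$ to give $\varsigma(\widetilde\Delta)$, which one checks is degree-one homogeneous (using $\Ll_\Ee\varsigma = \varsigma$ and $[\Ee,\widetilde\Delta]=0$), hence equals $\widetilde{g_\Delta}$ for a unique $g_\Delta \in \Gamma(L)$; the assignment $\Delta \mapsto g_\Delta$ is readily seen to be $C^\infty(M)$-linear and a first-order differential operator $\Der L \to \Gamma(L)$, i.e.\ an element $\psi \in \Gamma((\Der L)^\ast\otimes L) = \Gamma(J^1 L)$ (here one uses that $(DL)^\ast = J^1 L\otimes L^\ast$, so $(DL)^\ast\otimes L = J^1 L$), and $\widetilde\psi = \varsigma$ by construction.

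Finally, point (3) is the geometric repackaging. I would define $\pi^\ast J^1 L \simeq T^\ast\widetilde M$ dually to the isomorphism $\pi^\ast J_1 L \simeq T\widetilde M$ already constructed in Proposition \ref{prop:homog}(3), i.e.\ by sending $(\upsilon,\psi) \in \pi^\ast J^1 L$, with $\upsilon \in L_x^\ast\smallsetminus 0$ and $\psi \in J^1_x L$, to the covector in $T^\ast_\upsilon\widetilde M$ that pairs with $X \in T_\upsilon\widetilde M \simeq \{\upsilon\}\times (J_1)_x L$, say $X \leftrightarrow (\upsilon, F)$, to give $\langle F, \psi\rangle$ evaluated via the natural pairing $J_1 L \otimes J^1 L \to \R$; alternatively, factor it as $\pi^\ast J^1 L \simeq \pi^\ast (J_1 L)^\ast \simeq (T\widetilde M)^\ast = T^\ast\widetilde M$, transposing the isomorphisms of Proposition \ref{prop:homog}(3). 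The isomorphism $\pi^\ast(DL)^\ast \simeq \pi^\ast J^1 L$ is just $(DL)^\ast = J^1 L\otimes L^\ast$ pulled back, made explicit by $(\upsilon,\phi)\mapsto (\upsilon,\phi\otimes\upsilon^\ast)$ where $\langle\upsilon,\upsilon^\ast\rangle = 1$. That the composite $\Gamma(J^1 L)\INTO\Gamma(\pi^\ast J^1 L)\simeq\Gamma(T^\ast\widetilde M)$ is $\iota^\vee$ is then checked by evaluating on a homogeneous vector field $\widetilde\Delta$, reducing to the defining identity $\widetilde\psi(\widetilde\Delta) = \widetilde{\langle\Delta,\psi\rangle}$. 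I expect the only mildly delicate point to be the converse half of (2) — verifying that $\Delta\mapsto g_\Delta$ genuinely lands in $J^1 L$ (first-order, not arbitrary $\R$-linear) — which follows because $\varsigma$ is $C^\infty(\widetilde M)$-linear in its argument and the symbol of $\Delta\mapsto\langle\Delta,\psi\rangle$ is controlled by the symbol map $\sigma: DL\to TM$; everything else is a routine transcription of the proof of Proposition \ref{prop:homog}.
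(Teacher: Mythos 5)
Your proposal is correct and follows essentially the same route as the paper: the paper's (much terser) proof likewise rests on the facts that a $1$-form on $\widetilde M$ is determined by its contractions with the degree-zero homogeneous vector fields $\widetilde\Delta$, that the chain $\widetilde{j^1\lambda}(\widetilde\Delta)=\widetilde{\langle\Delta,j^1\lambda\rangle}=\widetilde{\Delta(\lambda)}=\widetilde\Delta(\widetilde\lambda)=(d\widetilde\lambda)(\widetilde\Delta)$ gives $\widetilde{j^1\lambda}=d\widetilde\lambda$, that a degree-one homogeneous $1$-form sends degree-zero homogeneous vector fields to degree-one homogeneous functions (whence the converse in (2)), and that (3) follows from Proposition \ref{prop:homog}. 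The only cosmetic remark is that in your converse step the relevant property of $\Delta\mapsto g_\Delta$ is $C^\infty(M)$-linearity (yielding a bundle map $DL\to L$, i.e.\ a section of $(DL)^\ast\otimes L=J^1L$), which you do identify correctly despite the slightly misleading ``first-order differential operator'' phrasing.
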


\begin{proof}
For point (1), it is enough to notice that a differential $1$-form on $\widetilde M$ is completely determined by its contraction with degree zero homogeneous vector fields. In particular, it follows from $\widetilde{j^1 \lambda} (\widetilde \Delta) = \widetilde{\langle \Delta , j^1 \lambda \rangle} = \widetilde{\Delta (\lambda)} = \widetilde {\Delta} (\widetilde \lambda) = (d\widetilde \lambda)(\widetilde \Delta)$ that $\widetilde{j^1 \lambda} = d \widetilde \lambda$. The same argument shows that every differential $1$-form in the image of $\iota^\vee$ is homogeneous of degree one. To see that every degree one homogeneous differential $1$-form $\sigma$ is in the image of $\iota^\vee$, and complete the proof of point (2), notice that $\sigma$ maps degree zero homogeneous vector fields to degree one homogeneous functions. Point (3) follows from Proposition \ref{prop:homog}.
\end{proof}

Put $\mathbb T \widetilde M := T\widetilde M \oplus T^\ast \widetilde M$.

\begin{theorem}\label{theor:homog}\quad
\begin{enumerate}
\item There is an embedding of $C^\infty (M)$-modules $\iota: \Gamma (\D L) \INTO \Gamma (\mathbb T \widetilde M)$,\linebreak $(\Delta, \psi) \mapsto (\widetilde \Delta, \widetilde \psi)$. Moreover $\iota$ intertwines the bi-linear pairing $\bla -, - \bra : \Gamma (\D L) \times \Gamma (\D L) \to \Gamma (L)$ and the symmetric bi-linear pairing $\bla -, - \bra_{\widetilde M} : \Gamma (\mathbb T \widetilde M) \times \Gamma (\mathbb T \widetilde M) \to C^\infty (\widetilde M)$ in the sense that
\[
\bla \iota (\alpha), \iota (\beta) \bra_{\widetilde M} = \widetilde{\bla \alpha, \beta \bra}, \quad \alpha, \beta \in \Gamma (\D L).
\]
Embedding $\iota$ does also intertwine bracket $\blq -, - \brq$ on $\Gamma (\D L)$ and Dorfman bracket on $\Gamma (\mathbb T \widetilde M)$.
\item The image of $\iota$ consists of pairs $(X, \sigma)$, where $X$ is a degree zero homogeneous vector field and $\sigma$ is a degree one homogeneous differential $1$-form.
\item Geometrically, there is a canonical isomorphism $\pi^\ast \D L \simeq  \mathbb T \widetilde M$ of vector bundles over $\widetilde M$. Embedding $\Gamma (\D L) \INTO \Gamma (\pi^\ast \D L) \simeq \Gamma (\mathbb T\widetilde M)$ agrees with~$\iota$.
\end{enumerate}
\end{theorem}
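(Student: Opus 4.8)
The plan is to assemble $\iota$ from the two component embeddings already constructed. Decompose $\Gamma(\D L) = \Der L \oplus \Gamma(J^1 L)$ and $\Gamma(\mathbb T\widetilde M) = \mathfrak X(\widetilde M) \oplus \Omega^1(\widetilde M)$, and set $\iota(\Delta,\psi) := (\widetilde\Delta, \widetilde\psi)$ with $\widetilde\Delta$ the degree zero homogeneous vector field of Proposition \ref{prop:homog}(1) and $\widetilde\psi$ the degree one homogeneous $1$-form of Proposition \ref{prop:homog*}(1). This is $C^\infty(M)$-linear and injective since each summand map is, which gives the first assertion of part (1). Parts (2) and (3) are then essentially free: since the splitting $\mathbb T\widetilde M = T\widetilde M \oplus T^\ast\widetilde M$ matches $\D L = D L \oplus J^1 L$ and $\iota$ respects this decomposition, the image of $\iota$ is the product of the images of the two component maps, which by Propositions \ref{prop:homog}(2) and \ref{prop:homog*}(2) are exactly the degree zero homogeneous vector fields and the degree one homogeneous $1$-forms; and the isomorphism $\pi^\ast\D L \simeq \mathbb T\widetilde M$ of part (3) is the direct sum of $\pi^\ast D L \simeq T\widetilde M$ and $\pi^\ast J^1 L \simeq T^\ast\widetilde M$ from Propositions \ref{prop:homog}(3) and \ref{prop:homog*}(3), compatibility with $\iota$ being inherited from those statements.

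Next I would verify the pairing identity in (1) by direct computation: for $\alpha = (\Delta,\varphi)$ and $\beta = (\square,\psi)$,
\[
\bla \iota(\alpha),\iota(\beta)\bra_{\widetilde M} = \widetilde\psi(\widetilde\Delta) + \widetilde\varphi(\widetilde\square) = \widetilde{\langle\Delta,\psi\rangle} + \widetilde{\langle\square,\varphi\rangle} = \widetilde{\langle\Delta,\psi\rangle + \langle\square,\varphi\rangle} = \widetilde{\bla\alpha,\beta\bra},
\]
where the second equality is the defining property of $\widetilde\psi$ and $\widetilde\varphi$ (Proposition \ref{prop:homog*}(1)) and the third is additivity of $\lambda\mapsto\widetilde\lambda$.

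The hard part will be the bracket statement in (1). Writing the Dorfman bracket on $\Gamma(\mathbb T\widetilde M)$ as $\blq(X,\xi),(Y,\eta)\brq_{\widetilde M} = ([X,Y],\ \Ll_X\eta - i_Y d\xi)$ and recalling (\ref{eq:Dorf_brack}), and using $\widetilde{[\Delta,\square]} = [\widetilde\Delta,\widetilde\square]$ from Proposition \ref{prop:homog}(1), what remains is the pair of intertwining identities $\widetilde{\Ll_\Delta\psi} = \Ll_{\widetilde\Delta}\widetilde\psi$ and $\widetilde{i_\square d_D\varphi} = i_{\widetilde\square}\,d\widetilde\varphi$. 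Since a $1$-form on $\widetilde M$ is determined by its contractions with the degree zero homogeneous vector fields $\widetilde{\Delta'}$, it suffices to check each identity after evaluating on an arbitrary such $\widetilde{\Delta'}$, and then to use the tautological-representation formula for the der-complex differential, $(d_D\varphi)(\square,\Delta') = \square\langle\Delta',\varphi\rangle - \Delta'\langle\square,\varphi\rangle - \langle[\square,\Delta'],\varphi\rangle$, together with $\widetilde{j^1\lambda} = d\widetilde\lambda$. Concretely, $(i_\square d_D\varphi)(\Delta') = (d_D\varphi)(\square,\Delta')$ lies in $\Gamma(L)$, and applying $\widetilde{(\cdot)}$ to it and using $\widetilde\Delta(\widetilde\lambda) = \widetilde{\Delta(\lambda)}$, $\widetilde\varphi(\widetilde\Delta) = \widetilde{\langle\Delta,\varphi\rangle}$ and $\widetilde{[\square,\Delta']} = [\widetilde\square,\widetilde{\Delta'}]$ reproduces exactly $(d\widetilde\varphi)(\widetilde\square,\widetilde{\Delta'})$; the Lie-derivative identity then follows from $\Ll = i\,d_D + d_D\,i$ once one notes $d_D i_\Delta\psi = j^1\langle\Delta,\psi\rangle$, whence $\widetilde{d_D i_\Delta\psi} = d(\widetilde\psi(\widetilde\Delta))$. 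I expect the only real friction to be bookkeeping: the operation $\widetilde{(\cdot)}$ is defined only on sections of $L$, $D L$ and $J^1 L$, so the argument must be organised in terms of the scalar quantities $(d_D\varphi)(\square,\Delta')\in\Gamma(L)$ rather than treating $\widetilde{d_D\varphi}$ as a genuine $2$-form on $\widetilde M$ — a matter of presentation, not a substantive obstacle. Once these identities are in place, $\iota\blq\alpha,\beta\brq = \blq\iota(\alpha),\iota(\beta)\brq_{\widetilde M}$ follows componentwise, completing the proof.
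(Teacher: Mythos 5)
Your proposal is correct and follows essentially the same route as the paper: parts (2), (3) and the pairing identity are read off from Propositions \ref{prop:homog} and \ref{prop:homog*}, and the Dorfman bracket reduces to the identities $\Ll_{\widetilde \Delta}\widetilde\psi = \widetilde{\Ll_\Delta\psi}$ and $i_{\widetilde\square}\,d\widetilde\varphi = \widetilde{i_\square d_D\varphi}$, verified by evaluating on homogeneous vector fields. The paper merely packages your ``bookkeeping'' concern more cleanly, by observing that the tilde operation extends to an injective cochain map $(\Omega^\bullet_L, d_D) \INTO (\Omega^\bullet(\widetilde M), d)$, so that $\widetilde{d_D\varphi}$ is a genuine $2$-form on $\widetilde M$; the underlying computation is the one you perform.
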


\begin{proof}
The statement immediately follows from Propositions \ref{prop:homog} and \ref{prop:homog*}. The only part deserving some more comments is that about the Dorfman bracket in point (1). The latter follows from the identity
\begin{equation}\label{eq:Lie}
\Ll_{\widetilde \Delta} \widetilde \psi = \widetilde{\Ll_{\Delta} \psi},
\end{equation}
for all $\Delta \in \Der L$, and $\psi \in \Gamma (J^1 L)$. In order to prove Equation (\ref{eq:Lie}), it suffices to notice that the embeddings $\Gamma (L) \INTO C^\infty (\widetilde M)$, and $\Gamma (J^1 L) \INTO \Omega^1 (\widetilde M)$, uniquely extend to an injective cochain map $(\Omega^\bullet_L, d_{D}) \INTO (\Omega^\bullet (\widetilde M), d)$, $\omega \mapsto \widetilde \omega$ such that
\[
\widetilde \omega (\widetilde \Delta_1, \ldots, \widetilde \Delta_k) = \widetilde{\omega (\Delta_1, \ldots, \Delta_k)},
\]
for all $\Delta_1, \ldots, \Delta_k \in \Der L$, and $\omega \in \Omega_L^k$.
\end{proof}

Now, let $L \to M$ and $\widetilde M$ be as above. Denote again by $\pi : \widetilde M \to M$ the projection.

\begin{proposition}
Let $(L, \mathfrak L)$ be a Dirac-Jacobi bundle over $M$. The subbundle $\pi^\ast \mathfrak L \subset \pi^\ast \D L \simeq \mathbb T \widetilde M$ is a Dirac structure on $\widetilde M$.
\end{proposition}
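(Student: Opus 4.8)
The plan is to verify that $\pi^\ast \mathfrak L$ is a Dirac structure on $\widetilde M$ by checking the three defining properties of a Dirac structure in $\mathbb T\widetilde M$: that it is a vector subbundle of the right rank, that it is maximal isotropic with respect to the canonical symmetric pairing $\bla -,-\bra_{\widetilde M}$, and that its sections are closed under the Dorfman bracket. The main tool throughout is Theorem \ref{theor:homog}, which identifies $\pi^\ast \D L$ with $\mathbb T\widetilde M$ as vector bundles over $\widetilde M$ and shows that the embedding $\iota : \Gamma(\D L)\INTO \Gamma(\mathbb T\widetilde M)$ intertwines $\bla -,-\bra$ with $\bla -,-\bra_{\widetilde M}$ and $\blq -,-\brq$ with the Dorfman bracket.

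First I would observe that $\pi^\ast\mathfrak L$ is automatically a vector subbundle of $\pi^\ast \D L \simeq \mathbb T\widetilde M$, being the pullback of a subbundle along the submersion $\pi$, and that $\rk \pi^\ast \mathfrak L = \rk \mathfrak L = \dim M + 1 = \dim \widetilde M$, which is exactly half of $\rk \mathbb T\widetilde M = 2\dim\widetilde M$. Next, for the isotropy: a section of $\pi^\ast\mathfrak L$ is locally a $C^\infty(\widetilde M)$-linear combination of sections of the form $\iota(\alpha)$ with $\alpha \in \Gamma(\mathfrak L)$ (since $\Gamma(\mathfrak L)$ generates $\pi^\ast\mathfrak L$ fiberwise over $\widetilde M$). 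Because $\mathfrak L$ is isotropic, $\bla\alpha,\beta\bra = 0$ for $\alpha,\beta\in\Gamma(\mathfrak L)$, hence $\bla \iota(\alpha),\iota(\beta)\bra_{\widetilde M} = \widetilde{\bla\alpha,\beta\bra} = 0$ by Theorem \ref{theor:homog}(1); $C^\infty(\widetilde M)$-bilinearity of $\bla -,-\bra_{\widetilde M}$ then gives isotropy of all of $\pi^\ast\mathfrak L$, and since it has half the rank of $\mathbb T\widetilde M$ it is maximal isotropic (here the pairing $\bla -,-\bra_{\widetilde M}$ has split signature, so isotropic plus half-dimensional implies maximal isotropic, exactly as in Remark \ref{rem:char}).

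For involutivity under the Dorfman bracket, I would again work with local generating sections $\iota(\alpha)$, $\alpha\in\Gamma(\mathfrak L)$. By Theorem \ref{theor:homog}(1), $\blq \iota(\alpha),\iota(\beta)\brq_{\widetilde M} = \iota(\blq\alpha,\beta\brq)$, and since $\mathfrak L$ is involutive, $\blq\alpha,\beta\brq \in \Gamma(\mathfrak L)$, so this lies in the image of $\iota$, hence in $\Gamma(\pi^\ast\mathfrak L)$. The remaining point is to extend this from the generators $\iota(\alpha)$ to arbitrary sections $f\,\iota(\alpha)$ with $f\in C^\infty(\widetilde M)$: here one uses the Leibniz-type identity (\ref{eq:cour3}) for the Dorfman bracket, which shows that $\blq f\iota(\alpha), g\iota(\beta)\brq_{\widetilde M}$ is a $C^\infty(\widetilde M)$-combination of $\iota(\blq\alpha,\beta\brq)$, $\iota(\alpha)$, and $\iota(\beta)$, all of which are sections of $\pi^\ast\mathfrak L$. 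The main (minor) obstacle is just being careful that $\Gamma(\mathfrak L)\subset\Gamma(\D L)$, viewed inside $\Gamma(\mathbb T\widetilde M)$ via $\iota$, really does generate the pullback bundle $\pi^\ast\mathfrak L$ as a $C^\infty(\widetilde M)$-module locally — this is the standard fact that a local frame of $\mathfrak L$ pulls back to a local frame of $\pi^\ast\mathfrak L$ — after which involutivity propagates by $C^\infty(\widetilde M)$-linearity of the bracket modulo anchor terms. This completes the verification that $\pi^\ast\mathfrak L$ is a Dirac structure on $\widetilde M$.
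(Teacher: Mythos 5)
Your proposal is correct and follows exactly the route the paper takes: the paper's proof is the one-line statement that the result "immediately follows from Theorem \ref{theor:homog} and dimension counting," and your argument is precisely the unpacking of that — isotropy and involutivity are transported through the embedding $\iota$ of Theorem \ref{theor:homog}, maximality comes from $\rk \pi^\ast \mathfrak L = \dim M + 1 = \dim \widetilde M = \tfrac{1}{2}\rk\, \mathbb T \widetilde M$, and the extension from pulled-back generators to arbitrary sections via the Leibniz identity (\ref{eq:cour3}) is the standard bookkeeping the paper leaves implicit.
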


\begin{proof}
It immediately follows from Theorem \ref{theor:homog} and dimension counting.
\end{proof}

\begin{definition}
The Dirac structure $\pi^\ast \mathfrak L$ is called the \emph{Dirac-ization} of the Dirac-Jacobi structure $ \mathfrak L $. A Dirac structure on $\widetilde M$ is \emph{homogeneous} if it is the Dirac-ization of some Dirac-Jacobi structure $ \mathfrak L $ on $L$.
\end{definition}

\begin{remark}
When $(L, \mathfrak L)$ is the Dirac-Jacobi bundle corresponding to a Jacobi bundle $(L, \{-,-\})$ (resp.~a precontact distribution $C$) on $M$, then its Dirac-ization $\widetilde{\mathfrak L} = \pi^\ast \mathfrak L$ corresponds to the Poissonization of $(L, \{-,-\})$ (resp.~the presymplectization of $C$).
\end{remark}

Let $(L, \mathfrak L)$ be a Dirac-Jacobi bundle over $M$, and let $\widetilde{\mathfrak L}$ be its Dirac-ization. We conclude this section discussing the relationship between the characteristic foliation $\Ff_{\mathfrak L} $ of $\mathfrak L$ and the characteristic (presymplectic) foliation $\Ff_{\widetilde{\mathfrak L}}$ of $\widetilde{\mathfrak L}$. First of all, notice that the diagram
\[
\xymatrix@C=0pt@R=5pt{ & & T \widetilde M \ar[ddrrrrrr]|!{[rrrdd];[rrrrr]}\hole ^-{d\tau} \ar[rrrrr] \ar[ddddrr]|!{[lldd];[rrrdd]}\hole  & & & & & D L \ar[rdd]^-{\sigma} & & \\
 & & & & & & & & & \\
 \mathbb T \widetilde M \ar[uurr]^-{\pr_{T}} \ar[rrrrr] \ar[rrdddd] & & & & & \D L \ar[uurr] \ar[rrdddd] & & & TM \ar[rdd]^-{\tau}
  & \\
 & & & & & & & & & \\
 & & & & \widetilde M \ar[rrrrr]|!{[uur];[ddrrr]}\hole 
 & & & & & M \\
 & & & & & & & & & \\
 & & T^\ast \widetilde M \ar[rrrrr] \ar[uurr] & & & & & J^1 L \ar[uurr] & &
}
\]
commutes. It immediately follows that $\pr_T \widetilde{\mathfrak L} = \pi^\ast \pr_{D} \mathfrak L$. In particular, $T \Ff_{\mathfrak L}  = (d \tau )( T \Ff_{\widetilde{\mathfrak L}})$, hence leaves of $\Ff_{\widetilde{\mathfrak L}}$ project onto leaves of $\Ff_{\mathfrak L} $ under $\pi$. More precisely, $\pi$ establishes a surjection between leaves of $\Ff_{\widetilde{\mathfrak L}}$ and leaves of $\Ff_{\mathfrak L} $. If $\Oo$ is a leaf of $\Ff_{\mathfrak L} $, and $\widetilde{\Oo}$ is a leaf of $\Ff_{\widetilde{\mathfrak L}}$ projecting onto $\Oo$, then the tangent bundle $T \widetilde{\Oo}$ is $\pi^\ast (\Ii_{\mathfrak L}|_\Oo) \subset \pi^\ast D L \simeq T \widetilde M$. Now, recall that, if $\Oo$ is lcps, then $\Ii_{\mathfrak L}|_\Oo$ is actually the image of a flat connection $\nabla^\Oo$ in $L|_\Oo$. Consider the dual connection, also denoted by $\nabla^\Oo$, in $L|_\Oo^\ast$ and its restriction to $\widetilde M |_\Oo = \pi^{-1} (\Oo) = L|_\Oo^\ast \smallsetminus \mathbf 0$. The above discussion shows that $\widetilde{\Oo}$ is (locally) the image of a $\nabla^\Oo$-constant section of $\pi : \widetilde M |_\Oo \to \Oo$. On the other hand, if $\Oo$ is precontact, then $\Ii_{\mathfrak L}|_\Oo = D (L|_\Oo)$ and $\widetilde{\Oo}$ coincides with $\widetilde M |_\Oo$. In particular, 
\begin{equation}\label{eq:dim_Oo_tilde}
\dim \widetilde{\Oo} =  
\begin{cases}
\dim \Oo & \text{if $\Oo$ is lcps} \\
\dim \Oo +1 & \text{if $\Oo$ is precontact}.
\end{cases}
\end{equation}

\begin{remark}\label{rem:opposite}
Identity (\ref{eq:dim_Oo_tilde}) provides an alternative way to prove Corollary \ref{cor:parity}. Namely, it is known that the parity of the dimensions of characteristic leaves of a Dirac manifold is locally constant (see, e.g., \cite[Corollary 3.3]{DW2008}). Together with (\ref{eq:dim_Oo_tilde}) this immediately implies Corollary \ref{cor:parity}. Additionally, it follows from (\ref{eq:dim_Oo_tilde}) that, locally, the dimension of a lcps leaf and the dimension of a precontact leaf have different parities.
\end{remark}

We also have $\widetilde{\mathfrak L} \cap T \widetilde M = \pi^\ast (\mathfrak L  \cap D L)$. This shows that the null distribution $K_{\widetilde{\mathfrak L}}$ of $\widetilde{\mathfrak L}$ projects onto $K_{\mathfrak L} $ point-wise isomorphically. This can be also seen as follows. Let $\Oo$ be a leaf of $\Ff_{\mathfrak L} $ and let $\widetilde{\Oo}$ be a leaf of $\Ff_{\widetilde{\mathfrak L}}$ over $\Oo$. Moreover, let $\omega_{\widetilde \Oo} \in \Omega^2 (\widetilde \Oo)$ be the presymplectic structure on $\widetilde{\Oo}$ induced by $\widetilde{\mathfrak L}$. Clearly, $\omega_{\widetilde \Oo}$ is completely determined by its values on vector fields of the form $\widetilde \Delta$, with $\Delta$ a section of $\Ii_{\mathfrak L}|_\Oo$. Hence, if $\Oo$ is lcps, $\omega_{\widetilde{\Oo}}$ is completely determined by
\[
\omega_{\widetilde{\Oo}} (\widetilde{\nabla^\Oo_X}, \widetilde{\nabla^\Oo_Y}) = \langle \widetilde{\nabla^\Oo_Y}, \widetilde \psi \rangle = \widetilde{\langle \nabla^\Oo_Y , \psi \rangle } = \widetilde{\omega_{\Oo} (X, Y)},
\] 
where $X,Y \in \mathfrak X (\Oo)$, and $\psi \in \Gamma ((J^1 L)|_\Oo)$ is such that $(\nabla^\Oo_X, \psi)$ is a section of $ \mathfrak L |_\Oo$. If $\Oo$ is precontact, $\omega_{\widetilde{\Oo}}$ is completely determined by
\[
\omega_{\widetilde{\Oo}} (\widetilde \Delta, \widetilde \square) = \langle \widetilde{\square}, \widetilde \psi \rangle = \widetilde{\langle \square , \psi \rangle } = \widetilde{\omega_{\Oo} (\Delta, \square)},
\]
where $\Delta, \square \in \Der L|_{\Oo}$, and $\psi \in \Gamma ((J^1 L)|_\Oo)$ is such that $(\Delta, \psi)$ is a section of $ \mathfrak L |_\Oo$. In any case, 
\[
\omega_{\widetilde{\Oo}} = \widetilde{\omega_{\Oo}} |_{\widetilde{\Oo}}.
\]


\subsection*{Acknowledgements}
I thank the anonymous referee for suggesting me several revisions that improved a lot the presentation. I also thank her/him for suggesting me to use the local structure Theorem \ref{theor:local} to investigate transverse structures to characteristic leaves. 

\bigskip

\end{document}